\documentclass[11pt]{amsart}
\usepackage{amsmath, amsthm, amssymb}
\usepackage{amsmath,amscd}
\usepackage{mathabx}
\usepackage{hyperref}
\usepackage{mathrsfs}
\usepackage{xcolor, changepage}
\usepackage{graphicx}
\usepackage{titletoc}
\usepackage{enumerate}
\usepackage{accents}
\usepackage{romanbar}

\usepackage{tikz}
\usetikzlibrary{matrix,arrows}
\usetikzlibrary{shapes}
\usetikzlibrary{calc}
\usetikzlibrary{arrows}
\usetikzlibrary{decorations.pathreplacing,decorations.markings}
\usepackage[all]{xy}
\usepackage{tikz-cd}

\usepackage{caption}
\usepackage{subcaption}
\usepackage{geometry}
\usepackage{comment}

\theoremstyle{plain}
\newtheorem{theorem}{Theorem}
\newtheorem{proposition}[theorem]{Proposition}
\newtheorem{lemma}[theorem]{Lemma}

\numberwithin{theorem}{section}

\theoremstyle{remark}
\newtheorem{remark}[theorem]{Remark}

\theoremstyle{definition}
\newtheorem{definition}[theorem]{Definition}
\newtheorem{example}[theorem]{Example}
\newtheorem{construction}[theorem]{Construction}

\newcommand{\R}{\mathbb{R}}
\newcommand{\cH}{\mathcal{H}}
\newcommand{\fs}{\mathfrak{s}}
\newcommand{\cA}{\mathcal{A}}
\newcommand{\eps}{\varepsilon}

\DeclareMathOperator{\Lip}{Lip}
\DeclareMathOperator{\Isom}{Ismo}

\DeclareMathOperator{\Int}{Int}

\DeclareMathOperator{\Ric}{Ric}

\DeclareMathOperator{\Area}{Area}
\DeclareMathOperator{\sys}{sys}

\DeclareMathOperator{\id}{id}

\DeclareMathOperator{\cor}{cor}
\DeclareMathOperator{\fac}{fac}
\DeclareMathOperator{\Id}{Id}

\title[Capillary prisms, Coxeter gluing, and $\pi_2$-systole]{Capillary prisms, Coxeter gluing, and the $\pi_2$-systole of $3$-manifolds with positive scalar curvature}
\author{Shihang He}
\address{Key Laboratory of Pure and Applied Mathematics,
School of Mathematical Sciences, Peking University, Beijing, 100871, P. R. China
}
\email{hsh0119@pku.edu.cn, sh9035@nyu.edu}
\author{Chao Li}
\address{Courant Institute, New York University, 251 Mercer St, New York, NY
10012, USA
}
\email{chaoli@nyu.edu}
\begin{document}

\begin{abstract}
    We study the $\pi_2$-systole of positive scalar curvature $3$-manifolds via a local-to-global approach. The tessellated local building blocks are capillary prisms $M$:  Riemannian cylinders enclosed by mean convex surfaces meeting at prescribed dihedral angles. For a class of such prisms, called energy-essential prisms, we obtain angle-sensitive relative $\pi_2$-systole estimates. In particular, if the upper capillary angle is at most $\alpha \in (0,\tfrac{\pi}{2}]$, then
    \[\sys_2(M,\partial_0 M, g)\cdot \inf R_g \lesssim |\log \alpha|^{-1},\qquad \alpha\to 0.\]
    The proof relies on the monotonicity of a new quasi-local mass.

    The local-to-global step is achieved via Coxeter gluing of copies of capillary prisms. We prove a general Coxeter gluing and smoothing theorem for scalar curvature: under Coxeter compatibility of the corner strata and nonnegative mean curvature jump conditions along the glued facets, we prove that the resulting piecewise smooth manifold can be smoothened while preserving the scalar curvature lower bound. This theorem gives, to our knowledge, the first general rigorous formulation and proof of the Coxeter-polyhedral smoothing principle for scalar curvature lower bounds, a principle that has long been used heuristically in positive scalar curvature geometry.

    As an application, we construct smooth positive scalar curvature metrics with large $\pi_2$-systole on connected sums of copies of $S^2\times S^1$ and lens spaces that are not covered by $S^2\times \R$. The scale-invariant systolic constants obtained include $4\pi$ for connected sums of $S^2\times S^1$ and 
    \[
    \min\left\{
        4\pi,\,
        \min_j 8\pi\left(1-\cos\frac{\pi}{p_j}\right)
    \right\}
    \]
    for connected sums with lens-space $L(p_j, q_j)$. These examples give the first explicit metrics with quantitatively large $\pi_2$-systole on these topologies. We also discuss rigidity and non-rigidity phenomena for the local $\pi_2$-systolic estimates.
\end{abstract}
\maketitle
	\tableofcontents	

\section{Introduction}

\begin{comment}
    \begin{definition}
A Riemannian cylinder  $(M^n,g)$ is a compact Riemannian manifold with corner, whose boundary has a decomposition $\partial M = \partial_-M\cup \partial_0 M\cup\partial_+ M$, satisfying
\begin{itemize}
    \item $\partial_- M\cap\partial_+ M = \emptyset$;
    \item Both $\partial_{\pm}M$ meet $\partial_0 M$ in a smooth curve.
\end{itemize}
We denote the dihedral angle between $\partial_{\pm} M$ and $\partial_0 M$ by $\angle_{\pm0}(M,g)$, and the mean curvature of $\partial_0M$ with respect to the outward pointing normal of $M$ by $\bar{H}$.
\end{definition}
\end{comment}

Let $(M^3, g)$ be a closed oriented Riemannian $3$-manifold with nontrivial $\pi_2$. Its $\pi_2$-systole, denoted by $\sys_2(M,g)$, is defined as

\begin{definition}\label{defn: pi2 systole}
	\[\sys_2(M,g) = \inf\{\Area(S^2, f^*g): f: S^2\to M \text{ is homotopically nontrivial}\}.\]
\end{definition}

The $\pi_2$-systole is an important metric invariant of the $(M,g)$ that is intimately related to scalar curvature $R_g$,  especially when $(M,g)$ has positive scalar curvature (abbreviated as PSC in this article). Indeed, denoting by
\[\fs (M) = \sup\{\sys_2(M,g)\cdot \inf_M R_g: g\text{ is PSC}\},\]
a fundamental result by Bray-Brendle-Neves \cite{BBN10} proved that whenever $\pi_2(M)\ne 0$ and $R_g>0$, we have that
\begin{equation}\label{eq:BBN}
	\fs(M)\le 8\pi.
\end{equation}
The inequality \eqref{eq:BBN} has a rigidity phenomenon: equality holds if and only if $(M,g)$ is covered by a product manifold $S^2\times \R$. The systolic inequalities have been extensively studied in the scalar curvature context and various extensions of \eqref{eq:BBN} were obtained. For example, Bray-Brendle-Eichmair-Neves \cite{BBEN} studied the least area of nontrivial embedded $\R P^2$ and obtained a rigidity result modeled on the round $\R P^3$; Nunes \cite{Nunes} obtained sharp area bounds for locally area-minimizing high-genus surfaces under a negative lower bound of $R_g$, with rigidity modeled by $\Sigma\times \R$. A particularly intriguing result by Xu \cite{Xu25}, which largely motivated this work, reveals a topological gap phenomenon on $\sys_2$ for $3$-manifolds that are not covered by $S^2\times S^1$. In higher dimensions, Zhu \cite{Zhu2020rigidity,Zhu2023rigidity} proved sharp area upper bounds for nontrivial $S^2$ under a lower bound of $R$ with rigidity modeled on $S^2\times T^n$. 

In view of the rigidity theorem by Bray-Brendle-Neves \cite{BBN10} and Xu's topological gap theorem \cite{Xu25}, it is natural to ask how large $\fs(M)$ can be for individual PSC topologies. By the work of Schoen-Yau, Gromov-Lawson and Perelman, a closed orientable $3$-manifold $M$ admits a PSC Riemannian metric if and only if it decomposes into $S^2\times S^1$ and spherical space forms. The exact value of $\fs(M)$ is presently unknown except in the case when $M$ is covered by $S^2\times \R$. It is therefore natural to seek qualitative and quantitave relations between the topology of $M$ and the size of $\fs(M)$. For example, one expects that increasingly complicated spherical prime factors should force $\fs(M)$ to become small: if $\{M_j\}$ contains a prime factor $S^3/\Gamma_j$ with $|\Gamma_j|\to \infty$, then one should expect that $\fs(M_j)\to 0$. 

From the opposite direction, one would like to construct PSC metrics with a fixed scalar curvature lower bound and large $\pi_2$-systole. Prior to this work, no explicit constructions seem to have been available for metrics on such $3$-manifolds achieving quantitatively large $\pi_2$-systole.

We approach these questions on the connected sums  of copies of $S^2\times S^1$ and lens spaces via a local-to-global strategy \footnote{We also give potential future directions on how to handle other spherical space forms, see Section \ref{subsection: prism manifolds} and \ref{subsection: 3 special spaces}.}. We isolate a geometric configuration - a Riemannian capillary prism  with controlled dihedral angle - as the tessellated localization for these PSC $3$-manifolds. By appropriately prescribing different capillary angles, these prisms serve as Coxeter building blocks for $S^2\times S^1$ and lens spaces. We obtain relative $\pi_2$-systole estimates on all such capillary prisms, localizing and extending the results of Bray-Brendle-Neves \cite{BBN10} and Xu \cite{Xu25}. Conversely, by constructing capillary prisms with large relative $\pi_2$ systole, we obtain global metrics with large $\pi_2$-systole. We now lay out our local-to-global estimates and constructions in detail.

\subsection{Riemannian capillary prisms}
 
In the first part of this paper, we analyze Riemannian capillary prisms. They serve as fundamental building blocks for closed $3$-manifolds and will be the central geometric object of this paper. Let us focus on the local picture at this moment and defer the local-to-global construction till Section \ref{subsection: coxeter smoothing - intro}.

\begin{definition}
    A Riemannian cylinder  $(M^n,g)$ is a compact Riemannian manifold with corner admitting a diffeomorphism $\varphi: M\longrightarrow D^{n-1}\times [0,1]$. We denote the upper and bottom boundary by $\partial_+ M = \varphi^{-1}(D^{n-1}\times\{1\})$, $\partial_- M = \varphi^{-1}(D^{n-1}\times\{0\})$, and the lateral boundary by $\partial_0 M$. We denote the dihedral angle between $\partial_{\pm} M$ and $\partial_0 M$ by $\angle_{\pm0}(M,g)$, and the mean curvature of $\partial_0M$ with respect to the outward pointing normal of $M$ by $\bar{H}$.
\end{definition}

For a $3$-dimensional Riemannian cylinder $(M^3,g)$, we define the relative $\pi_2$-systole, $\sys_2(M,\partial_0 M, g)$, analogously as in Definition \ref{defn: pi2 systole}.

\begin{align*}
    \sys_2(M,\partial_0 M,g) = \inf\{|D^2|_{f^*g},\quad &f:(D^2,\partial D^2)\longrightarrow (M,\partial_0 M) \text{ is a smooth immersion},\\
    &\text{ and }f\text{ is not homotopically trivial}\}.
\end{align*}

The relative $\pi_2$-systole has been studied previously: assuming that $R_g>0$ and $\partial M$ is weakly mean convex, Ambrozio \cite{Ambrozio} proved that
\[\sys_2(M,\partial_0 M, g)\inf_M R_g \le 4\pi,\]
with rigidity modeled on $S^2_+\times \R$.

It turns out that the appropriate localization involves Riemannian cylinders that satisfies a capillary type boundary condition.

\begin{definition}\label{defn: capillary prism}
    Fix $\alpha\in (0,\tfrac{\pi}{2}]$. A capillary prism $(M^3,g)$ with angle at most $\alpha$ is a Riemannian cylinder satisfying
    \[\angle_{+0}(M,g)\le \alpha,  \quad \angle_{-0}(M,g) \le \frac{\pi}{2},\]
    and $\partial_\pm M, \partial_0 M$ are weakly mean convex with respect to the outward unit normal \footnote{In this paper, we adopt the convention that the unit sphere in $\R^3$ has mean curvature $=2$ with respect to the outward unit normal.}.
\end{definition}

\begin{example}\label{example: standard cylinder block - intro}
    Fix $\alpha\in (0,\tfrac{\pi}{2}]$. We introduce a capillary prisms with angle $=\alpha$ and $R_g = 2$, hereafter called a \textit{standard cylindrical block} $M_\alpha$. They are central examples throughout this paper. For different choices of $\alpha$, these will be the Coxeter building blocks for various $3$-manifolds. We delay the detailed construction to Section \ref{section: construction} and only describe their basic properties.

    $M_\alpha$ is a subset of the cylinder $S^2_+\times \R$, where $S^2_+$ stands for the upper hemisphere of radius $1$.  $\partial_\pm M_\alpha$ are contained in well-chosen slices $S^2_+\times \{t_\pm\}$, and hence are totally geodesic; $\partial_0 M_\alpha$ is a rotationally invariant minimal surface in $S^2_+\times \R$, meeting at constant angle $\alpha$ with $\partial_+ M_\alpha$  and orthogonally with $\partial_{-} M_\alpha$. The leaves $\{M_\alpha\cap (S^2_+\times \{t\})\}_{t\in [t_{-},t_+]}$ form a unit speed totally geodesic foliation of $M_\alpha$. See Figure \ref{figure: example 1}.

    The relative $\pi_2$-systole for $M_\alpha$ is achieved by $|\partial_{-}M_\alpha|$, and equals $2\pi(1-\cos\alpha)$.
\end{example}

As we will see in Section \ref{section: Coxeter gluing and smoothing} and \ref{section: construction}, a capillary prism with angle $\alpha\to 0$ should be thought of the localization of a spherical space form $S^3/\Gamma$ with $|\Gamma|\to \infty$. 

\subsection{Upper bounds for the relative $\pi_2$-systole}

In our first main result, we obtain an upper bound of the relative $\pi_2$-systole for a large class of capillary prisms $M$, modeled on Example \ref{example: standard cylinder block - intro}. To describe the assumption we require on $M$, we note that the maximum principle implies that for any $\beta\in (\alpha,\tfrac{\pi}{2})$, the minimizer to the capillary functional
\[\cA_\beta (\Omega) = \cH^2(\partial\Omega\cap \mathring M) - \cos \beta \cH^2(\partial \Omega\cap \partial_0 M) \]
exists. Moreover, any such minimizer $\Omega$ contains $\partial_{-}M$ and is disjoint from $\partial_+ M$. We are now ready to describe the class of prisms we consider.

\begin{definition}\label{defn: energy-energy essential}
    We say a capillary prism with angle $\alpha$ is \textit{energy-essential}, if for all $\beta\in [\alpha, \tfrac{\pi}{2}]$, any $\cA_\beta$-minimizer $\Omega$ is connected.
\end{definition}

In other words, the $\cA_\beta$-minimizer does not contain any relatively null-homologous open sets along $\partial_0 M$. Since $M$ is simply connected, this is equivalent to requiring that the capillary surface $\partial \Omega\cap\mathring M$ is connected. In Proposition \ref{prop: energy-essential}, we will give sufficient conditions for the energy-essential property:
\begin{itemize}
    \item $M$ admits a foliation by capillary surfaces of monotone angles from $\partial M_{-}$ to $\partial_+ M$.
    \item $\partial_0 M$ is area-minimizing.
    \item $M$ does not contain any stable capillary disk with angle $\beta\in (\alpha, \tfrac{\pi}{2})$ and boundary on $\partial_0 M$.
\end{itemize}
Moreover, we will prove in Proposition \ref{prop: energy-essential is open} that energy-essential is an open condition: it is preserved under small perturbations of $(M,g)$.

To describe our first result, we introduce an auxiliary function $\Psi(\alpha)$. For $\alpha\in (0,\frac\pi2]$, let $x = \Psi(\alpha)$ be the unique solution in $(0,1]$ to the following equation
\begin{align*}
    \frac{1}{x}+\log x = 1-\frac{1}{3}\log\sin\alpha.
\end{align*}
It is straightforward to check that $\Psi$ is an increasing function on $(0,\frac\pi2]$ with $\Psi(\frac{\pi}{2}) = 1$, and
\begin{align*}
    \Psi(\alpha) = O(|\log\alpha|^{-1}),\quad \alpha\to 0.
\end{align*}
In particular, $\lim_{\alpha\to 0}\Psi(\alpha) = 0$.

\begin{theorem}\label{thm: 2 systole estimate}
     Let $(M^3,g)$ be an energy-essential capillary prism with scalar curvature $R_g\ge R_0>0$ and angle at most $\alpha$, $\alpha\in (0,\frac{\pi}{2}]$. Then
    \begin{align*}
        \sys_2(M,\partial_0 M,g)\cdot R_0\le 4\pi\Psi(\alpha).
    \end{align*}
    In particular, the upper bound of $ \sys_2(M,\partial_0 M,g)$ tends to $0$ as $\alpha\to 0$.
\end{theorem}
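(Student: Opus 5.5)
The plan is to run a monotonicity argument along the one-parameter family of capillary minimizers that interpolates between $\partial_+M$ and $\partial_-M$. For each $\beta\in[\alpha,\tfrac{\pi}{2}]$, let $\Omega_\beta$ be an $\cA_\beta$-minimizer; its existence and the facts $\partial_-M\subset\Omega_\beta$, $\Omega_\beta\cap\partial_+M=\emptyset$ are recalled just before Definition \ref{defn: energy-energy essential}. Set $\Sigma_\beta=\partial\Omega_\beta\cap\mathring M$ and let $W(\beta)=\cH^2(\partial\Omega_\beta\cap\partial_0M)$ be the wetted area. Because $M$ is energy-essential, $\Sigma_\beta$ is connected, and hence (as observed after Definition \ref{defn: energy-energy essential}) a properly embedded disk. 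It meets $\partial_0M$ at angle $\beta$, and its boundary curve separates $\partial_-M$ from $\partial_+M$ inside the annulus $\partial_0M$. This curve is therefore homotopically nontrivial in $\partial_0 M$, so $\Sigma_\beta$ is an admissible competitor in the definition of $\sys_2(M,\partial_0M,g)$, giving
\[
\sys_2(M,\partial_0M,g)\le \min_\beta|\Sigma_\beta|.
\]
It then suffices to show that some $\Sigma_\beta$ satisfies $R_0|\Sigma_\beta|\le 4\pi\Psi(\alpha)$.

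\textbf{Two inequalities.} First, the stability inequality for the capillary disk $\Sigma_\beta$, tested with the constant function $1$, is combined with the Gauss equation and Gauss--Bonnet on the disk in the usual Schoen--Yau/Ambrozio way. The boundary terms are rewritten using the capillary angle $\beta$ and the weak mean convexity of $\partial_0M$; the weak mean convexity of $\partial_\pm M$ enters only through the barrier facts already quoted. I expect this to give an estimate of the form
\[
R_0|\Sigma_\beta|\le 4\pi - c(\beta)\,(\text{boundary length/wetted-area term}),
\]
which reduces to Ambrozio's $R_0|\Sigma|\le4\pi$ at $\beta=\tfrac{\pi}{2}$. Second, the value function $F(\beta)=\min\cA_\beta$ is Lipschitz in $\beta$, and by the envelope principle its one-sided derivatives are $\sin\beta\cdot W(\beta^\pm)$, computed along the one-sided limiting minimizers. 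Using $F(\beta)=|\Sigma_\beta|-\cos\beta\,W(\beta)$, together with comparisons of $\Omega_\beta$ against $\Omega_{\beta'}$ as competitors, the first inequality is converted into a differential inequality for the area $A(\beta)=|\Sigma_\beta|$.

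\textbf{The quasi-local mass.} I would package $x(\beta)=R_0A(\beta)/(4\pi)$ into a quantity of the form
\[
m(\beta)=\frac1{x}+\log x+\tfrac13\log\sin\beta,
\]
which is the combination suggested by the defining equation of $\Psi$. The goal is to prove that $m$ is monotone in $\beta$ along the family. Arguing by contradiction, suppose $x(\beta)>\Psi(\alpha)$ for all $\beta$. At $\beta=\tfrac{\pi}{2}$ the Ambrozio endpoint gives $x\le 1$, so $m(\tfrac{\pi}{2})\ge 1$. Integrating the monotonicity from $\tfrac{\pi}{2}$ down to $\alpha$ should then force $\tfrac1{x}+\log x\ge 1-\tfrac13\log\sin\alpha$ at some $\beta$, that is $x\le\Psi(\alpha)$, a contradiction. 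Since $\Psi$ is increasing and $\Psi(\alpha)=O(|\log\alpha|^{-1})$, the final assertion of Theorem \ref{thm: 2 systole estimate} follows.

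\textbf{Main obstacle.} The hardest step is the derivation of the monotonicity itself. The exact constant $\tfrac13$ has to come from a careful choice of weights combining the stability boundary term with the factor $\sin\beta$ in $F'$; I would first try to optimise the coefficient in front of $\log\sin\beta$ as a free parameter. A further difficulty is that the minimizers need not vary continuously, since they may jump. I would handle this by working with the upper and lower envelopes of $A$, using the Lipschitz bound on $F$ and the ordering $\Omega_{\beta}\subset\Omega_{\beta'}$ for $\beta<\beta'$, which comes from cut-and-paste and the maximum principle. This should yield the inequality for $m$ in the sense of Dini derivatives, and that form is enough to integrate.
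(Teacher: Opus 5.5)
There is a genuine gap at the step you flag as the main obstacle. Testing capillary stability of $\Sigma_\beta$ with $\varphi\equiv 1$ cannot produce the boundary-length or wetted-area term you hope for. The boundary weight in the second variation is $\frac{1}{\sin\beta}\mathrm{I\!I}(\bar\nu,\bar\nu)+\cot\beta\,A(\nu,\nu)=\frac{\bar H}{\sin\beta}-k$. After Gauss--Bonnet you therefore get only $R_0|\Sigma_\beta|\le 4\pi-\int_{\Sigma_\beta}|A|^2-\frac{2}{\sin\beta}\int_{\partial\Sigma_\beta}\bar H$, which is Ambrozio's bound with no usable dependence on $\beta$. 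Comparing the minimizers $\Omega_\beta,\Omega_{\beta'}$ with one another gives only first-order information: monotonicity of the wetted area, and concavity of $\min\cA_\beta$ in $\cos\beta$. The second-order inequality has to come from using the constant-speed normal push-off $\Sigma(\tau)$ of a minimizer as an upper barrier for the profile $I(s)$, the least area at wetted area $s$. Two facts drive this: $\frac{d}{d\tau}|S(\tau)|=|\partial\Sigma|/\sin\beta$, and $\frac{d^2}{d\tau^2}\bigl(|\Sigma(\tau)|-\cos\beta\,|S(\tau)|\bigr)$ equals the second-variation form evaluated at $1$. Together they give, in the viscosity sense, $I''\le|\partial\Sigma|^{-2}(1-I'^2)\bigl(2\pi-\tfrac{R_0}{2}I\bigr)$ with $I'=\cos\beta$. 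The energy-essential hypothesis is used exactly here, to have $\chi(\Sigma)=1$. Your parametrization by $\beta$ via the value function is just the Legendre dual of this and needs the same barrier. The factor $|\partial\Sigma|^{-2}$ must then be controlled by an isoperimetric inequality $|\partial\Sigma|\ge\sqrt{R_0/6}\,|\Sigma|$. The paper proves this from the spectral inequality $\int_\Sigma(|\nabla\varphi|^2+K\varphi^2)+\int_{\partial\Sigma}k\varphi^2\ge\frac{R_0}{2}\int_\Sigma\varphi^2$, using a filling-radius bound $\rho^+\le\pi/\sqrt{3\lambda}$ and a Gromov--Lawson test function. Nothing in your plan plays this role, and it is where the constant comes from: $\tfrac13=2C_{\mathrm{iso}}^2$ with $C_{\mathrm{iso}}^2=\tfrac16$, the factor $2$ coming from $1-I'^2=\sin^2\beta$. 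It is not the result of optimizing weights.

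Second, your quasi-local mass has the wrong sign, and the argument runs in the wrong direction. Let $g(x)=\frac1x+\log x$, which is decreasing on $(0,1]$. The correct monotone quantity is $e^{-6/x}x^{-6}\sin^2\beta$, nondecreasing in wetted area; equivalently, $g(x)-\frac13\log\sin\beta$ is nondecreasing in $\beta$. The anchor is at the $\alpha$-end. Since $x\le 1$ for every capillary minimizer, $g(x)-\frac13\log\sin\beta\ge 1-\frac13\log\sin\beta\to1-\frac13\log\sin\alpha$ as $\beta\to\alpha$. Monotonicity carries this to $\beta\to\frac{\pi}{2}$, where $\Sigma_\beta$ subconverges to a free boundary minimizer $\Sigma_0$ with $g\bigl(\tfrac{R_0|\Sigma_0|}{4\pi}\bigr)\ge 1-\frac13\log\sin\alpha$, i.e.\ $R_0|\Sigma_0|\le4\pi\Psi(\alpha)$. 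Anchoring at $\beta=\frac{\pi}{2}$ with your $m=g(x)+\frac13\log\sin\beta$ can only force $x\le\Psi(\alpha)$ at the angle-$\alpha$ end. That is false for the standard cylindrical block: its angle-$\alpha$ leaf is a hemisphere with $x=1$, so $m(\alpha)=1+\frac13\log\sin\alpha<1<g(1-\cos\alpha)=m(\tfrac{\pi}{2})$, contradicting the monotonicity you would need. Some smaller points:
\begin{enumerate}
\item The nesting is $\Omega_{\beta'}\subset\Omega_\beta$ for $\beta<\beta'$, since smaller angles favour wetting.
\item Disk topology comes from stability plus Gauss--Bonnet, not from connectedness alone.
\item The paper first reduces to outermost and innermost end surfaces (Assumption A), so that the profile is defined on all of $(0,|\partial_0M|)$.
\end{enumerate}
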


When $\alpha = \tfrac{\pi}{2}$, our result recovers that of Ambrozio \cite{Ambrozio}. As $\alpha\to 0$, Theorem \ref{thm: 2 systole estimate} is the localization that the $\pi_2$-systole approaches $0$ on for a $3$-manifold containing $S^3/\Gamma$, as $|\Gamma|\to \infty$.

\begin{comment}
\begin{remark}
    We list approximate values of $\Psi(\alpha)$ for some special choices of $\alpha$:

\[
\begin{array}{c|c|c}
\alpha & \Psi(\alpha) & \text{Upper bound of } \sys_2\cdot R_0 \\ \hline
10^\circ & 0.400 & 1.600\pi \\
30^\circ & 0.543 & 2.172\pi \\
45^\circ & 0.641 & 2.563\pi \\
60^\circ & 0.745 & 2.980\pi
\end{array}
\]

\end{remark}
\end{comment}

Let us discuss the proof of Theorem \ref{thm: 2 systole estimate} as it is interesting in its own right. It is well-known that the relative systole is achieved by an embedded free boundary area-minimizing disk $\Sigma$. Plugging the constant test function into the stability inequality on $\Sigma$ immediately gives the upper bound $\sys_2\cdot R_0\le 4\pi$. However, this estimate is independent of the angle $\alpha$ and cannot be achieved unless $\alpha=\tfrac{\pi}{2}$.

We thus instead study a whole capillary free energy profile. For each angle $\beta\in (\alpha, \tfrac{\pi}{2})$, we minimize $\cA_\beta$ and obtain a minimizer $\Omega_\beta$. Denote by $\Sigma_\beta=\Omega_\beta\cap \mathring M$ be the capillary surface and  $S(\Sigma_\beta) = \partial_\beta \Omega\cap \partial_0 M$ the wetting region. For clarity of presentation, let us assume here that the surfaces $\{\Sigma_\beta\}_{\beta\in (\alpha,\frac{\pi}{2})}$ gives a foliation $\mathcal{F}$ of $M$. 

For each $s\in (0, |\partial_0 M|)$, we consider the free energy profile
\[I(s) = |\Sigma_s|,\]
where $\Sigma_s$ is the unique leaf in $\mathcal{F}$ such that $|S(\Sigma_s)|= s$. This profile was first considered (in a different context) by Eichmair-Koerber \cite{EK24}. The key property is that $I$ satisfies an ordinary differential inequality in the viscosity sense (see Proposition \ref{prop: differential inequality for I}):
\[I''(s) \le |\partial \Sigma_s|^{-2} (1-I'(s)^2) (2\pi - \frac12 R_0 I(s)). \]
The constant $2\pi$ is a consequence of the Gauss-Bonnet formula and is where we used the energy-essential assumption on $M$. We note here that the connectedness of minimizers has been a necessary assumption for many applications of geometric profile functions, most notably Bray's comparison results \cite{Bray97}. The work by Eichmair-Koerber \cite{EK24} managed to elegantly bypass this issue with an ingenious observation on the Euclidean isoperimetric inequality. Our case is similar to that of Bray \cite{Bray97} because of the PSC ambient geometry. See more discussions in Remark \ref{remark: connectedness is needed}.

We thus need an estimate of $|\partial \Sigma_s|$ in terms of $I(s)$, which reduces to typical relative isoperimetric inequality. We deduce such an estimate for $\Sigma_s$ as a consequence of the capillary stability:
\[|\partial \Sigma_s| \ge \frac{R_0}{\sqrt 6} |\Sigma_s|.\]
The combination of these two estimates leads to the monotonicity of a new quasi-local mass: defining
\[m(s) = \exp \left(-\frac{24\pi}{R_0 I(s)}\right)I(s)^{-6} (1-I'(s)^2),\]
we have that $m'(s)\ge 0$. From this we deduce the desired upper bound for $|\Sigma|$.

\subsection{A general Coxeter smoothing theorem}\label{subsection: coxeter smoothing - intro}

In the second part of this paper, we rigorously prove a general Coxeter gluing/smoothing theorem allowing polyhedral singularities, while maintaining scalar curvature lower bounds. The main result works for in all dimensions and general (i.e. not necessarily nonnegative) scalar curvature lower bounds.

To motivate the discussions, we explain how we use the standard cylindrical blocks as in Example \ref{example: standard cylinder block - intro} to build $(S^2\times S^1)\# (S^2\times S^1)$ with large $\pi_2$-systole. 

Fix $\alpha=\tfrac{\pi}{3}$ and take six copies of standard cylindrical blocks $M_i$, $i=1,\cdots, 6$, each isometric to $M_\alpha$. We identify facets of $M_i$ according to the relation
\[\partial_+ M_{2k} \sim \partial_+ M_{2k+1}, \quad \partial_0 M_{2k-1} \sim \partial_0 M_{2k}. \]

Here the subscripts are understood modulo $6$. The result of this gluing is a geometric realization of the `pant' $\mathcal{P}_3$ - a topological $3$-manifold that is diffeomorphic to $S^3$ with three disjoint $D^3$'s removed - equipped with a piecewise smooth Riemannian metric $g$. The singular set of $g$ is polyhedral: the codimension $1$ singular strata are minimal surfaces; at each interior point along the codimension $2$ strata of $(\mathcal{P}_3,g)$, the choice $\alpha = \tfrac{\pi}{3}$ guarantees a total cone angle of $2\pi$. Locally, each of these conditions individually is known to preserve the scalar curvature lower bounds, see, e.g. \cite{Miao2002,LM2019}. Moreover, the boundary of $(\mathcal{P}_3, g)$ are least area homotopically nontrivial $S^2$'s with area $=2\pi$. Finally, take a double of $(\mathcal{P}_3,g)$ along its boundary and we obtain a Lipschitz metric (which we still denote by $g$) on $(S^2\times S^1)\# (S^2\times S^1)$ with $\pi_2$-systole equal to $2\pi$. Our goal is thus to mollify $g$ to a smooth metric while preserving the same scalar curvature lower bound and leaving the $\pi_2$-systole almost unchanged. A similar construction with copies of $M_\alpha$ with $\alpha = \tfrac{\pi}{p}$ leads to a piecewise smooth, Lipschitz metric on $L(p,q)\setminus D^3$ with large $\pi_2$-systole, which we aim to smoothen. These constructions are described in detail in Section \ref{subsection: construction of S2xS1 and lens spaces}.

This type of local-to-global constructions involving the gluing/smoothing of polyhedral singularities have appeared multiple times in the PSC literature. For example, in his foundational paper \cite{Gromov2014Dirac}, Gromov proposed the well-known dihedral rigidity conjecture and sketched a proof for the case of cubes with an analogous gluing/smoothing idea (for a detailed discussion, see e.g. \cite[Introduction]{LM2019}). Similar strategies have also appeared in Gromov's more recent works, e.g. \cite{Gro18,Gro23,Gr2024}. However, such smoothing procedure has not been rigorously proved.

Previously, people have been able to carry out the smoothing for hypersurface singularities (see, e.g. \cite{Miao2002,BMN2011,BarHanke} and \cite{CarlottoLi2024}), as well as codimension two edge-cone type singularities (e.g. \cite{LM2019}). It is noteworthy that smoothing general singularities of codimensions at least three cannot always be carried out, as illustrated in the recent work of Cecchini-Frenck-Zeidler \cite{CFZ2025}.

The second main result of this paper is a general Coxeter gluing and smoothing theorem. We here give a quick and slightly informal statement of the theorem and delay the precise definitions and statements till Section \ref{section: Coxeter gluing and smoothing}. 

\begin{theorem}[Theorem \ref{thm: coxeter gluing smoothing}]\label{thm: coxeter gluing smoothing - intro}
    Let $n\ge 3$, $R_0\in\R$, and
\[
    X
    =
    \left(\bigsqcup_{\lambda\in\Lambda} M_\lambda\right)\big/\sim
\]
be a closed topological \(n\)-manifold, obtained
from finitely many compact Riemannian Coxeter building blocks
$(M_\lambda,g_\lambda)$ by identifying pairs of boundary facets. Assume:
\begin{enumerate}
    
    \item The gluing is Coxeter-compatible.
    \item The induced metrics are isometric on every pair of identified facets.  

    \item Each chamber has scalar curvature bounded below by $R_0$:
    \[
        R_{g_\lambda}\ge R_0
        \qquad\text{on }\Int M_\lambda .
    \]

    \item Along every identified facet \(F_{\lambda}\sim F_{\mu}\), the
    mean curvature jump is nonnegative:
    \[
        H_{\lambda}+H_{\mu}\ge 0.
    \]
\end{enumerate}

Then $X$ carries a smooth structure, compatible with the smooth structures
on the interiors of the chambers. The piecewise metrics  $\{g_\lambda\}$ patch together to a globally defined Lipschitz metric $g$ on $X$. For every $\varepsilon>0$, there exists a smooth Riemannian metric
    $g_\varepsilon$ on $X$ such that
    \[
        R_{g_\varepsilon}\ge R_0-\varepsilon,
    \]
    and
    \[
        g_\varepsilon\longrightarrow  g
    \]
    uniformly on $X$ and smoothly on compact subsets of the regular part of $g$.  In particular, \(g_\varepsilon\) may be chosen so that
    \[
        (1-\varepsilon) g\le g_\varepsilon\le (1+\varepsilon) g
    \]
    as quadratic forms.

\end{theorem}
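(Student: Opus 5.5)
The plan is to localize and reduce everything to model situations that are already understood: smoothing across a hypersurface with a mean curvature jump, and smoothing near codimension-two edges. The key point is that the Coxeter condition makes every corner stratum locally a quotient of a smooth situation by a finite reflection group.

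\textbf{Step 1: smooth structure and Lipschitz metric.} Near a point $x$ of a codimension-$k$ corner stratum, Coxeter compatibility says the chambers meeting at $x$ are arranged like the fundamental chambers of a finite Coxeter group $W$ acting on $\R^n$ by linear reflections. The dihedral angles are $\pi/m_{ij}$, and the number of chambers around each codimension-two face is $2m_{ij}$. I would build charts by first taking each chamber's corner chart into a Weyl chamber $C\subset\R^n$, straightened by an exponential-type map adapted to the facets. The resulting charts on the union of chambers then patch to a homeomorphism onto a neighborhood of $0\in\R^n$. Transition maps are smooth on chamber interiors and bi-Lipschitz overall, and one checks they can be chosen smooth, for example by using normal coordinates to the strata. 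Condition (2) together with these charts makes $g$ continuous, hence a Lipschitz metric.

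\textbf{Step 2: reduction to facet smoothing.} Away from codimension $\ge2$ strata, the singular set is a union of hypersurfaces with $H_\lambda+H_\mu\ge0$. There I would apply the Miao / Brendle–Marques–Neves type smoothing. Precisely, first perform a conformal or normal-direction deformation making the mean curvature jump strictly positive at cost $\varepsilon$ in scalar curvature. Then mollify in the normal coordinate, gaining a large positive term in $R$ that absorbs the error terms. The near-boundary estimates must be uniform as one approaches the edges.

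\textbf{Step 3: corners, the main obstacle.} Near a codimension-$k$ stratum I would use the $W$-symmetric structure. Locally unfold: pull back to a $W$-equivariant picture on a neighborhood in $\R^n$, on which $g$ is $W$-invariant and smooth on each chamber. Along walls it has only a nonnegative mean curvature jump, with no angle defect, because the angles are exactly $\pi/m$. I would proceed by induction on codimension, starting from the deepest strata. Take a $W$-equivariant mollification $g*\rho_\delta$ using a $W$-invariant kernel in a chart where the strata are linear. Since $W$ acts linearly, the mollified metric remains invariant and smooth. The scalar curvature then splits as follows.
\begin{itemize}
\item The terms from smooth parts converge.
\item The wall terms from the normal derivative jumps give $(H_\lambda+H_\mu)\delta^{-1}\times(\text{bump})\ge0$ at leading order, as in Miao.
\item The remaining quadratic terms in first derivatives are bounded and supported in a $\delta$-neighborhood, so they contribute $O(1)$ rather than $O(\delta^{-1})$.
\end{itemize}
The difficulty is that near a corner, a bounded $O(1)$ error on a set of volume $O(\delta^k)$ is not automatically compensated. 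I would handle this by first bending the metric near each wall so that the jump is strictly positive, $\ge c>0$, via a $W$-equivariant conformal factor $e^{2u}$ with $u$ having a positive kink along walls. This makes the good $\delta^{-1}$ term dominate pointwise wherever the mollifier sees a wall, including near corners, because every point within $\delta$ of a corner is within $\delta$ of some wall, with uniform constants.

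Finally, a small rescaling absorbs the errors to get $R\ge R_0-\varepsilon$ globally, and $C^0$-closeness gives $(1-\varepsilon)g\le g_\varepsilon\le(1+\varepsilon)g$.
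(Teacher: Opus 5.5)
\textbf{The gap is in Step 1,} at the sentence ``Condition (2) together with these charts makes $g$ continuous.'' Condition (2) only matches the tangential blocks of $g_\lambda$ and $g_\mu$ on an identified facet.

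In your $W$-equivariant chart, the metric on the chamber across a wall $P$ is $r^*g$, where $r$ is the reflection in $P$. Since $r$ fixes $TP$ and sends the Euclidean normal $e$ of $P$ to $-e$, you get $(r^*g)(e,T)=-g(e,T)$ for $T\in TP$. So the glued metric is continuous across $P$ only if $g(e,T)=0$ along $P$. In other words, the chamber chart must make the Euclidean normal of \emph{every} wall $g$-orthogonal to that wall. This must hold simultaneously for all walls through a corner stratum, compatibly with the facet charts and across the different blocks.

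``Normal coordinates to the strata'' or an ``exponential-type straightening'' do not give this. If it fails, mollifying a metric that jumps across $P$ produces scalar curvature of order $\delta^{-2}$ with no sign, and Steps 2--3 have nothing to work with.

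Producing such charts is the main technical content of the paper's proof. Each $g_\lambda$ is replaced by $\Phi_\lambda^*g_\lambda$, where $\Phi_\lambda$ is a $C^{1,1}$ self-diffeomorphism, smooth away from $\partial_{\mathrm{cor}}M_\lambda$, with $\Phi_\lambda=\mathrm{Id}$ and $d\Phi_\lambda=\mathrm{Id}$ on the corner locus. It is built by induction on codimension plus a $C^{1,1}$ Whitney extension, so that in straightened charts the $g$-unit normal of each facet equals the Euclidean one (normal compatibility). These charts are then extended by the Coxeter action. Smoothness of the transitions between charts adapted to different strata, which you only assert, is checked through the induced coordinate maps.

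The loss of regularity is genuine. Under the paper's normalization, a chart that is $C^2$ up to a right-angled edge would force $\mathrm{I\!I}_{F}(\nu,\nu)=0$ along the edge for both facets $F$, where $\nu\in TF$ is the unit normal to the edge. This fails for $\partial_0M_\alpha$ along $\partial_0M_\alpha\cap\partial_-M_\alpha$ in the standard block, where the profile curve has curvature $\pm\cot\alpha$. So the chamber metrics are in general only Lipschitz up to the corner strata, and your premise in Step 3 that $g$ is smooth on each chamber is not available either.

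\textbf{For the smoothing step the paper proceeds differently.} Rather than mollifying by hand, it proves that the glued Lipschitz metric has $R\ge R_0$ in the Lee--LeFloch distributional sense; normal compatibility reduces the corner boundary terms over $\partial X_\varepsilon$ to $\int 2(H_i+H_j)\,u\ge 0$ on the facets. It then invokes the Ricci-flow regularization theorem of Lee--Liu.

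Your conformal-kink-plus-mollification scheme is a reasonable self-contained substitute once continuity is secured. Locally a $W$-invariant kink exists, e.g.\ $u=-a\max_{w\in W}\langle y,wy_0\rangle$ with $y_0\in\Int C$. Two points need fixing, though.
\begin{itemize}
\item The reason you give for the kink is misdirected. A uniformly bounded negative error supported on a set of small volume is exactly what a Miao-type conformal correction absorbs. At codimension-$\ge 2$ strata a Lipschitz metric contributes nothing beyond the wall terms. What you actually need is the uniform bound $R(g*\rho_\delta)\ge -C$.
\item Because the chamber metrics are only Lipschitz near corners, ``the terms from smooth parts converge'' must be replaced by a Friedrichs-type commutator estimate for $g^{-1}g^{-1}\partial^2 g$. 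This uses that the contraction appearing in $R$, not $\partial^2 g$ itself, is bounded on each chamber.
\end{itemize}
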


Theorem~\ref{thm: coxeter gluing smoothing - intro} gives, to our
knowledge, the first general rigorous formulation and proof of the Coxeter-polyhedral smoothing principle for scalar curvature lower bounds.  This principle has long guided constructions in scalar curvature geometry: if the scalar curvature is bounded below on each chamber, the induced metrics match along the facets, the mean-curvature jumps across codimension-one strata are nonnegative, and the higher-codimensional strata are modeled on Coxeter
tessellations, then the resulting polyhedral metric should be smoothable with arbitrarily small loss of scalar curvature.

\subsection{PSC metrics with large systole on $3$-manifolds}

In the third part of this paper, we use the standard cylindrical blocks and the Coxeter gluing theorem to construct Riemannian cylinders with large systole. These gives the first examples of a large class of PSC $3$-manifolds with nontrivial lower bounds of the $\pi_2$-systole. 

\begin{theorem}\label{thm: examples of PSC 3 manifolds}
    Fix $R_0>0$.
    \begin{enumerate}
        \item Let $M^3$ be a $3$-manifold diffeomorphic to $S^3$ with three disjoint open $3$-balls removed. Then there exists a Riemannian metric $g$ on $M^3$ such that the boundary components are area-minimizing spheres, each of which realizes the $\pi_2$-systole of $(M,g)$, and
\begin{align*}
(\inf R_g)\cdot\sys_2(M,\partial M,g)>4\pi.
\end{align*}
        \item Let $M^3$ be a $3$-manifold  diffeomorphic to a lens space $L(p,q)$ with a solid $D^3$ removed $((p,q) = 1, p\ge 2)$. Then there exists a Riemannian metric $g$ on $M^3$ with area-minimizing boundary sphere, such that
        \begin{itemize}
            \item $(\inf R_g)\cdot\sys_2(M,\partial M,g) = 8\pi$, when $p=2$;
            \item $(\inf R_g)\cdot\sys_2(M,\partial M,g)>8\pi(1-\cos\frac\pi p)$, when $p\ge 3$.
        \end{itemize}
        \begin{align*}
            (\inf R_g)\cdot\sys_2(M,\partial M,g)>8\pi(1-\cos\frac\pi p).
        \end{align*}
        \item Let $M^3$ be a closed $3$-manifold such that
\begin{align*}
    M^3\cong \#_{i=1}^k (S^2\times S^1).
\end{align*}
Then 
\begin{itemize}
    \item $\fs(M)=8\pi$ when $k=1$;
    \item $\fs(M)>4\pi$ when $k\ge 2$.
\end{itemize}
        \item Let $M^3$ be a closed $3$-manifold such that
        \begin{align*}
            M\cong (\#_{i=1}^kS^2\times S^1)\#(\#_{j=1}^l L(p_j,q_j)) \qquad (l\ge 1, (p_j,q_j) = 1,p_j\ge 2).
        \end{align*}
         Assume that $\pi_2(M)\ne 0$, which holds if and only if $k\ge 1$ or $k=0, l\ge 2$. Then 
    \begin{itemize}
        \item $\fs(M)=8\pi$ when $M^3\cong RP^3\# RP^3$;
        \item Otherwise, $\fs(M)>\min \{4\pi, \min_j\{8\pi(1-\cos\frac{\pi}{p_j})\}\}$.
    \end{itemize}
    \end{enumerate}
\end{theorem}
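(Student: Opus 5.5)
We build explicit piecewise smooth metrics from standard cylindrical blocks $M_\alpha$ (Example \ref{example: standard cylinder block - intro}) with $R=2$. We then apply Theorem \ref{thm: coxeter gluing smoothing - intro} to smooth them, and finally argue that the systole is preserved up to $\varepsilon$. The strict inequalities in the statement leave room for two small losses: the smoothing error, and a deliberate perturbation of the blocks that makes the facets strictly mean convex, so that the gluing jump is strictly positive.

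\textbf{Step 1: the pieces.} For (1) we use the gluing described in Section \ref{subsection: coxeter smoothing - intro}: six copies of $M_{\pi/3}$ produce $\mathcal P_3$. At each codimension-two stratum the dihedral angles sum to $2\pi$, and the boundary spheres are the glued $\partial_-$ faces, each of area $2\cdot 2\pi(1-\cos\frac{\pi}{2})=4\pi$ once paired.

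For (2) we take $2p$ copies of $M_{\pi/p}$ arranged cyclically around the axis, with $\partial_+$ faces glued alternately. The cone angle is then $2p\cdot\frac{\pi}{p}=2\pi$, and the lateral faces are identified with the twist $q$, which yields $L(p,q)\setminus D^3$. The boundary is the union of the $\partial_-$ faces, and a natural competitor disk has area $2\cdot 2\pi(1-\cos\frac{\pi}{p})$. For $p=2$ we instead use the round/$\R P^3$-type model directly, where the value $8\pi$ is attained.

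To get a strict inequality, we perturb each $M_\alpha$ slightly, for instance by a conformal or warped deformation near $\partial_0$. The aim is that $\partial_0$ and $\partial_+$ become strictly mean convex, while $R>2-\delta$ and the angles are preserved. The facet areas may drop only by $\delta$, and we must keep the lower bound on the relative systole, which is a calibration-type margin.

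\textbf{Step 2: smoothing.} We smooth the glued pieces via Theorem \ref{thm: coxeter gluing smoothing - intro}. It is stated for closed $X$, so we first double along the boundary; this is exactly what produces (3) and (4). Coxeter compatibility follows because every dihedral angle is $\pi/m$ and the facets meet as in the reflection group tessellation of $S^2\times\R$. The mean curvature jumps are $\ge 0$ because the facets are minimal or totally geodesic.

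After smoothing, $g_\varepsilon$ is $(1\pm\varepsilon)$-bi-Lipschitz to $g$, so areas of all maps change by a factor of at most $(1+\varepsilon)^2$. Since $R_{g_\varepsilon}\ge 2-\varepsilon$, the product $\inf R\cdot\sys_2$ changes by $O(\varepsilon)$, and the strict margin from Step 1 absorbs this. For (3) with $k=1$ and for $\R P^3\#\R P^3$, we use the product $S^2\times S^1$ or its quotient $S^2\times\R/\langle\cdot\rangle$, which gives $8\pi$ by \eqref{eq:BBN} rigidity. For general connected sums we glue copies of $\mathcal P_3$ and of the lens pieces along their boundary spheres in a tree pattern.

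\textbf{Step 3: lower bound on the systole of the glued space (main obstacle).} The key claim is that every homotopically nontrivial sphere, or relative disk, in the glued piecewise metric has area at least the claimed constant.

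The first approach we would try is a folding or projection argument. The reflection (Coxeter) structure gives a $1$-Lipschitz retraction of the universal cover onto the tessellation of a model. In each $M_\alpha$, the totally geodesic unit speed foliation by hemisphere slices gives a distance-nonincreasing projection onto a slice. Using this, a nontrivial sphere must meet each essential separating facet family, and its projection covers the hemisphere slices with nonzero degree, so its area is at least that of two caps.

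Making this rigorous requires a degree argument across the glued chambers and handling spheres that pass through the codimension-two strata. We expect this to be the hardest step, particularly for the lens case, where the twist complicates the degree count. If it fails, a fallback is to show that the least-area minimizer in each homotopy class can be isotoped to avoid the singular set and then to compare it with the leaves using the maximum principle.
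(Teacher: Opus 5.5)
The real content of (1), (2) for $p\ge 3$, (3) for $k\ge 2$ and (4) is the \emph{strict} inequality, and your construction cannot produce it. With $R=2$, the face $\partial_-M_\alpha$ is the cap of radius $\alpha$, not $\frac{\pi}{2}$. So for $\alpha=\frac{\pi}{3}$ each boundary sphere of $\mathcal P_3$ has area $2\cdot 2\pi(1-\cos\frac{\pi}{3})=2\pi$, not $4\pi$, and $\inf R\cdot\sys_2=4\pi$ exactly. Likewise $M_{\pi/p}$ gives exactly $8\pi(1-\cos\frac{\pi}{p})$ for the lens piece. These are precisely the thresholds in the statement, so there is no margin to absorb losses. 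Theorem \ref{thm: coxeter gluing smoothing} only yields $R_{g_\varepsilon}\ge 2-\varepsilon$ and $g_\varepsilon\ge(1-\varepsilon)\hat g$, hence $\inf R\cdot\sys_2\ge 4\pi-O(\varepsilon)$, which gives $\fs\ge 4\pi$ at best, not $\fs>4\pi$. Your perturbation to strictly mean convex facets, with $R$ and the facet areas dropping by $\delta$, can only lose more. A nonnegative jump is all the smoothing theorem needs, and a positive jump does not raise $R\equiv 2$ inside the chambers.

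What is missing is a block that strictly beats the standard one before gluing. The paper takes it from the non-rigidity construction, Example \ref{example: perturbation counterexample}. The doubly warped metric $\cosh^2(\lambda r)\,ds^2+dr^2+k^{-2}\sin^2(kr)\,d\theta^2$ of Lemma \ref{lem: A2} keeps $R\ge 2$, totally geodesic $\partial_\pm M$, minimal $\partial_0M$, and angles $\alpha,\frac{\pi}{2}$. But it has $|\partial_-M|=2\pi\int_0^a u>2\pi(1-\cos\alpha)$ for every $\alpha<\alpha_0\approx 0.35\pi$, which covers $\alpha=\frac{\pi}{3}$ and $\alpha=\frac{\pi}{p}$ for $p\ge 3$. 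This strict gain is what survives the $O(\varepsilon)$ smoothing loss.

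There are also several secondary issues.
\begin{itemize}
\item In (4) you cannot glue $\mathcal P_3$-type pieces and lens pieces (or lens pieces with different $p_j$) directly along their boundary spheres. These spheres are doubles of caps of different radii, so the isometric-facet hypothesis of Theorem \ref{thm: coxeter gluing smoothing} fails. The paper inserts modified blocks $M^{\mathrm{mod}}_\alpha$, which have a product collar $S^2_{\alpha+\epsilon}\times[0,\frac12)$ near $\partial_-$. It also uses connecting blocks $M^{\mathrm{con}}_{\alpha,\beta}$, cut from $M_\beta$ with relative systole $\ge 2\pi(1-\cos\beta)$, to interpolate between the caps.
\item For $L(p,q)\setminus D^3$ the paper uses two blocks, glued along $\partial_+$, with $\partial_0M^1$ identified to $\partial_0M^0$ by a reflection composed with rotation by $\frac{2\pi q}{p}$. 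The $2p$ chambers appear only locally around the singular circle. Gluing $2p$ blocks cyclically gives $S^3$ minus $p$ balls, which is the universal cover.
\item For $p=2$ the model is $S^2\times[-1,1]$ modulo $(x,t)\mapsto(-x,-t)$, not a round metric.
\item Your Step 3 folding-and-projection idea is sound, and the paper does not spell this step out. However, it cannot work as a degree argument, since a closed sphere mapped into the hemisphere has degree zero. Instead, over each point of the $\partial_-$ cap, the $s$-fibres of the blocks concatenate into arcs joining distinct boundary spheres (in the universal cover for the lens case). A nonzero class in $H_2$ has nonzero algebraic intersection with at least two of these arcs, since the intersection numbers are differences of the coefficients of the boundary spheres. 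This gives two preimages over each cap point, so the area is at least that of two caps.
\end{itemize}
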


The constant $4\pi$ in Theorem \ref{thm: examples of PSC 3 manifolds} could be compared with $8\pi$ by Bray-Brendle-Neves \cite{BBN10} and $24\pi \cdot\frac{2-\sqrt 2}{4-\sqrt 2}\approx 5.44 \pi$ by Xu \cite{Xu25}. We also note here that our construction gives much better $\pi_2$-systole bounds compared to the naive surgery construction (see Section \ref{section:comparison with surgery}). For example, on a lens space $L(p,q)\setminus D^3$, Theorem \ref{thm: examples of PSC 3 manifolds} gives a metric with $\pi_2$ systole $=O(p^{-2})$, while the naive Schoen-Yau/Gromov-Lawson surgery construction performed on the round metric only achieves the bound $O(p^{-6})$. We prove this bound in Section \ref{section:comparison with surgery} using the inverse mean curvature flow.

\subsection{Rigidity and non-rigidity phenomena}
It would be interesting to understand whether the standard cylindrical block $M_\alpha$ in Example \ref{example: standard cylinder block - intro} is rigid for the relative $\pi_2$-systole among all capillary prisms with angle at most $\alpha$, and consequently, whether the constants in Theorem \ref{thm: examples of PSC 3 manifolds} are optimal. As we will see in Section \ref{section: construction}, the answer to both questions are negative even among local deformations.

\begin{example}[Example \ref{example: perturbation counterexample}]
    There exists $\alpha_0 \approx 0.35 \pi$, such that for each $\alpha\in (0,\alpha_0)$, the standard cylindrical block $(M_\alpha,g)$ can be perturbed into $(M_\alpha, \tilde g)$ such that
    \[\sys_2(M_\alpha, \partial_0 M_\alpha,\tilde g)\cdot \inf_M R_{\tilde g}> 4\pi (1-\cos\alpha). \]
\end{example}

On the other hand, we have the following rigidity result for $M_\alpha$, under an additional curvature pinching assumption.

\begin{theorem}\label{thm: 2 systole estimate, rigidity}
     Let $(M^3,g)$ be an energy-essential capillary prism with scalar curvature $R_g\ge R_0>0$, sectional curvature $\sec\le \frac{R_0}{2}$, and angle at most $\alpha\in (0,\frac{\pi}{2}]$. Assume that $(M,g)$ contains no stable free boundary minimal surfaces, nor stable capillary minimal surfaces with capillary angle $\alpha$ whose boundary lies in $\partial_0 M$ in the interior. Then
    \begin{align*}
        \sys_2(M,\partial_0 M,g)\le \frac{4\pi(1-\cos\alpha)}{R_0}.
    \end{align*}
    Moreover, if the equality holds, then $(M,g)$ is isometric to a standard cylindrical block of upper angle $\alpha$.
\end{theorem}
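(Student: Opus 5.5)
We run the same capillary profile argument as for Theorem \ref{thm: 2 systole estimate}, using the pinching $\sec\le R_0/2$ in place of the crude relative isoperimetric inequality. The model $M_\alpha$ has $2\pi(1-\cos\alpha)$ as its area, normalized so that $R_0=2$ gives bound $4\pi(1-\cos\alpha)/R_0$.

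\textbf{Step 1: the foliation.} For $\beta\in(\alpha,\tfrac{\pi}{2})$ let $\Omega_\beta$ minimize $\cA_\beta$, with capillary surface $\Sigma_\beta$. The hypotheses exclude stable free boundary minimal surfaces (angle $\tfrac{\pi}{2}$) and stable angle-$\alpha$ capillary surfaces in the interior. Together with the maximum principle, this lets us continue the family $\Sigma_\beta$ to both endpoints. As $\beta\to\tfrac{\pi}{2}$ the family degenerates to $\partial_-M$ (or a free boundary minimizer, which must then be $\partial_-M$), and as $\beta\to\alpha$ it degenerates to $\partial_+M$. Energy-essential gives that every $\Sigma_\beta$ is a disk. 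The systole is bounded by the area of the free-boundary end, i.e. by $|\Sigma_{\pi/2}|\le|\partial_-M|$.

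\textbf{Step 2: sharper local comparison.} On each $\Sigma_\beta$, capillary stability with test function $1$ and Gauss--Bonnet give
\[\int_{\Sigma}\tfrac12 R_0 + \tfrac12|A|^2 \le 2\pi - \int_{\partial\Sigma}\Big(\kappa+\frac{\text{boundary terms}}{\sin\beta}\Big).\]
To compare with $M_\alpha$, where the leaves are totally geodesic hemisphere caps of $S^2\times\R$, I would use the Gauss equation $K_\Sigma=\sec(T\Sigma)+\det A\le R_0/2+\det A$. This controls the Gaussian curvature of the leaves from above, so by a Bishop/Toponogov-type (Lévy–Gromov on the disk with geodesic curvature boundary) comparison, $|\partial\Sigma|$ is bounded below by the spherical-cap isoperimetric profile of curvature $R_0/2$. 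This replaces $|\partial\Sigma|\ge R_0|\Sigma|/\sqrt6$ by the sharp spherical relation $L^2\ge A(4\pi - \tfrac{R_0}{2}A)$. Feeding this into the viscosity ODE of Proposition \ref{prop: differential inequality for I} yields a differential inequality whose equality solution is exactly the profile of $M_\alpha$. A corresponding monotone quantity (the analogue of $m(s)$, now built from the spherical profile) then compares $I$ at $\beta=\tfrac{\pi}{2}$ with the model given the angle constraint $\alpha$ at the top. This should yield $I\le 2\pi(1-\cos\alpha)\cdot 2/R_0$.

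\textbf{Step 3: rigidity.} Equality forces equality in every step. Every leaf is totally geodesic with $K=R_0/2$ and $\Ric(\nu,\nu)=0$, and the boundary geodesic curvature and angle are constant. The lapse of the foliation is constant, so $M$ is locally a product of a round cap region with an interval, foliated at unit speed. The lateral boundary is then determined by the angle ODE, which identifies $M$ with $M_\alpha$.

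\textbf{Main obstacle.} The hardest step is Step 2: turning the upper bound $\sec\le R_0/2$ into a sharp isoperimetric lower bound for $|\partial\Sigma|$ on capillary disks that are not geodesically convex. A further difficulty is that the pinching controls $K_\Sigma$ only up to the $\det A$ term, which I would absorb using $|A|^2\ge 2\det A$ from the stability inequality. If this fails, I would fall back to comparing the full profile via the angle along $\partial_0M$ directly, integrating $\frac{d}{ds}\cos\beta$ along the foliation.
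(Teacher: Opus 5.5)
Your inequality argument is the paper's. It uses the viscosity profile $I$ of Proposition \ref{prop: differential inequality for I} with $\chi(\Sigma)=1$, the sharp bound $|\partial\Sigma|^2\ge 4\pi|\Sigma|-\frac{R_0}{2}|\Sigma|^2$, and the monotone quantity $I(\frac{8\pi}{R_0}-I)(1-\bar I'^2)$, evaluated as $\theta\to\frac{\pi}{2}$ and as $\theta\to\alpha$. At the top end you only need $x(\frac{8\pi}{R_0}-x)\le\frac{16\pi^2}{R_0^2}$. You also need $R_0|\partial_-M|\le 4\pi$ from stability to discard the root $4\pi(1+\cos\alpha)/R_0$; your sketch omits this.

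Two corrections to Step 2. First, L\'evy--Gromov and Bishop/Toponogov comparison require \emph{lower} curvature bounds, which the leaves do not have. What you need is the Bol--Fiala/Alexandrov inequality for Riemannian disks with $K\le K_0$ (Lemma \ref{lem: isoperimetric inequality assuming sectional curvature upper bound}), and it needs no convexity. Second, the $\det A$ term is not an obstacle. The leaves are capillary \emph{minimal}, so $\det A=-\frac12|A|^2\le 0$ and $K_\Sigma\le\sec\le\frac{R_0}{2}$ pointwise. Absorbing it via the stability inequality would not work anyway: stability gives only integral information, while the isoperimetric inequality needs a pointwise curvature bound.

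The genuine gap is Step 3. Equality tells you only about each minimizer $\Sigma_t\in\mathcal{E}_t$ separately: $s_t^-=s_t^+$, $\Sigma_t$ is a totally geodesic round cap of curvature $\frac{R_0}{2}$, $\bar H=0$ on $\partial\Sigma_t$, and $I=I_0$. It does not say these minimizers form a foliation, let alone a smooth one with constant lapse. Your Step 1 ``continuation'' does not supply this either; in general minimizers jump, which is why the profile is needed at all. The paper needs four further arguments.
\begin{enumerate}
\item Disjointness and uniqueness. Two minimizers that meet must cross transversally on $\partial_0M$. Since they are totally geodesic, $\nabla_T\bar N=(k\sin\theta)T$ with $k>0$, so both boundary tangents are principal directions of $\partial_0M$ with positive curvature. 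This makes the second fundamental form of $\partial_0M$ positive definite at the crossing, contradicting $\bar H=0$. So distinct minimizers are disjoint and each $\mathcal{E}_t$ is a singleton; filling then follows from $s_t^-=s_t^+$ for all $t$.
\item Smoothness. The linearized capillary operator reduces to $(-\Delta f,\ \sin\theta(\partial_\nu f+kf))$, which is invertible because $k>0$. The implicit function theorem then gives a smooth foliation.
\item Constant lapse. This does not follow from equality. One shows $\Delta v=0$ and that $\partial_\nu v+kv$ is constant on $\partial\Sigma$, so $\int_\Sigma|\nabla(v-\bar v)|^2=-k\int_{\partial\Sigma}(v-\bar v)^2$ forces $v$ to be constant. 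Only then does the Bray--Brendle--Neves argument give a parallel normal and a local product.
\item Common axis. After embedding into $S^2\times\mathbb{R}$, the caps $\Sigma_z$ could a priori have moving centers $a(z)$, so there is no ``angle ODE'' yet. The paper derives $a'(z)=0$ from constancy of the contact angle along each $\partial\Sigma_z$, and only then reduces to the ODE of Section \ref{section: construction}.
\end{enumerate}
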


It would be very interesting to investigate the optimal constant \[\fs_\alpha := \sup \{\sys_2(M_\alpha, \partial M_\alpha, g)\cdot \inf R_g : (M_\alpha,g) \text{ is a capillary prism with angle at most } \alpha\}\] 
in the $\pi_2$-systole estimate. Comparing the Theorem \ref{thm: 2 systole estimate} and Example \ref{example: standard cylinder block - intro}, we see that, assuming an almost optimal shape is energy-essential,
\[ \alpha^2\lesssim \fs_\alpha \lesssim |\log \alpha|^{-1}, \qquad \alpha \to 0. \]

We believe that the discrepancy between the proved bound $O(|\log \alpha|^{-1})$ and the construction with $O(\alpha^2)$ reflects the highly non-rigid behavior of PSC geometry: often times the optimal shape is not formed by a unit-speed foliation by totally geodesic leaves. Previously, such phenomena have been illustrated by the counterexample of the Min-Oo conjecture \cite{BMN2011}, and consequently the failure of the positive mass theorem modeled on the Schwarzschild de Sitter space.

\subsection{Building blocks for other spherical space forms}

A natural question following Theorem \ref{thm: examples of PSC 3 manifolds} is to construct PSC metrics with large $\pi_2$-systole on $3$-manifolds containing other spherical space forms. From \cite[Section 12.2]{Martelli2025GeometricTopology}, we know that every $3$-dimensional spherical space form is a finite cyclic quotient of a \textit{single-action}, or \textit{homogeneous}, spherical space form. Equivalently, every spherical space form is regularly cyclically covered by one of the homogeneous spherical space forms $S^3/G$, where $G$ is cyclic, binary dihedral ($D^*_{4m}$), binary tetrahedral ($T^*_{24}$), binary octahedral ($O^*_{48}$), or binary icosahedral ($I^*_{120}$). The corresponding spherical space forms are the homogeneous lens spaces, the prism manifolds, the binary tetrahedral space (also known as quaternionic space), the binary octahedral space, and the binary icosahedral space (also known as the dodecahedral space or the Poincar\'e homology sphere). 

In Section \ref{subsection: prism manifolds} and \ref{subsection: 3 special spaces}, we propose capillary Coxeter building blocks for $(S^3/G) \setminus D^3$, $G$ as above. A key difficulty here is that when $G = D^*_{4m}, T^*_{24}, O^*_{48}$ or $I^*_{120}$, the Coxeter rank of $G$ is $3$ (in comparison, the Coxeter rank of a finite cyclic group is $2$). This means that the Coxeter building blocks for the corresponding space forms must contain three faces meeting transversely at a common point. It is thus extremely challenging to find a simple and elegant candidate analogous to Example \ref{example: standard cylinder block - intro}. We have some preliminary numerical evidences for the existence of such Riemannian polyhedra. It is an interesting future question to explicitly construct them.

\subsection{Organization of the paper}

This paper contains three main parts. In part I of the paper (Section 2 and 3), we study the free capillary energy functional and the relative isoperimetric inequality to define a new monotone quasi-local mass. We then use it to prove Theorem \ref{thm: 2 systole estimate}. Part II of the paper (Section 4) contains a completely self-contained, rigorous proof of the general Coxeter gluing and smoothing theorem for scalar curvature. This part of the paper is independent from the others, and the proof can be skipped at a first reading. In Part III (Section 5, 6, 7) of the paper, we explicitly construct the standard cylindrical blocks, use them to build PSC $3$-manifolds with large $\pi_2$-systole, and study their rigidity and non-rigidity behaviors. 

\subsection{Acknowledgment}
The authors would like to thank Otis Chodosh for helpful conversations. S.H. would like to thank Yuguang Shi for constant encouragement. S.H. was partially supported by NSFC Grant No.125B2005. C.L. was partially supported by NSF grant DMS-2202343 and a Sloan Fellowship.

\section*{Part I. The relative $2$-systole estimates in capillary prisms}

\section{Capillary minimal surfaces and the free energy profile}
\subsection{Preliminaries}

$\quad$

We start by recalling some basics of capillary surfaces. Let $(M^n,g)$ be a Riemannian cylinder. We introduce the following class of admissible surfaces:

\begin{align*}
    \mathcal{F}(M) = \{\Sigma:\, &\text{There is a Caccioppoli set }\Omega\subset M, \partial_-M\subset \Omega, \partial_+ M\cap \Omega = \emptyset\\
    &\text{such that } \Sigma = \partial \Omega\cap \mathring{M}\}.
\end{align*}

Let us introduce several notations. For $\Sigma\in\mathcal{F}(M)$, denote $\Omega(\Sigma)$ to be the Caccioppoli set associated to $\Sigma$ in the above expression of $\mathcal{F}(M)$, and let $S = S(\Sigma) = \partial\Omega\cap \partial M_0$ be the wetting region on the lateral surface. Conversely, for any Caccioppoli set $\Omega$, let $\Sigma_\Omega$ and $S_\Omega$ denote the associated capillary surface and the wetting region, respectively. Let $N$ be the outer normal of $\Sigma$ respect to $\Omega$, $\bar{N}$ be the outer normal of $\partial_0 M$ respect to $M$; Let $\nu,\bar{\nu}$ be the co-normal of $\Sigma$ and $S$ pointing outward. Let $A$ be the second fundamental form of $\Sigma$ with respect to $N$ and $\mathrm{I\!I}$ the second fundamental form of $\partial_0 M$ with respect to $\bar{N}$. Let $H = tr A$ and $\bar{H} = tr \mathrm{I\!I}$ be the mean curvature of $\Sigma$ and $\partial_0 M$. Our convention is that the mean curvature of the unit $S^{n-1}$ in $\mathbf{R}^n$ is $n-1$ with respect to the outward pointing normal. We summarize the notations used here in Figure \ref{figure: the set up}.

Let $\Sigma\in \mathcal{F}(M)$ be a smooth embedded surface. An admissible deformation means a diffeomorphism $\Psi: (-\epsilon,\epsilon)\times \Sigma\longrightarrow M$ such that $\Psi(t,\cdot): \Sigma\longrightarrow M$ is a diffeomorphism when $t\in (-\epsilon,\epsilon)$, satisfying $\Psi(0,x) \equiv x$. Denote $\Sigma(t) = \{\Psi(t,x), x\in \Sigma\}$. Let $\Omega(t) = \Omega(\Sigma(t))$ and $S(t) = S(\Sigma(t))$. Denote $Y = \frac{\partial\Psi(t,\cdot)}{\partial t}\big|_{t=0}$ to be the vector field generating $\Psi$. 

For a fixed angle $\theta\in (0,\pi)$, we consider the functional
\begin{align}\label{eq: 1st variation for capillary functional}
    \mathcal{A}_{\theta}(\Sigma) = \mathcal{H}^{n-1}(\Sigma)-\cos\theta\cdot \mathcal{H}^{n-1}(S(\Sigma)).
\end{align}
Let $f = \langle Y,N\rangle$ be the normal component of the vector field $Y$. The first variation of $\mathcal{A}_\theta$ with respect to the variation vector field $Y$ is given by
\begin{align*}
    \frac{d}{dt}\bigg |_{t=0}\mathcal{A}_\theta(\Sigma(t)) = \int_\Sigma Hfd\mathcal{H}^{n-1}+\int_{\partial\Sigma}\langle Y, \nu-(\cos\theta) \bar\nu \rangle d\mathcal{H}^{n-2}.
\end{align*}
See for instance \cite{RosSouam97capillarystability}. $\Sigma$ is said to be a capillary minimal surface if $\frac{d}{dt}\big|_{t=0}\mathcal{A}_\theta(\Sigma(t)) = 0$ for any admissible deformation. From \eqref{eq: 1st variation for capillary functional}, $\Sigma$ is a  capillary minimal surface if and only if $H\equiv 0$ and the dihedral angle between $\Sigma$ and $\partial_0 M$, which equals the angle between $\nu$ and $\bar\nu$, is everywhere equal to $\theta$.

Now we assume $\Sigma$ is minimal capillary. The second variation formula is given by
\begin{align}\label{eq: 2nd variation for capillary functional}
    \frac{d^2}{dt^2}\bigg|_{t=0}\mathcal{A}_\theta(\Sigma(t)) = \int_\Sigma |\nabla_\Sigma f|^2-(\Ric(N,N)+|A|^2)f^2d\mathcal{H}^{n-1}-\int_{\partial\Sigma} Qf^2 d\mathcal{H}^{n-2},
\end{align}
where 
\begin{align*}
    Q = \frac{1}{\sin\theta}\mathrm{I\!I}(\bar\nu,\bar\nu)+(\cot\theta) A(\nu,\nu).
\end{align*}

Next, we recall the definitions for the $\pi_2$-systole.
\begin{definition}
    Let $(M^n,g)$ be a Riemannian manifold, possibly with boundary. If $M$ has nonvanishing second homotopy group, then we define the $\pi_2$-systole for $(M,g)$ to be
    \begin{align*}
        \sys_2(M,g) = \inf\{|S^2|_{f^*g}: f: S^2\longrightarrow M\text{ is an immersion with }[f]\ne 0\text{ in }\pi_2(M)\}.
        \end{align*}
\end{definition}

We also need the following relative version of the $\pi_2$-systole.

\begin{definition}
    Let $(M^n,g)$ be a Riemannian manifold with boundary, and let $A$ be a subset of $\partial M$. Suppose that
    \begin{align*}
        [(D^2,\partial D^2),(M,A)]\ne \{0\} 
    \end{align*}
        where $[(D^2,\partial D^2),(M,A)]$ denotes the set of homotopy classes of maps of pairs from $(D^2,\partial D^2)$ to $(M,A)$, and $0$ denotes the class of constant maps.
    We define the relative $\pi_2$-systole of $(M,A,g)$ by
\begin{align*}
    \sys_2(M,A,g)
    =
    \inf\left\{
    |D^2|_{f^*g}:
    \begin{aligned}
        &f:(D^2,\partial D^2)\longrightarrow (M,A)
        \text{ is an immersion}\\
        &\text{and } [f]\ne 0
        \text{ in } [(D^2,\partial D^2),(M,A)].
    \end{aligned}
    \right\}.
\end{align*}
\end{definition}

\begin{figure}
\centering
\includegraphics[width=0.6\textwidth]{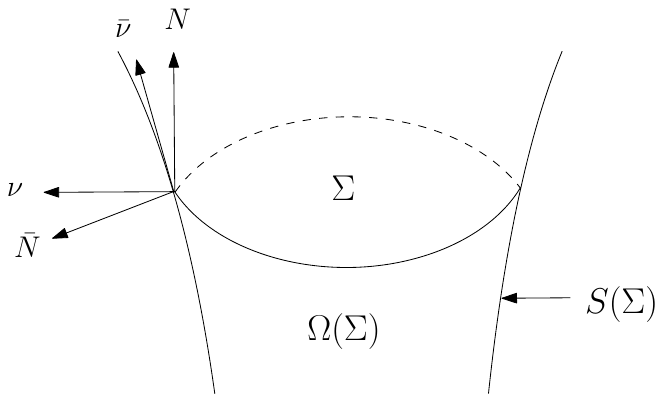}
\caption{Capillary surfaces}
\label{figure: the set up}
\end{figure}

\subsection{The energy-essential condition}
We proceed to discuss the energy-essential condition from Definition \ref{defn: energy-energy essential}. 

\begin{proposition}\label{prop: energy-essential}
    Let $(M,g)$ be a capillary prism with angle $\alpha$. Then it is energy-essential if any of the following holds.
    \begin{enumerate}
        \item $M$ admits a foliation by capillary surfaces of monotone angles from $\partial_{-}M$ to $\partial_+ M$;
        \item $\partial_0 M$ is area-minimizing.
        \item $M$ does not contain any stable capillary surface with angle $\beta\in (\alpha, \tfrac{\pi}{2})$ and boundary on $\partial_0 M$.
    \end{enumerate}
\end{proposition}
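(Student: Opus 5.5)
Throughout, fix $\beta\in[\alpha,\tfrac{\pi}{2}]$ and let $\Omega$ be an $\cA_\beta$-minimizer, which contains $\partial_-M$ and is disjoint from $\partial_+M$. Since $\partial_-M$ is connected, it lies in a single component $\Omega_0$ of $\Omega$. Arguing by contradiction, suppose there is another component $\Omega'\neq\Omega_0$. Set $\Sigma'=\partial\Omega'\cap\mathring M$ and $S'=\partial\Omega'\cap\partial_0M$. Then $\Omega'$ is disjoint from $\partial_\pm M$, and $\Sigma'$ has its boundary on $\partial_0M$ (away from the corners). By the regularity theory for capillary minimizers in dimension $3$, $\Sigma'$ is a smooth capillary minimal surface meeting $\partial_0M$ at angle $\beta$.

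The basic comparison is to delete $\Omega'$: the set $\Omega\setminus\Omega'$ is still admissible. Minimality gives $\cA_\beta(\Omega)\le\cA_\beta(\Omega\setminus\Omega')$, that is,
\[
|\Sigma'|\le \cos\beta\,|S'|.
\]
For $\beta=\tfrac{\pi}{2}$ this forces $|\Sigma'|=0$, which is impossible because $\Omega'$ is a nonempty open set with $\partial\Omega'\cap\mathring M\neq\emptyset$. So we may assume $\beta<\tfrac{\pi}{2}$, and then $|\Sigma'|<|S'|$ unless $|S'|=0$. The three conditions are treated separately.

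Case (2). The surfaces $\Sigma'$ and $S'$ have the same boundary and together bound $\Omega'$, so they are relatively homologous. Area-minimality of $\partial_0M$ therefore gives $|S'|\le|\Sigma'|\le\cos\beta\,|S'|$. Since $\cos\beta<1$, this forces $|S'|=0$. Then $\Sigma'$ would be a closed minimal surface bounding $\Omega'$, and the deletion inequality gives $|\Sigma'|\le0$, a contradiction.

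Case (3). Because $\Omega$ minimizes $\cA_\beta$, each component of $\Sigma'$ is in particular a stable capillary minimal surface with angle $\beta$ and boundary on $\partial_0M$: any admissible variation supported near $\Sigma'$ is a competitor for $\Omega$. For $\beta\in(\alpha,\tfrac{\pi}{2})$ this contradicts the hypothesis. The endpoint $\beta=\tfrac{\pi}{2}$ was already handled. For $\beta=\alpha$, I would argue by approximation: take minimizers for $\beta_k\downarrow\alpha$ and use compactness together with the openness of connectedness for smooth limits. Alternatively, one can read the hypothesis as including $\beta=\alpha$. I expect this endpoint to be a minor technical point.

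Case (1). Let $\{\Sigma_t\}$ be the capillary foliation, with angles $\theta(t)$ monotone from $\partial_-M$ to $\partial_+M$. Let $t_{\max}$ and $t_{\min}$ be the extreme leaf parameters reached by $\overline{\Omega'}$. Then $\Omega'$ lies on one side of $\Sigma_{t_{\max}}$ and touches it, and lies on the other side of $\Sigma_{t_{\min}}$ and touches it, either at an interior point or at a point of $\partial_0M$. At an interior touching point, the strong maximum principle for minimal surfaces applies. At a boundary touching point, the Hopf-type boundary maximum principle for capillary surfaces compares the contact angles. With the orientations fixed, this yields $\theta(t_{\max})\ge\beta$ on one side and $\theta(t_{\min})\le\beta$ on the other, or the reverse, with strict inequality unless $\Sigma'$ coincides with the leaf.

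At $t_{\min}$ the outward normal of $\Omega'$ is opposite to the leaf's normal, whereas at $t_{\max}$ it agrees with it. Because of this flip, the two inequalities point in directions incompatible with the monotonicity of $\theta$ unless $\theta$ is constant on $[t_{\min},t_{\max}]$. In the equality case $\Sigma'$ would contain a whole leaf, which separates $\partial_-M$ from $\partial_+M$ and hence cannot bound a component avoiding $\partial_-M$.

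The main obstacle is this sign bookkeeping: the capillary angle is measured with respect to $\Omega'$'s outward normal, and the direction of monotonicity of $\theta$ must be matched with the flip so that both touching configurations are genuinely excluded. I would settle it first on the model $M_\alpha$, where the leaves are slices and the angle increases toward $\partial_-M$.
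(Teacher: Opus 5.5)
Your cases (2) and (3), and your separate handling of $\beta=\tfrac{\pi}{2}$, are fine and follow the paper's reasoning: delete the parasitic component $\Omega'$ and compare $\cA_\beta$; in (3), $\Sigma'$ is relatively null-homologous and locally minimizing, hence stable.

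\textbf{Case (1) has a genuine gap.} The touching argument does not produce a contradiction, and the sign bookkeeping you postpone is exactly where it breaks. Let $\theta(t)$ be the leaf angle in the paper's convention: the opening of the region $\{u\le t\}$ along $\partial\Sigma_t$. It decreases from $\tfrac{\pi}{2}$ to $\alpha$ as you go from $\partial_-M$ to $\partial_+M$.
\begin{enumerate}
\item At a top contact point on $\partial_0M$, the wedge of $\Omega'$ (opening $\beta$) sits inside the wedge of $\{u\le t_{\max}\}$. This gives $\beta\le\theta(t_{\max})$, as you say.
\item At a bottom contact point, $\Omega'$ lies in $\{u\ge t_{\min}\}$, whose wedge has opening $\pi-\theta(t_{\min})$. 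The comparison gives $\beta\le\pi-\theta(t_{\min})$, not $\theta(t_{\min})\le\beta$. Reversing the normal replaces the angle by its supplement; it does not reverse the inequality.
\end{enumerate}
Since $\beta<\tfrac{\pi}{2}$ and every leaf angle is at most $\tfrac{\pi}{2}$, the second inequality holds automatically. Moreover $\theta(t_{\min})\ge\theta(t_{\max})\ge\beta$ is perfectly consistent with monotonicity. So a component lying below $\Sigma_\beta$ whose lowest point lies on $\partial_0M$ is not excluded by your argument. Only interior touching at $t_{\min}$ is ruled out, since it would force $\theta(t_{\min})=\pi-\beta>\tfrac{\pi}{2}$.

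Your equality-case claim is also false as stated. The slab $\{t_1<u<t_2\}$ avoids $\partial_-M$, yet its boundary contains the whole leaf $\Sigma_{t_2}$.

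\textbf{The missing idea} is to use that $\Omega$ minimizes $\cA_\beta$, not merely that $\Sigma'$ is stationary. The paper does this with a calibration. Set $\xi=\nabla u/|\nabla u|$ and $\omega=\iota_\xi dV_g$. Then $\omega$ is closed because the leaves are minimal, $|\omega|\le 1$, and $\omega$ restricts to $-q\,dA$ on $\partial_0M$, where $q=\cos\theta$ is monotone along $\partial_0M$. Stokes' theorem gives, for every admissible $\Omega$,
\[
\cA_\beta(\Omega)\ \ge\ \cA_\beta(\Sigma_\beta)+\int_{S_\Omega\setminus S_\beta}(q-\cos\beta)-\int_{S_\beta\setminus S_\Omega}(q-\cos\beta)\ \ge\ \cA_\beta(\Sigma_\beta),
\]
because $q\le\cos\beta$ on $S_\beta$ and $q\ge\cos\beta$ off it. Hence every minimizer attains equality and is calibrated, i.e.\ its unit normal is $\xi$ everywhere. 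A component disjoint from $\partial_-M$ cannot have normal $\xi$ along all of $\partial\Omega'\cap\mathring M$ (look at its lowest point), so no parasitic component exists.

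Finally, your compactness sketch for $\beta=\alpha$ would only show that some $\cA_\alpha$-minimizer is connected, not every one. The paper's proof also treats only $\beta\in(\alpha,\tfrac{\pi}{2})$, so that endpoint is covered by neither argument.
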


\begin{proof}
    Fix $\beta\in (\alpha, \tfrac{\pi}{2})$ and let $\Omega$ be a minimizer of $\cA_\beta$. By the standard regularity theory of capillary surfaces, we can write $\Omega$ as finite disjoint union $\sqcup_{j=0}^L \Omega^j$, such that each $\Sigma^j = \partial \Omega^j\cap \mathring M$ is a capillary surface meeting $\partial M$ at angle $\beta$. There has to be a connected component, which we denote by $\Omega^0$, that contains $\partial_{-}M$. Since $M$ is simply connected, $\Sigma^0$ is connected. Thus, the energy-essential condition holds if there is no other component $\Omega^j$, $j>0$. We now rule this out under one of the conditions in Proposition \ref{prop: energy-essential}.

    Suppose (1) holds. We have the standard calibration argument as follows. Let $u: M\to [-1,1]$ defines the foliation, namely for each $s\in [-1,1]$, $u^{-1}(t)$ is a leaf of the foliation with $u^{-1}(\pm 1) = \partial_\pm M$. Denote by $\xi=\tfrac{\nabla u}{|\nabla u|}$. Then along $\partial_0 M$, $\langle \xi, \bar N \rangle$ is decreasing. We thus obtain that the calibration form $\omega = \iota_\xi dV_g$ is closed, $|\omega|\le 1$, and $\omega|_{\partial_0 M} = \langle \xi, \bar N\rangle dA_{\partial_0 M} = -q$ ($q$ denotes the boundary flux). Denote by $\Sigma_\beta$ a leaf meeting with capillary angle $=\beta$ and $S_\beta$ the corresponding wetting region. Then for any Caccioppoli set $\Omega$,
    \[\int_{\Sigma_\Omega} \omega - \int_{S_\Omega} q = \int_{\Sigma_\beta} \omega - \int_{S_\beta}q = \cH^2(\Sigma_\beta) - \int_{S_\beta}q. \]
    This follows directly from that $d\omega=0$. Hence
    \begin{align*}
        \cA_\beta(\Sigma_\Omega) &= \cH^2(\Sigma_\Omega) - \cos\beta \cH^2(S_\Omega) \\
        &\ge \cH^2(\Sigma_\beta) - \int_{S_\beta} q + \int_{S_\Omega} q - \cos\beta \cH^2(S_\Omega)\\
        & = \cA_\beta(\Sigma_\beta) + \int_{S_\Omega\setminus S_\beta} (q-\cos\beta) - \int_{S_\beta \setminus S_\Omega} (q-\cos\beta).
    \end{align*}
    By assumption, $q\le \cos\beta$ in $S_\beta$ and $q\ge \cos\beta$ on $\partial_0 M\setminus S_\beta$, so we conclude that $\cA_\beta(\Sigma_\Omega) \ge \cA_\beta(\Sigma_\beta)$. This proves that $\Sigma_\beta$ is a minimizer of $\cA_\beta$, which is connected. It is straightforward to check from the equality case that any minimizer of $\cA_\beta$ is calibrated by $\omega$, and hence is a leaf in the foliation.

    Suppose (2) holds. Then for any null-homologous open set $U$ (relative to $\partial_0 M$), we have that
    \begin{align*}
        \cA_\beta(U) &= \cH^2(\partial U\cap \mathring M) - \cos\beta  \cH^2 (\partial U\cap \partial M)\\
        &\ge (1-\cos\beta) \cH^2(\partial U\cap \partial M)> 0,
    \end{align*}
    where the first inequality follows from that $\partial_0 M$ is area-minimizing. Thus, 
    \[\cA_\beta(\Omega) = \sum_{j=0}^l \cA_\beta(\Omega^j) > \cA_\beta(\Omega^0),\]
    unless $l=0$. So the capillary minimizer is just $\Omega^0$, which is connected.

    Suppose (3) holds. Note that any capillary surface arises from $\Omega^j$, $j\ge 1$, is relatively null-homologous and locally minimizing (and hence stable), which cannot exist by the assumption.

\end{proof}

Next, we establish that the energy-essential condition is preserved under small perturbations. We start with the following lemma.

\begin{lemma}\label{lem: no small parasitic regions}
    Suppose $(M,g)$ is a Riemannian cylinder. There exists a constant $\eps_0$ depending smoothly on $M$ and $\alpha$ such that the following holds. Given Caccioppoli set $\Omega$ in $M$ with $\cH^2(S_\Omega)<\eps_0$, for any $\beta\in [\alpha, \tfrac{\pi}{2}]$,
    \[\cA_\beta(\Omega)>0.\]
\end{lemma}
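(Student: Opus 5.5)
The plan is a calibration-type divergence argument near $\partial_0 M$, combined with the relative isoperimetric inequality of the compact manifold with corners $M$. Throughout, $\Omega$ is understood as a parasitic region, as in the proof of Proposition \ref{prop: energy-essential}: a Caccioppoli set of positive measure with $\partial\Omega\cap\partial M\subset\partial_0 M$, so that $\partial\Omega$ splits into $\Sigma_\Omega\cup S_\Omega$ up to null sets. It suffices to find $\eps_0$ with
\[
\cH^2(\Sigma_\Omega)>\cos\alpha\,\cH^2(S_\Omega),
\]
since $\cos\beta\le\cos\alpha$ for $\beta\in[\alpha,\tfrac{\pi}{2}]$. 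Strictness when $\cH^2(S_\Omega)=0$ is automatic, because then $\cH^2(\Sigma_\Omega)>0$ for a nontrivial $\Omega$.

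\textbf{Step 1 (vector field).} Fix a smooth vector field $X$ on $M$ with $|X|\le 1$ everywhere and $X=\bar N$ on $\partial_0 M$. For instance, take the gradient of the distance to $\partial_0 M$ in a collar, cut off, and smoothed near the corners; this is possible because the dihedral angles at $\partial_\pm M\cap\partial_0 M$ lie in $(0,\tfrac{\pi}{2}]$. Set $C_1=\sup_M|\mathrm{div}X|$, which depends only on $(M,g)$. The divergence theorem on $\Omega$ gives
\[
\cH^2(S_\Omega)=\int_\Omega \mathrm{div}X-\int_{\Sigma_\Omega}\langle X,N\rangle\le C_1|\Omega|+\cH^2(\Sigma_\Omega).
\]

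\textbf{Step 2 (reduction and isoperimetry).} If $\cH^2(\Sigma_\Omega)\ge\eps_0$, then $\cA_\beta(\Omega)\ge\eps_0-\cH^2(S_\Omega)>0$ and we are done. Otherwise the total perimeter of $\Omega$ is at most $2\eps_0$. The isoperimetric inequality of the compact Lipschitz domain $M$ then gives
\[
\min\{|\Omega|,|M\setminus\Omega|\}\le C_2\bigl(\cH^2(\Sigma_\Omega)+\cH^2(S_\Omega)\bigr)^{3/2}.
\]

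\textbf{Step 3 (the small side is $\Omega$).} This is the main subtle point: we must know that $|\Omega|$ is the smaller of $|\Omega|$ and $|M\setminus\Omega|$. For parasitic $\Omega$ I would argue as follows. The complement $M\setminus\Omega$ contains $\partial_\pm M$, and $\Omega$ has perimeter at most $2\eps_0$. Since $|\partial_- M|>0$, a small-perimeter set cannot contain a full neighborhood of the whole boundary without also containing one of $\partial_\pm M$. Alternatively, one can shrink $\eps_0$ until $C_2(2\eps_0)^{3/2}<\tfrac12|M|$. With that choice the complement cannot be the small side, because $M\setminus\Omega$ has the same perimeter as $\Omega$ yet $\Omega$ cannot carry almost all of $M$ while avoiding $\partial_\pm M$. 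In the worst case I would simply add this as the standing interpretation of the lemma.

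\textbf{Step 4 (conclusion).} Combining Steps 1 and 2, with $\eta=C_1C_2(2\eps_0)^{1/2}$ we get
\[
(1-\eta)\cH^2(S_\Omega)\le(1+\eta)\cH^2(\Sigma_\Omega).
\]
Choosing $\eps_0$ so small that $\tfrac{1-\eta}{1+\eta}>\cos\alpha$ yields $\cA_\beta(\Omega)>0$. The resulting $\eps_0$ depends only on $C_1$, $C_2$ and $\alpha$, hence continuously on $(M,g)$ and $\alpha$.
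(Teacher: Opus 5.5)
\paragraph{Overall.} Your argument is essentially the paper's proof. There too $\bar N$ is extended to a field $X$ with $|X|\le 1$, Stokes for $\omega=\iota_X dV_g$ (your divergence identity) gives $\cH^2(S_\Omega)\le\cH^2(\Sigma_\Omega)+\Lambda\,\cH^3(\Omega)$, and the volume term is absorbed using $\cH^3(\Omega)\lesssim\cH^2(S_\Omega)^{3/2}$. The differences are cosmetic. The paper argues by contradiction, which yields $\cH^2(\Sigma_\Omega)<\cH^2(S_\Omega)$ and plays the role of your Step 2. It also takes the volume bound from Almgren's small-area isoperimetric inequality for the cycle $\partial\Omega=S_\Omega-\Sigma_\Omega$, rather than from the relative isoperimetric inequality of $M$.

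\paragraph{The gap: Step 3.} This is the one step that is not yet a proof. The relative isoperimetric inequality of $M$ is symmetric under $\Omega\leftrightarrow M\setminus\Omega$. What decides which side is small is the fact that $\chi_\Omega$ has zero trace on $\partial_\pm M$. Your two heuristics assert the conclusion rather than derive it from this fact. Adopting it as an extra standing hypothesis is also unnecessary.

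\paragraph{A clean fix.} Extend $(M,g)$ to a closed manifold $\hat M$.
\begin{itemize}
\item Because $\Omega$ is parasitic, its perimeter in $\hat M$ is exactly $\cH^2(\Sigma_\Omega)+\cH^2(S_\Omega)\le 2\eps_0$.
\item On the other hand, $|\hat M\setminus\Omega|\ge|\hat M\setminus M|>0$.
\item So once $C(2\eps_0)^{3/2}<|\hat M\setminus M|$, the isoperimetric inequality of $\hat M$ bounds $|\Omega|$ itself.
\end{itemize}
Your first heuristic can also be made rigorous with the BV trace inequality applied to $\chi_{M\setminus\Omega}$. This gives $|\partial_-M|\le C\bigl(\cH^2(\Sigma_\Omega)+|M\setminus\Omega|\bigr)$, which rules out a small complement.

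\paragraph{The same point in the paper.} The paper's use of Almgren's inequality rests on the same fact, stated in current language. If $R$ is any integral $3$-current in $M$ with $\partial R=S_\Omega-\Sigma_\Omega$, the constancy theorem gives $\Omega-R=k[M]$. Then $\partial(k[M])=k[\partial M]=0$ forces $k=0$, so the small filling is $\Omega$ itself.

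With Step 3 replaced by one of these arguments, your Step 4 goes through as written.
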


\begin{proof}
    Extend the vector field $\bar N$ on $\partial_0 M$ to a smooth vector field $X$ on $M$ such that $|X|\le 1$. Set $\omega= \iota_X dV_g$. Then $\omega|_{\partial_0 M} = dA_{\partial_0 M}$. Denote by 
    \[\Lambda = \|d\omega\|_{L^\infty (M)}.\]
    Fix $\Omega, \beta$ and denote by $\Sigma = \Sigma_\Omega$, $S=S_\Omega$ and $c=\cos\beta$. We prove that for sufficiently small $\eps_0$ depending on $M, \Lambda, \alpha$, 
    \[\cH^2(\Sigma) - c \cH^2(S) \ge \frac{1-c}{2}\cH^2(S), \]
    provided that $\cH^2(S)\le \eps_0$.

    Suppose the contrary. From
    \[\cH^2(\Sigma) - c \cH^2(S) < \frac{1-c}{2} \cH^2(S), \]
    we deduce that $\cH^2(\Sigma) < \cH^2(S)$. Since $S-\Sigma = \partial \Omega$ as currents, Almgren's small-area isoperimetric inequality \cite[Proposition 1.12]{Almgren1962} implies that
    \[\cH^3(\Omega) = M(\Omega) \le C_I M(S-T)^{3/2} \le C_I (2\cH^2(S))^{3/2}, \]
    provided that $\cH^2(S)<\eps_0$ is sufficiently small. Stokes' theorem gives
    \begin{align*}
        \cH^2(S) &= \int_S \omega =\int_\Sigma \omega + \int_\Omega d\omega\\
            &\le \cH^2(\Sigma) + \Lambda \cH^3(\Omega)\\
            &\le \cH^2(\Sigma) + C_I 2^{3/2} (\cH^2(S))^{3/2}.
    \end{align*}    
    
    Therefore, assuming that $\cH^2(S)$ is sufficiently small depending on $C_I$, the above gives 
    \[\cH^2(\Sigma) - c \cH^2(S) \ge \frac{1-c}{2}\cH^2(S),\]
    contradiction. Finally, since $\Lambda$, the isoperimetric constant $C_I$ as well as the required Almgren's smallness of area smoothly depend on $g$, so does $\eps_0$.

\end{proof}

We are ready to prove that the energy-essential condition is preserved under small perturbations.

\begin{proposition}\label{prop: energy-essential is open}
    Suppose a capillary prism $(M,g)$ with angle $\alpha$ is energy-essential. Then there is $\eps_1>0$ such that any capillary prism $(M, g')$ with angle $\alpha$ that is within the $\eps_1$-neighborhood in the smooth topology is energy-essential.
\end{proposition}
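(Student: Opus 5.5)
The plan is to argue by contradiction and compactness. Suppose there are metrics $g_k\to g$ in the smooth topology, each $(M,g_k)$ a capillary prism with angle $\alpha$, angles $\beta_k\in[\alpha,\tfrac{\pi}{2}]$, and $\cA_{\beta_k}$-minimizers $\Omega_k$ (with respect to $g_k$) that are disconnected. As in the proof of Proposition \ref{prop: energy-essential}, we decompose $\Omega_k=\Omega_k^0\sqcup\bigsqcup_{j\ge1}\Omega_k^j$, where $\Omega^0_k\supset\partial_-M$ and each $\Omega_k^j$ with $j\ge 1$ is a relatively null-homologous region touching only $\partial_0 M$. Minimality of $\Omega_k$ forces every parasitic component to satisfy $\cA_{\beta_k}(\Omega_k^j)\le 0$; otherwise deleting it would lower the energy. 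We pass to a subsequence with $\beta_k\to\beta_\infty\in[\alpha,\tfrac{\pi}{2}]$.

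First, Lemma \ref{lem: no small parasitic regions} applies uniformly. Because $\eps_0$ depends continuously (smoothly) on the metric, there is a single $\eps_0>0$ valid for all $g_k$ with $k$ large. Hence each parasitic component satisfies $\cH^2_{g_k}(S_{\Omega_k^j})\ge\eps_0$. In particular the number of parasitic components is uniformly bounded by $|\partial_0M|/\eps_0$, and at least one of them, say $\Omega_k^1$, has wetting area bounded below.

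Next we take limits. The perimeters of $\Omega_k$ are uniformly bounded, by comparison with a fixed competitor and because $\cos\beta_k\le\cos\alpha<1$. So by BV compactness $\Omega_k^0\to\Omega^0_\infty$ and $\Omega_k^1\to\Omega^1_\infty$ in $L^1$, and the traces on $\partial_0 M$ converge as well. By the standard lower semicontinuity and the cut-and-paste argument for capillary functionals (the functional $\cA_\beta$ is lower semicontinuous since $|\cos\beta|<1$), $\Omega_\infty=\Omega^0_\infty\cup\Omega^1_\infty$ is an $\cA_{\beta_\infty}$-minimizer for $g$. The key point is that $\Omega^1_\infty$ is nontrivial and separated from $\Omega^0_\infty$. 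Two facts give this. The wetting areas converge, so $\cH^2(S_{\Omega^1_\infty})\ge\eps_0$. The volumes of $\Omega_k^1$ also stay bounded below, which follows by applying the relative isoperimetric inequality together with the lower bound $\cH^2(\Sigma)\ge\frac{1+c}{2}\cH^2(S)$ extracted from the proof of Lemma \ref{lem: no small parasitic regions}, or alternatively via the monotonicity formula for capillary minimizers. Then $\Omega_\infty$ is a minimizer with at least two components, since $\Omega^1_\infty$ does not contain $\partial_-M$. This contradicts energy-essentiality of $(M,g)$ at angle $\beta_\infty$.

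The main obstacle is the last step: ensuring that $\Omega^0_k$ and $\Omega^1_k$ do not merge in the limit, i.e.\ that the limit is genuinely disconnected rather than two regions touching along a set of positive measure. For this I would use the uniform regularity of capillary minimizers (curvature estimates and density lower bounds up to the boundary, which are uniform in the metric and in $\beta$ away from $0$ and $\pi$) to upgrade $L^1$ convergence to smooth graphical convergence of the capillary surfaces $\Sigma_k$. Smooth convergence with multiplicity one then preserves the number of connected components. If the two regions touched in the limit, the maximum principle applied to the smooth limit capillary surface would force them to coincide locally, which is incompatible with multiplicity-one convergence. Handling possible degenerations at the corners $\partial_\pm M\cap\partial_0M$ is not needed, since minimizers stay away from $\partial_+M$ and contain $\partial_-M$ by the maximum principle, uniformly under small perturbation.
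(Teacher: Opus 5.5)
Your argument is correct and follows essentially the same route as the paper's proof. Both argue by contradiction, use a uniform $\eps_0$ from Lemma \ref{lem: no small parasitic regions} to bound the number of parasitic components, use curvature estimates to get smooth graphical subconvergence to an $\cA_{\beta,g}$-minimizer, and use the wetting-area bound to keep the limit disconnected; you are in fact more explicit than the paper about ruling out merging. One small correction: your first justification of the volume lower bound does not work, since $\cH^2(\Sigma)\ge\frac{1+c}{2}\cH^2(S)$ is only proved when $\cH^2(S)<\eps_0$ and actually fails for parasitic components, which have $\cA_\beta\le 0$. That step is redundant given smooth convergence, and if wanted, the Stokes estimate $(1-c)\,\cH^2(S)\le\Lambda\,\cH^3(\Omega)$ from the same proof supplies it directly.
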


\begin{proof}
    Suppose the contrary. Then there exists a sequence of capillary prisms $(M, g_j)$ with $g_j \to g$, and $\Omega_j \subset M$ a disconnected minimizer of $\cA_{\beta_j, g_j}$. Write $\Omega_j = \sqcup_{i=1}^{l_j} \Omega_j^i$ with $l_j>0$ and $\Omega_j^0$ the connected component containing $\partial_{-}M$. Since $g_j$ converges to $g$, Lemma \ref{lem: no small parasitic regions} guarantees a uniform $\eps_0$ so that $\cH^2(S_{\Omega_j^i})\ge \eps_0$ for all $j$ and $1\le i\le l_j$. In particular, $\{l_j\}$ is uniformly bounded. Moreover, since each $\Sigma_{\Omega_j^i}$ is a stable capillary surface, the standard curvature estimates implies that a subsequence of $(\beta_j, \Omega_j)$ (which we do not relabel) converges smoothly graphically to $(\beta, \Omega)$, and $\Omega$ is a $\cA_{\beta, g}$-minimizer. The uniform area bound $\cH^2(S_{\Omega_j^i})\ge \eps_0$ guarantees that the limit $\Omega$ is disconnected, contradicting that $(M,g)$ is energy-essential.
\end{proof}

\subsection{Monotonicity of the free energy profile}\label{subsection: monotonicity of free energy profile}

$\quad$

In this subsection, we define the free energy profile in Riemannian manifolds and derive a differential inequality of it. We note that this profile was first considered for Euclidean domains by Eichmair-Koerber \cite{EK24} in their work of the extrinsic Penrose inequality. Most of their fundamental structural results for the profile function extends to the Riemannian setting in the current work. Let $(M^3,g)$ be an energy-essential capillary prism. We make the following assumption throughout this section: $\partial_+ M$ (resp. $\partial_{-}M$) is the outermost minimal surface meeting the lateral boundary $\partial_0 M$ at constant angle $\alpha$ (resp. $\tfrac{\pi}{2}$):

\textbf{Assumption A}
\begin{itemize}
    \item $\partial_+ M$ is a capillary minimal surface with capillary angle $\alpha$, and there is no other area-minimizing capillary minimal surface with capillary angle $\alpha$ inside $M$;
    \item $\partial_- M$ is a free boundary minimal surface, and there is no other free boundary minimizing surface inside $M$;
\end{itemize}
We remark here that requiring Assumption A does not lose any generality: if either $\partial_\pm$ is not the outermost minimal surface, we may minimize the corresponding functional (either the capillary functional for the area functional) to obtain another stable minimal disk, cut along it and obtain another capillary prism, which we then work on. The energy-essential property is preserved and the relative $\pi_2$-systole is non-decreasing in this procedure.
$\quad$

Let $\Phi(t) = \tanh t\,(t\in [0,\infty))$ be an auxiliary function. The only properties of $\Phi$ that we need are $\Phi(0) = 0$, $\lim_{t\to\infty}\Phi(t) = 1$ and $\Phi'(t)>0$. For $t\in (0,\Phi^{-1}(\cos\alpha))$ and $\Sigma\in\mathcal{F}(M)$, consider the functional
\begin{align*}
    J_t(\Sigma) = \mathcal{H}^2(\Sigma) - \Phi(t)\mathcal{H}^2(S(\Sigma)).
\end{align*}
Let $\mathcal{E}_t$ be the collection of surfaces $\Sigma\in\mathcal{F}(M)$ that minimizes $J_t$.

By assumption A, the dihedral angle between $\partial_+M$ and $\partial_0 M$ is $\alpha$, and the dihedral angle between $\partial_- M$ and $\partial_0 M$ equals $\frac\pi2$. Hence, from the maximum principle, the minimizer for $\mathcal{A}_\theta$ always exists for any $\theta\in (\alpha,\frac\pi2)$. In other words, we have:
\begin{lemma}
    For $t\in (0,\Phi^{-1}(\cos\alpha))$, $\mathcal{E}_t\ne\emptyset$.
\end{lemma}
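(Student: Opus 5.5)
The plan is the direct method in the calculus of variations. The only real input beyond standard compactness is that the barriers $\partial_\pm M$ keep the minimizing sequence away from the top and bottom faces.

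Fix $t\in(0,\Phi^{-1}(\cos\alpha))$ and set $c=\Phi(t)\in(0,\cos\alpha)$. Since $\Phi$ is increasing with $\Phi(0)=0$, we have $c=\cos\theta$ for a unique $\theta\in(\alpha,\tfrac\pi2)$, so $J_t=\cA_\theta$.

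\textbf{Step 1: the infimum is finite.} $J_t$ is bounded below on $\mathcal{F}(M)$, because
\[
J_t(\Sigma)\ge -c\,\cH^2(S(\Sigma))\ge -c\,\cH^2(\partial_0 M).
\]
Take a minimizing sequence $\Omega_j$. Its perimeters in $\mathring M$ are uniformly bounded, since $\cH^2(\Sigma_{\Omega_j})\le J_t(\Sigma_{\Omega_j})+c\,\cH^2(\partial_0M)$. By BV compactness, a subsequence converges in $L^1$ to a Caccioppoli set $\Omega$.

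\textbf{Step 2: lower semicontinuity.} Lower semicontinuity of $J_t$ follows from the standard fact for capillary functionals with $|c|<1$. Extend $M$ slightly across $\partial_0M$ and write the functional as perimeter in $\mathring M$ minus $c$ times the trace term. This is lower semicontinuous under $L^1$ convergence by the trace inequality, e.g.\ as in Giusti's or Maggi's treatment of the capillarity problem. Hence $\Omega$ minimizes among Caccioppoli sets with the same constraints, provided the constraints survive in the limit.

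\textbf{Step 3: the constraints persist (main obstacle).} The difficulty is that the conditions $\partial_-M\subset\Omega$ and $\partial_+M\cap\Omega=\emptyset$ are not closed under $L^1$ convergence. Mass can escape onto $\partial_\pm M$, since $\partial_\pm M$ are themselves part of $\partial M$.

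To handle this, I would run the minimization as an obstacle problem: minimize among $\Omega$ with $\Omega_-\subset\Omega\subset M\setminus\Omega_+$, where $\Omega_\pm$ are thin collars of $\partial_\pm M$. Then I would show that the obstacles are not touched. This is where Assumption A and the maximum principle enter.

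\emph{Top face.} $\partial_+M$ is minimal and meets $\partial_0M$ at angle $\alpha<\theta$. Consider a minimizer $\Sigma$ of angle $\theta$ lying below $\partial_+M$. Where it touches $\partial_+M$, either in the interior or along $\partial_0M$, the interior strong maximum principle and the boundary (Hopf-type) comparison of contact angles force a contradiction. The reason is that a surface with larger contact angle cannot lie on the side of one with smaller angle at a common boundary point.

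\emph{Bottom face.} $\partial_-M$ is free boundary minimal, with angle $\tfrac\pi2>\theta$, and the angle comparison is reversed in the same way.

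One can also shrink the collars and use the ``outermost'' part of Assumption A. If the minimizer of the obstacle problem touched the obstacle, then a limiting argument as the collars shrink would produce a competitor coinciding with $\partial_\pm M$ on an open set. The strong maximum principle (interior plus the capillary boundary version) would then give a whole minimizing surface equal to $\partial_\pm M$, which Assumption A and the angle mismatch exclude.

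\textbf{Step 4: conclusion.} With the obstacles not touched, $\Omega$ is an unconstrained local minimizer and lies in $\mathcal{F}(M)$. Regularity near the corners is not needed for mere existence. Thus $\mathcal{E}_t\neq\emptyset$.
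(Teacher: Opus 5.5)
Your overall strategy is the one the paper's one-line proof appeals to: the direct method, with the strict angle gap $\alpha<\theta<\tfrac{\pi}{2}$ and the maximum principle keeping the minimizer off $\partial_\pm M$. But Steps 3--4 as you set them up do not close.

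\textbf{Where the collar argument fails.} In the collar-constrained problem, a minimizer can touch only $\partial\Omega_\pm\cap\mathring M$, not $\partial_\pm M$. These collar boundaries are not minimal. For instance, the parallel surface at distance $\epsilon$ inside the minimal face $\partial_+M$ has mean curvature $\epsilon(\Ric(N,N)+|A|^2)+O(\epsilon^2)$, which has no definite sign under $R_g>0$ alone. So your comparison, which uses the minimality and contact angles of $\partial_\pm M$, says nothing about the actual obstacles.

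More seriously, even if the collars are not touched, you only get a minimizer among sets containing $\Omega_-$ and avoiding $\Omega_+$. An unconstrained \emph{local} minimizer need not minimize $J_t$ over all of $\mathcal F(M)$, which is what membership in $\mathcal E_t$ means. So Step 4 does not follow.

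Your shrinking-collar alternative points the right way, but it needs the right limit functional. With $J_t$ itself (your Step 2), boundary lying on $\partial_\pm M$ costs nothing. Indeed $\Omega=M$ already attains the lower bound $-c\,\cH^2(\partial_0M)$ from Step 1, so ``the limit is minimizing'' carries no information.

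\textbf{The fix: relax instead of restrict.} Minimize over all Caccioppoli sets $\Omega\subset M$
\[
\tilde J_t(\Omega)=\cH^2(\partial^*\Omega\cap\mathring M)+\cH^2\bigl(\{\mathrm{Tr}\,\chi_\Omega=1\}\cap\partial_+M\bigr)+\cH^2\bigl(\{\mathrm{Tr}\,\chi_\Omega=0\}\cap\partial_-M\bigr)-c\,\cH^2\bigl(\{\mathrm{Tr}\,\chi_\Omega=1\}\cap\partial_0M\bigr).
\]
Equivalently, take perimeter in $M$ enlarged by exterior collars across $\partial_\pm M$, with $\Omega$ forced to contain the lower collar and avoid the upper one. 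Then $\tilde J_t$ is bounded below, agrees with $J_t$ on $\mathcal F(M)$, and is $L^1$-lower semicontinuous by the same trace argument as in your Step 2. The direct method gives a minimizer $\Omega^*$.

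Now run your maximum principle against $\partial_\pm M$ themselves. Suppose $\{\mathrm{Tr}\,\chi_{\Omega^*}=1\}\cap\partial_+M$ had positive measure. The strong maximum principle against the minimal face would propagate the coincidence up to the edge. There, pushing the contact line down into $M$ changes $\tilde J_t$ at first order by $\cos\theta-\cos\alpha<0$ per unit length. At the bottom, the analogous upward push gives $-\cos\theta<0$.

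Hence $\Omega^*$ satisfies the trace constraints. So $\Omega^*\in\mathcal F(M)$ and minimizes $J_t$ there, i.e.\ $\Omega^*\in\mathcal E_t$.
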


The next two lemmas concern fundamental properties for elements in $\mathcal{E}_t$.
\begin{lemma}[{\cite[Lemma 21]{EK24}}]\label{lem: convergence of capillary surfaces}
   Let $\{t_k\}_{k=1}^\infty$ be a sequence of numbers with $t_k\in (0,\Phi^{-1}(\cos\alpha))$. Assume $\Sigma_{t_k}\in\mathcal{E}_{t_k}$ for each $k$. Then there exists $\Sigma_t\in\mathcal{E}_t$, such that by passing to a subsequence, $\Sigma_{t_k}\to\Sigma_t$ smoothly.
\end{lemma}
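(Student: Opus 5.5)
The argument is a compactness argument: uniform mass bounds, then stability-based curvature estimates, then passage of the minimizing property to the limit. Throughout, set $\theta_k$ and $\theta$ by $\cos\theta_k=\Phi(t_k)$ and $\cos\theta=\Phi(t)$.

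\textbf{Step 1: the limit parameter.} After passing to a subsequence we may assume $t_k\to t\in[0,\Phi^{-1}(\cos\alpha)]$. The case of an endpoint limit has to be treated separately. By Assumption A, the minimizers are confined between $\partial_-M$ and $\partial_+M$, and we interpret $\mathcal{E}_t$ at an endpoint accordingly. The main case is therefore $t$ in the open interval.

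\textbf{Step 2: uniform bounds and compactness.} Each $\Sigma_{t_k}$ minimizes $J_{t_k}$. Comparing with $\partial_-M$ gives $J_{t_k}(\Sigma_{t_k})\le|\partial_-M|$. Since $\Phi(t_k)\le\cos\alpha<1$, this yields
\[
\mathcal H^2(\Sigma_{t_k})\le |\partial_-M|+|\partial_0M|,
\]
so we have uniform mass bounds for both $\Sigma_{t_k}$ and $S(\Sigma_{t_k})$. By BV compactness, after passing to a subsequence, $\Omega_{t_k}\to\Omega$ in $L^1$. Moreover, $\partial_-M\subset\Omega$ and $\Omega\cap\partial_+M=\emptyset$ persist in the limit, using the maximum principle barrier separation from Assumption A.

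\textbf{Step 3: the limit is a minimizer.} For any competitor $\Omega'$, take the usual cut-and-paste competitor: $\Omega'$ inside a slightly shrunken domain, and $\Omega_{t_k}$ near the boundary pieces. Combining lower semicontinuity of the perimeter terms with continuity of the wetting term gives $J_t(\Sigma_\Omega)\le J_t(\Sigma_{\Omega'})$. The wetting term is the delicate part, because the trace on $\partial_0M$ is only $L^1$-continuous under strict convergence.

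I would handle the wetting term by rewriting $J_t$ with the flux-form trick from Lemma~\ref{lem: no small parasitic regions}. Choose a vector field $X$ with $X|_{\partial_0M}=\bar N$ and $|X|\le1$. Then
\[
\mathcal H^2(S_\Omega)=\int_{\Sigma_\Omega}\iota_X dV+\int_\Omega \mathrm{div}X\,dV.
\]
Hence
\[
J_t(\Omega)=\int_{\Sigma_\Omega}\bigl(1-\Phi(t)\langle X,N\rangle\bigr)+\text{(volume term)},
\]
and $1-\Phi(t)\langle X,N\rangle>0$ because $\Phi(t)<1$. In this form $J_t$ is an elliptic integrand plus a volume term. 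It is therefore lower semicontinuous and jointly continuous in $t$, so $\Sigma_\Omega\in\mathcal E_t$. The same argument shows $\lim J_{t_k}(\Sigma_{t_k})=J_t(\Sigma_\Omega)$, which gives convergence of the perimeter measures.

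\textbf{Step 4: upgrade to smooth convergence.} Each $\Sigma_{t_k}$ is an $\mathcal A_{\theta_k}$-minimizer, with $\theta_k\in[\alpha',\pi/2]$ for some $\alpha'>\alpha$ bounded away from $0$ and $\pi$. The capillary regularity theory (De Philippis–Maggi) and the curvature estimates for stable capillary surfaces, with constants uniform in $\theta_k$, give uniform $C^{2,\gamma}$ bounds up to the boundary. These bounds also hold near the corners, where barriers keep the surfaces away from $\partial_\pm M$. Combined with the varifold convergence from Step 3, they upgrade the convergence to smooth graphical convergence. I would do this via Allard-type/curvature-estimate sheeting, using the fact that the limit is embedded and has multiplicity one, since $\Omega$ is a set.

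\textbf{Main obstacle.} I expect the hardest part to be the uniform boundary regularity near the lateral boundary together with the exclusion of multiplicity. Here I would rely on the Eichmair–Koerber argument in \cite[Lemma 21]{EK24} and on the fact that minimizing boundaries converge with multiplicity one.
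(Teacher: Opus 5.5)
Your proposal is correct. It follows the standard compactness route that the paper does not write out but delegates to \cite[Lemma 21]{EK24}: uniform perimeter bounds and BV compactness, then lower semicontinuity of $J_t$ via the flux rewriting (so the limit lies in $\mathcal{E}_t$ and the perimeters converge, which gives multiplicity one), and finally uniform capillary regularity to upgrade to smooth convergence.

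Two simplifications are available. First, no cut-and-paste competitor is needed in Step 3: any admissible $\Omega'$ is a competitor for every $J_{t_k}$, so $J_t(\Omega)\le\liminf_k J_{t_k}(\Omega_{t_k})\le\lim_k J_{t_k}(\Omega')=J_t(\Omega')$. Second, your closing appeal to \cite[Lemma 21]{EK24} is unnecessary, because multiplicity one already follows from the perimeter convergence you established in Step 3.
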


Given  $t\in (0,\Phi^{-1}(\cos\alpha))$, let
\begin{itemize}
    \item $s_t^- = \inf \{|S(\Sigma_t)|: \Sigma_t\in\mathcal{E}_t\}$;
    \item $s_t^+ = \sup \{|S(\Sigma_t)|: \Sigma_t\in\mathcal{E}_t\}$.
\end{itemize}

\begin{lemma}\label{lem: properties of Sigma t}

$\quad$

    \begin{enumerate}
        \item Let $t\in (0,\Phi^{-1}(\cos\alpha))$. There exist $\Sigma_t^-,\Sigma_t^+\in\mathcal{E}_t$ such that $|S(\Sigma_t^-)| = s_t^-$ and $|S(\Sigma_t^+)| = s_t^+$. Moreover,
        \begin{align*}
            |\Sigma_t^+| = |\Sigma_t^-|+\Phi(t)(s_t^+-s_t^-)
        \end{align*}
        \item Let $t_1,t_2\in  (0,\Phi^{-1}(\cos\alpha))$ with $t_2>t_1$. Then $s_{t_2}^->s_{t_1}^+$.
        \item Let $t\in  (0,\Phi^{-1}(\cos\alpha))$ and $\{t_k\}_{k=1}^\infty$ be a sequence of numbers. 
        \begin{itemize}
            \item Assume $t_k\searrow t$, then $s_{t_k}^-\searrow s_t^+$ and $s_{t_k}^+\searrow s_t^+$;
            \item Assume $t_k \nearrow t$, then $s_{t_k}^-\nearrow s_t^-$ and $s_{t_k}^+\nearrow s_t^-$.
        \end{itemize}
    \end{enumerate}
\end{lemma}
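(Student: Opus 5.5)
The plan is to follow the standard structure of Eichmair--Koerber's profile arguments, using three ingredients: the compactness statement Lemma \ref{lem: convergence of capillary surfaces}, the fact that $\mathcal{E}_t$ minimizes a linear combination of the two quantities $|\Sigma|$ and $|S(\Sigma)|$, and comparison between minimizers at different parameters.

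For (1), take a minimizing sequence $\Sigma_k\in\mathcal{E}_t$ with $|S(\Sigma_k)|\to s_t^-$. By Lemma \ref{lem: convergence of capillary surfaces} with all $t_k=t$, a subsequence converges smoothly to some $\Sigma_t^-\in\mathcal{E}_t$. Smooth convergence, including up to the boundary curve on $\partial_0 M$, gives convergence of the wetting regions in area. Hence $|S(\Sigma_t^-)|=s_t^-$, and $\Sigma_t^+$ is obtained in the same way. The identity then follows because both surfaces realize the same minimum value of $J_t$:
\[
|\Sigma_t^+|-\Phi(t)s_t^+=|\Sigma_t^-|-\Phi(t)s_t^-.
\]

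For (2), take $\Sigma_1\in\mathcal{E}_{t_1}$ and $\Sigma_2\in\mathcal{E}_{t_2}$. Adding the two minimality inequalities $J_{t_1}(\Sigma_1)\le J_{t_1}(\Sigma_2)$ and $J_{t_2}(\Sigma_2)\le J_{t_2}(\Sigma_1)$ gives
\[
(\Phi(t_2)-\Phi(t_1))\bigl(|S(\Sigma_2)|-|S(\Sigma_1)|\bigr)\ge 0,
\]
so $|S(\Sigma_2)|\ge|S(\Sigma_1)|$ because $\Phi$ is strictly increasing. This only yields $s_{t_2}^-\ge s_{t_1}^+$, and the strict inequality is the main obstacle.

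To get strictness, suppose $|S(\Sigma_2)|=|S(\Sigma_1)|$. Then both inequalities are equalities, so $\Sigma_1$ and $\Sigma_2$ lie in $\mathcal{E}_{t_1}\cap\mathcal{E}_{t_2}$. By the first variation formula, a minimizer of $J_t$ is a capillary minimal surface with angle $\theta=\arccos\Phi(t)$. Minimality is used only in the interior; the boundary angle comes from the boundary term in the first variation. A single surface would therefore meet $\partial_0 M$ at two different constant angles $\arccos\Phi(t_1)\neq\arccos\Phi(t_2)$. This is impossible provided $\partial\Sigma_1\cap\partial_0 M\neq\emptyset$. That holds because $\Sigma_1$ separates $\partial_-M$ from $\partial_+M$ in the cylinder, so it must touch the lateral boundary. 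The energy-essential property and Assumption A guarantee such nonempty contact: a minimizer can coincide with neither $\partial_\pm M$, since those meet $\partial_0 M$ at the angles $\alpha$ and $\pi/2$, and these are excluded by the maximum principle. Hence $s_{t_2}^->s_{t_1}^+$.

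For (3), let $t_k\searrow t$. By (2) the sequences $s_{t_k}^\pm$ are monotone nonincreasing and bounded below by $s_t^+$. Pick $\Sigma_{t_k}^\pm$ as in (1). Lemma \ref{lem: convergence of capillary surfaces} gives a subsequence converging smoothly to some $\Sigma\in\mathcal{E}_t$, so the limits of $s_{t_k}^\pm$ equal $|S(\Sigma)|\le s_t^+$. Combined with the lower bound, both limits equal $s_t^+$. The case $t_k\nearrow t$ is symmetric, with both limits equal to $s_t^-$. Throughout, I would be careful that the smooth convergence in Lemma \ref{lem: convergence of capillary surfaces} controls the boundary curves, so that $|S(\cdot)|$ is continuous along the subsequence.
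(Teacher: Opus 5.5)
Your proposal is correct and follows the same route as the paper. The paper defers (1) and (2) to \cite[Proposition 23, Lemma 24]{EK24} and derives (3) from Lemma \ref{lem: convergence of capillary surfaces} together with (2). Your argument is a self-contained version of exactly this: compactness gives (1), adding the two minimality inequalities gives monotonicity, and strictness follows because a common minimizer would have to meet $\partial_0 M$ at two different angles. Two cosmetic points: passing from strictness for each pair $(\Sigma_1,\Sigma_2)$ to $s_{t_2}^->s_{t_1}^+$ uses the attainment from (1). Also, energy-essentiality plays no role in showing $\partial\Sigma_1\neq\emptyset$; the separation argument, with the maximum principle keeping minimizers off $\partial_\pm M$, suffices.
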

\begin{proof}
    The proof of (1)(2) is the same as \cite[Proposition 23, Lemma 24]{EK24}. (3) follows directly from Lemma \ref{lem: convergence of capillary surfaces} and (2).
\end{proof}

Next, we define the free energy profile $I: (0, |\partial_0M|)\longrightarrow (0,\infty)$ following the idea of \cite{EK24}. We give self-contained detailed arguments here. First, we need to define a function $\sigma$ that associates the lateral area $s$ to the time parameter $t$. From Lemma \ref{lem: properties of Sigma t} (2), $s_{t_1}^+<s_{t_2}^-$ whenever $0<t_1<t_2<\Phi^{-1}(\cos\alpha)$, so $[s_{t_1}^-,s_{t_1}^+]\cap [s_{t_2}^-,s_{t_2}^+] = \emptyset$. Moreover,  Lemma \ref{lem: convergence of capillary surfaces} and Lemma \ref{lem: properties of Sigma t}(3) imply $\bigcup_{t\in (0,\Phi^{-1}(\cos\alpha))}[s_t^-,s_t^+]$ is an open sub-interval of $(0,|\partial_0M|)$. Since $\partial_- M$ is an outermost free boundary minimizing surface, and $\partial_+ M$ is an innermost capillary minimizing surface of capillary angle $\alpha$, we conclude 
$$\bigcup_{t\in (0,\Phi^{-1}(\cos\alpha))}[s_t^-,s_t^+] = (0,|\partial_0 M|).$$
Consequently, we can define a continuous and nondecreasing function $\sigma: (0, |\partial_0M|)\longrightarrow (0,\Phi^{-1}(\cos\alpha))$ given by $\sigma(s) = t$ when $t\in (0,\Phi^{-1}(\cos\alpha))$ is the unique number satisfying $s\in [s_t^-,s_t^+]$.

With $\sigma$ at hand we proceed to define the free energy profile $$I: (0, |\partial_0M|)\longrightarrow (0,\infty),$$ which is given by
\begin{align*}
    I(s) = |\Sigma_{\sigma(s)}^-|+\Phi(\sigma(s))(s-s_{\sigma(s)}^-).
\end{align*}
Using Lemma \ref{lem: properties of Sigma t} and that $\sigma$ is continuous, we see that $I$ is also continuous. We have the following natural upper bound of $I$.

\begin{lemma}[{\cite[Lemma 28]{EK24}}]\label{lem: properties of I}
 Let $\Sigma\in \mathcal{F}(M)$ be a surface, then
    \begin{align*}
        I(|S(\Sigma)|)\le |\Sigma|.
    \end{align*}
\end{lemma}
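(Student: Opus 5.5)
Write $s=|S(\Sigma)|\in(0,|\partial_0M|)$ and $t=\sigma(s)$. The plan is to compare $\Sigma$ with the minimizers $\Sigma_t^\pm\in\mathcal{E}_t$ from Lemma \ref{lem: properties of Sigma t}(1), using only the minimizing property of $J_t$ and the linear structure of $I$ on the interval $[s_t^-,s_t^+]$.

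First observe that for every $\tilde s\in[s_t^-,s_t^+]$ we have
\[
I(\tilde s)=|\Sigma_t^-|+\Phi(t)(\tilde s-s_t^-) ,
\]
by the definition of $I$ together with $\sigma(\tilde s)=t$. In particular, Lemma \ref{lem: properties of Sigma t}(1) gives $I(s_t^\pm)=|\Sigma_t^\pm|$. The identity can be rewritten as
\[
I(\tilde s)-\Phi(t)\tilde s=|\Sigma_t^-|-\Phi(t)s_t^-=J_t(\Sigma_t^-)=\min_{\mathcal F(M)}J_t .
\]
Since $\Sigma_t^-$ minimizes $J_t$ over $\mathcal{F}(M)$ and $\Sigma\in\mathcal{F}(M)$, we get
\[
|\Sigma|-\Phi(t)\,|S(\Sigma)|=J_t(\Sigma)\ge J_t(\Sigma_t^-)=I(s)-\Phi(t)s .
\]
Because $|S(\Sigma)|=s$, the terms $\Phi(t)s$ cancel, leaving $|\Sigma|\ge I(s)$.

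This is the whole argument. The only point to check is that $\sigma(s)$ is well defined at $s=|S(\Sigma)|$. That holds by the covering $\bigcup_t[s_t^-,s_t^+]=(0,|\partial_0M|)$ established just before the definition of $\sigma$, which relies on Assumption A.

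The degenerate cases $s=0$ or $s=|\partial_0 M|$ lie outside the domain of $I$ and need no treatment. If one wants them, they follow by continuity, or trivially from the outermost/innermost minimizing properties of $\partial_\pm M$.

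I do not expect a real obstacle. The potential subtlety is only that $s$ might fall strictly inside a jump interval $(s_t^-,s_t^+)$, where no minimizer has wetting area exactly $s$. This is handled by the observation above that $I-\Phi(t)\,\mathrm{id}$ is constant, equal to $\min J_t$, on the whole interval.
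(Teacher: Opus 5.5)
Your argument is correct: on $[s_t^-,s_t^+]$ the quantity $I(\tilde s)-\Phi(t)\tilde s$ is constant and equal to $\min_{\mathcal F(M)}J_t$. Testing the minimality of $J_t$ against $\Sigma$ therefore gives $|\Sigma|\ge I(|S(\Sigma)|)$, and this holds also when $|S(\Sigma)|$ falls strictly inside a jump interval. The paper gives no proof of its own and simply cites \cite[Lemma 28]{EK24}; your supporting-line argument, comparing $J_t(\Sigma)$ with $J_t(\Sigma_t^-)$, is the natural way to derive the statement from the definition of $I$.
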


Now we are in a position to derive the main differential inequality of this section
\begin{proposition}\label{prop: differential inequality for I}
Given $\delta>0$ sufficiently small. For every $s_0\in (\delta, |\partial_0M|-\delta)$, there exists $\tilde{\delta}\in (0,\delta)$ and $\psi\in C^\infty(s_0-\tilde{\delta},s_0+\tilde{\delta})$ such that
\begin{enumerate}
    \item\label{sec2 item1} $\psi(s_0) = I(s_0)$;
    \item\label{sec2 item2} $\psi(s)\ge I(s)$ whenever $s\in (s_0-\tilde{\delta},s_0+\tilde{\delta})$;
    \item\label{sec2 item3} $\psi'(s_0) = \Phi(\sigma(s_0))$;
    \item\label{sec2 item4} 
        If $s_0\in (s_{\sigma(s_0)}^-,s_{\sigma(s_0)}^+)$, then $\psi''(s_0) = 0$;\\
        If $s_0 = s_{\sigma(s_0)}^{\pm}$, let $\Sigma_{\sigma(s_0)}\in\mathcal{E}_{\sigma(s_0)}$ such that $|\Sigma_{\sigma(s_0)}| = s_0$, then
    \begin{multline}\label{eq: 5}
    \psi''(s_0) \le |\partial\Sigma_{\sigma(s_0)}|^{-2}(1-\psi'(s_0)^2)\\
    \cdot\left(2\pi\chi(\Sigma_{\sigma(s_0)}) -\int_{\Sigma_{\sigma(s_0)}}  \frac{1}{2}(R_g+|A|^2)-\frac{1}{\sqrt{1-\psi'(s_0)^2}}\int_{\partial\Sigma_{\sigma(s_0)}}\bar{H}\right).
\end{multline}  
\end{enumerate}
\end{proposition}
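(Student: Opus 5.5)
We construct the barrier $\psi$ by deforming a minimizer $\Sigma_0:=\Sigma_{\sigma(s_0)}\in\mathcal{E}_{\sigma(s_0)}$ through a one-parameter family of admissible surfaces. The family is parametrized by the wetting area $s$ rather than by time. Throughout we write $t_0=\sigma(s_0)$, $\Phi_0=\Phi(t_0)=\cos\theta_0$, and $\theta_0\in(\alpha,\tfrac\pi2)$ for the capillary angle of $\Sigma_0$.

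\textbf{Interior case $s_0\in(s_{t_0}^-,s_{t_0}^+)$.} Here we take $\psi$ to be the affine function
\[
\psi(s)=|\Sigma_{t_0}^-|+\Phi_0\,(s-s_{t_0}^-).
\]
Items \eqref{sec2 item1}, \eqref{sec2 item3} and \eqref{sec2 item4} hold by definition. For \eqref{sec2 item2}, pick $\Sigma_{t_0}^-$. Every $\Sigma$ satisfies $J_{t_0}(\Sigma)\ge J_{t_0}(\Sigma_{t_0}^-)$, and Lemma \ref{lem: properties of I} applied along minimizers gives
\[
I(s)=\min\{|\Sigma|:|S(\Sigma)|=s\}\ \ge\ |\Sigma_{t_0}^-|+\Phi_0(s-s_{t_0}^-).
\]
This is the wrong direction for \eqref{sec2 item2}. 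So instead we argue as in the boundary case below, building comparison surfaces: the competitors $\Sigma_{t_0}^\pm$ themselves realize $I$ at the endpoints, and linear interpolation in $s$ together with the concavity-type structure of \cite{EK24} gives $I\le\psi$ near $s_0$. Concretely, we use the variation construction below starting from $\Sigma^-_{t_0}$ and from $\Sigma^+_{t_0}$.

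\textbf{Boundary case $s_0=s^\pm_{t_0}$.} Let $\Sigma_0$ be the extremal minimizer with $|S(\Sigma_0)|=s_0$. It is a smooth capillary minimal surface at angle $\theta_0$. By the energy-essential assumption it is connected, and it is a disk because $M$ is simply connected. Choose a smooth vector field $Y$ tangent to $\partial_0M$ along $\partial_0M$ with
\[
\langle Y,N\rangle=\frac{\varphi}{\sin\theta_0}\quad\text{on }\partial\Sigma_0,
\]
where $\varphi$ is a function to be chosen. Its flow gives surfaces $\Sigma(\tau)\in\mathcal{F}(M)$ whose wetting area $w(\tau)=|S(\Sigma(\tau))|$ has
\[
w'(0)=\int_{\partial\Sigma_0}\frac{\varphi}{\sin\theta_0}\neq 0.
\]
Invert to write $\tau=\tau(s)$ and set $\psi(s)=|\Sigma(\tau(s))|$. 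Lemma \ref{lem: properties of I} gives \eqref{sec2 item2}, and \eqref{sec2 item1} is immediate. The first variation formula at a $\theta_0$-capillary minimal surface gives $\frac{d}{d\tau}|\Sigma|=\Phi_0\,w'$, hence \eqref{sec2 item3}.

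For \eqref{sec2 item4} we compute $\psi''(s_0)$ via the chain rule:
\[
\psi''=\frac{\frac{d^2}{d\tau^2}\mathcal{A}_{\theta_0}(\Sigma(\tau))}{w'(0)^2}.
\]
Here $\frac{d^2}{d\tau^2}\mathcal{A}_{\theta_0}$ is the second variation \eqref{eq: 2nd variation for capillary functional} with normal speed $f$. The $\Phi_0 w''$ term cancels exactly because we are differentiating $\mathcal{A}_{\theta_0}$, which equals $|\Sigma|-\Phi_0 w$. We choose $f\equiv 1$ on $\Sigma_0$; this fixes $\varphi=\sin\theta_0$ on $\partial\Sigma_0$ and gives
\[
w'(0)=\frac{|\partial\Sigma_0|}{\sin\theta_0},\qquad \frac{1}{w'(0)^2}=\frac{1-\Phi_0^2}{|\partial\Sigma_0|^2}.
\]
Then
\[
\delta^2\mathcal{A}=-\int_{\Sigma_0}\big(\Ric(N,N)+|A|^2\big)-\int_{\partial\Sigma_0}Q.
\]
Next use the Schoen--Yau rewriting
\[
\Ric(N,N)+|A|^2=\tfrac12\big(R_g+|A|^2\big)-K+\tfrac12H^2,\qquad H=0.
\]
Apply Gauss--Bonnet,
\[
\int_{\Sigma_0} K=2\pi\chi(\Sigma_0)-\int_{\partial\Sigma_0}\kappa,
\]
and then the standard capillary boundary identity
\[
\kappa-Q=-\frac{\bar H}{\sin\theta_0}+\text{(terms cancelling by }H=0\text{)}.
\]
Because the geodesic curvature of $\partial\Sigma_0$ in $\Sigma_0$ and in $\partial_0M$ are related through $A(\nu,\nu)$, $\mathrm{I\!I}(\bar\nu,\bar\nu)$ and $\theta_0$, this gives
\[
\kappa-Q=-\frac{1}{\sin\theta_0}\,\bar H+\cot\theta_0\,H.
\]
Together these yield exactly \eqref{eq: 5}, using $\sin\theta_0=\sqrt{1-\psi'(s_0)^2}$.

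\textbf{Main obstacle.} The delicate point is this boundary identity. One must track sign conventions for $\nu,\bar\nu,\mathrm{I\!I},A$ and show that the combination $\kappa-Q$ reduces to $-\bar H/\sin\theta_0$, using $\mathrm{I\!I}(T,T)+\mathrm{I\!I}(\bar\nu,\bar\nu)=\bar H$ and the decomposition of $N$ and $\nu$ in the frame $(\bar N,\bar\nu)$. A second subtlety is that $\psi$ is only defined on one side of the extremal minimizer a priori. Since the deformation is two-sided and Lemma \ref{lem: properties of I} holds for all admissible $\Sigma$, this causes no problem once $Y$ is cut off away from $\partial_\pm M$, which is possible because $s_0$ stays $\delta$-away from the ends.
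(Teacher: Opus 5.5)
\textbf{Interior case: a genuine error.} The interior case $s_0\in(s^-_{t_0},s^+_{t_0})$ is not proved as written. The affine function you first wrote down, $\psi(s)=|\Sigma^-_{t_0}|+\Phi(t_0)(s-s^-_{t_0})$, is exactly the right choice, and item (2) holds with equality. The intervals $[s^-_t,s^+_t]$ are pairwise disjoint by Lemma \ref{lem: properties of Sigma t}(2), so $\sigma\equiv t_0$ on all of $(s^-_{t_0},s^+_{t_0})$. By the very definition $I(s)=|\Sigma^-_{\sigma(s)}|+\Phi(\sigma(s))(s-s^-_{\sigma(s)})$, you therefore get $I\equiv\psi$ on any $(s_0-\tilde\delta,s_0+\tilde\delta)\subset(s^-_{t_0},s^+_{t_0})$, and $\psi''(s_0)=0$.

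You rejected this choice because you identified $I(s)$ with $\min\{|\Sigma|:|S(\Sigma)|=s\}$. That is not how $I$ is defined. Lemma \ref{lem: properties of I} gives only $I(|S(\Sigma)|)\le|\Sigma|$, i.e.\ $I$ lies below that infimum, and for interior $s_0$ there need not be any element of $\mathcal{E}_{t_0}$ with wetting area $s_0$.

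The replacement you propose also fails. Variations of $\Sigma^\pm_{t_0}$ give upper barriers that touch $I$ only at $s^\pm_{t_0}$, not at an interior point. Appealing to ``the concavity-type structure of \cite{EK24}'' does not produce a $\psi$ with $\psi\ge I$, $\psi(s_0)=I(s_0)$ and $\psi''(s_0)=0$. The fix is simply to keep your first choice and read off the definition of $I$.

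\textbf{Boundary case: same as the paper, with slips.} Your boundary case $s_0=s^\pm_{t_0}$ follows the paper's argument. You use an admissible variation with normal speed $f\equiv1$ and set $\psi(s)=|\Sigma(\tau(s))|$, with (2) from Lemma \ref{lem: properties of I} and (3) from the first variation. You then use $\frac{d^2\psi}{ds^2}=(w')^{-2}\frac{d^2}{d\tau^2}\mathcal{A}_{\theta_0}$ together with Gauss--Bonnet and \eqref{eq: 14}. Three bookkeeping points need fixing.
\begin{enumerate}
\item On $\partial\Sigma_0$ one has $\langle Y,\bar\nu\rangle=\langle Y,N\rangle/\sin\theta_0$, so $w'(0)=\int_{\partial\Sigma_0}f/\sin\theta_0$. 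Your intermediate formula $w'(0)=\int\varphi/\sin\theta_0$ is therefore off by a factor of $\sin\theta_0$, although your final value $|\partial\Sigma_0|/\sin\theta_0$ is correct.
\item After Gauss--Bonnet the boundary term is $-\int_{\partial\Sigma_0}(\kappa+Q)$. With $H=0$, \eqref{eq: 14} gives $Q=\bar H/\sin\theta_0-\kappa$, i.e.\ $\kappa+Q=\bar H/\sin\theta_0$. Your identity for $\kappa-Q$ has the sign of $\kappa$ reversed and would not yield \eqref{eq: 5}.
\item Simple connectivity of $M$ does not make $\Sigma_0$ a disk; that needs $R_0>0$, as in Lemma \ref{lem: spectral positivity on surfaces}. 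Nothing here requires it, since $\chi(\Sigma_0)$ is kept in \eqref{eq: 5}.
\end{enumerate}
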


\begin{remark}\label{remark: connectedness is needed}
    At a technical level, the Euler characteristic term in \eqref{eq: 5} is the reason that we require the energy-essential condition. Without it, we may get unbounded number of connected components of $\Sigma$ and lose control of this term. In  \cite{EK24}, Eichmair-Koerber worked through this issue because of the validity of the Euclidean isoperimetric inequality $|\partial \Sigma|^2 \ge 4\pi |\Sigma|$. This scaling enables them to choose different weights in the stability of components of $\Sigma$. 

    For surfaces $\Sigma$ in a positive curvature background geometry, $|\partial \Sigma|$ and $|\Sigma|$ usually scale the same way. Because of this, there seems no elegant way around the multiple connectedness issue. This case is similar to that of Bray's comparison theorem \cite{Bray97}, where connectedness has to be assumed.
\end{remark}

\begin{proof}
    Let $\delta>0$, and $s_0\in (\delta, |\partial_0 M|-\delta)$.
    
    We first consider the case $s_0 = s_{\sigma(s_0)}^{\pm}$. Let $\Sigma_{\sigma(s_0)}\in\mathcal{E}_{\sigma(s_0)}$ such that $|S(\Sigma_{\sigma(s_0)})| = s_0$. Let $\Omega(\tau) \,(\tau\in (-\epsilon,\epsilon))$ be a variation of $\Omega(\Sigma_{\sigma(s_0)})$ of with an admissible variation vector field $X = fN+X_0$, where $X_0$ is the projection of $X$ to $T\Sigma_{\sigma(s_0)}$. Thus, we have $\Omega(0) = \Omega(\Sigma_{\sigma(s_0)})$, and $\Sigma(\tau) = \partial\Omega(\tau)\cap\mathring{M}\in\mathcal{F}(M)$ forms an admissible deformation of $\Sigma_{\sigma(s_0)}$ for $\tau\in (-\epsilon,\epsilon)$. Let $S(\tau) = S(\Sigma(\tau))$. Define $\psi(\tau) = |\Sigma(\tau)|$ and $\eta(\tau) = |S(\tau)|$.

    Let $\theta = \arccos(\Phi(\sigma(s_0)))$. Since $\Sigma_{\sigma(s_0)}$ minimizes the functional
    \begin{align*}
        J_{\sigma(s_0)}(\Sigma) = |\Sigma|-\cos\theta |S(\Sigma)|
    \end{align*}
    among $\mathcal{F}(M)$, $\Sigma_{\sigma(s_0)}$ is a connected area-minimizing capillary surface with capillary angle $\theta$. The first variation formula yields
    
\begin{equation}\label{eq: 1}
    \begin{split}
            &\psi'(0) = \frac{d}{d\tau}|_{\tau = 0}|\Sigma(\tau)| = \int_\Sigma Hf+\int_{\partial\Sigma}\langle X,\nu\rangle = \int_{\partial\Sigma}f\cot\theta,\\
    &\eta'(0) = \frac{d}{d\tau}|_{\tau = 0}|S(\tau)| = \int_{\partial\Sigma}\langle X,\bar{\nu}\rangle = \int_{\partial\Sigma}\frac{1}{\sin\theta}f.
    \end{split}
\end{equation}
Hence we deduce that
\begin{align}\label{eq: 4}
    \psi'(0) = \cos\theta\cdot \eta'(0)
\end{align}

Next, using the second variation formula for capillary minimal surfaces, we find that
\begin{equation}\label{eq: 2nd variation for capillary surface 1}
    \begin{split}
        \psi''(0) - \cos\theta\cdot &\eta''(0) = \frac{d^2}{d\tau^2}|_{\tau = 0}\mathcal{A}_\theta(\Omega_t)\\
    & = \int_\Sigma |\nabla f|^2-(\Ric(N,N)+|A|^2)f^2 - \int_{\partial\Sigma} (\frac{1}{\sin\theta}\mathrm{I\!I}(\bar{\nu},\bar{\nu})+(\cot\theta) A(\nu,\nu))f^2\\
    & = \int_\Sigma |\nabla f|^2-\frac{1}{2}(R_g+|A|^2)f^2+Kf^2 - \int_{\partial\Sigma} (\frac{1}{\sin\theta}\bar{H}-k_{\partial\Sigma})f^2\ge 0
    \end{split}
\end{equation}
where in the last equality  we have used \cite[(3.8)]{Li20}
\begin{align}\label{eq: 14}
    \mathrm{I\!I}(\bar{\nu},\bar{\nu})+\cos\theta A(\nu,\nu)+\sin\theta k_g = \bar{H}.
\end{align}

In the following let us assume $f\equiv 1$. Combinning \eqref{eq: 2nd variation for capillary surface 1} with the Gauss-Bonnet formula, we obtain
\begin{align}\label{eq: 2}
    \psi''(0) - \cos\theta\cdot\eta''(0) = 2\pi\chi(\Sigma) -\int_\Sigma  \frac{1}{2}(R_g+|A|^2)-\frac{1}{\sin\theta}\int_{\partial\Sigma}\bar{H}.
\end{align}

Let $s = |S(\tau)| = \eta(\tau)$ and note that $s_0 = |S(0)| = \eta(0)$. From \eqref{eq: 1},
\begin{align}\label{eq: eta derivative in t}
    \eta'(0) = \frac{1}{\sin\theta}|\partial\Sigma|>0,
\end{align}
so $s$ is strictly increasing in $\tau$ when $\tau$ is in a sufficiently small interval containing $0$. Thus, $\psi$ can be viewed as a function of $s$ near $s_0$. We have $\psi(s_0) = |\Sigma_{\sigma(s_0)}| = I(s_0)$, which verifies \eqref{sec2 item1}. \eqref{sec2 item2} follows directly from Lemma \ref{lem: properties of I}(1) and the definition of $\psi$.

The remaining task is to calculate the derivative of $\psi$ with respect to $s$. For simplicity, we use notations of the form $\psi', \eta'$ to represent the derivative with respect to $\tau$. To begin with, we note \eqref{eq: 4} implies that
\begin{align}\label{eq: 3}
    \frac{d\psi}{ds}\bigg |_{s=s_0} =\frac{\psi'}{\eta'}\bigg |_{\tau=0} = \cos\theta = \Phi(\sigma(s_0))>0,
\end{align}
which verifies \eqref{sec2 item3}. Thus
\begin{align*}
    \frac{d^2\psi}{ds^2}\bigg |_{s=s_0} = \frac{\psi''\eta' - \psi'\eta''}{(\eta')^3}\bigg |_{\tau=0} = \frac{1}{(\eta')^2}(\psi''-(\frac{\psi'}{\eta'})\eta'') = \frac{1}{(\eta')^2}(\psi''-\cos\theta\cdot\eta'').
\end{align*}
Combined with \eqref{eq: 2} and \eqref{eq: eta derivative in t}, we have
\begin{align*}
    \frac{d^2\psi}{ds^2} \bigg|_{s=s_0} = |\partial\Sigma|^{-2}\sin^2\theta\left(2\pi\chi(\Sigma) -\int_\Sigma  \frac{1}{2}(R_g+|A|^2)-\frac{1}{\sin\theta}\int_{\partial\Sigma}\bar{H}\right).
\end{align*}
Plugging \eqref{eq: 3} into the previous expression, we obtain \eqref{sec2 item4}. This completes the proof when $s_0 = s_{\sigma(s_0)}^{\pm}$.

If $s_0\in (s_{\sigma(s_0)}^-,s_{\sigma(s_0)}^+)$, using that $I$ is linear near $s_0$, we see that the function
    \begin{align*}
        \psi(s) = I(s_0)+\Phi(\sigma(s_0))(s-s_0)
    \end{align*}
    coincides with $I(s)$ near $s_0$. Thus in this case, $\psi''(s_0) = 0$ and $\psi'(s_0) = \Phi(\sigma(s_0))\in (0,1)$. Combined with \eqref{eq: 2nd variation for capillary surface 1}, we see that  the right hand side of \eqref{eq: 5} is nonnegative. This completes the proof of the proposition.
\end{proof}

\begin{lemma}\label{lem: semiconcavity of I}
Assume $R_g\geq 0$. Then 
\begin{enumerate}
    \item the left derivative $\underline{I}'$ and the right derivative $\bar{I}'$ exist at every point of $(0,|\partial_0M|)$;
    \item $\underline{I}'$ is left-continuous, and $\bar{I}'$ is right-continuous;
    \item $I$ is differentiable except at countably many points.
\end{enumerate}

\end{lemma}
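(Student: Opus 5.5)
The plan is to show that $I$ is locally semiconcave: near each point it equals a concave function plus a smooth one. All three conclusions then follow from standard facts about concave functions of one variable. The tool is Proposition \ref{prop: differential inequality for I}, which at every $s_0\in(\delta,|\partial_0 M|-\delta)$ supplies a smooth upper barrier $\psi$ touching $I$ from above. We will show that $\psi''(s_0)\le C_\delta$ with $C_\delta$ independent of $s_0$. Then $I(s)-\tfrac{C_\delta}{2}s^2$ is touched from above at every point by smooth functions with nonpositive second derivative, so it is concave in the viscosity sense. A continuous function with this property is concave, and $I$ is continuous by construction.

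\textbf{Uniform bound on $\psi''(s_0)$.} In the linear case $\psi''(s_0)=0$, so there is nothing to do. In the case $s_0=s^\pm_{\sigma(s_0)}$ we use \eqref{eq: 5}. Since $R_g\ge 0$, $|A|^2\ge0$, and $\bar H\ge 0$ (the prism is weakly mean convex), every term in the bracket except $2\pi\chi(\Sigma)$ is nonpositive. The energy-essential assumption makes $\Sigma$ connected, and it has nonempty boundary, so $\chi(\Sigma)\le 1$. We also have $1-\psi'(s_0)^2\le 1$. Hence
\[
\psi''(s_0)\le 2\pi\,|\partial\Sigma_{\sigma(s_0)}|^{-2}.
\]
What remains is a positive lower bound for $|\partial\Sigma_t|$, uniform over minimizers whose wetting area lies in $[\delta,|\partial_0M|-\delta]$.

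\textbf{Main obstacle: the lower bound on $|\partial\Sigma_t|$.} The curve $\partial\Sigma_t$ bounds the wetting region $S(\Sigma_t)$ inside the disk $\partial_0 M$. Its area is at least $\delta$ and the complement in $\partial_0M$ also has area at least $\delta$. The relative isoperimetric inequality on the compact surface $\partial_0 M$ (with corners) then gives
\[
|\partial\Sigma_t|\ge c\min\{\delta,|\partial_0M|-\delta\}^{1/2}>0.
\]
The subtle point is that $\partial\Sigma_t$ might touch the corner curves $\partial_\pm M\cap\partial_0M$. This is excluded because $s_0$ stays a distance $\delta$ away from the endpoints, which correspond to $\partial_\pm M$. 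As a fallback, use a compactness argument via Lemma \ref{lem: convergence of capillary surfaces}. If $|\partial\Sigma_{t_k}|\to0$ along a sequence, smooth convergence gives a limit minimizer with wetting area in $[\delta,|\partial_0M|-\delta]$ and zero boundary length, which is a contradiction.

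\textbf{Conclusion.} Once $I-\tfrac{C_\delta}{2}s^2$ is concave on each interval $(\delta,|\partial_0M|-\delta)$, each of the three items follows. First, one-sided derivatives of concave functions exist everywhere. Second, the left derivative of a concave function is left-continuous and the right derivative is right-continuous, and adding the smooth term $C_\delta s$ preserves both properties. Third, a concave function is differentiable except where its nonincreasing derivative jumps, which happens at only countably many points. Letting $\delta\to0$ covers all of $(0,|\partial_0M|)$.
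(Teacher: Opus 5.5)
Your proof is correct. Its skeleton matches the paper's: a uniform upper bound on $\psi''(s_0)$ from \eqref{eq: 5}, hence viscosity concavity of $I-\tfrac{C}{2}s^2$, hence concavity. The difference lies in how you bound $|\partial\Sigma|$ from below.

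The paper uses that $\partial_0M\cong S^1\times[0,1]$ is an annulus, not a disk as you wrote. For every $\Sigma\in\mathcal{F}(M)$ the curve $\partial\Sigma$ separates the two boundary circles, so it represents a nontrivial class in $H_1(\partial_0M)$. Its length is therefore at least that of the shortest essential loop. This gives $c_0=\inf_{\mathcal{F}(M)}|\partial\Sigma|>0$ independent of the wetting area, hence a single constant $C$ and concavity of $I-\tfrac C2 s^2$ on all of $(0,|\partial_0M|)$.

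Your relative isoperimetric inequality (or the compactness fallback through Lemma \ref{lem: convergence of capillary surfaces}) only yields a $\delta$-dependent constant $C_\delta$, which may blow up as $\delta\to0$. So you obtain semiconcavity on each $(\delta,|\partial_0M|-\delta)$ separately. Since all three conclusions are local, and a countable union of countable sets is countable, this is enough. What each route buys: yours does not use the topology of $\partial_0M$ at all, while the paper's gives a semiconcavity constant that is uniform up to the endpoints.

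Two minor points. The disk/annulus slip is harmless for your argument, because the relative isoperimetric inequality only involves the perimeter of $S(\Sigma_t)$ in the interior of $\partial_0M$, and $|\partial\Sigma_t|$ dominates that. For the same reason your worry about the corner curves is moot. Also, the area margin $\delta$ would not by itself keep $\partial\Sigma_t$ off $\partial_\pm M\cap\partial_0M$; that is a maximum-principle matter, as the paper notes when it says minimizers contain $\partial_-M$ and avoid $\partial_+M$.
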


\begin{proof}
For any $\Sigma\in\mathcal{F}(M)$, the boundary $\partial\Sigma$ represents a nontrivial homology class in $H_1(\partial_0M)$. Therefore, by standard results in geometric measure theory,
\begin{align*}
    c_0:=\inf\{|\partial\Sigma|:\Sigma\in\mathcal{F}(M)\}>0.
\end{align*}
Under the assumption $R_g\ge 0$, we obtain
\begin{align*}
    \psi''(s_0)
    \le
    2\pi\chi(\Sigma_{\sigma(s_0)})c_0^{-2}
    +\frac{1}{\sin\alpha}c_0^{-1}
    \sup_{\partial_0M}|\bar H|.
\end{align*}
Hence, $J(s) = I(s)-\frac{C}{2}s^2$ is concave in the viscosity sense for some constant $C$. By the standard equivalence between viscosity and classical concavity, it follows that $J$ is concave on $(0,|\partial_0M|)$. All of the properties listed in the lemma follows from the concavity of $J$.

%In particular, the left derivative $\underline{I}'$ and the right derivative $\bar{I}'$ exist at every point of $(0,|\partial_0M|)$. Since $\underline{J}'(s) = \underline{I}'(s)-Cs$ and $\bar{J}'(s) = \bar{I}'(s)-Cs$ are monotone, $I$ is differentiable except at countably many points.
\end{proof}

We end this section by recording the following lemma.

\begin{lemma}[{\cite[Lemma 29]{EK24}}]\label{lem: I is increasing}
    $I$ is strictly increasing on $(0,|\partial_0 M|)$. Furthermore,
    \begin{align*}
        \bar{I}'(s)\le \Phi(\sigma(s))\le \underline{I}'(s)
    \end{align*}
    for every $s\in (0,|\partial_0 M|)$.
\end{lemma}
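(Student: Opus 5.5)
The plan is to read off both derivative inequalities from the smooth upper barrier of Proposition \ref{prop: differential inequality for I}, and then get strict monotonicity by integrating. Throughout we are in the setting of Lemma \ref{lem: semiconcavity of I} (with $R_g\ge 0$, which holds for capillary prisms with $R_g\ge R_0>0$), so the one-sided derivatives $\bar I'(s)$ and $\underline I'(s)$ exist everywhere.

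\textbf{Step 1: the derivative inequalities.} Fix $s_0\in(0,|\partial_0 M|)$ and choose $\delta$ small enough that $s_0\in(\delta,|\partial_0M|-\delta)$. Proposition \ref{prop: differential inequality for I} gives $\tilde\delta>0$ and a smooth $\psi$ on $(s_0-\tilde\delta,s_0+\tilde\delta)$ such that:
\begin{itemize}
\item $\psi(s_0)=I(s_0)$;
\item $\psi\ge I$ on this interval;
\item $\psi'(s_0)=\Phi(\sigma(s_0))$.
\end{itemize}
For $s>s_0$ we therefore have
\[\frac{I(s)-I(s_0)}{s-s_0}\le \frac{\psi(s)-\psi(s_0)}{s-s_0}.\]
Letting $s\searrow s_0$ gives $\bar I'(s_0)\le \psi'(s_0)=\Phi(\sigma(s_0))$. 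For $s<s_0$ the denominator is negative, so the inequality between the difference quotients reverses:
\[\frac{I(s)-I(s_0)}{s-s_0}\ge \frac{\psi(s)-\psi(s_0)}{s-s_0}.\]
Letting $s\nearrow s_0$ gives $\underline I'(s_0)\ge \Phi(\sigma(s_0))$. Together these yield $\bar I'(s)\le\Phi(\sigma(s))\le\underline I'(s)$, exactly as stated.

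\textbf{Step 2: strict monotonicity.} By the proof of Lemma \ref{lem: semiconcavity of I}, $I=J+\tfrac C2 s^2$ with $J$ concave. Hence $I$ is locally Lipschitz, and therefore absolutely continuous on every compact subinterval. At each of the (co-countably many) points where $I$ is differentiable, Step 1 forces $I'(s)=\Phi(\sigma(s))$. This is strictly positive, because $\sigma(s)\in(0,\Phi^{-1}(\cos\alpha))$ and $\Phi=\tanh$ is positive on $(0,\infty)$. So for $s_1<s_2$,
\[I(s_2)-I(s_1)=\int_{s_1}^{s_2}\Phi(\sigma(s))\,ds>0.\]

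\textbf{Main obstacle.} The only delicate point is getting the orientation of the one-sided inequalities right. Because $\psi$ is an \emph{upper} barrier, it bounds the right difference quotients from above and the left difference quotients from below. The direction reverses on the left side purely because of the sign of $s-s_0$.

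A fallback for the strictness, in case one wants to avoid the semiconcavity lemma, is a direct comparison. Take $s_1<s_2$ with $t_i=\sigma(s_i)$, so that $t_1\le t_2$. Using the minimality of $\Sigma^-_{t_2}$ for $J_{t_2}$ together with the explicit piecewise-linear form of $I$ on each interval $[s_t^-,s_t^+]$, one should again be able to show $I(s_2)>I(s_1)$.
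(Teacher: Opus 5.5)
Your Step 1 is correct and is the natural barrier argument. The upper barrier $\psi$ from Proposition~\ref{prop: differential inequality for I} touches $I$ at $s_0$ with $\psi'(s_0)=\Phi(\sigma(s_0))$, so it bounds the right difference quotients from above and the left ones from below. That is exactly $\bar I'(s)\le\Phi(\sigma(s))\le\underline I'(s)$. (The paper gives no argument of its own; it only cites \cite[Lemma 29]{EK24}.)

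Your Step 2 is valid but takes a costly detour. It goes through Lemma~\ref{lem: semiconcavity of I}, which is stated only for $R_g\ge 0$ and relies on the second variation and Gauss--Bonnet. The present lemma, by contrast, sits in the general setting of an energy-essential capillary prism under Assumption A, with no curvature hypothesis. So as written you prove it only when $R_g\ge 0$. That covers every later use in the paper, but not the statement as given.

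In fact Step 1 alone suffices. It gives $\liminf_{s\nearrow s_0}\frac{I(s)-I(s_0)}{s-s_0}\ge\Phi(\sigma(s_0))>0$ at every $s_0$. Hence a minimum point $x_0>a$ of the continuous function $I$ on $[a,b]$ is impossible, since points just to the left of $x_0$ have strictly smaller values. The minimum is therefore attained only at $a$, and $I(b)>I(a)$.

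Your fallback has the comparison backwards. Minimality of $\Sigma^-_{t_2}$ for $J_{t_2}$ only bounds $|\Sigma^-_{t_2}|$ from \emph{above}, which is the wrong direction. What works is minimality at the \emph{lower} parameter. Fix $s_0$, put $t=\sigma(s_0)$, and for any $s$ put $t'=\sigma(s)$. On $[s^-_{t'},s^+_{t'}]$ the function $I(s)-\Phi(t)s$ is affine, and by Lemma~\ref{lem: properties of Sigma t}(1) its endpoint values are $J_t(\Sigma^\pm_{t'})$. Therefore
\[
I(s)-\Phi(t)s\;\ge\;\min_{\mathcal F(M)}J_t\;=\;J_t(\Sigma^-_t)\;=\;I(s_0)-\Phi(t)s_0,
\]
that is, $I(s)-I(s_0)\ge\Phi(\sigma(s_0))(s-s_0)$ for all $s$.

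Taking $s>s_0$ gives strict monotonicity with an explicit lower bound on the increment. It needs no curvature assumption and no absolute continuity. This supporting-line inequality also shows that $I$ is convex with $\underline I'\le\Phi\circ\sigma\le\bar I'$. Combined with your Step 1, $I$ is then differentiable everywhere with $I'=\Phi\circ\sigma$. That is stronger than the ``differentiable off a countable set'' conclusion of Lemma~\ref{lem: semiconcavity of I}.
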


\section{The angle to systole inequality in 3 dimensional PSC capillary prisms}

\subsection{Isoperimetric inequalities on compact surfaces with boundary}

$\quad$
In this section we continue the proof of Theorem \ref{thm: 2 systole estimate}. Given Proposition \ref{prop: differential inequality for I}, the remaining piece is to bound $|\partial \Sigma_s|$ in terms of $|\Sigma_s|$. We thus proceed with an isoperimetric inequality for surfaces with boundary. 

Let $\Sigma$ be a compact oriented surface with boundary. The key assumption we have is that
    \begin{align}\label{eq: 7}
        \int_\Sigma (|\nabla\varphi|^2+K\varphi ^2) d\mathcal{H}^2 +\int_{\partial\Sigma} k\varphi^2 d\mathcal{H}^1\ge \lambda\int_\Sigma \varphi^2d\mathcal{H}^2
    \end{align}
    holds for all $\varphi\in C^\infty(\Sigma)$, where $K$ and $k$ denote the Gaussian curvature of $\Sigma$ and the geodesic curvature of $\partial\Sigma$, $\lambda >0$. \eqref{eq: 7} asserts that $\Sigma$ has Gauss curvature bounded by $\lambda$ and convex boundary in the weak spectral sense. It is a consequence of the capillary stability of $\Sigma$.
    
    Assume $(\Sigma,g)$ satisfies \eqref{eq: 7}. We will adopt an argument by \cite{GL83}. First, we introduce the following notations:
\begin{align*}
\gamma &= \partial \Sigma, 
& d(x) &= d(x,\gamma), 
& \gamma(s) &= \{ x \in \Sigma : d(x) = s \}, \\
L(s) &= |\gamma(s)|, 
& K(s) &= \int_{\gamma(s)} K \, dl, 
& G(s) &= \int_0^s K(t)\, dt, \\
\rho_+ &= \sup_{x \in \Sigma} d(x), 
& \Omega(s) &= \{ x \in \Sigma : 0 \le d(x) < s \}, 
& \chi(s) &= \chi(\Omega(s)).
\end{align*}
Note that $\rho^+$ is exactly the relative filling radius $\text{Fill Rad}(\partial\Sigma\subset \Sigma)$ in the language of \cite{Gro83}. Finally, we define
\begin{align}\label{eq: 9}
    \Gamma(s) = 2\pi\chi(s)-G(s)-\int_{\gamma}k.
\end{align}

When $\gamma(s)$ is smooth, the Gauss-Bonnet theorem applied to $\Omega(s)$ implies
\begin{align*}
    \int_{\Omega(s)}K+\int_{\gamma}k+\int_{\gamma(s)}k = 2\pi\chi(s).
\end{align*}
In particular, we have
\begin{align*}
    \Gamma(s) = \int_{\gamma(s)}k
\end{align*}
whenever $\gamma(s)$ is smooth. Here for boundary components $\gamma, \gamma(s)\subset \partial\Omega(s)$, the geodesic curvature is taken with respect to the outward unit normal of $\Omega(s)$. Also, $L'(s) = \Gamma(s)$ when $s$ is small. However, $L(s)$ is generally not continuous because of cut locus, so we need the following partial regularity result.

\begin{lemma}(\cite{Hartman1964},\cite[Lemma 2.1]{Xu22})
    $\gamma(s)$ defined above is piecewise smooth for almost every $s$. Moreover, $L(s)$ is almost everywhere differentiable and $L'(s)\le \Gamma(s)$ for $s>0$.
\end{lemma}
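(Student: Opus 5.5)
This is the classical Hartman regularity result for level sets of the distance function to the boundary, and I would follow Hartman's approach, as adapted by Xu. I will treat it as a statement about the distance function $d$ on a compact smooth surface with smooth boundary $\gamma$, using only two facts: $d$ is smooth away from $\gamma$ and the cut locus $\mathcal{C}$, and $\mathcal{C}$ is closed.

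\textbf{Structure of the cut locus.} The first step is to recall that in dimension two, for a smooth (even $C^2$) metric, $\mathcal{C}$ has finite one-dimensional Hausdorff measure and is a locally finite graph away from a small set. This is Hartman's theorem. On $\Sigma\setminus\mathcal{C}$ the function $d$ is smooth with $|\nabla d|=1$, and the regular level sets there are smooth curves.

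\textbf{Piecewise smoothness of $\gamma(s)$ for a.e.\ $s$.} Next I would argue by coarea applied to $d$ restricted to $\mathcal{C}$. Since $d$ is Lipschitz and $\mathcal{H}^1(\mathcal{C})<\infty$, the sets $\gamma(s)\cap\mathcal{C}$ are finite for a.e.\ $s$. Away from these finitely many points, $\gamma(s)$ is smooth. Near a cut point $p$, $\gamma(s)$ is a finite union of arcs of distance circles from finitely many minimizing directions, hence piecewise smooth. Here I would use Hartman's analysis of the local structure; this is the main obstacle, and I would cite it rather than reprove it.

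\textbf{Differential inequality for $L$.} Write
\[
L(s)=\mathcal{H}^1(\gamma(s)),
\]
and compare $\gamma(s+h)$ with the image of $\gamma(s)\setminus\mathcal{C}$ under the normal exponential flow $\phi_h$ along $\nabla d$. Points of $\gamma(s+h)$ are all reached from $\gamma(s)$ by minimizing geodesics that are not yet cut. The flow only loses length where geodesics die at $\mathcal{C}$, and it never creates new length. Therefore
\[
L(s+h)\le |\phi_h(\gamma(s)\setminus\mathcal{C})|.
\]
By the first variation of length along a unit normal flow, the right-hand side equals
\[
L(s)+h\int_{\gamma(s)}k+o(h).
\]
At corners of $\gamma(s)$ at cut points, the curve is concave toward the direction of growth, since two fronts meet. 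The corner angles therefore contribute negatively to the Gauss--Bonnet-type bookkeeping, and one gets
\[
\int_{\gamma(s)}k\le\Gamma(s),
\]
with $\Gamma(s)$ as defined in \eqref{eq: 9}. Concretely, apply Gauss--Bonnet to $\Omega(s)$ with a piecewise smooth boundary; the exterior turning angles at the corners are negative, so
\[
\int_{\gamma(s)}k=\Gamma(s)-\sum(\text{turning angles})\le\Gamma(s).
\]
This gives an upper right Dini derivative bound $D^+L(s)\le\Gamma(s)$ for a.e.\ $s$.

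\textbf{Conclusion.} Finally, $L$ is a.e.\ differentiable because it is the difference of a monotone-type function and an absolutely continuous one. Indeed, $\Gamma$ has bounded variation, since $\chi(s)$ changes only by jumps at countably many $s$ and $G$ is absolutely continuous. Alternatively, one can use that the length function has locally bounded variation, following Hartman. Combining the a.e.\ differentiability with the Dini bound yields $L'(s)\le\Gamma(s)$ a.e., which is the assertion of the lemma.
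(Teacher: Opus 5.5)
Your outline (fronts flowing along $\nabla d$, length lost at $\mathcal{C}$, Gauss--Bonnet on $\Omega(s)$) follows the Fiala--Hartman argument that the paper cites. However, the step that is supposed to produce $\Gamma(s)$ has the wrong sign. Gauss--Bonnet on $\Omega(s)$ with piecewise smooth boundary gives
\[
\Gamma(s)=\int_{\gamma(s)}k+\sum_i\epsilon_i ,
\]
where $\epsilon_i$ are the exterior angles at the corners of $\gamma(s)$. At a cut point, $\Omega(s)$ is locally $\{d_1<s\}\cup\{d_2<s\}$, which is a reflex corner. So $\epsilon_i<0$, and therefore $\int_{\gamma(s)}k\ge\Gamma(s)$. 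Your own identity $\int_{\gamma(s)}k=\Gamma(s)-\sum(\text{turning angles})$ says exactly this, not $\le$.

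A concrete check: take a planar square with rounded corners and $s$ larger than the rounding radius. Then $\gamma(s)$ is a square with straight sides, so $\int_{\gamma(s)}k=0$, while $\Gamma(s)=2\pi\cdot 0-0-2\pi=-2\pi$. Your first-variation bound $L(s+h)\le L(s)+h\int_{\gamma(s)}k+o(h)$ is correct, but it only yields $D^+L\le\int_{\gamma(s)}k$. That is strictly weaker than the lemma whenever $\gamma(s)$ has corners: in the example it gives $L'\le 0$, whereas in fact $L'=-8\le-2\pi$.

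The missing idea is to account for the length cut off at each corner, which you lost by flowing all of $\gamma(s)\setminus\mathcal{C}$. To first order, the cut curve $\{d_1=d_2\}$ bisects the corner. Write $\epsilon_i=-2\psi_i$ with $\psi_i\in(0,\tfrac{\pi}{2})$. Then each front meets $\mathcal{C}$ at angle $\tfrac{\pi}{2}-\psi_i$. Consequently, the points of $\gamma(s)$ within distance $h\tan\psi_i+o(h)$ of the corner, on each side, hit $\mathcal{C}$ before time $h$ and never reach $\gamma(s+h)$. Discarding them before flowing gives
\[
L(s+h)\le L(s)+h\Bigl(\int_{\gamma(s)}k-2\sum_i\tan\psi_i\Bigr)+o(h).
\]
Only now does the elementary inequality $\tan\psi\ge\psi$ give $\int_{\gamma(s)}k-2\sum_i\tan\psi_i\le\int_{\gamma(s)}k+\sum_i\epsilon_i=\Gamma(s)$. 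In the square example this is $-8\le-2\pi$. This corner comparison is the actual content of Hartman's lemma, and it is absent from your proposal.

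Separately, your last paragraph does not establish a.e.\ differentiability. A bound on $D^+L$ at almost every $s$ does not exclude upward jumps or a singular increasing part of $L$. Bounded variation of $\Gamma$, which you also have not justified, says nothing about $L$. What is needed is that $s\mapsto L(s)-\int_0^s\Gamma$ is nonincreasing, i.e.\ $L(s_2)-L(s_1)\le\int_{s_1}^{s_2}\Gamma$ for all $s_1<s_2$. This requires controlling $L$ at the exceptional levels as well: $L$ may only jump downward, for instance when two fronts collapse onto a segment of $\mathcal{C}$ lying in a single level set. Once this integrated statement is in hand, a.e.\ differentiability follows. It is also exactly the distributional form used in the proof of Lemma~\ref{lem: weak differential inequality}.
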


The following fact is proved as a claim in \cite{Xu22}. We include a proof for completeness.
\begin{lemma}\label{lem: weak differential inequality}
    Let $f\in C^\infty([0,\rho^+])$ be a non-positive function, then
    \begin{align*}
        \int_0^{\rho^+}\Gamma f\le -\int_0^{\rho^+}Lf'-L(0)f(0).
    \end{align*}
\end{lemma}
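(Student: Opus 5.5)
Since $f$ is non-positive, the claim is an integrated form of $L'\le\Gamma$ with the sign running the right way. Integration by parts gives
\[
-\int_0^{\rho^+}Lf'-L(0)f(0)=\int_0^{\rho^+}f\,dL-L(\rho^+)f(\rho^+),
\]
where $dL$ is the distributional derivative of $L$. The difficulty is that $L$ is not absolutely continuous: it can jump at cut-locus values. So I will first determine the sign of its singular part.

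First, I claim $L$ can only jump downward, i.e.\ $L(s^+)\le L(s^-)$ in the appropriate sense, so that the Lebesgue--Stieltjes measure $dL$ has singular part $\le 0$. The plan is the classical argument of \cite{GL83} (equivalently \cite{Hartman1964}, \cite{Xu22}). The distance function $d$ is $1$-Lipschitz with $|\nabla d|=1$ a.e. By the coarea formula, $|\Omega(s)|=\int_0^s L(t)\,dt$. For the upper bound I will use the comparison: for $s<s'$, every point of $\gamma(s')$ is reached by a minimizing geodesic from $\gamma$ that passes through $\gamma(s)$. Hence $\gamma(s')$ is the image of a subset of $\gamma(s)$ under the normal flow for time $s'-s$. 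The Jacobian of this flow is controlled by the curvature through the Riccati equation, and focal and cut points only remove length. This gives
\[
L(s')\le L(s)+\int_s^{s'}\Gamma(t)\,dt,
\]
where the right-hand side uses $\Gamma$ with the Euler characteristic correction $2\pi\chi$ accounting for topology changes. More precisely, Hartman's result states $L(s')-L(s)\le\int_s^{s'}\Gamma$ for a.e.\ $s<s'$, and this is the quantitative input I will cite. Combined with the lemma's statement that $L'\le\Gamma$ a.e., this yields $dL\le \Gamma\,dt$ as measures on $[0,\rho^+)$. I also need $L(s)\to L(0)=|\gamma|$ as $s\to0^+$, which follows from smoothness of $\gamma$.

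Second, I will pair this with $f\le0$. This reverses the inequality of measures, so $\int f\,dL\ge\int f\Gamma$, which is the correct direction. At the right endpoint, $L(\rho^+)=0$ (or $L\ge0$ and $-L(\rho^+)f(\rho^+)\ge0$), so that boundary term only helps.

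To avoid measure-theoretic subtleties, I would instead run the argument with Riemann--Stieltjes sums. Fix a partition $0=s_0<\dots<s_N=\rho^+$ of a.e.-good points. Write
\[
-\sum_i L(s_i)\bigl(f(s_{i+1})-f(s_i)\bigr)=\sum_i f(s_{i+1})\bigl(L(s_{i+1})-L(s_i)\bigr)-L(0)f(0)+(\text{endpoint}).
\]
Bound each increment $L(s_{i+1})-L(s_i)\le\int_{s_i}^{s_{i+1}}\Gamma$ and multiply by $f(s_{i+1})\le0$. Then let the mesh go to $0$, using the boundedness of $L$ and $\Gamma$ and the smoothness of $f$.

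The main obstacle is the increment bound $L(s')-L(s)\le\int_s^{s'}\Gamma$ across nonsmooth levels. This is the content of Hartman's theorem, which I will quote rather than reprove. The only care needed is that $\chi(s)$ is piecewise constant and that $\Gamma$ as defined in \eqref{eq: 9} matches $\int_{\gamma(s)}k$ at smooth levels.
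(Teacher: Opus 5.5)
Your proposal is correct and is essentially the paper's argument. Both proofs reduce the lemma to the weak (integrated) form of $L'\le\Gamma$ from Hartman/Xu, pair it with $f\le 0$, and discard the contribution at $\rho^+$ because $L\ge 0\ge f$. The paper does this with a cutoff $\psi$ at a differentiability point $\rho^+-\delta$, so it only ever tests against compactly supported functions. You instead integrate by parts in the Stieltjes sense on all of $[0,\rho^+]$. That is legitimate, since your increment bound makes $L-\int_0^s\Gamma$ nonincreasing and hence $L$ of bounded variation.

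One small slip: in your Abel-summation display the term should be $+L(0)f(0)$, not $-L(0)f(0)$. Equivalently, $-L(0)f(0)$ belongs on the left-hand side, which is consistent with the continuous identity you wrote just before it.
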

\begin{proof}
    Since $L'(s)=\Gamma(s)$ is smooth in a neighborhood of $s=0$, it is enough to verify the identity for functions $f$ supported away from $0$. 

Fix parameters $\varepsilon \ll \delta \ll 1$ such that $L$ is differentiable at $\rho_+ - \delta$. Let $\psi$ be a cutoff function satisfying
\[
\psi \equiv 0 \ \text{on } [0, \rho_+ - \delta - \varepsilon], 
\quad 
\psi \equiv 1 \ \text{on } [\rho_+ - \delta + \varepsilon, \rho_+],
\]
and $|\psi'| \leq C \varepsilon^{-1}$ for some constant $C$. Decompose
\[
f = f\psi + f(1-\psi).
\]
Then one easily checks that
\[
\int_0^{\rho_+} \Gamma f \psi \, ds \to 0 \quad \text{as } \delta \to 0.
\]

Using the weak inequality $L' \leq \Gamma$, we obtain
\begin{align*}
\int_0^{\rho_+} \Gamma f(1-\psi)\, ds
&\leq - \int_0^{\rho_+} L \bigl(f(1-\psi)\bigr)' \, ds \\
&= - \int_0^{\rho_+} L f' \, ds + o_\delta(1) + \int_0^{\rho_+} L f \psi' \, ds.
\end{align*}

Finally, as $\varepsilon \to 0$, we have
\[
\int_0^{\rho_+} L f \psi' \, ds \to L(\rho_+ - \delta)\, f(\rho_+ - \delta),
\]
which is non-positive. This completes the proof.
\end{proof}

Now we  are able to derive a useful differential inequality from \eqref{eq: 7}.

\begin{lemma}
    Let $\Sigma$ be a compact surface with boundary satisfying \eqref{eq: 7}. Then for any  function $\phi\in C^\infty([0,\rho^+])$ with $\phi\ge 0, \phi'\le 0$ when $s\in (0,\rho^+)$, it holds
\begin{align}\label{eq: 11}
    \lambda\int_0^{\rho^+}L\phi^2 + \int_0^{\rho^+}2L\phi\phi''+\int_0^{\rho^+}L(\phi')^2+ 2L(\gamma)\phi(0)\phi'(0) \le 2\pi\chi(\Sigma)\phi(\rho^+)^2.
\end{align}
\end{lemma}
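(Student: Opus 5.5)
Plug the radial test function $\varphi(x)=\phi(d(x))$ into the stability inequality \eqref{eq: 7}, evaluate everything by the coarea formula along the level sets $\gamma(s)$, and then move all curvature terms onto $\Gamma$ so that Lemma \ref{lem: weak differential inequality} applies.

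\textbf{Step 1 (coarea reduction).} Since $|\nabla d|=1$ almost everywhere, $\varphi$ is Lipschitz, and by approximation it is admissible in \eqref{eq: 7}. The coarea formula gives
\[
\int_\Sigma |\nabla\varphi|^2=\int_0^{\rho^+}L(\phi')^2,\qquad \int_\Sigma\varphi^2=\int_0^{\rho^+}L\phi^2,\qquad \int_\Sigma K\varphi^2=\int_0^{\rho^+}K(s)\phi(s)^2\,ds=\int_0^{\rho^+}G'\phi^2 .
\]
The boundary term is $\phi(0)^2\int_\gamma k$. Hence
\[
\lambda\int_0^{\rho^+}L\phi^2\le\int_0^{\rho^+}L(\phi')^2+\int_0^{\rho^+}G'\phi^2+\phi(0)^2\int_\gamma k .
\]

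\textbf{Step 2 (trade $G$ for $\Gamma$).} Integrate by parts, using $G(0)=0$:
\[
\int_0^{\rho^+} G'\phi^2=G(\rho^+)\phi(\rho^+)^2-\int_0^{\rho^+}2G\phi\phi'.
\]
By \eqref{eq: 9}, $G=2\pi\chi(s)-\Gamma-\int_\gamma k$. The terms involving $\int_\gamma k$ collect to
\[
-\Bigl(\int_\gamma k\Bigr)\Bigl(\phi(\rho^+)^2-\phi(0)^2\Bigr)-\phi(0)^2\int_\gamma k+\phi(0)^2\int_\gamma k ,
\]
so after cancellation they contribute exactly what combines with the explicit $\phi(0)^2\int_\gamma k$. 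A careful bookkeeping leaves $\int_\gamma k$ multiplying only $\phi(\rho^+)^2$, and this is absorbed by $\Gamma(\rho^+)$.

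For the Euler characteristic terms, note that $\Omega(s)$ grows to $\Sigma$, so $\chi(s)$ is piecewise constant. I expect that $\chi(s)\ge\chi(\Sigma)$ in the appropriate sense, or at least that
\[
\int_0^{\rho^+}\!-2\cdot2\pi\chi(s)\phi\phi'\le 2\pi\chi(\Sigma)\bigl(\phi(\rho^+)^2-\phi(0)^2\bigr)
\]
up to a term controlled by signs. This uses $-\phi\phi'\ge0$ and a monotonicity of $\chi(s)$. This is the main obstacle: one has to check that topology changes in $\Omega(s)$ only decrease $\chi$ in the right direction, possibly by arguing that new components cannot appear since every point of $\Omega(s)$ is within distance $s$ of $\gamma$. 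A cruder route is to bound the combined $\chi$ terms as a Stieltjes integral that telescopes to the endpoint value $2\pi\chi(\Sigma)\phi(\rho^+)^2$.

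\textbf{Step 3 (apply the weak inequality).} The remaining $\Gamma$ term is $\int_0^{\rho^+}2\Gamma\phi\phi'$. Since $f=\phi\phi'\le0$, Lemma \ref{lem: weak differential inequality} gives
\[
\int_0^{\rho^+}\Gamma\cdot 2\phi\phi'\le-\int_0^{\rho^+}L\bigl(2(\phi')^2+2\phi\phi''\bigr)-2L(0)\phi(0)\phi'(0).
\]
Substituting, the $(\phi')^2$ terms combine as $1-2=-1$. Moving everything to one side yields \eqref{eq: 11}, with $L(0)=L(\gamma)$.

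I will treat the sign conditions $\phi\ge0$, $\phi'\le0$ as exactly what makes both Step 3 and the $\chi$-estimate in Step 2 go through. I will also double-check the endpoint terms at $\rho^+$, where $L$ may vanish.
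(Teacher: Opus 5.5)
Your Steps 1 and 3 match the paper's argument. Step 2, however, has a genuine gap: you handle the Euler characteristic term in the wrong direction.

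After integrating by parts, use Gauss--Bonnet on all of $\Sigma$ for the endpoint term: $G(\rho^+)=\int_\Sigma K=2\pi\chi(\Sigma)-\int_\gamma k$. This endpoint term alone already supplies the $2\pi\chi(\Sigma)\phi(\rho^+)^2$ on the right of \eqref{eq: 11}. The $\int_\gamma k$ terms then cancel exactly:
\begin{itemize}
\item $-(\int_\gamma k)\phi(\rho^+)^2$ from the endpoint;
\item $+(\int_\gamma k)(\phi(\rho^+)^2-\phi(0)^2)$ from $-\int_0^{\rho^+}2G\phi\phi'$ (your tally has this sign reversed);
\item $+(\int_\gamma k)\phi(0)^2$ from \eqref{eq: 7}.
\end{itemize}
Nothing is ``absorbed by $\Gamma(\rho^+)$''. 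What is left is $-\int_0^{\rho^+}4\pi\chi(s)\phi\phi'$, and this must be shown to be $\le 0$. Making it telescope to $2\pi\chi(\Sigma)(\phi(\rho^+)^2-\phi(0)^2)$ would count the $\chi(\Sigma)$ contribution twice.

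Your target inequality is in fact false. For a round or flat disk, $\chi(\Sigma)=1$, while $\Omega(s)$ is an annulus for $0<s<\rho^+$. So its left side is $0$, and its right side $2\pi(\phi(\rho^+)^2-\phi(0)^2)$ is negative for strictly decreasing $\phi$. Your guesses $\chi(s)\ge\chi(\Sigma)$ and ``topology changes only decrease $\chi$'' also fail. For a disk, $\chi(s)\le 0<1$. Passing a non-global local maximum of $d$ caps off a boundary circle of $\Omega(s)$ and raises $\chi$.

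The missing idea is the one-sided bound $\chi(s)\le 0$ for $0<s<\rho^+$. Every component of $\Omega(s)$ meets $\gamma$, because a minimizing path from a point to $\partial\Sigma$ has decreasing $d$ and so stays in $\Omega(s)$. Every component also has boundary on $\gamma(s)$; otherwise it would be all of the connected surface $\Sigma$, which is impossible since some point has $d=\rho^+>s$. Hence each component is a connected surface with $b\ge 2$ boundary curves, and $\chi=2-2g-b\le 0$.

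Because $\phi\phi'\le 0$, this gives $-\int_0^{\rho^+}4\pi\chi(s)\phi\phi'\,ds\le 0$, and the term is simply discarded. No monotonicity of $\chi(s)$ or Stieltjes telescoping is needed. With this correction, your Step 3 (the weak inequality with $f=2\phi\phi'$) finishes the proof exactly as in the paper.
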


\begin{proof}
We test \eqref{eq: 7} by the function $\varphi(x) = \phi(d(x))$. The coarea formula implies
\begin{align}\label{eq: 8}
    \lambda \int_0^{\rho^+}L(s)\phi(s)^2ds\le \int_0^{\rho^+}[L(s)\phi'(s)^2+K(s)\phi(s)^2]ds+(\int_\gamma k)\phi(0)^2.
\end{align}
Since $G'(s) = K(s)$, an integration by parts together with \eqref{eq: 9} yields
\begin{align*}
    \int_0^{\rho^+}K(s)\phi(s)^2 ds &= \int_0^{\rho^+} G'(s)\phi(s)^2ds\\
    &=G(s)\phi(s)^2\Big|_0^{\rho^+}-\int_0^{\rho^+}2G\phi\phi'ds\\
    &= [2\pi\chi(\Sigma)-\int_\gamma k]\phi(\rho^+)^2 - \int_0^{\rho^+}[2\pi\chi(s)-\Gamma(s)-\int_\gamma k]2\phi\phi'ds\\
    &= [2\pi\chi(\Sigma)-\int_\gamma k]\phi(\rho^+)^2-\int_0^{\rho^+}4\pi\chi(s)\phi\phi'\\
    &+\int_0^{\rho^+}2\Gamma(s)\phi\phi'+(\int_\gamma k)[\phi^2(\rho^+)-\phi^2(0)]\\
    &= 2\pi\chi(\Sigma)\phi(\rho^+)^2-\int_0^{\rho^+}4\pi\chi(s)\phi\phi'+\int_0^{\rho^+}2\Gamma(s)\phi\phi'-(\int_\gamma k)\phi^2(0).
\end{align*}
Since $\Omega(s)$ is connected and has more than one boundary components, we have $\chi(s)\le 0$. 
Recalling that $\phi \ge 0$ and $\phi' \le 0$, the above inequality becomes
\begin{align*}
   \int_0^{\rho^+}K(s)\phi(s)^2 ds + (\int_\gamma k)\phi(0)^2\le 2\pi\chi(\Sigma)\phi(\rho^+)^2+\int_0^{\rho^+}2\Gamma(s)\phi\phi' .
\end{align*}
Applying Lemma \ref{lem: weak differential inequality} with $f = 2\phi\phi'$, we obtain
\begin{align*}
    \int_0^{\rho^+}2\Gamma(s)\phi\phi' &\le \int_0^{\rho^+}2L'\phi\phi' \\
    & = 2L\phi\phi'\Big|_0^{\rho^+}-\int_0^{\rho^+}2L\phi\phi''-\int_0^{\rho^+}2L(\phi')^2\\
    &= -2L(\gamma)\phi(0)\phi'(0) - \int_0^{\rho^+}2L\phi\phi''-\int_0^{\rho^+}2L(\phi')^2
\end{align*}
Thus
\begin{equation}\label{eq: 10}
    \begin{split}
        &\int_0^{\rho^+}K(s)\phi(s)^2 ds + (\int_\gamma k)\phi(0)^2+2L(\gamma)\phi(0)\phi'(0) + \int_0^{\rho^+}2L\phi\phi''+\int_0^{\rho^+}2L(\phi')^2\\
    &\le 2\pi\chi(\Sigma)\phi(\rho^+)^2.
    \end{split}
\end{equation}
Combined with \eqref{eq: 8} we get the desired result.
\end{proof}

The following lemma gives the bound of $\rho^+$ under the spectral curvature condition \eqref{eq: 7}.
\begin{lemma}\label{lem: bound for rho+}
Let $\Sigma$ be a compact surface with boundary satisfying \eqref{eq: 7}, then
    \begin{align*}
        \rho^+ = \mathrm{Fill Rad}(\partial\Sigma\subset \Sigma)\le \frac{\pi}{\sqrt{3\lambda}}
    \end{align*}
\end{lemma}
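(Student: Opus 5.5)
We will test inequality \eqref{eq: 11} with a single well-chosen function of the distance to $\gamma=\partial\Sigma$. This is the argument of Gromov--Lawson adapted to the spectral condition \eqref{eq: 7}, in the form used by Xu. The idea is to choose $\phi$ so that the interior terms on the left of \eqref{eq: 11} have a sign, the boundary term $2L(\gamma)\phi(0)\phi'(0)$ vanishes, and the right-hand side vanishes. If $\rho^+$ exceeded the claimed bound, this would contradict the positivity of $L$.

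Concretely, suppose for contradiction that $\rho^+>\frac{\pi}{\sqrt{3\lambda}}$. Set $\rho_1=\frac{\pi}{\sqrt{3\lambda}}<\rho^+$ and take
\[
\phi(s)=\cos\Bigl(\frac{\pi s}{2\rho_1}\Bigr)\ \text{for } 0\le s\le\rho_1,\qquad \phi\equiv 0 \text{ on } [\rho_1,\rho^+].
\]
This function satisfies $\phi\ge0$, $\phi'\le0$, $\phi'(0)=0$ and $\phi(\rho^+)=0$. The right-hand side $2\pi\chi(\Sigma)\phi(\rho^+)^2$ is then $0$, and so is the boundary term. Since $\phi$ is only $C^{1,1}$, we will first apply \eqref{eq: 11} to smooth approximations $\phi_\epsilon$ that keep the monotonicity and sign conditions, and then pass to the limit. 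The integrals involve only $L\in L^1$ together with $\phi,\phi',\phi''$, which are bounded, so the limit is legitimate.

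With $\omega=\frac{\pi}{2\rho_1}$ we have $\phi''=-\omega^2\phi$. Then \eqref{eq: 11} gives
\[
\int_0^{\rho_1}L\bigl[(\lambda-2\omega^2)\cos^2(\omega s)+\omega^2\sin^2(\omega s)\bigr]\,ds\le 0 .
\]
The pointwise bracket is not obviously positive. We therefore plan to use the freedom of the test function: replace the cosine by a profile adapted to the weights, e.g. $\phi=\cos^{a}(\omega s)$, or solve the ODE $\lambda\phi^2+2\phi\phi''+(\phi')^2=0$ exactly. Substituting $\phi=u^{2/3}$ turns this into the linear equation $u''+\frac{3\lambda}{4}u=0$. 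Its solution $u=\cos(\frac{\sqrt{3\lambda}}{2}s)$ has first zero at $s=\frac{\pi}{\sqrt{3\lambda}}$, which is exactly the claimed constant.

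So the refined plan is to take $\phi=\cos^{2/3}(\mu s)$ with $\mu$ slightly smaller than $\frac{\sqrt{3\lambda}}{2}$, cut off at its first zero $\frac{\pi}{2\mu}<\rho^+$. For this $\phi$ the integrand $L(\lambda\phi^2+2\phi\phi''+(\phi')^2)$ is strictly positive on $(0,\frac{\pi}{2\mu})$, because $L>0$ there; this holds since $s<\rho^+$ implies $\gamma(s)\neq\emptyset$. This contradicts \eqref{eq: 11}. Letting $\mu\nearrow\frac{\sqrt{3\lambda}}{2}$ then gives $\rho^+\le\frac{\pi}{\sqrt{3\lambda}}$.

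The main obstacle is regularity. The function $\cos^{2/3}$ has an infinite derivative at its zero, so $\phi'$ and $\phi''$ blow up there. We need to check that the integrals in \eqref{eq: 11} still make sense and that the approximation argument goes through. Near the zero, $\phi\phi''+(\phi')^2$ behaves like $-\frac19 u^{-2/3}(u')^2$, with singularity of order $t^{-2/3}$, which is integrable against bounded $L$. So we will approximate by $\phi_\epsilon=(u+\epsilon)^{2/3}-\epsilon^{2/3}$, suitably truncated, and control the error terms as $\epsilon\to0$.
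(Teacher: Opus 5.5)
Your argument is correct, and it takes a genuinely different route from the paper's. The paper uses a warped $\mu$-bubble. It takes the first eigenfunction $u>0$ of $-\Delta+(K-\lambda)$ with Robin condition $\partial_\nu u+ku=0$ and minimizes $\int_{\partial\Omega}u-\int_\Omega hu$. The weight $h=-c\tan(\frac{\pi}{2\rho^+}\rho_1)$ vanishes on $\partial\Sigma$ and equals $-\infty$ on a small circle around a point at distance $\rho^+$. The contradiction comes from the second variation, comparing $|\nabla h|$ with $\tfrac34h^2+\lambda$; existence is taken from Chodosh--Li and the test function from Xu. You never leave the one-dimensional framework already built for Lemma~\ref{lem: isoperimetric type inequality on Sigma}: you only feed a test function into \eqref{eq: 11}. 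Your substitution $\phi=u^{2/3}$ gives $\lambda\phi^2+2\phi\phi''+(\phi')^2=u^{1/3}(\lambda u+\tfrac43u'')$. This is the linearized form of the same Riccati structure: the critical weight $h=-\tfrac{2\sqrt\lambda}{\sqrt3}\tan(\tfrac{\sqrt{3\lambda}}{2}\rho)$, which solves $h'=-(\lambda+\tfrac34h^2)$, is exactly $\tfrac43(\log\cos(\tfrac{\sqrt{3\lambda}}{2}\rho))'$. That is why both routes produce $\pi/\sqrt{3\lambda}$.

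Your version avoids existence and regularity of weighted $\mu$-bubbles and the second-variation computation, and it makes the subsection self-contained. In exchange it inherits everything \eqref{eq: 11} rests on: Hartman's structure of $\gamma(s)$, the weak inequality $L'\le\Gamma$, and $\chi(s)\le0$. It also needs an approximation step. The $\mu$-bubble argument does not depend on the fine structure of the distance function and adapts more readily to other settings.

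The approximation is easier than you anticipate. For $\phi=\cos^{2/3}(\mu s)$ the singular parts cancel identically: $\lambda\phi^2+2\phi\phi''+(\phi')^2=(\lambda-\tfrac43\mu^2)\cos^{4/3}(\mu s)$. So your estimate of $\phi\phi''+(\phi')^2$ concerns the wrong combination. For $\phi_\epsilon=(u+\epsilon)^{2/3}-\epsilon^{2/3}$ on $[0,\tfrac{\pi}{2\mu}]$, the integrand equals $\lambda\phi_\epsilon^2-\tfrac43\mu^2(u+\epsilon)^{1/3}u$ plus the nonnegative terms $\tfrac43\mu^2\epsilon^{2/3}(u+\epsilon)^{-1/3}u+\tfrac49\epsilon^{2/3}(u+\epsilon)^{-4/3}(u')^2$. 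Every error therefore has the favorable sign. The kink at $\tfrac{\pi}{2\mu}$ is removed by mollifying the even extension with a symmetric kernel. This keeps $\phi\ge0$, $\phi'\le0$, $\phi'(0)=0$ and $\phi(\rho^+)=0$. Because $\phi'$ jumps upward at the kink, the mollification only adds a nonnegative amount to $\int 2L\phi\phi''$, up to $o(1)$.

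Two small fixes:
\begin{enumerate}
\item Positivity of $\int_0^{\pi/(2\mu)}L\cos^{4/3}(\mu s)\,ds$ should come from the coarea formula, $\int_{s_1}^{s_2}L=|\{s_1<d<s_2\}|>0$ for $0\le s_1<s_2\le\rho^+$, not from $\gamma(s)\ne\emptyset$.
\item No limit $\mu\nearrow\tfrac{\sqrt{3\lambda}}{2}$ is needed. A single $\mu<\tfrac{\sqrt{3\lambda}}{2}$ with $\tfrac{\pi}{2\mu}<\rho^+$ already gives the contradiction.
\end{enumerate}
Also, your first trial function is Lipschitz rather than $C^{1,1}$, though you discard it anyway.
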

\begin{proof}
    Let $u>0$ be the first eigenfunction of $-\Delta+(K-\lambda)$ subject to a Robin boundary condition:
    \begin{align*}
        \begin{cases}
-\Delta u+(K-\lambda)u = \mu_1 u & \mbox{in  } \Sigma,\\
 \frac{\partial u}{\partial\nu}+ku = 0 &\mbox{on }\partial\Sigma.
\end{cases}
    \end{align*}
Assume the lemma is false, then we can find a point $p\in\Sigma$ such that $d(p,\partial\Sigma) = \rho^+>\tfrac{\pi}{\sqrt{3\lambda}}$. Pick $0<\epsilon<\frac{\sqrt{3\lambda}\rho^+}{\pi}-1$, and consider a smooth function $\rho_1: \Sigma\backslash B_\epsilon(p)$ satisfying $\rho_1|_{\partial\Sigma} = 0$, $\rho_1|\partial B_\epsilon(p) = \rho^+$ and $\Lip\rho_1<1+\epsilon$. Consider the following functional acting on all open sets $\Omega\subset\Sigma$ with $\partial\Sigma\subset\Omega$ and $\Omega\cap B_\epsilon(p) = \emptyset$:
\begin{align*}
    E_u(\Omega) = \int_{\partial\Omega} u \, d\mathcal{H}^1 
    - \int_\Omega h u \, d\mathcal{H}^2
\end{align*}
Let
\begin{align*}
    h(x) = -\frac{2\pi(1+\epsilon)}{3\rho^+}\tan\left( \frac{\pi}{2\rho^+}\rho_1(x)\right).
\end{align*}
It is direct to verify
\begin{equation}\label{eq: property of the mu function}
    \begin{split}
        h|_{\partial\Sigma = 0}, \quad h|_{\partial B_\epsilon(p)} = -\infty,\quad |\nabla h|<\frac{3}{4}h^2+\lambda.
    \end{split}
\end{equation}
By \cite[Proposition 15]{CL2024}, the minimizer $\gamma_1$ of $E_u$ exists. By selecting the test function in the second variation formula as in \cite[Appendix A]{Xu22}, we obtain
\begin{align*}
    0\le \int_{\gamma_1}\Big[\frac{3}{4}h^2-\lambda+|\nabla h|\Big]u^{-1}d\mathcal{H}^1,
\end{align*}
which contradicts \eqref{eq: property of the mu function}.
\end{proof}

Now we are ready to prove the main estimate of this subsection.
\begin{lemma}\label{lem: isoperimetric type inequality on Sigma}
    Let  $\Sigma$ be a compact surface with boundary satisfying \eqref{eq: 7}, then
    \begin{align*}
         |\partial\Sigma|\ge \sqrt{\frac{\lambda}{3}}|\Sigma|.
    \end{align*}
\end{lemma}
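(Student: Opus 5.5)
The plan is to test the integrated inequality \eqref{eq: 11} with a profile $\phi$ that vanishes at $s=\rho^+$, and then use Lemma \ref{lem: bound for rho+} to control the length of the interval $[0,\rho^+]$. Since $\phi(\rho^+)=0$, the topological term $2\pi\chi(\Sigma)\phi(\rho^+)^2$ on the right-hand side of \eqref{eq: 11} disappears. This matters because $\chi(\Sigma)$ may equal $1$, as in the disk case relevant to Theorem \ref{thm: 2 systole estimate}, and then that term would be positive. What remains is
\begin{align*}
\int_0^{\rho^+} L\,\bigl(\lambda\phi^2+2\phi\phi''+(\phi')^2\bigr)\,ds \le -2L(\gamma)\phi(0)\phi'(0) = 2|\partial\Sigma|\,\phi(0)|\phi'(0)|.
\end{align*}
By the coarea formula $\int_0^{\rho^+}L\,ds=|\Sigma|$, since $|\nabla d|=1$ a.e. Hence it suffices to find an admissible $\phi$, meaning $\phi\ge0$ and $\phi'\le0$, for which $\lambda\phi^2+2\phi\phi''+(\phi')^2\ge\mu>0$ on $[0,\rho^+]$. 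We then get $|\Sigma|\le \frac{2\phi(0)|\phi'(0)|}{\mu}|\partial\Sigma|$ and need to optimize the ratio $2\phi(0)|\phi'(0)|/\mu$.

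\emph{Baseline check.} The linear choice $\phi=\rho^+-s$ gives $\lambda\phi^2+2\phi\phi''+(\phi')^2=\lambda(\rho^+-s)^2+1\ge1$. Hence $|\Sigma|\le 2\rho^+|\partial\Sigma|\le \frac{2\pi}{\sqrt{3\lambda}}|\partial\Sigma|$ by Lemma \ref{lem: bound for rho+}. This already proves an inequality of the right form, with constant $\sqrt{3\lambda}/(2\pi)$, and confirms the mechanism. To reach $\sqrt{\lambda/3}$, the $\lambda\phi^2$ term also has to be exploited.

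\emph{Improving the constant.} Here I would use the substitution $\phi=w^{2/3}$, which linearizes the operator:
\begin{align*}
2\phi\phi''+(\phi')^2=\tfrac43 w^{1/3}w'', \qquad\text{so}\qquad \lambda\phi^2+2\phi\phi''+(\phi')^2=w^{1/3}\bigl(\lambda w+\tfrac43 w''\bigr).
\end{align*}
The natural scale of the operator $\tfrac43 w''+\lambda w$ is $\sqrt{3\lambda}/2$, and the bound $\rho^+\le\pi/\sqrt{3\lambda}$ is exactly half a period at that scale, so the filling-radius estimate and the test function are matched. I would take $w$ decreasing and vanishing at $\rho^+$ (or at some $\rho_0\ge\rho^+$), with $\tfrac43 w''+\lambda w$ bounded below by a positive multiple of $w^{-1/3}$ or similar. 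A first candidate is $w=\rho_0-s$ with $\rho_0=\pi/\sqrt{3\lambda}$, and one can interpolate between this and the sine solution. The parameters are then optimized, using only $\rho^+\le\pi/\sqrt{3\lambda}$, so that the resulting ratio equals $\sqrt{3/\lambda}$.

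\emph{Main obstacle.} Two points need care. The first is the boundary behaviour: $w^{2/3}$ is not smooth where $w=0$, so I would either take $\rho_0$ slightly larger than $\rho^+$ or approximate and pass to the limit. The second, and the harder step, is the explicit optimization of $\phi$ needed to land exactly on the constant $\sqrt{\lambda/3}$. If the pure power ansatz falls short, my fallback is to split $[0,\rho^+]$ into two pieces: linear near $\partial\Sigma$, and concave, sine-like, near $s=\rho^+$.
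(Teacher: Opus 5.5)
Your reduction is correct and matches the paper's. A $\phi$ with $\phi(\rho^+)=0$ kills the term $2\pi\chi(\Sigma)\phi(\rho^+)^2$ in \eqref{eq: 11}. Any admissible $\phi$ with $\lambda\phi^2+2\phi\phi''+(\phi')^2\ge\mu>0$ then gives $|\Sigma|\le 2\phi(0)|\phi'(0)|\mu^{-1}|\partial\Sigma|$, and your linear baseline is also correct.

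\textbf{The gap.} The proposal stops exactly where the proof has to happen. You never produce a test function that reaches $\sqrt{\lambda/3}$, and the routes you sketch do not get there:
\begin{enumerate}
\item Under $\phi=w^{2/3}$ the integrand is $w^{1/3}(\lambda w+\tfrac43 w'')$. The sine solution of $\tfrac43 w''+\lambda w=0$, toward which you want to interpolate, therefore makes the integrand vanish identically.
\item The linear choice $w=\rho_0-s$ with $\rho_0=\rho^+$ makes the integrand vanish at $s=\rho^+$, so $\mu=0$.
\item The same choice with $\rho_0>\rho^+$ leaves $\phi(\rho^+)>0$. This reinstates the term $2\pi\chi(\Sigma)\phi(\rho^+)^2$, which is positive for a disk.
\end{enumerate}
Also, $\pi/\sqrt{3\lambda}$ is a quarter period of $\sin(\tfrac{\sqrt{3\lambda}}{2}s)$, not half a period, so the matching heuristic is off as well.

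\textbf{The missing idea.} Put the sine at the level of $\phi$ itself, and choose its frequency to cancel the $\phi^2$ term. For $\phi(s)=\sin\bigl(c(\rho^+-s)\bigr)$ one has
\[
\lambda\phi^2+2\phi\phi''+(\phi')^2=(\lambda-3c^2)\phi^2+c^2,\qquad -2\phi(0)\phi'(0)=c\sin(2c\rho^+).
\]
Taking $c=\sqrt{\lambda/3}$ makes the integrand identically $\lambda/3$. Then \eqref{eq: 11} gives $\tfrac{\lambda}{3}|\Sigma|\le c\sin(2c\rho^+)|\partial\Sigma|\le\sqrt{\lambda/3}\,|\partial\Sigma|$, which is the claim.

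Lemma \ref{lem: bound for rho+} is used only to make this $\phi$ admissible: it gives $c\rho^+\le\pi/3<\pi/2$, so $\phi\ge0$ and $\phi'\le0$ on $[0,\rho^+]$. This is precisely the paper's proof. No substitution, endpoint regularization, or piecewise construction is needed.
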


\begin{proof}
Let $\phi(s) = -\sin(\sqrt{\frac{\lambda}{3}}(s-\rho_+))$. It is direct to check
\begin{align*}
    \lambda\phi^2+2\phi''\phi+(\phi')^2 = \frac{\lambda}{3},\quad2\phi(0)\phi'(0) = -\sqrt{\frac{\lambda}{3}}\sin\left(2\sqrt{\frac{\lambda}{3}}\rho_+\right).
\end{align*}
By Lemma \ref{lem: bound for rho+}, $$\rho^+\le \frac{\pi}{\sqrt{3\lambda}}<\frac{\sqrt{3}\pi}{2\sqrt{\lambda}} = \frac{\frac{\pi}{2}}{\sqrt{\frac{\lambda}{3}}}.$$ Therefore, $\phi(s)$ is decreasing on $(0,\rho^+)$. Since $\phi(\rho^+) = 0$, it follows from \eqref{eq: 10} that
\begin{align*}
    \frac{\lambda}{3}|\Sigma|+2\phi(0)\phi'(0)|\partial\Sigma| = \frac{\lambda}{3}|\Sigma|-\sqrt{\frac{\lambda}{3}}\sin\left(2\sqrt{\frac{\lambda}{3}}\rho_+\right)|\partial\Sigma|\le 0,
\end{align*}
which implies the desired estimate.
\end{proof}

\subsection{Proof of Theorem \ref{thm: 2 systole estimate}}

$\quad$

In this subsection, we prove Theorem \ref{thm: 2 systole estimate}. Let $(M^3,g)$ be as in Theorem \ref{thm: 2 systole estimate}. We record the following fundamental consequence of the second variation formula.
\begin{lemma}\label{lem: spectral positivity on surfaces}
    Let $\Sigma\in\mathcal{F}(M)$ be an area-minimizing capillary surface of capillary angle $\theta$. Then 
    \begin{enumerate}
        \item  for any $\varphi\in C^\infty(\Sigma)$, there holds
    \begin{align}\label{eq: variational inequality for capillary surfaces}
        \int_\Sigma (|\nabla\varphi|^2+K\varphi ^2) d\mathcal{H}^2 +\int_{\partial\Sigma} k\varphi^2 d\mathcal{H}^1\ge \frac{R_0}{2}\int_\Sigma \varphi^2d\mathcal{H}^2,
    \end{align}
    where $K$ denotes the Gauss curvature of $\Sigma$ and $k$ the geodesic curvature of $\partial\Sigma$. 

    \item $\Sigma$ is a topological disk, and
    \begin{align}\label{eq: 33}
        |\Sigma|\le \frac{4\pi}{R_0}.
    \end{align}

    \item \begin{align}\label{eq: 34}
    |\partial \Sigma|\ge C_{\mathrm{iso}}\sqrt{R_0}|\Sigma|
\end{align}
for $C_{\mathrm{iso}} = \frac{1}{\sqrt{6}}$.
    \end{enumerate}
 
\end{lemma}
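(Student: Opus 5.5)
For (1), I would start from the stability inequality \eqref{eq: 2nd variation for capillary surface 1}. Since $\Sigma$ minimizes $\cA_\theta$ in $\mathcal F(M)$, it is stable, so for every $\varphi\in C^\infty(\Sigma)$ we have
\[
\int_\Sigma |\nabla\varphi|^2-\tfrac12(R_g+|A|^2)\varphi^2+K\varphi^2 - \int_{\partial\Sigma}\Big(\tfrac{1}{\sin\theta}\bar H-k\Big)\varphi^2\ge 0 .
\]
This already uses the Gauss equation together with the boundary identity \eqref{eq: 14}. I then use $R_g\ge R_0$ and $|A|^2\ge 0$, and drop $\bar H/\sin\theta\ge 0$, which holds because $\partial_0M$ is weakly mean convex. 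Rearranging gives \eqref{eq: variational inequality for capillary surfaces} directly. Since $\Sigma$ is compact with boundary, the sign convention on $k$ (outward normal of $\Sigma$) is the one used in \eqref{eq: 7}, so (1) says exactly that \eqref{eq: 7} holds with $\lambda=R_0/2$.

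For (2), I would plug $\varphi\equiv 1$ into (1) and apply Gauss--Bonnet:
\[
\tfrac{R_0}{2}|\Sigma|\le \int_\Sigma K+\int_{\partial\Sigma}k = 2\pi\chi(\Sigma).
\]
Since $|\Sigma|>0$, this forces $\chi(\Sigma)>0$. The energy-essential hypothesis makes $\Sigma$ connected, and it has nonempty boundary because $\partial\Sigma$ is nontrivial in $H_1(\partial_0M)$. A connected compact orientable surface with boundary and positive Euler characteristic has $\chi=1$, so $\Sigma$ is a disk. Hence $|\Sigma|\le 4\pi/R_0$, which is \eqref{eq: 33}.

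The one point needing care is connectedness. Energy-essential is stated for $\cA_\beta$-minimizers with $\beta\in[\alpha,\tfrac\pi2]$, so I must check that $\theta$ lies in this range. The surfaces here come from $J_t$ with $\Phi(t)<\cos\alpha$, so $\theta\in(\alpha,\tfrac\pi2)$, and $\Sigma$ is indeed an $\cA_\theta$-minimizer. If one prefers to avoid this check, stability applied on each component gives $\chi>0$ componentwise, so each component is a disk anyway. Connectedness is really needed only for the $2\pi$ constant in (2), and I would note this as the main subtlety.

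For (3), I would simply invoke Lemma \ref{lem: isoperimetric type inequality on Sigma} with $\lambda=R_0/2$, which is legitimate by (1). This gives
\[
|\partial\Sigma|\ge \sqrt{\tfrac{R_0}{6}}\,|\Sigma| = \tfrac{1}{\sqrt6}\sqrt{R_0}\,|\Sigma|,
\]
so $C_{\mathrm{iso}}=1/\sqrt6$. That lemma's proof uses the fact that $\chi(s)\le 0$ for the subdomains $\Omega(s)$ and the term $2\pi\chi(\Sigma)\phi(\rho^+)^2$, which vanishes because $\phi(\rho^+)=0$. Both hold here since $\Sigma$ is a disk, so no further work is needed.
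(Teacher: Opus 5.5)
Your proposal is correct and follows the paper's proof essentially step for step: you rewrite stability using the Gauss equation and \eqref{eq: 14}, drop the $R_g-R_0$, $|A|^2$ and $\bar H$ terms, apply Gauss--Bonnet with $\varphi\equiv 1$, and then invoke Lemma~\ref{lem: isoperimetric type inequality on Sigma} with $\lambda=R_0/2$. Your explicit use of the energy-essential hypothesis (with $\theta\in(\alpha,\tfrac{\pi}{2})$) to get connectedness fills in a step the paper leaves implicit when it concludes $\chi(\Sigma)=1$. One correction to your closing remark: the componentwise fallback only shows that each component is a disk, which gives neither that $\Sigma$ itself is a disk nor the total area bound, so connectedness is needed for all of (2), exactly as your main argument uses it.
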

\begin{proof}
    From the second variation formula for capillary surfaces we deduce that (See for instance the reasoning of \eqref{eq: 2nd variation for capillary surface 1} in the proof of Proposition \ref{prop: differential inequality for I})
    \begin{align}\label{eq: 76}
        \int_\Sigma |\nabla \varphi|^2- \frac{1}{2}(R_g+|A|^2)\varphi^2+K\varphi^2 - \int_{\partial\Sigma} (\frac{1}{\sin\theta}\bar{H}-k_{\partial\Sigma})\varphi^2\ge 0
    \end{align}
    holds for all $\varphi\in C^\infty(\Sigma)$. The inequality \eqref{eq: variational inequality for capillary surfaces} then follows from the assumption that $R_g\ge R_0$ in $M$, as well as $\bar{H}\ge 0$ on $\partial_0 M$. Plugging $\varphi\equiv 1$ into \eqref{eq: variational inequality for capillary surfaces} and using the Gauss-Bonnet formula, we obtain $4\pi\chi(\Sigma)\ge R_0|\Sigma|>0$. Thus $\chi(\Sigma) = 1$, $\Sigma$ is a topological disk and $|\Sigma|\le \frac{4\pi}{R_0}$. The last statement follows from \eqref{eq: variational inequality for capillary surfaces} and Lemma \ref{lem: isoperimetric type inequality on Sigma}.
\end{proof}

Before proceeding, we make a simple observation: $(M^3,g)$ can be assumed to satisfy Assumption~A in Section~2. 

Indeed, by applying the curvature estimates for capillary surfaces (see, e.g. \cite{CEL24}), we can construct an outermost area-minimizing free boundary surface $\Sigma_- \in \mathcal{F}(M)$, as well as an innermost area-minimizing capillary surface $\Sigma_+ \in \mathcal{F}(M)$ with capillary angle $\alpha$, lying between $\partial_+ M$ and $\Sigma_-$. 

By Lemma~\ref{lem: spectral positivity on surfaces}, both $\Sigma_{\pm}$ are disks. Elementary topology then implies that the regions between $\partial_{\pm} M$ and $\Sigma_{\pm}$ are topological $3$-balls. Hence, the region bounded between $\Sigma_-$ and $\Sigma_+$ is a Riemannian cylinder. Note that replacing $(M^3,g)$ by the Riemannian cylinder bounded by $\Sigma_-$ and $\Sigma_+$ does not increase the free boundary $2$-systole. Thus, we may always assume $(M^3,g)$ satisfies Assumption A in the following discussion.

We record the following refined differential inequality for $I(s)$ under the curvature assumptions of Theorem \ref{thm: 2 systole estimate}. The notations are as in Section 2.
\begin{lemma}
    Let $(M^3,g)$ be a Riemannian cylinder satisfying Assumption A and the assumptions of Theorem \ref{thm: 2 systole estimate}. Given $\delta>0$ sufficiently small. For every $s_0\in (\delta, |\partial_0M|-\delta)$, there exists $\tilde{\delta}\in (0,\delta)$ and $\psi\in C^\infty(s_0-\tilde{\delta},s_0+\tilde{\delta})$ such that
\begin{enumerate}
    \item\label{sec3 item1} $\psi(s_0) = I(s_0)$;
    \item\label{sec3 item2} $\psi(s)\ge I(s)$ whenever $s\in (s_0-\tilde{\delta},s_0+\tilde{\delta})$;
    \item\label{sec3 item3} $\psi'(s_0) = \Phi(\sigma(s_0))$;
    \item\label{sec3 item4} \begin{align}\label{eq: 15}
    \psi''(s_0)\le \frac{1}{C_{\mathrm{iso}}^2R_0\psi^2}(1-\psi'(s_0)^2)(2\pi -\frac{R_0}{2}\psi(s_0))
\end{align}
For $C_{\mathrm{iso}} = \frac{1}{\sqrt{6}}$.
\end{enumerate}
\end{lemma}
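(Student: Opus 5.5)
The plan is to take the barrier $\psi$ from Proposition~\ref{prop: differential inequality for I} unchanged and check that its second-derivative bound \eqref{eq: 5} implies \eqref{eq: 15}. Properties (1)--(3) are then immediate. Only (4) needs an argument, and we split it into the same two cases as in that proposition.

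\emph{Case $s_0=s^\pm_{\sigma(s_0)}$.} Let $\Sigma=\Sigma_{\sigma(s_0)}$ be the attaining minimizer.
\begin{enumerate}
\item By Lemma~\ref{lem: spectral positivity on surfaces}(2), $\Sigma$ is a disk, so $\chi(\Sigma)=1$.
\item Since $R_g\ge R_0$ and $|A|^2\ge0$, we have $\int_\Sigma \tfrac12(R_g+|A|^2)\ge \tfrac{R_0}{2}|\Sigma|$. Since $\bar H\ge0$ on $\partial_0M$ and $\sqrt{1-\psi'(s_0)^2}=\sin\theta>0$, the boundary term $-\tfrac{1}{\sin\theta}\int_{\partial\Sigma}\bar H$ is nonpositive and can be dropped.
\item Because $|\Sigma|=\psi(s_0)$, the bracket in \eqref{eq: 5} is therefore at most $2\pi-\tfrac{R_0}{2}\psi(s_0)$.
\item By \eqref{eq: 33} this bracket is nonnegative. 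By \eqref{eq: 34}, $|\partial\Sigma|^{-2}\le \big(C_{\mathrm{iso}}^2R_0|\Sigma|^2\big)^{-1}=\big(C_{\mathrm{iso}}^2R_0\psi(s_0)^2\big)^{-1}$.
\item Multiplying the bound on $|\partial\Sigma|^{-2}$ by the nonnegative quantities $(1-\psi'(s_0)^2)$ and $(2\pi-\tfrac{R_0}{2}\psi(s_0))$ gives \eqref{eq: 15}.
\end{enumerate}
The order matters here: the bracket must be shown nonnegative before the upper bound on $|\partial\Sigma|^{-2}$ is substituted. This sign check is the only real obstacle. The bound $|\Sigma|\le 4\pi/R_0$ supplies it, and the sign of the bracket in this case does not depend on the other case.

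\emph{Case $s_0\in(s^-_{\sigma(s_0)},s^+_{\sigma(s_0)})$.} Here $\psi''(s_0)=0$, so it suffices to show that the right-hand side of \eqref{eq: 15} is nonnegative. We have $1-\psi'(s_0)^2>0$ because $\psi'(s_0)=\Phi(\sigma(s_0))\in(0,1)$. For the last factor, note that $\psi(s_0)=I(s_0)$ lies between $|\Sigma^-_{\sigma(s_0)}|$ and $|\Sigma^+_{\sigma(s_0)}|$, since $I$ is linear on that interval. Both endpoints are areas of minimizing capillary surfaces, so each is at most $4\pi/R_0$ by \eqref{eq: 33}. Hence $2\pi-\tfrac{R_0}{2}\psi(s_0)\ge0$, and the inequality holds.
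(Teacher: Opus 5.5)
Your proposal is correct and follows the paper's proof. You reuse the barrier from Proposition~\ref{prop: differential inequality for I}; in the case $s_0=s^\pm_{\sigma(s_0)}$ you combine \eqref{eq: 5} with $R_g\ge R_0$, $\bar H\ge 0$, $\chi(\Sigma)=1$ and \eqref{eq: 34}, and in the interior case you use $\psi''(s_0)=0$, $\Phi(\sigma(s_0))\in(0,1)$ and \eqref{eq: 33}. Your explicit ordering of the sign check, where the bracket is replaced by the nonnegative quantity $2\pi-\frac{R_0}{2}\psi(s_0)$ before the bound on $|\partial\Sigma|^{-2}$ is inserted, makes precise a step the paper leaves implicit.
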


\begin{proof}
Let $\psi$ be constructed as in Proposition \ref{prop: differential inequality for I}. 

If $s_0= s_{\sigma(s_0)}^{\pm}$, the inequality \eqref{eq: 15} follows as a consequence of  \eqref{eq: 5}, the facts $R_g\ge R_0$ in $M$ and $\bar{H}\ge 0$ on $\partial_0 M$, and an application of \eqref{eq: 34} to $\Sigma_{\sigma(s_0)}$. 

If $s_0\in (s_{\sigma(s_0)}^-,s_{\sigma(s_0)}^+)$, the strict monotonicity of the function $I$, \eqref{eq: 33} and the fact that $\psi'(s_0) = \Phi(\sigma(s_0))\in (0,1)$ implies that the right hand side of \eqref{eq: 15} is nonnegative at $s_0$. On the other hand, the first item of Proposition \ref{prop: differential inequality for I} says $\psi''(s_0) = 0$, which implies \eqref{eq: 15} remains true in this case.
\end{proof}

For $s\in (0,|\partial_0 M|)$, we define two mass functions as follows
\begin{equation}\label{eq: 16}
\begin{split}
    &m_0(s) = \exp\left({-\frac{4\pi}{C_{\mathrm{iso}}^2R_0I(s)}}\right)\cdot I(s)^{-\frac{1}{C_{\mathrm{iso}}^2}},\\
    &m(s) = m_0(s)\cdot(1-\bar{I}'(s)^2).
\end{split}
\end{equation}
From now on, we fix the isoperimetric constant $C_{\mathrm{iso}} = \frac{1}{\sqrt{6}}$.

\begin{lemma}\label{lem: The mass function is nondecreasing}
 $m(s)$ is nondecreasing.
\end{lemma}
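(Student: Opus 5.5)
The plan is to treat $m$ as a first integral of the comparison ODE attached to \eqref{eq: 15}, and then to pass from the smooth upper barriers $\psi$ to $I$ itself via the semiconcavity of Lemma \ref{lem: semiconcavity of I}.

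\emph{Smooth computation.} Write $a = C_{\mathrm{iso}}^{-2} = 6$. For a smooth function $\psi$ with $\psi' \in (0,1)$, set $m_\psi = e^{-4\pi a/(R_0\psi)}\psi^{-a}(1-\psi'^2)$. Then
\[
\frac{m_\psi'}{m_\psi} = \frac{4\pi a\,\psi'}{R_0\psi^2} - \frac{a\,\psi'}{\psi} - \frac{2\psi'\psi''}{1-\psi'^2}.
\]
Since $\psi'>0$, we have $m_\psi'\ge 0$ if and only if
\[
\psi'' \le \frac{a(1-\psi'^2)}{2R_0\psi^2}\bigl(4\pi - R_0\psi\bigr) = \frac{a}{R_0\psi^2}(1-\psi'^2)\Bigl(2\pi-\frac{R_0}{2}\psi\Bigr).
\]
This is exactly \eqref{eq: 15}. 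Hence at each $s_0$, the barrier $\psi$ from the preceding lemma satisfies $m_\psi'(s_0)\ge 0$. Moreover $m_\psi(s_0) = m_0(s_0)(1-\Phi(\sigma(s_0))^2)$, because $\psi(s_0)=I(s_0)$ and $\psi'(s_0)=\Phi(\sigma(s_0))$.

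\emph{From barriers to $I$.} By Lemma \ref{lem: semiconcavity of I}, $I$ is locally Lipschitz, $I - \tfrac C2 s^2$ is concave, and $I$ is differentiable off a countable set. At a differentiability point, Lemma \ref{lem: I is increasing} gives $I'(s_0) = \Phi(\sigma(s_0)) = \psi'(s_0)$. Since $\psi\ge I$ with equality at $s_0$, we get $I''\le \psi''(s_0)$ in the viscosity (upper-barrier) sense.

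I would then argue that $m$ has nonnegative "distributional derivative." Write $m = m_0\cdot(1-(\bar I')^2)$. Here $m_0$ is locally Lipschitz and $\bar I'$ is of locally bounded variation, being a derivative of a semiconcave function. The distributional derivative $D\bar I'$ splits into three parts:
\begin{itemize}
\item an absolutely continuous part, bounded above at a.e. point by $\psi''(s_0)$ via Alexandrov's theorem and the barrier comparison;
\item a jump part, which is nonpositive by semiconcavity;
\item a Cantor part, which is also nonpositive by semiconcavity.
\end{itemize}

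\emph{Sign of $Dm$.}
\begin{itemize}
\item At jump points $\bar I'$ decreases from $\underline I'$ to $\bar I'$, and both lie in $[0,1)$: $\bar I'\ge 0$ by monotonicity of $I$, and $\underline I'<1$ should follow from the barriers. So $1-(\bar I')^2$ jumps up, and the jump contribution to $Dm$ is $\ge 0$.
\item The Cantor part of $D\bar I'$ is nonpositive and is multiplied by $-2m_0\bar I'\le 0$, so it also contributes nonnegatively.
\item The absolutely continuous part is $\ge 0$ at a.e. point by the smooth computation, using $I''\le\psi''$ and $I=\psi$, $I'=\psi'$ there.
\end{itemize}
Therefore $Dm\ge 0$ as a measure, and since $m$ is right-continuous (because $\bar I'$ is right-continuous), $m$ is nondecreasing.

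\emph{Main obstacle.} The hardest step is making the a.e. comparison rigorous. The viscosity barrier $\psi$ only touches $I$ from above at $s_0$, and at an Alexandrov point this gives $I''(s_0)\le\psi''(s_0)$ only when the second-order Taylor expansion exists there; that point is fine. If that route gets messy, I would first try an alternative that avoids measure decomposition: show that $s\mapsto m_\psi(s)$ is locally nondecreasing near each $s_0$ and dominates $m$ on the appropriate side. This works because $\psi\ge I$, $\psi(s_0)=I(s_0)$ force $\bar I'(s)\le \psi'(s)$ just to the left of $s_0$ (up to an error) and $\bar I'(s)\ge\psi'(s)$ just to the right. From there, a standard "locally nondecreasing from the right at every point, plus right continuity" argument gives global monotonicity.
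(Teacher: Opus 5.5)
Your proposal is correct and follows essentially the paper's route. The paper does the smooth computation identifying \eqref{eq: 15} with $m'\ge 0$, then a distributional argument for general $I$, which it simply delegates to Bray's Lemma~3 and which you carry out via the decomposition of $D\bar I'$ (handling the absolutely continuous part at Alexandrov points with the barrier $\psi$), and finally uses right-continuity of $\bar I'$ to pass from $Dm\ge 0$ to genuine monotonicity.

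Two small remarks. First, the jump sign needs only $\underline I'\ge \bar I'\ge 0$, not $\underline I'<1$; in fact there are no jumps at all, since semiconcavity together with Lemma~\ref{lem: I is increasing} and the monotonicity of $\sigma$ forces $\underline I'=\bar I'=\Phi\circ\sigma$. Second, your fallback via local monotonicity of $m_\psi$ is unnecessary and would not work as stated, because \eqref{eq: 15} is only available at the touching point $s_0$.
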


\begin{proof}
We first treat the case that $I$ is smooth. Direct calculation yields
\begin{align*}
    m' = mI'\left[
\frac{4\pi}{C_{\mathrm{iso}}^2R_0I^2}
-\frac1{C_{\mathrm{iso}}^2I}
-\frac{2I''}{1-I'^2}
\right].
\end{align*}
Using \eqref{eq: 15}, we estimate
\[
\frac{2I''}{1-I'^2}
\le
\frac{2}{C_{\mathrm{iso}}^2R_0I^2}
\left(2\pi-\frac{R_0}{2}I\right).
\]
Substituting this bound into the previous expression and using $I$ is strictly increasing proves $m'(s)\ge 0$. In general case, the same reasoning of \cite[Lemma 3]{Bray97} shows $m(s)$ is nondecreasing in the distributional sense. From Lemma \ref{lem: semiconcavity of I}, $m(s)$ is right-continuous. This implies $m(s)$ is nondecreasing in usual sense.

\end{proof}

We define a function
\begin{align}\label{eq: 17}
    f(x) = \exp\left({-\frac{1}{C_{\mathrm{iso}}^2x}}\right)\cdot x^{-\frac{1}{C_{\mathrm{iso}}^2}}.
\end{align}
Taking the derivative, we have
\[
f'(x)
= \frac{1}{C_{\mathrm{iso}}^2}\exp\left({-\frac{1}{C_{\mathrm{iso}}^2 x}}\right)\cdot
x^{-\frac{1}{C_{\mathrm{iso}}^2}-1}
\left(
\frac{1}{x}-1
\right).
\]
Hence, $f(x)$ is increasing in $(0,1]$. Thus, for any $\alpha\in (0,\frac{\pi}{2}]$, there exists a unique value $\Psi(\alpha)\in (0,1]$ such that $f(\Psi(\alpha)) = f(1)\sin ^2\alpha$. Note this is equivalent to
\begin{align}\label{eq: 24}
    \frac{1}{x}+\log x = 1-2C_{\mathrm{iso}}^2\log\sin\alpha
\end{align}

\begin{lemma}
\begin{align*}
    \lim_{\alpha\to 0}\Psi(\alpha) = 0.
\end{align*}
More precisely, we have
    \begin{align*}
    \Psi(\alpha)\sim \frac{3}{|\log \alpha|}, \quad(\alpha\to 0).
    \end{align*}
\end{lemma}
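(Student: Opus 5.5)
Since $C_{\mathrm{iso}}^2=\tfrac16$, equation \eqref{eq: 24} reads
\[
\frac1x+\log x = 1-\tfrac13\log\sin\alpha =: T(\alpha).
\]
Throughout, $x=\Psi(\alpha)$ is its unique root in $(0,1]$.

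The plan has three steps.

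\textbf{Step 1: $T(\alpha)\to\infty$.} As $\alpha\to0$ we have $\sin\alpha\to 0$, so $T(\alpha)\to+\infty$.

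\textbf{Step 2: $\Psi(\alpha)\to 0$.} The function $h(x)=\frac1x+\log x$ satisfies $h'(x)=\frac{1}{x}\bigl(1-\frac1x\bigr)<0$ on $(0,1)$. Hence $h$ is strictly decreasing from $+\infty$ to $h(1)=1$, which recovers uniqueness. If $\Psi(\alpha)$ stayed bounded below by some $c>0$ along a sequence $\alpha\to0$, then $h(\Psi(\alpha))\le h(c)<\infty$, contradicting $T(\alpha)\to\infty$. So $\Psi(\alpha)\to0$.

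\textbf{Step 3: the asymptotics.} Rewrite the equation as
\[
\frac{1}{x}\bigl(1+x\log x\bigr)=T(\alpha).
\]
Since $x\to0$ by Step 2, $x\log x\to0$, and therefore $\frac{1}{x}\sim T(\alpha)$. Also $\log\sin\alpha=\log\alpha+\log\frac{\sin\alpha}{\alpha}=\log\alpha+o(1)$, so
\[
T(\alpha)=\tfrac13|\log\alpha|+O(1)\sim\tfrac13|\log\alpha|.
\]
Combining, $\Psi(\alpha)\sim 3/|\log\alpha|$.

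If a more quantitative form is wanted, one can sandwich the root. From $\frac1x\le T$, valid because $\log x\le0$, we get $x\ge 1/T$. From $\frac1x+\log x= T$ together with $\log x\ge \log(1/T)$ we get $\frac1x\ge T-\log T$. This gives
\[
\frac{1}{T}\le \Psi(\alpha)\le \frac{1}{T-\log T},
\]
and both bounds are $\sim 3/|\log\alpha|$.

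The argument is elementary; the only point needing care is justifying that $x\log x\to0$. That is precisely what Step 2 supplies, and the two-sided bounds above remove any circularity.
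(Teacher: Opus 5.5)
Your Steps 1--3 are correct and follow the paper's argument. The paper writes the defining relation as $f(\Psi(\alpha))=f(1)\sin^2\alpha$ with $f(x)=e^{-6/x}x^{-6}$; dividing its logarithm by $-6$ gives exactly your equation $\frac1x+\log x=T(\alpha)$. The paper then uses that $f(x)\to 0$ as $x\to 0$, and that the exponential factor dominates (i.e.\ $1/x$ dominates $\log x$). Your Step 2 and Step 3 spell out these two points explicitly.

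The optional sandwich at the end, however, has its inequalities reversed and should be fixed or dropped.
\begin{itemize}
\item Since $\log x\le 0$, the equation gives $\frac1x=T-\log x\ge T$. So $\Psi(\alpha)\le 1/T$.
\item Your lower bound $\Psi(\alpha)\ge 1/T$ is therefore false for every $\alpha<\frac{\pi}{2}$, because then the root satisfies $x<1$.
\item The next line does not follow from its own premise either: $\log x\ge-\log T$ would give $\frac1x\le T+\log T$, not $\frac1x\ge T-\log T$.
\end{itemize}

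A correct two-sided bound goes as follows. Put $y=1/x\ge 1$, so $y-\log y=T$. The map $y\mapsto y-\log y$ is increasing on $[1,\infty)$, and $2T-\log(2T)\ge T$ because $e^T\ge 2T$. Hence $y\le 2T$, and so $y=T+\log y\le T+\log(2T)$. This gives
\[
\frac{1}{T+\log(2T)}\le\Psi(\alpha)\le\frac{1}{T},
\]
and both sides are $\sim 3/|\log\alpha|$.

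This error does not affect your main proof. Step 2 already establishes $\Psi(\alpha)\to 0$ independently, so there was no circularity for the sandwich to remove.
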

\begin{proof}
The first statement follows from $\lim_{x\to 0}f(x) = 0$. For the second statement, we note that when $x$ is sufficiently small, $\exp({-\frac{1}{C_{\mathrm{iso}}^2 x}})\ll x^{-\frac{1}{C_{\mathrm{iso}}^2}}$. The desired conclusion then follows immediately.    
\end{proof}

\begin{lemma}\label{lem: comparison of mass function for different angles}
    For $\alpha<\theta_2<\theta_1<\frac\pi2$, $t_i = \Phi^{-1}(\cos\theta_i)\quad(i=1,2)$, and let $\Sigma_{t_i}\in\mathcal{E}_{t_i}$ be an area-minimizing surface with capillary angle $\theta_i$ in $(M,g)$. Then
    \begin{align}
        \exp\left({-\frac{4\pi}{C_{\mathrm{iso}}^2R_0|\Sigma_{t_1}|}}\right)\cdot |\Sigma_{t_1}|^{-\frac{1}{C_{\mathrm{iso}}^2}}\sin^2\theta_1\le  \exp\left({-\frac{4\pi}{C_{\mathrm{iso}}^2R_0|\Sigma_{t_2}|}}\right)\cdot |\Sigma_{t_2}|^{-\frac{1}{C_{\mathrm{iso}}^2}}\sin^2\theta_2.
    \end{align}
\end{lemma}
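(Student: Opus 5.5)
We will deduce the statement from the monotonicity of the mass $m$ (Lemma \ref{lem: The mass function is nondecreasing}) by evaluating $m$ at the lateral areas of the two surfaces. Set $s_i=|S(\Sigma_{t_i})|$. Since $\theta_2<\theta_1$, we have $\cos\theta_2>\cos\theta_1$, so $t_2>t_1$. Lemma \ref{lem: properties of Sigma t}(2) then gives $s_2\ge s_{t_2}^->s_{t_1}^+\ge s_1$, hence $s_1<s_2$. By construction $\sigma(s_i)=t_i$. Since $\Sigma_{t_i}\in\mathcal{E}_{t_i}$, the definition of $I$ combined with Lemma \ref{lem: properties of Sigma t}(1) (linearity of energy along $[s_t^-,s_t^+]$ with slope $\Phi(t)$) gives $I(s_i)=|\Sigma_{t_i}|$. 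Consequently $m_0(s_i)$ equals exactly the exponential–power factor appearing in the statement, evaluated at $|\Sigma_{t_i}|$.

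It remains to compare the factor $1-\bar I'(s)^2$ with $\sin^2\theta_i$. By Lemma \ref{lem: I is increasing}, $\bar I'(s)\le\Phi(\sigma(s))\le\underline I'(s)$. Since $I$ is increasing, $\bar I'\ge 0$. At $s_2$ this gives $1-\bar I'(s_2)^2\ge 1-\cos^2\theta_2=\sin^2\theta_2$ in the direction we do not want, so we instead work with one-sided limits. We use the following inequalities:
\[
m_0(s_1)\sin^2\theta_1=m_0(s_1)(1-\Phi(t_1)^2)\le m_0(s_1)(1-\bar I'(s)^2)\text{ for suitable } s \text{ near } s_1,
\]
and symmetrically at $s_2$ with $\underline I'$. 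Concretely, choose the extremal representatives. If $s_1=s_{t_1}^+$, then for $s\searrow s_1$ we have $\sigma(s)\searrow t_1$ by Lemma \ref{lem: properties of Sigma t}(3), and right-continuity of $\bar I'$ (Lemma \ref{lem: semiconcavity of I}) together with $\bar I'(s)\le\Phi(\sigma(s))$ gives
\[
\lim_{s\searrow s_1} m(s)\ge m_0(s_1)(1-\Phi(t_1)^2).
\]
This uses $\bar I'(s_1)=\lim\bar I'(s)\le\Phi(t_1)$. More simply, at $s_1$ itself we have $\bar I'(s_1)\le\Phi(t_1)$, so $m(s_1)\ge m_0(s_1)\sin^2\theta_1$.

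At $s_2$ we need the reverse bound $m(s)\le m_0(s_2)\sin^2\theta_2$ for some $s\ge s_1$ approaching $s_2$. For this we take $s\nearrow s_2$ and use left-continuity of $\underline I'$, which gives $\underline I'(s)\to\underline I'(s_2)\ge\Phi(t_2)$. At points of differentiability $\bar I'=\underline I'$, and such points are dense since $I$ is differentiable off a countable set. Hence along a sequence $s^{(k)}\nearrow s_2$ of differentiability points,
\[
\liminf_k \bar I'(s^{(k)})\ge\Phi(t_2),
\]
and continuity of $I$ gives $m_0(s^{(k)})\to m_0(s_2)$. Therefore $\limsup m(s^{(k)})\le m_0(s_2)\sin^2\theta_2$. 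Monotonicity of $m$ (for $s_1\le s^{(k)}$) then yields $m_0(s_1)\sin^2\theta_1\le m(s_1)\le m(s^{(k)})$, and passing to the limit proves the claim.

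The main obstacle is exactly this bookkeeping of one-sided derivatives at the endpoints. The key check is that $\underline I'(s^-)$ limits indeed dominate $\Phi(t_2)$, which holds because $\underline I'(s)\ge\Phi(\sigma(s))$ and $\sigma(s)\nearrow t_2$. If $s_2=s_1$ were possible the argument would degenerate, but $s_1<s_2$ was shown above.
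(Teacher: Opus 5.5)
Your argument is correct, and it is a somewhat more direct variant of the paper's proof. Both combine the monotonicity of $m$ with $\bar I'\le\Phi(\sigma)\le\underline I'$.

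The paper evaluates at the extremal lateral areas. It takes differentiability points $s_{1,k}\searrow s_{t_1}^+$ and $s_{2,k}\nearrow s_{t_2}^-$, where $I'=\Phi(\sigma)$ holds exactly. It then passes to the limit using continuity of $I$ and $\sigma$, which gives the inequality for $\Sigma_{t_1}^+$ and $\Sigma_{t_2}^-$. Only afterwards does it transfer this to arbitrary $\Sigma_{t_i}\in\mathcal{E}_{t_i}$, using:
\begin{itemize}
\item $|\Sigma_{t_1}|\le|\Sigma_{t_1}^+|$ and $|\Sigma_{t_2}^-|\le|\Sigma_{t_2}|$;
\item the monotonicity of $f$ on $(0,1]$;
\item the area bound \eqref{eq: 33}.
\end{itemize}

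You instead evaluate directly at $s_i=|S(\Sigma_{t_i})|$. This works because $I(|S(\Sigma_t)|)=|\Sigma_t|$ for every $J_t$-minimizer: all minimizers share the value $\min J_t$. Lemma \ref{lem: properties of Sigma t}(1) states this only for $\Sigma_t^\pm$, but the one-line reason is identical. At the lower point you use $0\le\bar I'(s_1)\le\Phi(t_1)$ with no limit at all. You take a left limit only at $s_2$, through left-continuity of $\underline I'$.

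What this buys you is that you never use that $m_0$ is increasing in $I$, so neither the monotonicity of $f$ nor \eqref{eq: 33} is needed. The argument works for any nondecreasing mass of the form $m_0(I)\,(1-\bar I'^2)$ with $m_0$ positive and continuous. The cost is only the one-sided-derivative bookkeeping at $s_1$ and $s_2$.

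Your middle paragraph about $s\searrow s_1$ and right-continuity of $\bar I'$ is unnecessary; the direct bound $m(s_1)\ge m_0(s_1)\sin^2\theta_1$ already suffices.
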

\begin{proof}
    Let $t_i = \Phi^{-1}(\cos\theta_i)$. Choose two sequences $s_{1,k}\searrow s_{t_1}^+$ and $s_{2,k}\nearrow s_{t_2}^-$, such that $I$ is differentiable at $s_{1,k}$ and $s_{2,k}$. By Lemma \ref{lem: I is increasing} and Lemma \ref{lem: The mass function is nondecreasing}, we have
    \begin{align*}
        &m_0(s_{1,k})(1-\Phi(\sigma(s_{1,k}))^2)= m_0(s_{1,k})(1-I'(s_{1,k})^2)\\
        \le& m_0(s_{2,k})(1-I'(s_{2,k})^2)= m_0(s_{2,k})(1-\Phi(\sigma(s_{2,k}))^2).
    \end{align*}
    Letting $k\to \infty$, and using the continuity of $I$ and $\sigma$, we have
    \begin{align}\label{eq: 77}
        m_0(s_{t_1}^+)\sin^2\theta_1 = m_0(s_{t_1}^+)(1-\Phi(\sigma(s_{t_1}^+))^2)\le m_0(s_{t_2}^-)(1-\Phi(\sigma(s_{t_2}^-))^2) = m_0(s_{t_2}^-)\sin^2\theta_2.
    \end{align}
    Let $\Sigma_{t_1}^+\in \mathcal{E}_{t_1}$ satisfy $|S(\Sigma_{t_1})| = s_{t_1}^+$, and $\Sigma_{t_2}^-\in \mathcal{E}_{t_2}$ satisfy $|S(\Sigma_{t_2})| = s_{t_2}^-$,  then $I(s_{t_1}^+) = |\Sigma_{t_1}^+|$ and $I(s_{t_2}^-) = |\Sigma_{t_2}^-|$. \eqref{eq: 77} yields
    \begin{align}\label{eq: 78}
        f\left(\frac{R_0|\Sigma_{t_1}^+|}{4\pi}\right)\sin^2\theta_1\le f\left(\frac{R_0|\Sigma_{t_2}^-|}{4\pi}\right)\sin^2\theta_2
    \end{align}
    Because $\Sigma_{t_1},\Sigma_{t_1}^+\in\mathcal{E}_{t_1}$ and $|S(\Sigma_{t_1}^+)|\ge |S(\Sigma_{t_1})|$ and $\Sigma_{t_1}$ is an area-minimizing capillary surface, we know that $|\Sigma_{t_1}|\le |\Sigma_{t_1}^+|$. Recall from \eqref{eq: 33} that $\frac{R_0|\Sigma_{t_1}^+|}{4\pi}\le 1$, we can use the monotonicity of $f$ over $(0,1]$ to deduce that
    \begin{align}\label{eq: 79}
        f\left(\frac{R_0|\Sigma_{t_1}|}{4\pi}\right)\le f\left(\frac{R_0|\Sigma_{t_1}^+|}{4\pi}\right).
    \end{align}
    Similarly,
    \begin{align}\label{eq: 80}
        f\left(\frac{R_0|\Sigma_{t_2}^-|}{4\pi}\right)\le f\left(\frac{R_0|\Sigma_{t_2}|}{4\pi}\right).
    \end{align}
    Combining \eqref{eq: 78}\eqref{eq: 79}\eqref{eq: 80}, we obtain the desired result.

\end{proof}

We are now in a position to prove Theorem \ref{thm: 2 systole estimate}.

\begin{proof}[Proof of Theorem \ref{thm: 2 systole estimate}]
     For $\alpha<\theta_2<\theta_1<\frac\pi2$, let $t_i = \Phi^{-1}(\cos\theta_i)$ and let $\Sigma_{t_i}\in\mathcal{E}_{t_i}$ be an area-minimizing surface with capillary angle $\theta_i$ in $(M,g)$ \quad $i=1,2$. By Lemma \ref{lem: comparison of mass function for different angles} we have
     \begin{align*}
         f\left(\frac{R_0|\Sigma_{t_1}|}{4\pi}\right)\sin^2\theta_1 \le f\left(\frac{R_0|\Sigma_{t_2}|}{4\pi}\right)\sin^2\theta_2.
     \end{align*}
     Recall from \eqref{eq: 33} we have $\frac{4\pi}{R_0|\Sigma_{\theta_i}|}\ge 1$  $(i = 1,2)$. Letting $\theta_2\to \alpha$, we obtain
     \begin{align*}
         f\left(\frac{R_0|\Sigma_{\theta_1}|}{4\pi}\right)\sin^2\theta_1\le f(1)\sin^2\alpha.
     \end{align*}
     When $\theta_1\to\frac{\pi}{2}$, $t_1\to 0$, and $\Sigma_{t_1}$ converges smoothly to a free boundary area-minimizing surface $\Sigma_0$ by possibly passing to a subsequence. 
     Since $f$ is increasing in $(0,1]$, we obtain
     \begin{align*}
         f\left(\frac{R_0|\Sigma_{0}|}{4\pi}\right)\le f(1)\sin^2\alpha.
     \end{align*}
     Therefore,
     \begin{align*}
         \sys_2(M,\partial_0M,g)\le |\Sigma_0|\le \frac{4\pi\Psi(\alpha)}{R_0}.
     \end{align*}
\end{proof}

\section*{Part II. The Coxeter smoothing theorem}
\section{A general Coxeter gluing and smoothing result}\label{section: Coxeter gluing and smoothing}

In this section, we describe a general mollification process that produces metrics with scalar curvature lower bound on closed manifolds by patching building blocks together. The result of this section is not restricted to $3$ dimensions or positive scalar curvature. We begin with the definition of manifolds with corners.

\begin{definition}[Riemannian manifold with corners]
     Let $M$ be a Hausdorff, second countable topological space. We say that $(M,g)$ is an $n$-dimensional smooth Riemannian manifold with corners, if for every point $x\in M$, there exist a closed $n$-dimensional Riemannian manifold $(\hat M^n,\hat g)$ without boundary, smooth domains-with-boundary
    \begin{align*}
        M_1,\dots,M_k\subset \hat M,
        \qquad 0\le k\le n,
    \end{align*}
    whose boundary hypersurfaces meet transversely, and an open set $U\subset \hat{M}$, such that $(M,g)$ is locally isometric to
    \begin{align*}
        (M_1\cap M_2\cap\dots\cap M_k\cap U, \hat{g}|_{M_1\cap M_2\cap\dots\cap M_k\cap U})).
    \end{align*}
\end{definition}

The next definition divides $\partial M$ into its facet part $\partial_{\fac} M$, and the corner part $\partial_{\cor} M$.

\begin{definition}\label{defn: building block}
   Let $(M,g)$ be a compact Riemannian manifold with boundary, possibly with corners, whose boundary is decomposed into a finite union of faces
\begin{align}\label{eq: 56}
    \partial M=\bigcup_{b\in \mathscr{B}} F_b .
\end{align}
Moreover, any two facets $F_{b_1}$ and $F_{b_2}$ $(b_1,b_2\in\mathscr{B})$ meet at a constant angle lying in $(0,\frac\pi2]$. We denote
\begin{align*}
    \partial_{\mathrm{fac}}M = \bigcup_{b\in \mathscr{B}} \Int F_b,
\end{align*}
to be the union of open facets, and $\partial_{\mathrm{cor}}M = \partial M\setminus \partial_{\mathrm{fac}}M$ to be the collection of the corner points.
\end{definition}

We stratify the corner locus according to codimension. For
$2\le k\le n$, let $\mathcal S^k$ denote the set of points
$x\in \partial_{\mathrm{cor}}M$ which have a neighborhood modeled on
\[
    [0,\infty)^k\times \mathbf R^{n-k},
\]
with $x$ corresponding to the origin in the first $k$ coordinates.
Equivalently, $\mathcal S^k$ consists of points where exactly $k$
local faces meet. Thus $\mathcal S^k$ is the codimension-$k$ strata
of the corner locus. %We write
%\[
%    \mathcal S^{\ge k}=\bigcup_{j=k}^n \mathcal S^j .
%\]

Next, we recall the definition of the Coxeter chamber and the Coxeter tessellation.

\begin{definition}[Coxeter chamber and Coxeter tessellation]\label{defn: Coxeter chamber and Coxeter tessellation}
Let $C\subset \mathbf R^n$ be a closed polyhedral cone with vertex at the
origin $\mathbf{0}$, and assume that $C$ has nonempty interior. Let
$\hat{F}_1,\ldots,\hat{F}_m$ be its facets. For each $i$, let $r_i$ denote the reflection across the
hyperplane containing $\hat{F}_i$.

We say that $C$ is a Coxeter chamber if the reflections $r_1,\ldots,r_m$
generate a discrete reflection group $G$ such that $C$ is a fundamental
chamber for the action of $G$ on $\mathbf{R}^n$. In particular, the dihedral angle of two adjacent facets $\hat{F}_i$ and $\hat{F}_j$ equals $\frac{\pi}{m_{ij}}$ for some integer $m_{ij}\geq 2$, and the corresponding reflections satisfy
\begin{align*}
    r_i^2 =1, (r_i r_j)^{m_{ij}} =1.
\end{align*}

Equivalently, the images of $C$ under $G$ have pairwise disjoint interiors and
cover $\mathbf{R}^n$:
\begin{align*}
    \mathbf R^n=\bigcup_{w\in G} w(C).
\end{align*}
The resulting decomposition
\begin{align*}
    \mathcal T_C=\{w(C):w\in G\}
\end{align*}
is called the Coxeter tessellation generated by $C$.
\end{definition}

\begin{example}\label{example: Coxeter chambers on the plane}
        For an integer $p\ge 2$, we denote by $C(p)\subset\mathbf{R}^2$
    the planar Coxeter chamber of angle $\pi/p$, defined by
    \begin{align*}
        C(p)
        =
        \left\{(r\cos\theta,r\sin\theta)\in\mathbf{R}^2
        \;\middle|\;
        r\ge 0,\ 0\le \theta\le \frac{\pi}{p}
        \right\}.
    \end{align*}
    The tessellation $\mathcal{T}_{C(p)}$ has $2p$ chambers, and the corresponding Coxeter group is the dihedral group $D_{2p}$\footnote{In general, we use $D_{2p}$ to denote the dihedral group, i.e. the symmetry group of a regular $2p$-gon, with presentation
\begin{align*}
D_{2p}=\langle a,b\mid a^{2p}=b^2=1,\ bab^{-1}=a^{-1}\rangle .
\end{align*}
}.
\end{example}

We also give the definition of the Coxeter building block.

\begin{definition}
    Let $(M^n,g)$ be a Riemannian manifold with boundary, possibly with corners. We say $(M^n,g)$ is a Coxeter building block, if 
    \begin{enumerate}
        \item any two facets of $(M^n,g)$ meet at a constant angle $\in (0,\frac\pi2]$;
        \item for any $x\in \partial_{\cor}M$, the rescaled pointed manifolds with corners
\begin{align*}
    (M,\lambda^2 g,x)
\end{align*}
converge, as $\lambda\to\infty$, in the pointed Cheeger--Gromov sense to a Euclidean Coxeter chamber $C_x$. We call $C_x$ the Coxeter chamber associated with $x$.
    \end{enumerate}
\end{definition}

We now state the main theorem of this section.

\begin{theorem}[Coxeter gluing and smoothing]\label{thm: coxeter gluing smoothing}
Let $n\ge 3$, $R_0\in\R$, and let
\[
    X
    =
    \left(\bigsqcup_{\lambda\in\Lambda} M_\lambda\right)\big/\sim
\]
be a closed topological \(n\)-manifold obtained
from finitely many compact Riemannian manifolds with corners
\((M_\lambda,g_\lambda)\) by identifying pairs of boundary facets.  Denote by \(\mathcal{S}\subset X\) the image of the glued facets, by \(\mathcal{S}_{\mathrm{fac}}\) the union of the open glued facets, and by \(\mathcal{S}_{\mathrm{cor}}\) the corresponding corner locus. Assume:
\begin{enumerate}
    \item For each \(\lambda\in\Lambda\), \((M_\lambda,g_\lambda)\) is a
    Coxeter building block in the sense of Definition
    \ref{defn: Coxeter chamber and Coxeter tessellation}. 

    \item The gluing is Coxeter-compatible: for every
    \(x\in\mathcal{S}_{\mathrm{cor}}\), there is a neighborhood \(\mathcal{U}\) of \(x\)
    in \(X\), a compact manifold \(N^{n-k}\), and a Euclidean Coxeter
    tessellation of a neighborhood of the origin in \(\mathbf R^k\), such that
    the chamber decomposition of \(\mathcal{U}\) is modeled on
    \[
        N^{n-k}\times \mathcal T_C ,
    \]
which means
    \begin{itemize}
        \item the tangent cones of all chambers incident to \(x\) fit
    together as the chambers of the Coxeter tessellation generated by the
    tangent Coxeter chamber;
    \item $G$ acts isometrically and transitively on chambers of $\mathcal{U}$.
    \end{itemize}

    \item The induced metrics are isometric on every pair of identified facets.  

    \item Each chamber has scalar curvature bounded below by \(R_0\):
    \[
        R_{g_\lambda}\ge R_0
        \qquad\text{in }\Int M_\lambda .
    \]

    \item Along every interior facet \(F_{\lambda}\sim F_{\mu}\), the
    mean-curvature jump is nonnegative:
    \[
        H_{\lambda}+H_{\mu}\ge 0,
    \]
    where \(H_{\lambda}\) and \(H_{\mu}\) are computed with respect to the
    outward unit normals of \(M_\lambda\) and \(M_\mu\), respectively.
\end{enumerate}

Then \(X\) carries a smooth structure, compatible with the smooth structures
on the interiors of the chambers, with the following properties.

\begin{enumerate}
    \item The piecewise metric
    \[
        \hat g|_{\Int M_\lambda}=g_\lambda
    \]
    descends to a globally defined Lipschitz metric on \(X\), smooth on
    \(X\setminus\mathcal{S}\).

    \item The scalar curvature of \(\hat g\) satisfies
    \[
        R_{\hat g}\ge R_0
    \]
    in the distributional sense of Lee--LeFloch \cite{LL15}.

    \item For every \(\varepsilon>0\), there exists a smooth Riemannian metric
    \(g_\varepsilon\) on \(X\) such that
    \[
        R_{g_\varepsilon}\ge R_0-\varepsilon,
    \]
    and
    \[
        g_\varepsilon\longrightarrow \hat g
    \]
    uniformly on \(X\) and smoothly on compact subsets of \(X\setminus
    \mathcal{S}\).  Moreover, \(g_\varepsilon\) may be chosen so that
    \[
        (1-\varepsilon)\hat g\le g_\varepsilon\le (1+\varepsilon)\hat g
    \]
    as quadratic forms.
\end{enumerate}

\end{theorem}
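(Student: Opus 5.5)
The plan is to reduce the Coxeter corner strata to the already-understood hypersurface smoothing by a reflection/doubling trick, and then to handle the codimension-one facets with Miao-type mollification.

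\textbf{Smooth structure.} Near a corner point $x\in\mathcal S_{\mathrm{cor}}$, the Coxeter compatibility hypothesis gives a neighborhood $\mathcal U\cong N^{n-k}\times \mathcal T_C$. The finite reflection group $G$ acts isometrically and transitively on the chambers. I would build smooth charts chamber by chamber. In a fixed chamber $M_\lambda$, take boundary-adapted coordinates in which the facets through $x$ are level sets, chosen by Fermi/normal-exponential constructions so that the facets are totally orthogonal to the appropriate coordinate fields to first order. Then extend these coordinates to the whole of $\mathcal U$ by the $G$-action. Since every chamber is isometric to $M_\lambda$ by an element of $G$ (the transitivity hypothesis), the glued metric is continuous with identical tangential parts across each facet (hypothesis (3)), so $\hat g$ is Lipschitz. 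It is smooth off $\mathcal S$ by construction. Because the chambers are images of a single one under reflections, overlapping charts agree to all orders on the chamber interiors; this gives a well-defined smooth structure. Away from corners, the standard Fermi-coordinate gluing along facets applies.

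\textbf{Distributional scalar curvature and smoothing.} I would first treat the codimension-one strata. Away from $\mathcal S_{\mathrm{cor}}$, the metric is smooth across each facet up to a corner along a hypersurface, with mean curvature jump $H_\lambda+H_\mu\ge0$. The Lee--LeFloch distributional scalar curvature then equals $R_{g_\lambda}$ plus $2(H_\lambda+H_\mu)\,d\sigma$, which is $\ge R_0$; this follows from Miao's computation. For the corner strata $\mathcal S^k$ with $k\ge2$, the Lipschitz metric is $W^{1,\infty}$, and the distributional curvature of a Lipschitz metric assigns no mass to sets of codimension $\ge2$ (this is the Lee--LeFloch capacity observation: for $n\ge3$ they have zero $W^{1,2}$ capacity). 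Hence $R_{\hat g}\ge R_0$ distributionally, which is conclusion (2).

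For the smoothing in conclusion (3), I would proceed in two steps. First, near the facets away from a $\delta$-tube of $\mathcal S_{\mathrm{cor}}$, apply Miao's mollification. This gives $g_\delta$ with scalar curvature $\ge R_0-C\delta$ except in an $O(\delta^2)$-thin region where it is bounded below by $-C$; a conformal correction, solving $-c_n\Delta u + R u = \ldots$ with a small right-hand side in $L^{p}$, $p>n/2$, restores the bound $R\ge R_0-\varepsilon$ with $u\to1$ uniformly. Second, near the corner strata, exploit the $G$-symmetry: mollify $G$-equivariantly, i.e.\ by averaging the Miao-smoothed metrics over $G$. Alternatively, one can note that the singular set of the facet-smoothed metric is now of codimension $\ge2$, so it can be smoothed with the edge-cone method of Li--Mantoulidis \cite{LM2019}. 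There the cone angle is exactly $2\pi$ because the $2m_{ij}$ chambers of angle $\pi/m_{ij}$ fit together; similarly, in codimension $k\ge3$ the tangent cone is the flat $\R^k$. The result is then perturbed conformally as before. Since all perturbations are $C^0$-small and supported near $\mathcal S$, we get $(1-\varepsilon)\hat g\le g_\varepsilon\le(1+\varepsilon)\hat g$ and smooth convergence on compact subsets of $X\setminus\mathcal S$.

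\textbf{Main obstacle.} The hardest step is the interaction at the corners: Miao's smoothings along different facets overlap near $\mathcal S^k$. There the metric is only Lipschitz, with tangent cone the flat tessellation but nonzero first-order terms. So one must show that the scalar curvature of the mollified metric has its negative part controlled in $L^{p}$, $p>n/2$, on a $\delta$-tube around $\mathcal S^{\ge2}$, with norm $\to0$. I would try the scaling estimate $|R_{g_\delta}^-|\lesssim \delta^{-1}$ on a tube of volume $\delta^{2}$. For $n\ge3$, this gives $\|R^-\|_{L^{p}}\to0$ for some $p>n/2$ only when the blow-up is at most $\delta^{-1}$ rather than $\delta^{-2}$. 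That is exactly where the flatness of the Coxeter tangent cone, i.e.\ total angle $2\pi$ (hence no curvature concentration), is used to kill the $\delta^{-2}$ term.
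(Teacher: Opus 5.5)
Your proposal has two genuine gaps: the corner gluing in your first step is asserted rather than constructed, and your smoothing step does not close in general dimension.

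\textbf{Gluing at the corners.} You conclude that extending ``boundary-adapted'' coordinates from one chamber by the $G$-action yields a continuous, hence Lipschitz, metric, because tangential parts agree by hypothesis (3) and $G$ acts transitively. That does not follow. If $r$ is the linear reflection across a flat facet $\hat F$ in such a chart, then $r^*g=g$ along $\hat F$ only if the coordinate normal $\hat n$ is $g$-orthogonal to $\hat F$ at every point of $\hat F$. Otherwise the mixed components $g(\hat n,X)$, $X\in T\hat F$, change sign across $\hat F$ and $\hat g$ jumps. Fermi coordinates achieve this along one facet. They do not achieve it simultaneously along several facets meeting at a Coxeter angle, all flat in the same chart, and your phrase ``orthogonal to the appropriate coordinate fields to first order'' supplies no construction. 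This is exactly the content of the paper's theorem on normal compatible coordinate maps. Each $g_\lambda$ is replaced by $\Phi_\lambda^*g_\lambda$, where $\Phi_\lambda$ is a $C^{1,1}$ self-diffeomorphism, smooth off $\partial_{\mathrm{cor}}M_\lambda$ and equal to the identity to first order on the corner locus. It is built by induction on the codimension of the strata, using a Whitney-type $C^{1,1}$ extension that is smooth away from corners (Appendix B). This is also why $\hat g$ is only Lipschitz.

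The same issue affects your atlas. A corner chart and an overlapping facet chart have a transition map that is smooth across the facet only if it intertwines the two reflections to all orders. That is not automatic for independently chosen charts, so ``agreeing on chamber interiors'' is not enough. The paper arranges it by building the facet charts near corners as induced coordinate maps from the corner charts. The transitions then take the form $(z,y_1,\dots,y_k)\mapsto(\tau(z,y_1),y_2,\dots,y_k)$ and commute with the Coxeter group.

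\textbf{Smoothing.} Granting a Lipschitz $\hat g$, your step 2 is fine, and shorter than the paper's explicit evaluation of the boundary terms on $\partial X_\varepsilon$. The relevant mechanism is that the Lee--LeFloch vector field $V$ is in $L^\infty$, and a cutoff $\eta_\varepsilon$ of a codimension-$\ge 2$ tube has $\int|\nabla\eta_\varepsilon|\to0$; it is not $W^{1,2}$-capacity. Step 3, however, fails at three points:
\begin{itemize}
\item Averaging metrics over $G$ does not preserve scalar curvature lower bounds.
\item Li--Mantoulidis applies to genuine edge-cone metrics. After Miao-smoothing only outside a $\delta$-tube, the facets inside the tube are still singular, so the remaining singular set is not of codimension two.
\item Your own scaling $\int|R^-|^p\lesssim\delta^{2-p}$ tends to zero only for $p<2$. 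That is compatible with $p>n/2$ only when $n=3$, while the theorem is stated for all $n\ge3$.
\end{itemize}
The paper does no corner-specific smoothing. Once $\hat g$ is Lipschitz with $R_{\hat g}\ge R_0$ in the Lee--LeFloch sense, it applies the Ricci-flow mollification theorem of Lee--Liu \cite{LL26}. That result directly gives $g_\varepsilon$ with $R_{g_\varepsilon}\ge R_0-\varepsilon$, uniform convergence, smooth convergence away from $\mathcal S$, and the two-sided quadratic-form bound. Your step 2 produces exactly its hypothesis, so citing it repairs step 3. The ingredient you are genuinely missing is the normal-compatibility construction described above.
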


\subsection{The normal compatible coordinate map}

$\quad$

In \cite{Miao2002}, Miao glued two Riemannian manifolds with boundary to obtain a closed manifold, and produced a PSC metric on this new manifold. Using Fermi coordinates near the common boundary, \cite{Miao2002} specified a smooth structure on the resulting manifold and showed that the newly constructed metric is Lipschitz with respect to this smooth structure. To carry out this construction for manifolds with corners, we need an analogue of Fermi coordinates near corner points. This is the content of Theorem \ref{thm: fermi type coordinate for manifolds with corner} below. The goal of this subsection is to prove this result.

Let $(M^n,g)$ be a Riemannian manifold with corners. Recall that $\mathcal{S}^k\subset \partial_{\cor}M$ is an $n-k$ dimensional submanifold. It might have multiple connected components, some of which might be closed, and others are be open. The latter could happen only when $\mathcal{S}^{k+1}\ne \emptyset$. Let $N^{n-k}\subset\mathcal{S}^k$ be a compact submanifold, possibly with boundary. Note that $N^{n-k}$ and $\mathcal{S}^k$ are of the same dimension. It is possible that $N^{n-k}$ coincides with a closed component of $\mathcal{S}^k$, while it may also be a subdomain of an open component of $\mathcal{S}^k$. By a neighborhood $U$ of $N^{n-k}$ in $M$, we mean a subset of $M$ containing $N^{n-k}$ which is diffeomorphic to $N^{n-k}\times [0,\infty)^k$.

\begin{definition}
    A simplicial polyhedral cone $C\subset\mathbf{R}^k$ is a polyhedral cone with vertex $\mathbf{0}\in \mathbf{R}^k$ and $k$ flat facets $G_1,G_2,\dots,G_k$.
\end{definition}

We will work with coordinate maps in the following sense throughout this section.

\begin{definition}[The coordinate map modeled on $N^{n-k}$]

\quad

    Let $(M^n,g)$ be a Riemannian manifold with corners. Let $N^{n-k}\subset \mathcal{S}^k$ be a compact submanifold, possibly with boundary. A coordinate map modeled on $N^{n-k}$ is a smooth map
    \begin{align*}
        \Psi: U\longrightarrow N^{n-k}\times C
    \end{align*}
    which maps a neighborhood $U$ of $N^{n-k}$ in $M$ diffeomorphically onto an open subset of $N^{n-k}\times C$, where $C$ is a simplicial polyhedral cone in $\mathbf{R}^k$. Moreover, we require
    \begin{align}\label{eq: 96}
        \Psi(z) = (z,\mathbf{0})\text{ for all }z\in N^{n-k}.
    \end{align}
\end{definition}

To begin with, we introduce the notion of the normal compatibility condition.
\begin{definition}[The normal compatibility condition]

\quad

\begin{enumerate}
    \item Let $C\subset \mathbf{R}^k$ be a polyhedral cone with flat facets $G_1,\dots,G_k$, and let $(N^{n-k},\omega_N)$ be a compact Riemannian manifold, possibly with boundary. On $N^{n-k}\times (C\cap B_1(\mathbf{0}))$, let $g$ be a Riemannian metric and let $h=dy_1^2+\cdots+dy_k^2+\omega_N$ be the product metric. For $\hat F_i=N\times G_i$, denote by $n_i$ and $\hat n_i$ the outward unit normals to $\hat F_i$ with respect to $g$ and $h$, respectively. We say that $g$ satisfies the normal compatibility condition if $n_i=\hat n_i$ on $\hat F_i\cap B_1(\mathbf{0})$ for every $i=1,\dots,k$.
    \item Let $(M^n,g)$ be a Riemannian manifold with corner, and let $N^{n-k}\subset \mathcal{S}^k$ be an $n-k$ dimensional compact connected manifold, possibly with boundary. Assume $N^{n-k}$ is contained in $k$ facets $F_1,F_2,\dots, F_k$. Let $U$ be a neighborhood of $N$ in $M$, and let $\Psi: U\longrightarrow N\times (C\cap B_1(\mathbf{0}))$ be a smooth coordinate map. We say $g$ satisfies the normal compatibility condition with respect to the coordinate map $\Psi$, if $(\Psi^{-1})^*g$ satisfies the normal compatibility condition.
\end{enumerate}
\end{definition}

%The following theorem shows the existence of a fermi type coordinate around corner points. With this we can describe the smooth structure of the patched manifold in a clear way.

Now we can state the main theorem in this subsection.
\begin{theorem}[Existence of normal compatible coordinate maps] \label{thm: fermi type coordinate for manifolds with corner}

\quad

    Let $(M^n,g)$ be a compact Riemannian manifold with corners, such that any two facets of $(M^n,g)$ meet at a constant angle in $(0,\frac\pi2]$. Then there exists a diffeomorphism $\Phi: M\longrightarrow M$, satisfying 
\begin{itemize}
    \item $\Phi\in C^\infty(M\backslash \partial_{\cor}M)\cup C^{1,1}(M)$;
    \item $\Phi(x) = x$ for all $x\in\partial_{\cor}M$;
    \item $d\Phi_x = \Id$  for all $x\in\partial_{\cor}M$,
\end{itemize}
such that $g' = \Phi^*g$ has the following properties: For any $p\in \partial_{\cor} M$, there exists 
    \begin{itemize}
        \item a compact connected manifold (possibly with boundary) $N^{n-k}$ satisfying $p\in \Int N^{n-k}\subset \mathcal{S}^k$ which lies in the intersection of  $k$ facets $F_1,F_2,\dots, F_k$;
        \item a neighborhood $U$ of $N$ in $M$;
        \item a simplicial polyhedral cone $C\subset \mathbf{R}^k$ with vertex $\mathbf{0}$ and $k$ facets $G_1,G_2,\dots, G_k$;
        \item a smooth coordinate map $\Psi: U\longrightarrow N^{n-k}\times (C\cap B_1(\mathbf{0}))\subset N^{n-k}\times \mathbf{R}^k$;
    \end{itemize}
such that $g'$ satisfies the normal compatibility condition with respect to the coordinate map $\Psi$. In particular, we have
    \begin{enumerate}

        \item $(\Psi^{-1})^*g' = dy_1^2+dy_2^2+\dots +dy_k^2+\omega_{N}$ along $N$;

        \item \begin{align}
            \angle_{g}(F_i,F_j) = \angle_{g'}(F_i,F_j) = \angle_{\mathbf{R}^k}(G_i,G_j), \quad i,j\in \{1,2,\dots,k\};
        \end{align}

        \item $g'$ is Lipschitz on $M$.
    \end{enumerate}
\end{theorem}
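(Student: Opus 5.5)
We will construct $\Phi$ stratum by stratum, proceeding by downward induction on the codimension $k$: first the deepest corner stratum $\mathcal{S}^n$, then $\mathcal{S}^{n-1}$, and so on down to $\mathcal{S}^2$. At each step we modify the metric only in a small neighborhood of the current stratum, keeping it unchanged near the strata already treated. The model construction is a generalized Fermi coordinate. Near a point $p\in\mathcal{S}^k$ lying on facets $F_1,\dots,F_k$, each facet carries its inward unit normal field $n_i$ (outward is $-n_i$). Since the facets meet at constant angles, the Gram matrix $\langle n_i,n_j\rangle$ is constant along $N=F_1\cap\dots\cap F_k$. Hence there is a fixed simplicial cone $C\subset\mathbf{R}^k$ whose facet normals $\hat n_i$ have the same Gram matrix, and this gives the angle identity (2). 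We then define a map
\[
\Psi^{-1}(z,y)=\exp_z\Big(\sum_j y_j e_j(z)\Big),
\]
where $e_j$ is an orthonormal frame of the normal bundle of $N$ in $M$, chosen so that the vectors $\hat n_i$ correspond to the $n_i$. This map is a diffeomorphism from $N\times(C\cap B_r)$ onto a neighborhood of $N$. Its differential along $N$ is the identity in the split $TN\oplus\mathbf{R}^k$, which gives (1). However, it generally does not carry $N\times G_i$ onto $F_i$.

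We will therefore work in two steps. \textbf{Step 1.} Straighten the facets. Compose with a diffeomorphism of $N\times\mathbf{R}^k$ that is tangent to the identity along $N\times\{0\}$ and that maps $\Psi(F_i)$ onto $N\times G_i$. To build it, write each $\Psi(F_i)$ as a graph over the hyperplane spanned by $G_i$ with respect to the direction $\hat n_i$; the graph function vanishes to second order at $N$. We then correct one facet at a time. Transversality of the $\hat n_i$ lets us perform each correction along directions tangent to the facets already straightened. The result is a smooth chart $\Psi$ in which the facets are flat and $(\Psi^{-1})^*g=h$ along $N$.

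\textbf{Step 2.} Fix the normals by a local diffeomorphism $\Phi$ of $M$. We need the $g'$-unit normal to $N\times G_i$ to equal $\hat n_i$. Write $g'=\Phi^*g$ with $\Phi=\mathrm{id}+$ a correction that vanishes on $\partial M$ to first order at the corners. The mismatch $n_i-\hat n_i$ is smooth and vanishes on $N$, so it is $O(|y|)$. Near $\Int F_i$ we use the standard trick from Miao's Fermi construction: replace $g$ near $F_i$ by pulling back along the normal exponential flow of $F_i$. Concretely, we define $\Phi$ on a collar of each facet by $\Phi(x+t\hat n_i)=\exp_x(t n_i(x))$, with $x\in F_i$, which is the identity on $F_i$. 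Near $N$ these collar maps must agree on overlaps. We will glue them using cutoffs in the angular variable of $C$, scaled homogeneously in $|y|$. Because each collar map differs from the identity by $O(|y|^2)$, a cutoff with gradient $O(|y|^{-1})$ produces a map that is $C^{1,1}$, has $d\Phi=\mathrm{Id}$ on $N$, and is smooth away from the corners. It is also a diffeomorphism for small $r$, since it is $C^1$-close to the identity. The resulting $g'$ is Lipschitz, which gives (3), and normal compatibility holds exactly on each $N\times G_i$. The reason is that in each angular sector touching $G_i$, only the $i$-th collar map acts at the facet itself. The cutoffs must be chosen to equal $1$ near $G_i$, which is possible because the facets of $C$ are separated in angle.

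Finally, a partition of unity over a finite cover of $\mathcal{S}^k$ by neighborhoods $N$ patches the local maps together. Consistency with higher-codimension strata is automatic because near those strata we work in the chart already constructed, where the facets are flat and $h$ is a product. The \textbf{main obstacle} is this compatibility near the boundary of $N$, where $\mathcal{S}^k$ accumulates onto $\mathcal{S}^{k+1}$. There the cutoffs for the lower stratum must restrict exactly to those already chosen, so that normal compatibility on $\mathcal{S}^{k+1}$ is not destroyed. I would handle this by choosing the angular cutoffs on the model cone of the deepest stratum once and for all, and defining all lower-stratum cutoffs as restrictions of these homogeneous functions to the faces.
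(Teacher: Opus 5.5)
Your Step~1 (normal exponential chart from $N$, then straightening the facets) is sound. So is your Step~2 in codimension $k=2$: Fermi collar maps glued by $0$-homogeneous angular cutoffs, using $\Phi_i-\mathrm{id}=O(|y|^2)$ and $D\Phi_i-I=O(|y|)$. Your argument is therefore complete when $k_{\max}(M)=2$, where it is a more direct alternative to the paper's inductive chart and Whitney-plus-dyadic-cutoff construction.

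The gap is at strata of codimension $k\ge 3$. There the facets of $C$ are \emph{not} separated in angle, since $G_i\cap G_j$ is a $(k-2)$-dimensional face. So no cutoff can equal $1$ near $G_i$ and $0$ near $G_j$. This is not a technicality of the cutoffs. Your $\Phi$ fixes every facet pointwise, so at each point of an edge $F_i\cap F_j$ in $U$ (such points accumulate on $N$), $d\Phi$ is the identity on $TF_i+TF_j=TM$. Hence $g'=g$ there, and normal compatibility along the edges would have to hold for $g$ itself in a smooth chart. Step~1 gives this only on $N$, and in general no smooth chart gives it.

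For example, take the orthant in $\mathbf R^3$ with $g^{ij}=\delta^{ij}+\epsilon x_1(\delta^{i1}\delta^{j2}+\delta^{i2}\delta^{j1})$ near $0$; all dihedral angles are $\pi/2$. Any chart with flat facets has first component $x_1a_1(x)$. Compatibility for $\{x_2=0\}$ along the $x_1$-axis forces $\partial_2a_1(0)=-\epsilon a_1(0)$. Compatibility for $\{x_1=0\}$ along the $x_2$-axis forces $a_1\equiv 1$ there, so $\partial_2 a_1(0)=0$, a contradiction. Hence $\Phi$ must move points \emph{inside} the facets near the edges, and $d\Phi\neq\Id$ along edges adjacent to a stratum of codimension $\ge 3$. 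Consistently, the paper's Lemma~\ref{lem: straightening the normal vector locally} only gives $d\phi=\Id$ on $N$, and the bullet $d\Phi=\Id$ on all of $\partial_{\cor}M$ cannot hold on such edges.

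The missing idea is the paper's induction on $k$ inside the local correction. For $k=3$:
\begin{enumerate}
\item It first applies the $k=2$ case within each facet $\hat F_i$, viewed as a manifold with corners along the edges $\hat E_{ij}$. This gives $\phi_i:\hat F_i\to\hat F_i$ fixing the edges pointwise, whose differential matches the $g$-conormal $\nu_{ij,i}$ of $\hat E_{ij}$ in $\hat F_i$ with the Euclidean conormal $\hat\nu_{ij,i}$.
\item These maps agree on the edges and patch to $\phi_0$ on $N\times\partial C$.
\item Because each $\gamma_{ij}$ is constant, $n_i$ is the same fixed combination of $\nu_{ij,i},\nu_{ij,j}$ as $\hat n_i$ is of $\hat\nu_{ij,i},\hat\nu_{ij,j}$. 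So $d\phi_0$ already carries the normals correctly along the edges, and the $1$-jet prescribed on $N\times\partial C$ becomes consistent.
\item Only then does the $C^{1,1}$ Whitney extension (Lemma~\ref{lem: C11 Whitney extension smooth away from corner 2}) produce $\phi$, with the Whitney condition checked via the lower bound on the dihedral angles.
\end{enumerate}

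The same issue governs your ``main obstacle''. Near a deeper stratum, the lower-stratum correction must agree with a map that is \emph{not} the identity on the facets. The paper arranges this through induced coordinate maps (Definition~\ref{defn: induced coordinate map}) and the relative Whitney extension (Lemma~\ref{lem: C11 Whitney extension smooth away from corner 2 relative version}). Restricting homogeneous cutoffs to faces, as you propose, does not supply this.
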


$\quad$

The remaining part of the subsection is devoted to the proof of Theorem \ref{thm: fermi type coordinate for manifolds with corner}. We begin with the following elementary lemma.

\begin{lemma}\label{lem: linear algebra}
    Let $C\subset\mathbf{R}^k$ be a polyhedral cone with $k$ flat facets $G_1,G_2,\dots, G_k$ passing through the origin $\mathbf{0}\in \mathbf{R}^k$. Let $N^{n-k}$ be a compact manifold possibly with boundary, and denote $\hat{F}_i = N\times G_i\subset N\times \mathbf{R}^k (i=1,2,\dots,k)$. Let $\mathcal{C}$ be another polyhedral domain in $N\times \mathbf{R}^k$ with $k$ smooth facets $F_1', F_2', \dots, F_k'$ intersecting along $N = N\times \{\mathbf{0}\}$, satisfying $F_i'$ is tangent to $\hat{F}_i$ along $N$ $(i = 1,2,\dots, k)$. Then there is a smooth map $\zeta$ defined on a neighborhood of $N\subset N\times \mathbf{R}^k$ satisfying
    \begin{itemize}
        \item $\zeta(F_i')\subset \hat{F}_i, (i=1,2,\dots,k)$;
        \item $\zeta(x) = x$ for $x\in N$;
        \item $D\zeta \equiv\Id$ on $N$.
    \end{itemize}
    In particular, $\zeta$ is a diffeomorphism in a small neighborhood of $N\subset N\times \mathbf{R}^k$.
\end{lemma}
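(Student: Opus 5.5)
Since $G_1,\dots,G_k$ are the $k$ facets of a simplicial cone through $\mathbf{0}$, the linear functionals defining them are linearly independent. Choose linear coordinates $y=(y_1,\dots,y_k)$ on $\mathbf{R}^k$ with $G_i\subset\{y_i=0\}$. This may be a non-orthogonal change of basis, which is harmless because $\zeta$ only has to be smooth, not isometric. Then $\hat F_i=N\times\{y_i=0\}$. In these coordinates the statement reduces to a "simultaneous straightening" problem: we need a map $\zeta=(z,y)\mapsto (z,\tilde y)$ with $\tilde y_i=0$ on $F_i'$, $\zeta|_N=\mathrm{id}$ and $D\zeta|_N=\mathrm{Id}$.

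\textbf{Step 1.} Write each $F_i'$ as a graph over $\hat F_i$ near $N$. Because $F_i'$ is a smooth hypersurface tangent to $\hat F_i$ along $N$, the implicit function theorem gives, on a neighborhood of $N$, a smooth function $h_i(z,y_1,\dots,\widehat{y_i},\dots,y_k)$ such that
\[
F_i'=\{y_i=h_i(z,y_1,\dots,\widehat{y_i},\dots,y_k)\}.
\]
Moreover $h_i=0$ on $N$, since $F_i'\supset N$. Tangency to $\hat F_i$ along $N$ means that $dh_i=0$ on $N$. Compactness of $N$ lets us take a uniform neighborhood; if $\partial N\neq\emptyset$, first extend the facets slightly past $\partial N$.

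\textbf{Step 2.} Define
\[
\zeta(z,y)=\bigl(z,\;y_1-h_1(z,\widehat{y_1}),\;\dots,\;y_k-h_k(z,\widehat{y_k})\bigr),
\]
where $\widehat{y_i}$ denotes the remaining $y$-coordinates. This map is smooth. It fixes $N$ because every $h_i$ vanishes there. Its differential at $N$ is the identity because every $dh_i$ vanishes there. On $F_i'$ the $i$-th new coordinate is exactly $y_i-h_i=0$, so $\zeta(F_i')\subset\{\tilde y_i=0\}=\hat F_i$.

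The point to notice is that the $i$-th component involves only $h_i$, which does not depend on $y_i$, so there is no coupling between the different facets. Consequently no iterative or sequential straightening is needed, and I expect no real obstacle here.

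\textbf{Step 3.} Since $D\zeta=\mathrm{Id}$ along the compact set $N$, the inverse function theorem, combined with a standard compactness and injectivity argument (using that $\zeta$ is the identity on $N$), shows that $\zeta$ is a diffeomorphism on a small neighborhood of $N$.

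The only mildly delicate point is Step 1 near $\partial N$, together with making sure the graph domains overlap on a common neighborhood. Both are handled by slightly enlarging $N$ within $\mathcal{S}^k$ and then shrinking the neighborhood.
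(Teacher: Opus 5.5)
Your proposal is correct and follows the paper's proof: the paper also reduces, via a linear isomorphism $T$ with $\zeta=T^{-1}\zeta_0T$, to the orthant where $\hat F_i=\{y_i=0\}$. It then writes each $F_i'$ as a graph $y_i=f_i(z,y_1,\dots,y_{i-1},y_{i+1},\dots,y_k)$ with $Df_i=0$ on $N$ and sets $\zeta=(z,y_1-f_1,\dots,y_k-f_k)$, exactly as you do. Your Step 3 (inverse function theorem plus injectivity near the compact set $N$) spells out the ``in particular'' clause, which the paper leaves implicit.
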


\begin{proof}
    We use $z$ to denotes points in $N$ and $(z,y_1,y_2,\dots,y_k)$ to denote points in $N\times \mathbf{R}^k$. For simplicity of exposition, we use the notation $\mathbf{y} = (y_1,y_2,\dots,y_k)$. Let
    \begin{align*}
        C_0 = \{\mathbf{y}\in \mathbf{R}^k, y_i\ge 0, i = 1,2,\dots,k\}.
    \end{align*}
    Note that there exists a linear isomorphism $T:\mathbf{R}^n\longrightarrow \mathbf{R}^n$ which maps $C$ to $C_0$. It suffices to prove the lemma for $C = C_0$, since we can construct $\zeta = T^{-1}\zeta_0 T$ once we obtain the map $\zeta_0$ for the cone $C_0$. Now we have $\hat{F}_i = \{(z,\mathbf{y}), y_i= 0\}\ (i = 1,2,\dots,k)$. Since $F_i'$ is tangent to $\hat{F}_i$ along ${N}$, it can be written as a graph over $\hat{F}_i$:
    \begin{align*}
        y_i = f_i(z,y_1,\dots,y_{i-1},y_{i+1},\dots,y_k)
    \end{align*}
    satisfying $Df_i(z,\mathbf{0})\equiv 0$ for all $z\in N$. Let
    \begin{align*}
        \rho_i(z,\mathbf{y}) = y_i-f_i(z,y_1,\dots,y_{i-1},y_{i+1},\dots,y_k)
    \end{align*}
    and define
    \begin{align*}
        \zeta(z,\mathbf{y}) = (z,\rho_1(z,\mathbf{y}),\dots,\rho_k(z,\mathbf{y})).
    \end{align*}
    Then $D\zeta(z,\mathbf{0}) \equiv \id$. This completes the construction of $\zeta$.
\end{proof}

We also need the following basic property derived from the normal compatibility condition.

\begin{lemma}\label{lem: nice form of the metric on the edge assuming angle matching condition}
    Let $(M^n,g)$ be a compact Riemannian manifold with corners. Let $N^{n-k}\subset \mathcal{S}^k$ be a compact connected submanifold. Assume $N^{n-k}$ is contained in $k$ facets $F_1,F_2,\dots, F_k$. Let $U$ be a tubular neighborhood of $N^{n-k}$ and let $\Psi: U\longrightarrow N^{n-k}\times C$ be a smooth coordinate map, where $C\subset \mathbf{R}^k$ is a simplicial polyhedral cone. Denote the facets of $N^{n-k}\times C$ by $\hat{F}_i\quad (i=1,2,\dots,k)$.
    
    Let $h = dy_1^2+dy_2^2+\dots +dy_k^2+\omega_N$ be the standard product metric on $N\times \mathbf{R}^k$, and let $n_i,\hat{n}_i$ be outer unit normal of $F_i,\hat{F}_i$ with respect to $g$ and $h$. Assume that 
    \begin{itemize}
        \item $g$ satisfies the normal compatibility condition with respect to the coordinate map $\Psi$;
        \item $\angle_g(F_i,F_j) = \angle_{h}(\hat{F}_i,\hat{F}_j)$.
    \end{itemize}
    Then
    \begin{align*}
        (\Psi^{-1})^*g = h
    \end{align*}
    along $N$.
\end{lemma}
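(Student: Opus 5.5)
This is a pointwise linear-algebra statement at each $z\in N$. The plan is to show that the metric $G=(\Psi^{-1})^*g$ at $z$ has the same inner products as $h$ on a basis of $T_{(z,\mathbf 0)}(N\times\mathbf R^k)=T_zN\oplus\mathbf R^k$. Fix $z\in N$. Since $\Psi(z)=(z,\mathbf 0)$ and $\Psi$ maps $N$ onto $N\times\{\mathbf 0\}$, the restriction of $G$ to $T_zN$ is the pullback of $g|_N$. I will take $\omega_N$ to be the metric induced on $N$ by $g$ through $\Psi$; this is implicit in writing $h$ as a product with $\omega_N$. So $G=h$ on $T_zN\times T_zN$, and it remains to compare the mixed terms and the $\mathbf R^k$ block.

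\emph{Step 1: mixed terms.} The vectors $\hat n_1,\dots,\hat n_k$ are the $h$-unit outward normals to the flat facets $N\times G_i$. They lie in $\{0\}\oplus\mathbf R^k$ and span it, because $C$ is simplicial, so the normals of $G_1,\dots,G_k$ are linearly independent. By normal compatibility, $n_i=\hat n_i$ as vectors. Each $n_i$ is $G$-orthogonal to $T\hat F_i\supset T_zN$. Hence every vector of $\{0\}\oplus\mathbf R^k=\mathrm{span}\{n_i\}$ is $G$-orthogonal to $T_zN$, exactly as for $h$. So the mixed block of $G$ vanishes.

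\emph{Step 2: the $\mathbf R^k$ block.} It suffices to check $G(\hat n_i,\hat n_j)=h(\hat n_i,\hat n_j)$ for all $i,j$, since the $\hat n_i$ form a basis of $\mathbf R^k$. For $i=j$, both equal $1$, because $n_i$ is $G$-unit and $\hat n_i$ is $h$-unit. For $i\neq j$, the dihedral angle between two facets through a common codimension-two edge is determined by the inner product of their unit outward normals: $\langle n_i,n_j\rangle=-\cos\angle(F_i,F_j)$. The hypothesis $\angle_g(F_i,F_j)=\angle_h(\hat F_i,\hat F_j)$ then gives $G(n_i,n_j)=h(\hat n_i,\hat n_j)$. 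Here I use that $\Psi$ is an isometry from $g$ to $G$, so the angles of $F_i$ under $g$ equal those of $\hat F_i$ under $G$. Combining with Step 1 gives $G=h$ at $(z,\mathbf 0)$, and since $z$ was arbitrary, along all of $N$.

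\emph{Main obstacle.} The step that needs care is the angle–normal identity in Step 2 at points of $N$, where all $k$ facets meet rather than just two. Let $\Pi$ be the $2$-plane $\mathrm{span}\{n_i,n_j\}$, which is $G$-orthogonal to $T(F_i\cap F_j)$. The dihedral angle is measured in $\Pi$ between the rays $T F_i\cap\Pi$ and $T F_j\cap\Pi$. Elementary planar geometry in $\Pi$ then gives $\langle n_i,n_j\rangle=-\cos\angle$ for both metrics, provided the angle convention (interior angle in $(0,\pi/2]$) is the same for $g$ and $h$. I will also note explicitly the hidden convention that $\omega_N$ agrees with the induced metric on $N$; if it does not, the lemma can only hold in that normalized form.
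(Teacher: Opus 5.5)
Your argument is correct and is essentially the paper's proof: split $T_{(z,\mathbf 0)}(N\times\mathbf R^k)=T_zN\oplus\mathrm{span}\{\hat n_1,\dots,\hat n_k\}$. Normal compatibility then gives $(\Psi^{-1})^*g(X,\hat n_i)=g(X,n_i)=0=h(X,\hat n_i)$ for $X\in T_zN$, and the angle hypothesis gives $(\Psi^{-1})^*g(\hat n_i,\hat n_j)=g(n_i,n_j)=h(\hat n_i,\hat n_j)$. Your explicit remark that $\omega_N$ must be the metric induced by $g$ on $N$, so that the $T_zN$ block also matches, fills in a point the paper leaves implicit.
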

\begin{proof}
By the normal compatibility condition we know that $d\Psi(n_i) = \hat{n}_i$. Since $\angle_g(F_i,F_j) = \angle_{h}(\hat{F}_i,\hat{F}_j)$, we have
    \begin{align}\label{eq: 97}
        (\Psi^{-1})^*g(\hat{n}_i,\hat{n}_j) = g(n_i,n_j) = h(\hat{n}_i,\hat{n}_j).
    \end{align}
    By \eqref{eq: 96}, we have $\Psi_*X =X$ for all $X\in TN$. We also have
    \begin{align}\label{eq: 98}
        (\Psi^{-1})^*g(X,\hat{n}_i) = g(X,n_i) = 0 = h(X,\hat{n}_i)
    \end{align}
    for all $X\in TN$. Combining \eqref{eq: 97}\eqref{eq: 98} and using $T_z(N\times \mathbf{R}^k) = T_z N\oplus \mathrm{Span}\{\hat{n}_1,\hat{n}_2,\dots,\hat{n}_k\}$ for $z\in N$, we obtain the desired result.
\end{proof}

With the preparation above, we now begin to prove Theorem \ref{thm: fermi type coordinate for manifolds with corner}. As the first step in the proof, we find a smooth coordinate around each corner point, such that the normal compatibility condition is satisfied on $N$.

\begin{lemma}\label{lem: Putting the metric into the cone}
    Let $(M^n,g)$ be a compact Riemannian manifold with corner. Let $N^{n-k}\subset \mathcal{S}^k$ be a compact connected submanifold. Assume that $N^{n-k}$ is contained in exactly $k$ facets
$F_1,F_2,\dots,F_k$. Then one can find
\begin{enumerate}
    \item a simplicial polyhedral cone $C\subset \mathbf{R}^k$ with facets
    $G_1,G_2,\dots,G_k$ and vertex $\mathbf{0}$, such that
    \begin{align}\label{eq: angle match 1}
        \angle_g(F_i,F_j)
        =
        \angle_{\mathbf{R}^k}(G_i,G_j)
    \end{align}
    for all $1\leq i<j\leq k$;

    \item a tubular neighborhood $U$ of $N^{n-k}$ in $M^n$, together with
    a smooth coordinate map
    \[
        \Psi: U \longrightarrow N^{n-k}\times (C\cap B_1(\mathbf{0}))
        \subset N^{n-k}\times \mathbf{R}^k,
    \]
    such that
    \[
        \Psi(U\cap F_i)=N^{n-k}\times (G_i\cap B_1(\mathbf{0}))
    \]
    for each $i$.
\end{enumerate}
Moreover, this construction can be arranged so that
\begin{align}\label{eq: 99}
    (\Psi^{-1})^*g
    =
    dy_1^2+\cdots+dy_k^2+\omega_N
\end{align}
along $N^{n-k}\times \{\mathbf{0}\}$.

\end{lemma}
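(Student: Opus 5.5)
The plan is to build $\Psi$ in three stages: a rough chart straightening the facets to first order, a linear normalization on $N$, and a correction via Lemma \ref{lem: linear algebra} that maps the facets exactly onto the cone facets without changing the first-order data.

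\textbf{Step 1: the cone and a rough chart.} For $z\in N$, let $n_1(z),\dots,n_k(z)$ be the outward unit normals of $F_1,\dots,F_k$. They are linearly independent because the facets meet transversely, and they span the normal space $\nu_z N$. Their Gram matrix $(g(n_i,n_j))=(-\cos\angle_g(F_i,F_j))$ is constant in $z$, since the dihedral angles are constant. Choose unit vectors $e_1,\dots,e_k\in\mathbf{R}^k$ with $\langle e_i,e_j\rangle=g(n_i,n_j)$; this is possible because the Gram matrix is positive definite. Define $C=\{\mathbf{y}:\langle \mathbf{y},e_i\rangle\le 0 \ \forall i\}$ with facets $G_i=C\cap e_i^\perp$. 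Then $C$ is simplicial, and \eqref{eq: angle match 1} holds by construction. Since $N$ may not be contractible, I trivialize the normal bundle by the frame $\{n_i\}$ itself. This frame is globally defined along $N$, so $\nu N$ is trivial. Let $L_z:\nu_zN\to\mathbf{R}^k$ be the linear map sending $n_i(z)\mapsto e_i$; it is an isometry by the Gram matching. Define the rough chart on a collar (extending $M$ slightly into $\hat M$ if needed) by
\[
\Psi_0^{-1}(z,\mathbf{y})=\exp_z\bigl(L_z^{-1}\mathbf{y}\bigr).
\]
This is a diffeomorphism near $N$. Its differential at $(z,\mathbf{0})$ is the identity on $TN$ and $L_z^{-1}$ on the normal directions, so $(\Psi_0^{-1})^*g=\sum dy_i^2+\omega_N$ along $N$.

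\textbf{Step 2: tangency of the facets.} Next I check that each image $F_i'=\Psi_0(F_i\cap U)$ is tangent to $\hat F_i=N\times G_i$ along $N$. The tangent space $T_zF_i$ equals $T_zN\oplus(\nu_zN\cap n_i^\perp)$. Under $d\Psi_0$ this goes to $T_zN\oplus e_i^\perp=T\hat F_i$, as required. Similarly, the image of $M$ lies on the correct side of each facet to first order, because the inward directions $-n_i$ are sent to $-e_i$.

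\textbf{Step 3: correction and conclusion.} Apply Lemma \ref{lem: linear algebra} to $\mathcal{C}=\Psi_0(U)$ with facets $F_i'$. This gives $\zeta$ with $\zeta(F_i')\subset\hat F_i$, $\zeta=\id$ on $N$ and $D\zeta=\Id$ on $N$. Set $\Psi=\zeta\circ\Psi_0$ and shrink $U$ so that the image lies in $N\times(C\cap B_1(\mathbf{0}))$, after rescaling $\mathbf{y}$ if necessary. Since $D\zeta=\Id$ along $N$, the metric identity \eqref{eq: 99} survives the correction, and $\Psi(U\cap F_i)=N\times(G_i\cap B_1)$.

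\textbf{Expected obstacle.} The main difficulty is making this step rigorous, and there are two points to check. First, $\zeta$ maps $F_i'$ into $\hat F_i$ but must be shown to map $U$ onto a full neighborhood of $N$ in $N\times C$, not just into it. I expect this to follow from $D\zeta=\Id$ on $N$, compactness of $N$, and the inverse function theorem on a uniform collar. Second, when $\partial N\neq\emptyset$, I would work on a slightly larger compact $N'\supset N$ inside $\mathcal{S}^k$ and then restrict.
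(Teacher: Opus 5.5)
Your proof is correct, and it takes a different route from the paper's.

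\textbf{The paper's route.} The paper argues by induction on $k$. It applies the $(k-1)$-case to the bottom facet $F_k$, viewed as an $(n-1)$-manifold with corners, to get a cone chart there. It then adds a transverse coordinate $y_k$ by moving along $-n_k$ and extending through a foliation. Next it needs a separate angle computation, writing $n_i=\cos\alpha\, d\Psi^{-1}(\partial_{y_k})+\sin\alpha\,\nu_i$ and comparing with the Euclidean picture, to show that $d\Psi(n_i)=\hat n_i$ along $N$. Only after this tangency of $\Psi(F_i)$ to $N\times G_i$ does it invoke Lemma \ref{lem: linear algebra}.

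\textbf{Your route.} You build the chart in one step: the normal exponential map composed with the fiberwise isometry $L_z$ that sends the facet normals $n_i$ to the outward normals $e_i$ of a cone with the same Gram matrix. This gives $d\Psi_0(n_i)=e_i$ along $N$ by construction. Both the tangency hypothesis of Lemma \ref{lem: linear algebra} and the normal form \eqref{eq: 99} are then immediate. It also avoids the paper's somewhat informal foliation-extension step.

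\textbf{What each buys.} The paper's inductive Steps 1--3 give a template that adapts to prescribed data. In the proof of Theorem \ref{thm: fermi type coordinate for manifolds with corner} (case $k_{\max}=3$), those steps are rerun relative to coordinates already fixed near the ends of an edge. Your canonical exponential-map chart does not directly accommodate that; it would need an extra interpolation argument.

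\textbf{Two small corrections.} First, do not rescale $\mathbf y$ to land in $B_1$. That turns $\sum dy_i^2$ into $\delta^2\sum dy_i^2$ along $N$ and breaks \eqref{eq: 99}; just shrink $U$, as the paper does. Second, your first anticipated obstacle resolves as you expect. Near $N$, $M$ is cut out by the $k$ one-sided conditions of the corner model. The map $\zeta$, whose components in normalized coordinates are exactly the defining functions $\rho_j$ of the $F_i'$, turns these into the defining inequalities of $N\times C$.
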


%\begin{lemma}\label{lem: Putting the metric into the cone}
   %Let $(M^n,g)$ be as in Theorem \ref{thm: fermi type coordinate for manifolds with corner}. Then by selecting $\Phi = \Id$, we can find a coordinate chart $U$ for each corner point $x_0\in\partial_{\cor}M$ that satisfies (1)(2)(3) and (4) of Theorem \ref{thm: fermi type coordinate for manifolds with corner}.
%\end{lemma}

\begin{proof}
    We prove the lemma holds for $(M,g)$ by induction on $k$. 
    
    When $k = 1$, we have $N^{n-1} = F_1$, and the lemma is equivalent to the existence of Fermi coordinate near $N^{n-1}$. For $k\ge 2$, let's assume the lemma is true for $k-1$. We divide the proof into $3$ steps:

    \textbf{Step 1: Determine the coordinate map $\Psi_0$ on the bottom facet $F_k$.}

$\quad$
    
     Since $F_1,F_2,\dots, F_k$ intersect along $N^{n-k}$, an $n-k$ dimensional submanifold, the tangent cone of $(M,g)$ at points in $N^{n-k}$ splits as $C\times \mathbf{R}^{n-k}$, with $C\subset\mathbf{R}^k$ satisfying \eqref{eq: angle match 1}. Denote $E_{ij} = F_i\cap F_j$, $\hat{E}_{ij} = \hat{F}_i\cap \hat{F}_j = N\times (G_i\cap G_j) = N\times L_{ij}$ for $i = 1,2,\dots, k-1$. Let $C_0 = G_k$, then $C_0\subset\mathbf{R}^{k-1}$ is a polyhedral cone with $k-1$ facets $L_{ik}\ (i = 1,2,\dots,k-1)$. Moreover, $(F_k,g|_{F_k})$ is an $n-1$-dimensional Riemannian manifold with corner. By using the inductive hypothesis on $(F_k,g|_{F_k})$ and $N^{n-k}$, there is a tubular neighborhood $V$ of $N^{n-k}$ in $F_k$, and a smooth coordinate map
     \begin{align*}
         \Psi_0: V\longrightarrow N^{n-k}\times (C_0\cap B_1(\mathbf{0}))\subset N^{n-k}\times \mathbf{R}^{k-1}
     \end{align*}
     which maps $E_{ik}$ to $\hat{E}_{ik} = N^{n-k}\times L_{ik}$. Moreover, it holds 
     \begin{itemize}
         \item $\Psi_0(z) = (z,\mathbf{0})\in N^{n-k}\times \mathbf{R}^{k-1}$ for $z\in N^{n-k}\subset V$;
         \item $(\Psi_0^{-1})^*g = dy_1^2+dy_2^2+\dots +dy_{k-1}^2+\omega_{N}$ along $N$;
         \item $\angle_{g}(E_{ik},E_{jk}) = \angle_{\mathbf{R}^{k-1}}(L_{ik},L_{jk})$.
     \end{itemize}

%$\Psi_0$ on the bottom face $F_k$ and the last coordinate $y_k$ give all the information in defining the full coordinate map $\Psi$. Thus, it remains only to define the last coordinate $y_k$, which is the purpose of Steps 2 and 3 below.

    \textbf{Step 2: Define $y_k$ on a hypersurface perpendicular to $F_k$.}

$\quad$
    
    First, we define the $y_k$ coordinate to be $0$ on $F_k$. Next we consider the vertical direction: With respect to the metric $g$, for $1\le i\le k$, let $n_i$ denote the unit normal of $F_i$ pointing outward $M$; For $1\le i\le k-1$, let $\nu_i$ denote the unit normal of $E_{ik}$ lying in $TF_k$ pointing outward $F_k$. For $z\in N$ and $y_k\in \mathbf{R}$, We define
   \begin{align}\label{eq: 66}
       \Psi\left (\exp_{(z,\underbrace{(0,\dots,0)}_{k-1\text{ copies of }0})}(-y_k n_k)\right) = (z,\underbrace{(0\dots,0,y_{k})}_{k-1\text{ copies of }0}).
   \end{align}
   
   Summarizing the definitions above, $y_k$ has already been defined for points with image $\{(z,\mathbf{y})\in N\times \mathbf{R}^k,\quad y_k = 0\}$ and $\{(z,\mathbf{y})\in N\times \mathbf{R}^k,\quad y_1 = y_2 = \dots = y_{k-1} = 0\}$. Let $\{\Sigma_t\}_{-\epsilon<t<\epsilon}$ be a smooth codimension $1$ foliation, such that $F_k\subset \Sigma_0$ and
   \begin{align*}
        \{ (\exp_{(z,\underbrace{(0,\dots,0)}_{k-1\text{ copies of }0})}(-y_k n_k),\quad z\in N\}\subset \Sigma_{y_k}.
   \end{align*}
   This enable us to define
   \begin{align*}
       y_k(x) = t\text{ if }x\in \Sigma_t.
   \end{align*}

  Using the same foliation argument for $z$ and $y_1,y_2,\dots,y_{k-1}$, we can extend $\Psi_0$ smoothly to
\begin{align*}
    \Psi_0:U\longrightarrow N^{n-k}\times \mathbf{R}^{k-1},
\end{align*}
where $U$ is a tubular neighborhood of $N^{n-k}$ in $M$. We then define
\begin{align}\label{eq: 117}
    \Psi:U\longrightarrow N^{n-k}\times \mathbf{R}^k,\qquad
    \Psi(x)=(\Psi_0(x),y_k(x)).
\end{align}
The extension $\Psi_0$ can be chosen so that the map $\Psi$ defined in \eqref{eq: 117} is compatible with \eqref{eq: 66}.
   
   Since  $\Psi_0$ is a diffeomorphism when restricted to $F_k$, and $d\Psi(n_k) = -\partial_{y_k}$ on $N^{n-k}$, we have $d\Psi\ne 0$ along $N^{n-k}$. Hence, $\Psi$ is a diffeomorphism by possibly shrinking $U$ small enough. Moreover, we have
   \begin{align}\label{eq: 100}
       (\Psi^{-1})^*g = dy_k^2+(\Psi_0^{-1})^*(g|_{F_k}) =  dy_1^2+dy_2^2+\dots+dy_{k}^2 +\omega_N
   \end{align}
   along $N^{n-k}$.

\textbf{Step 3: Correcting the coordinate map \(\Psi\) by a diffeomorphism.} 

$\quad$

   \textbf{Claim:} As hypersurfaces in $N\times \mathbf{R}^k$, $\Psi(F_i)$ is tangent to $\hat{F}_i = N\times G_i$ along $N\quad (i=1,2,\dots,k)$.

   Consider the metric $h = dy_1^2+dy_2^2+\dots+dy_{k}^2 +\omega_N$ defined on $N\times \mathbf{R}^k$. Define $\hat{n}_i$ to be the unit normal of $\hat{F}_i$ pointing outward $M$ with respect to the metric $h$ $(i=1,2,\dots,k)$; Let $\hat{\nu}_i$ denote the unit normal of $\hat{E}_{ik}$ lying in $T\hat{F}_k$ pointing outward $\hat{F}_k$ with respect to the metric $h\quad (i=1,2,\dots,k-1)$. Since $\Psi(F_k) = \hat{F}_k$  and $h|_{T\hat{F}_k} = dy_1^2+dy_2^2+\dots+dy_{k-1}^2 +\omega_N = (\Psi^{-1})^*g|_{T\hat{F}_k}$ on $N$, we conculde that
   \begin{align}\label{eq: 63}
       d\Psi(\nu_i) = \hat{\nu}_i\text{ on } N\quad (i=1,2,\dots,k-1).
   \end{align}
   
   Since $\nu_{i}\perp_g E_{ij}$ and $d\Psi^{-1}(\partial_{y_k})\perp_g E_{ij}$ for $i = 1,2,\dots, k-1$, we have $E_{ij}^{\perp_g} = \mathrm{Span}\{\nu_i,d\Psi^{-1}(\partial_{y_k})\}$. Because $n_i\perp_g E_{ij}$, $n_i$ can be written as
   \begin{align*}
       n_i = \cos\alpha\cdot d\Psi^{-1}(\partial_{y_k})+ \sin\alpha\cdot \nu_i.
   \end{align*}
   Hence,
   \begin{align}\label{eq: 64}
       &g(n_i,n_k) = g( \cos\alpha\cdot d\Psi^{-1}(\partial_{y_k})+ \sin\alpha\cdot \nu_i,-d\Psi^{-1}(\partial_{y_k}))\\
       =& -(\Psi^{-1})^*g(\partial_{y_k},\partial_{y_k})\cos\alpha=-\cos\alpha
   \end{align}
   along $N$.
   
   By writing
   \begin{align}\label{eq: 125}
       \hat{n}_i = \cos\hat{\alpha}\cdot \partial_{y_k}+\sin\hat{\alpha}\cdot \hat{\nu}_i
   \end{align}
   and applying the same argument as above (under the metric $h$ instead of $g$), we obtain
   \begin{align}\label{eq: 65}
       h(\hat{n}_i,\hat{n}_k) = -\cos\hat{\alpha}
   \end{align}
   along $N$. Recall that $C$ satisfies \eqref{eq: angle match 1}, so
   \begin{align*}
       g(n_i,n_k) = -\cos\angle_g(F_i,F_k) = -\cos\angle_{\mathbf{R}^k}(G_i,G_j) = -\cos\angle_h(\hat{F}_i,\hat{F}_j) = h(\hat{n}_i,\hat{n}_j).
   \end{align*}

   In conjunction with \eqref{eq: 64} and \eqref{eq: 65}, we conclude that $\cos\alpha = \cos\hat{\alpha}$. Plugging this into \eqref{eq: 63} and \eqref{eq: 125}, we obtain $d\Psi(n_i) = \hat{n}_i$ along $N$. This verifies the Claim.

   Once the Claim is verified, $\Psi(F_i)$ satisfies the property satisfied by $F_i'$ in Lemma \ref{lem: linear algebra}. By Lemma \ref{lem: linear algebra}, there exists a diffeomorphism $\zeta$, defined on a neighborhood of $N^{n-k}$ in $N^{n-k}\times \mathbf{R}^k$, which fixes $N^{n-k}\times \{0\}$, satisfies $D\zeta \equiv \Id$ along $N^{n-k}$, and maps $\Psi(U)$ into $N^{n-k}\times C$. We get the desired coordinate map by substituting $\Psi$ by $\zeta\circ\Psi$. The only point is to check \eqref{eq: 99}, which follows from \eqref{eq: 100} and the properties satisfied by $\zeta$ here. This completes the proof.

\end{proof}

The second step in the proof of Theorem \ref{thm: fermi type coordinate for manifolds with corner} is to show that the theorem holds in a local coordinate system $\Psi: U\longrightarrow N^{n-k}\times (C\cap B_1(\mathbf{0}))$, with the correction map $\phi$ only defined locally on the neighborhood $U$.
%Next, we show that Theorem \ref{thm: fermi type coordinate for manifolds with corner} holds in a local coordinate chart $U$ if we replace $g|_U$ by $\Phi^*g|_U$ for some diffeomorphism $\Phi\in C^\infty(\Int M\cup\partial_{\fac}M)\cap C^{1,1}(\bar{M})$.

\begin{lemma}\label{lem: straightening the normal vector locally}
Let $(N^{n-k},\omega_N)$ be a compact Riemannian manifold, and let $C\subset\mathbf{R}^k$ be a simplicial polyhedral cone with $k$ facets $G_1,G_2,\dots, G_k$ and vertex $\mathbf{0}$. Let $g$ be a Lipschitz metric on $N^{n-k}\times C$ which is smooth away from $N^{n-k}\times \partial_{\cor}C$, satisfying $g = h=dy_1^2+dy_2^2+\dots dy_k^2+\omega_N$ along $N = N\times \{\mathbf{0}\}$. Denote $\hat{F}_i = N\times G_i (i=1,2,\dots,k)$, and assume that
\begin{align*}
    \angle_g(\hat{F}_i,\hat{F}_j) = \angle_h(\hat{F}_i,\hat{F}_j) = \angle_{\mathbf{R}^k}(G_i,G_j).
\end{align*}

Then there is a neighborhood $W\subset N\times C$ of $N = N\times \{\mathbf{0}\}$ in $N\times \mathbf{R}^k$ and a $C^{1,1}$ map $\phi: W\longrightarrow N\times C$, smooth away from $N^{n-k}\times \partial_{\cor}C$, mapping $W$ diffeomorphically to its image, such that

    \begin{enumerate}
        \item $\phi(x) = x$ for all $x\in (N^{n-k}\times \partial_{\cor}C)\cap W$;
        \item $d\phi_x = \Id$ for all $x\in N$;
        \item $d\phi_x(n_i) = \hat{n}_i$ for all $x\in \hat{F}_i$ $(i=1,2,\dots,k)$, where $n_i$ and $\hat{n}_i$ are unit normal of $\hat{F}_i$ with respect to $g$ and $h$  pointing outward $N\times C$.
    \end{enumerate}
\end{lemma}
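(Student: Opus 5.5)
We construct $\phi$ as the flow of a suitable vector field: first on each facet separately, then glued near $N$. The idea is to push points of $W$ along the ``wrong'' normal direction so that the $g$-outward normal of $\hat F_i$ is carried onto the $h$-outward normal. Concretely, on each facet $\hat F_i$ set $V_i = n_i-\hat n_i$. Since $g=h$ along $N$ and the angles agree, Lemma \ref{lem: nice form of the metric on the edge assuming angle matching condition} type reasoning gives $n_i=\hat n_i$ on $N$, so $V_i$ is Lipschitz on $\hat F_i$ and vanishes on $N$. We then look for $\phi$ of the form
\[
\phi(x)=x+\sum_{i=1}^k \rho_i(x)\,\mathcal V_i(x),
\]
written in the linear coordinates $\mathbf y$ after the linear change $T$ of Lemma \ref{lem: linear algebra} that turns $C$ into the orthant $C_0$. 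Here $\rho_i$ is the distance to $\hat F_i$ (in the orthant this is just $y_i$), and $\mathcal V_i$ is an extension of $V_i$ off $\hat F_i$ that is constant in the $y_i$ direction, modulo a correction explained below. This is a Fermi/Miao-style correction: $\phi$ fixes $\hat F_i$ pointwise, and $d\phi(\partial_{y_i})=\partial_{y_i}+\mathcal V_i$ there. The coefficient of $\mathcal V_i$ will be normalized so that this vector maps $n_i$ to $\hat n_i$. The precise requirement is $d\phi_x(\hat n_i)$ proportional to an appropriate vector, and a linear system on $T_x$ determines the needed jet.

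The key compatibility issue sits at the corners $\hat F_i\cap\hat F_j$. There the term $\rho_j\mathcal V_j$ must not destroy either property (1) (that $\phi$ fixes the corner locus) or property (3) on $\hat F_i$. To handle this we exploit the codimension-two data: along $\hat F_i\cap \hat F_j$ both normals $n_i,n_j$ are prescribed, and they have the same mutual angle as $\hat n_i,\hat n_j$. We therefore first build $\phi$ near the corner strata by induction on the stratum dimension, starting from the deepest stratum $N$ and moving up through $\mathcal S^{k-1},\dots$. On each stratum we require $\mathcal V_j$ to be tangent to $\hat F_i$ whenever $i\neq j$; this is possible because in the orthant coordinates $\hat n_j$ has no component normal to $\hat F_i$ modulo the angle data. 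We also require $\mathcal V_j$ to vanish on $\hat F_i\cap\hat F_j$, which does hold, since the $g$-normal of $\hat F_j$ restricted to the corner agrees with the $h$-normal there. That agreement follows from the angle equality together with $g=h$ at $N$. Since the argument is local near $N$, we cut off with a partition of unity along $N$ and shrink $W$. Property (2) follows because each $\rho_i\mathcal V_i$ vanishes to first order at $N$: both factors vanish there.

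Regularity is where the difficulty lies. Since $g$ is only Lipschitz near $N\times\partial_{\cor}C$, the normals $n_i$ are Lipschitz, so $\mathcal V_i$ is Lipschitz and $\rho_i\mathcal V_i$ is $C^{1,1}$. Away from the corners everything is smooth, as required. Invertibility on a small $W$ follows from $d\phi=\Id$ on $N$ and the inverse function theorem for $C^1$ maps. We also need $\phi(W)\subset N\times C$; this holds because $\phi$ preserves each facet and is close to the identity.

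The step I expect to be the main obstacle is the corner compatibility: verifying that $n_i$ truly agrees with $\hat n_i$ on all of $\hat F_i\cap\hat F_j$, not just on $N$. As stated, the hypotheses only give $g=h$ at $N$ together with the angle equality. If that agreement fails, I would instead first apply a preliminary $C^{1,1}$ diffeomorphism fixing the corner locus, chosen to rotate the normals along each corner stratum. This uses only the angle equality: it maps the pair $(n_i,n_j)$ to $(\hat n_i,\hat n_j)$ by an isometry of the two-plane, extended constantly. After that preliminary step I would run the facet construction above.
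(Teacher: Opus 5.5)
The global ansatz $\phi(x)=x+\sum_i\rho_i\mathcal V_i$ fails both the regularity requirements and property (3).

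\textbf{Regularity.} Since $g$ is only Lipschitz near $N\times\partial_{\cor}C$, $V_i=n_i-\hat n_i$ is only Lipschitz up to the edges of $\hat F_i$. Your claim that $\rho_i\mathcal V_i\in C^{1,1}$ is therefore false: its derivative $\mathcal V_i\otimes d\rho_i+\rho_i\,D\mathcal V_i$ contains $\rho_i\,D\mathcal V_i$, which is merely $L^\infty$. Worse, $\mathcal V_i$ is constant in $y_i$. So at points of the open facet $\Int\hat F_j$ with $y_i>0$, it is evaluated near the edge $\hat F_i\cap\hat F_j$, where $n_i$ is not smooth. Hence $\phi$ is not smooth, and in general not even $C^1$, up to $\Int\hat F_j$, although the lemma demands smoothness away from $N\times\partial_{\cor}C$.

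\textbf{Property (3).} The same cross terms break it. Even if $V_j=0$ on the edges, so that $\phi$ fixes each facet, on $\hat F_i$ you pick up $\rho_j\,D\mathcal V_j(n_i)$ for $j\ne i$. This is $y_j$ times the derivative of $V_j$ transverse to the edge inside $\hat F_j$. Already for $k=2$ it is $y_j$ times a one-sided derivative of $V_j$ at $N$ in the direction into $\hat F_j$, which need not vanish or even exist. Your unspecified ``linear system'' does not address this.

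\textbf{How the paper does it.} It prescribes the $1$-jet $(\Id,A)$ on $\partial C$, with $A_a=\Id$ on $T_a\hat F_i$ and $A_a(n_i)=\hat n_i$. It verifies the Whitney condition $|a-b-A_b(a-b)|\le M|a-b|^2$ for $a,b$ on different facets by inserting a point $c\in N$ (where $A_c=\Id$) and using the lower bound on the dihedral angle. It then takes a $C^{1,1}$ Whitney extension $W$. Only after that does it replace $W$ by the facet model $P_i(s,t)=(s,0)+tA_{(s,0)}(\partial_t)$, which is essentially your single-facet ansatz. This replacement is done only inside dyadic boxes $\tilde\Omega_{i,j}$ of size $t_j$ comparable to the distance to the corner, and these boxes avoid the other facets. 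Agreement of $1$-jets gives $|P_i-W|\lesssim t_j^2$ and $|D(P_i-W)|\lesssim t_j$, which offsets $|D\chi_{i,j}|\lesssim t_j^{-1}$ and $|D^2\chi_{i,j}|\lesssim t_j^{-2}$. This localization is what your global sum lacks.

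\textbf{The edge normals.} The step you flag as the main obstacle does fail for $k\ge 3$. The hypothesis gives $g=h$ only on $N$. Along an edge, the $g$-normal plane can differ from the $h$-normal plane with the dihedral angle unchanged: for example, $g=L^*h$ pointwise with $L$ a shear in the edge direction and $L=\Id$ at $N$. Then $n_i\ne\hat n_i$ on the edge. A map fixing $\hat F_i$ and $\hat F_j$ pointwise has $d\phi=\Id$ there, so no such map can satisfy (3).

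Your fallback points the right way but is not a construction. The linear map $(n_i,n_j)\mapsto(\hat n_i,\hat n_j)$ on each edge is indeed well defined by the angle equality. But a map realizing it while fixing the corner locus must move points inside each facet. Its restriction to $\hat F_i$ is then a self-map of $\hat F_i$ fixing $\partial\hat F_i$ whose differential sends the $g$-conormals $\nu_{ij,i}$ of the edges to the $h$-conormals $\hat\nu_{ij,i}$. That is the present lemma for $(\hat F_i,g|_{\hat F_i})$ with $k-1$ in place of $k$, with the same regularity and deeper-corner compatibility issues. ``Extending constantly'' does not produce it.

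The paper proceeds exactly this way, by induction on $k$. It applies the lower-dimensional case on each facet and assembles the resulting maps $\phi_i$ into $\phi_0$. It then observes that $n_i$ is a combination of $\nu_{ij,i}$ and $\nu_{ij,j}$ with coefficients depending only on the dihedral angle, so $d\phi_0(n_i)=\hat n_i$ along the edges. Only then does it run the Whitney step above.
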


\begin{proof}
    We begin with the simplest case when $k=2$. In this case, $\hat{F}_1\cap \hat{F}_2 = N^{n-2}$. Since $g = dy^2_1+dy^2_2+\omega_{{N}}$ along $N^{n-2}$, we know that $n_i = \hat{n}_i$ along $N^{n-2}$ for $i=1,2$. 
    
    We first consider the case that $N^{n-2}$ is diffeomorphic to a ball $B_1(\mathbf{0})\subset\mathbf{R}^{n-2}$, which enables us to view $N\times C$ as a subset of $\mathbf{R}^n$. For $a\in \hat{F}_i$, let $A_a:T_a\mathbf{R}^n\longrightarrow T_a\mathbf{R}^n$ be the linear map satisfying $A_a(v) = v$ for $v\in T_a \hat{F}_i$ and $A_a(n_i) = \hat{n}_i$. Since $n_i = \hat{n}_i$ along $N^{n-2}$, we have $A_a = \Id$ for all $a\in N$. Since $g$ is  Lipschitz in $N^{n-2}\times C$ and smooth away from $N^{n-2}\times \partial_{\cor}C$, $n_i(a)$ is a smooth vector for $a\in\Int \hat{F}_i$ and is Lipschitz up to $\partial\hat{F}_i$. Consequently, $A_a$ is a smooth matrix  for $a\in\Int \hat{F}_i$ and is Lipschitz up to $\partial\hat{F}_i$. Since $A_a$ is defined to be $\Id$ for   $a\in N=\hat{F}_1\cap\hat{F}_2$, we conclude that $A_a$ is Lipschitz on $\hat{F}_1\cup \hat{F}_2$.

    Next, we verify \eqref{eq: 115} in Lemma \ref{lem: C11 Whitney extension smooth away from corner 2}. If $a$ and $b$ belong to the same $\hat{F}_i$, we have $A_b(v) = v$ for $v\in T{\hat{F}_i}$. Because $a-b\in T\hat{F}_i$, it follows that
    \begin{align*}
        a-b-A_b(a-b) = 0;
    \end{align*}
    If $a$ and $b$ belong to different $\hat{F}_i$, let $c$ be a point in $N^{n-2}$. We have
    \begin{equation}\label{eq: 70}
        \begin{split}
            |a-b-A_b(a-b)| =& |(\Id-A_b)(a-c)+c-b-A_b(c-b)|\\ 
        =& |(\Id-A_b)(a-c)|\le M|b-c|\cdot|a-c|
        \end{split}
    \end{equation}
    Since $\angle acb\ge \angle_{\mathbf{R}^n}(\hat{F}_1,\hat{F}_2) = \gamma_{12}$, we have
    \begin{equation}\label{eq: 71}
        \begin{split}
            |a-b|^2\ge& |a-c|^2+|b-c|^2-2\cos\gamma_{12}|a-c|\cdot|b-c|\\
        \ge& (1-\cos\gamma_{12})(|a-c|^2+|b-c|^2).
        \end{split}
    \end{equation}
    Combining \eqref{eq: 70} and \eqref{eq: 71}, we obtain\eqref{eq: 115}. Therefore, when $N^{n-2}$ is diffeomorphic to a ball, we can apply Lemma \ref{lem: C11 Whitney extension smooth away from corner 2} to obtain the desired $C^{1,1}$ extensio $\phi$. In the general case when $N^{n-2}$ is a manifold, we can apply Lemma \ref{lem: C11 Whitney extension smooth away from corner 2 relative version} (the relative version of Lemma \ref{lem: C11 Whitney extension smooth away from corner 2}) repeatedly to obtain the desired $C^{1,1}$ map $\phi$.

    Now we consider the case $k=3$. Denote $\hat{E}_{ij} = \hat{F}_i\cap \hat{F}_j$. Let $\nu_{ij,i}$ ($\nu_{ij,j}$) denote the unit inner normal of $\hat{E}_{ij}$ in $\hat{F}_i$ ($\hat{F}_j$) with respect to $g$, and let $\hat{\nu}_{ij,i}$ ($\hat{\nu}_{ij,j}$) denote the unit inner normal of $\hat{E}_{ij}$ in $\hat{F}_i$ ($\hat{F}_j$) with respect to the Euclidean metric.
    
    We focus on a single facet $\hat{F}_1$, with the boundary decomposition $\partial \hat{F}_1 = \hat{E}_{12}\cup \hat{E}_{13}$, and $\hat{E}_{12}\cap \hat{E}_{13} = N^{n-3}$. Because $g = dy_1^2+dy_2^2+\dots +dy_3^2+\omega_{N}$ along $N^{n-3}$, we have $\nu_{12,1} = \hat{\nu}_{12,1}$ and $\nu_{13,1} = \hat{\nu}_{13,1}$ along $N^{n-3}$. Now it is easy to see that $(\Psi^{-1})^*g|_{\hat F_1}$ satisfies all the assumptions of the present lemma in the case $k=2$. Therefore, there exists a $C^{1,1}$ diffeomorphism
$\phi_1:\hat F_1\to \hat F_1$, smooth away from $ N^{n-3}$, fixing
$\partial\hat F_1$ pointwise, such that
\[
    d\phi_1(\nu_{12,1})=\hat\nu_{12,1},\qquad
    d\phi_1(\nu_{13,1})=\hat\nu_{13,1}
\]
along $\partial\hat F_1$, and $d\phi_1 = \Id \text{ along }N^{n-3}$.

Similarly, we can construct $\phi_2$ and $\phi_3$ satisfying similar properties on $\hat{F}_2$ and $\hat{F}_3$. Since $\phi_i$ fixes the edges $\hat{E}_{ij}$, we know that $\phi_1,\phi_2$ and $\phi_3$ are compatible along $\hat{E}_{12},\hat{E}_{23}$ and $\hat{E}_{13}$. Moreover, $d\phi_i (i=1,2,3)$ patches into an identity self map of $T(N\times \mathbf{R}^3)$ along $N^{n-3}$. Then we find a $C^{1,1}$ diffeomorphism $\phi_0$ defined on a neighborhood of $N^{n-3}\subset N^{n-3}\times C$, smooth away from $N^{n-3}\times \partial_{\cor}C$, satisfying $\phi_0|_{\hat{F}_i} = \phi_i (i=1,2,3)$.

Since $\angle_{g}(\hat{F}_i,\hat{F}_j) = \angle_{h}(\hat{F}_i,\hat{F}_j):=\gamma_{ij}$, we have
\begin{align*}
    n_1 = \cot\gamma\cdot \nu_{12,1}-\frac{1}{\sin\gamma}\cdot \nu_{12,2} = \cot\gamma\cdot d\phi_0(\nu_{12,1})-\frac{1}{\sin\gamma}\cdot d\phi_0(\nu_{12,2}) = d\phi_0(n_1)
\end{align*}
along $\hat{E}_{12}$. The same thing also holds for $n_1,n_2,n_3$ on other edges. Consequently, $\phi_0$ satisfies (1)(2) and satisfies (3) on $\hat{E}_{12}\cup \hat{E}_{23}\cup\hat{E}_{13}$. By running the argument in the $k=2$ case again, we can find a $\phi$ which satisfies all conditions. This completes the proof in the case $k=3$.

For general $k$, the lemma can be proved in the same way as in the $k=3$ case by induction on $k$.
\end{proof}

%$\begin{lemma}\label{lem: compatibility between different coordinates}
%\end{lemma}

Before proceeding, we introduce a procedure for constructing new coordinate maps from old ones. Let $P^{n-k}\subset\mathcal{S}^{k}$ and assume that $P$ is contained in the intersection of $k$ facets of $M$: $F_1,F_2,\dots F_k$. Assume that there is a neighborhood $U$ of $P$ in $M$ and a smooth coordinate map $\Psi: U\longrightarrow P^{n-k}\times C$, where $C\subset \mathbf{R}^k$ is a simplicial polyhedral cone. Denote $\mathcal{L} = F_2\cap F_3\cap \dots F_k$ to be an edge containing $P^{n-k}$, then $\dim \mathcal{L} = n-k+1$. We will find a submanifold $N^{n-k+1}\subset \mathcal{L}$, a neighborhood $V$ of $N^{n-k+1}$ in $M$, and construct a smooth coordinate map $\tilde{\Psi}: V\longrightarrow N^{n-k+1}\times C'$, with $C'\subset \mathbf{R}^{k-1}$ being a polyhedral cone with $k-1$ facets. %In fact, the tangent cone of $M$ along $\mathcal{L}$ is isometric to $C'\times \mathbf{R}$.

For $T\in O(k)$, we define $\Psi^T:  U\longrightarrow P^{n-k}\times T(C)\subset P^{n-k}\times \mathbf{R}^k$ as
\begin{align*}
    \Psi^T(x) = (z(x),T(\mathbf{y}(x)))
\end{align*}
where $(z(x), \mathbf{y}(x)) = \Psi(x)\in P\times C$. Denote $e = \{(y_1,0,0,\dots ,0)\in \mathbf{R}^k, y_1\ge 0\}$. By possibly replace $\Psi$ by $\Psi^T$ for some suitable $T\in O(k)$, we can assume $\Psi(\mathcal{L})\subset P\times e$, and $\Psi(F_i)\quad (i=2,3,\dots,k)$ are contained in $k-1$ facets of $P\times e\times C'\subset P\times \mathbf{R}^k$. Here $e\subset\mathbf{R}$ is parametrized by $y_1$, and $C'\subset \mathbf{R}^{k-1}$ is parametrized by $y_2,y_3,\dots,y_k$.

Let $\epsilon,\delta>0$. By selecting $\delta$ sufficiently small, one ensures that $(\epsilon,2\epsilon)\times (C'\cap B_\delta^{k-1}(\mathbf{0}))\subset C\subset e\times C'$. Hence, $(\epsilon,2\epsilon)\times (C'\cap B_\delta^{k-1}(\mathbf{0}))\subset e\times C'$ is a corner neighborhood of $(\epsilon,2\epsilon)\subset e$ in $C$. Let $V = \Psi^{-1}(P\times (\epsilon,2\epsilon)\times (C'\cap B_\delta^{k-1}(\mathbf{0})))$ and $N^{n-k+1} = \Psi^{-1}(P\times (\epsilon,2\epsilon)\times \mathbf{0})$. We define $\tilde{\Psi}: V\longrightarrow N^{n-k+1}\times \mathbf{R}^{k-1}$ via
\begin{align}\label{eq: 124}
    \tilde{\Psi}\circ\Psi^{-1}(z,y_1,y_2,\dots,y_k) = (\tau(z,y_1),y_2,\dots,y_k)\subset N^{n-k+1}\times \mathbf{R}^{k-1},
\end{align}
where $\tau(z,y_1) = \Psi^{-1}(z,y_1,0,\dots,0)$. Because $\Psi(\mathcal{L})\subset P\times e$, we know that $N\subset\mathcal{L}$.

For $\mathbf{z} = \tau(z,y_1)\in N^{n-k+1}\subset \mathcal{L}$, we have
\begin{align*}
    \tilde{\Psi}(\mathbf{z}) = \tilde{\Psi}(\tau(z,y_1)) = \tilde{\Psi}\circ \Psi^{-1}(z,y_1,0,\dots,0) = (\tau(z,y_1),0,\dots,0) = (\mathbf{z},0,\dots,0).
\end{align*}
It follows that $\tilde{\Psi}$ is a coordinate map modeled on $N^{n-k+1}$. Since $T\in O(k)$, we know that $\tilde{\Psi}^*g$ satisfies the normal compatibility condition if $\Psi^*g$ does.

\begin{figure}
\centering
\includegraphics[width=0.9\textwidth]{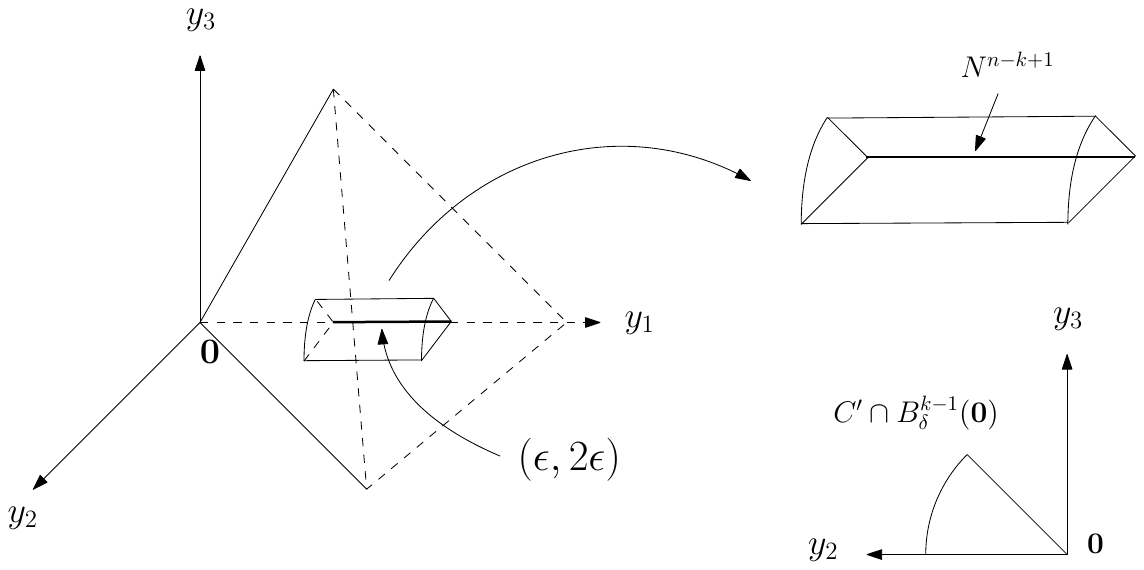}
\caption{The construction of the smooth map in the special case where $n=k=3$, $P^0$ is a point, and $N^1$ is a segment.}
\label{figure: induced coordinate map}
\end{figure}

\begin{definition}\label{defn: induced coordinate map}
    The coordinate map $\tilde{\Psi}: V\longrightarrow N^{n-k+1}\times C'$ constructed from $\Psi: U\longrightarrow P^{n-k}\times C$ above is called the induced coordinate map from $\Psi$.
\end{definition}

\begin{remark}
    In the definition of the induced coordinate map $\tilde{\Psi}$ above, we may use a linear transformation $T\in O(k)$ to adjust the position of $C\subset \mathbf{R}^k$ so that it matches the position of $e\times C'$. In Lemma \ref{lem: linear transition map} below, we will see that this linear adjustment does not affect our definition of the smooth structure on the singular space $X$ in Theorem \ref{thm: coxeter gluing smoothing}.
\end{remark}

Now we are at the final stage of the proof of Theorem \ref{thm: fermi type coordinate for manifolds with corner}. We need to collect the correction maps $\phi$ in Lemma \ref{lem: straightening the normal vector locally} together, which are defined only on corner neighborhoods, and extend them to a global correction map $\Phi$. We provide the details below.

\begin{proof}[Proof of Theorem \ref{thm: fermi type coordinate for manifolds with corner}]

Let $(M^n,g)$ be the Riemannian manifold with corner in the statement of the theorem. We denote $k_{\max}(M) = \max\{ k, \mathcal{S}^k\ne \emptyset\}$.

    When $k_{\max}(M) = 2$, $\mathcal{S}^2$ is the disjoint union of closed manifolds
    \begin{align*}
        \mathcal{S}^{2} = N_1^{n-2}\cup N_2^{n-2}\cup\dots \cup N_{l_1}^{n-2}.
    \end{align*}
    By Lemma \ref{lem: straightening the normal vector locally}, for each $i=1,2,\dots,l_1$, there exist the following objects:
\begin{itemize}
    \item a corner neighborhood $U_i$ of $N_i$ in $M$;
    \item a cone $C_i^2\subset \mathbf{R}^2$;
    \item a smooth coordinate map
    \begin{align*}
        \Psi_i: U_i\longrightarrow
        N_i^{n-2}\times (C_i^2\cap B_1(\mathbf{0}))
        \subset N_i^{n-2}\times \mathbf{R}^2;
    \end{align*}
    \item a $C^{1,1}$ diffeomorphism $\phi_i:U_i\longrightarrow U_i$, which is smooth away from $N_i^{n-2}$,
\end{itemize}
such that $(\phi_i)^*g$ satisfies the normal compatibility condition in the coordinates given by $\Psi_i$. A slight difference from Lemma \ref{lem: straightening the normal vector locally} is that $\phi$ in Lemma \ref{lem: straightening the normal vector locally} was defined on an open set of $N\times C$, while $\phi_i$ is defined on $U_i$ now. This does not matter as we can always replace $\phi$ by $\Psi^{-1}\circ\phi\circ\Psi$, which does not affect the $C^{1,1}$ regularity at the corner. By Lemma \ref{lem: extension}, we can extend $\phi_i$ to a $C^{1,1}$ diffeomorphism $\Phi$ of $M$ smooth away from $\partial_{\cor}M$. $\Phi$ and the coordinate maps $\Psi_i$ then satisfies the desired properties.

    We now consider the case $k_{\max}(M)=3$. In this case, $\mathcal{S}^3$ is disjoint union of closed $n-3$ dimensional manifolds
    \begin{align*}
         \mathcal{S}^{3} = P_1^{n-3}\cup P_2^{n-3}\cup\dots \cup P_{l_1}^{n-2}.
    \end{align*}
    For each $P_i$, there exist the following objects:
\begin{itemize}
    \item a corner neighborhood $U_i$ of $P_i$ in $M$;
    \item a smooth coordinate map
    \begin{align*}
        \psi_i:U_i\longrightarrow
        P_i^{n-3}\times (C_i^3\cap B_1(\mathbf{0}))
        \subset P_i^{n-3}\times \mathbf{R}^3;
    \end{align*}
    \item a $C^{1,1}$ diffeomorphism $\phi_i:U_i\longrightarrow U_i$, which is smooth away from $\partial_{\cor}M$,
\end{itemize}
such that $(\phi_i)^*g$ satisfies the normal compatibility condition in the coordinates given by $\psi_i$.

For simplicity of exposition, we first consider the case where $\mathcal{L}^{n-2}$ is a connected component of $\mathcal S^2$ and $\partial\mathcal{L}^{n-2}$ has exactly two boundary components, denoted by $P_1^{n-3}$ and $P_2^{n-3}$. The argument below extends directly to the general case.

    Let $\rho$ be a Morse function on $\mathcal{L}^{n-2}$, satisfying $\rho = 0$ on $P_1$, $\rho = 1$ on $P_2$, and $0<\rho<1$ in $\mathcal{L}$. Let $N_\epsilon = \rho^{-1}[\epsilon,1-\epsilon]$. From Lemma \ref{lem: Putting the metric into the cone}, there is a neighborhood $U_\epsilon$ of $N_\epsilon$, a cone $C^2\subset\mathbf{R}^2$ and a smooth coordinate  map $\tilde{\psi}: U_\epsilon\longrightarrow N_\epsilon\times (C^2\cap B_1(\mathbf{0})) $, such that $(\tilde{\psi}^{-1})^*g = dy_1^2+dy_2^2+\omega_N$ along $N_\epsilon$. However, $\phi_i^*g$ doesn't necessarily satisfy the normal compatibility condition in $U_i\cap U_\epsilon$. 

    To overcome this, we reconstruct $\tilde\psi$ by adapting the argument used in the proof of Lemma \ref{lem: Putting the metric into the cone}. Throughout the construction, we ensure compatibility with the coordinate maps $\psi_1$ and $\psi_2$. Let $\tilde{\psi}_1: V_{\epsilon,1}\longrightarrow \rho^{-1}[\epsilon,2\epsilon]\times C^2$ be the induced coordinate map in the sense of Definition \ref{defn: induced coordinate map} from $\psi_1$ for a neighborhood $V_{\epsilon,1}$ of $\rho^{-1}[\epsilon,2\epsilon] = P_1\times [\epsilon,2\epsilon]$ in $M$. By Lemma \ref{lem: nice form of the metric on the edge assuming angle matching condition}, we have $\tilde{\psi}_1^*g = dy_1^2+dy_2^2+\omega_N$ along $\rho^{-1}[\epsilon,2\epsilon]$. Similarly, there is a neighborhood $V_{\epsilon,2}$ of $\rho^{-1}[1-2\epsilon,1-\epsilon]$ in $M$, and a induced coordinate map $\tilde{\psi_2}: V_{\epsilon,2}\longrightarrow \rho^{-1}[1-2\epsilon,1-\epsilon]\times C^2$ with $\tilde{\psi}_2^* g = dy_1^2+dy_2^2+\omega_N$ along $\rho^{-1}[1-2\epsilon,1-\epsilon]$. Moreover, $\phi_i^*g$ satisfies the normal compatibility condition with respect to the coordinate map $\tilde{\psi}_i\quad (i=1,2)$. 

    Assume that $\mathcal{L}^{n-2}$ is the common edge of two facets $F_1$ and $F_2$. Let us describe how to extend $\tilde{\psi}_1$ and $\tilde{\psi}_2$ to $\tilde{\psi}: U_\epsilon\longrightarrow N_\epsilon\times \mathbf{R}^2$. Since $y_2 = 0$ in $V_{\epsilon,i}\cap F_1$, we define $y_2 = 0$ on $F_1$. Note $|\nabla y_1| = 1$ in $V_{\epsilon,i}\cap F_1$, we simply extend $y_1$ to a neighborhood of $V_\epsilon$ in $F_1$ satisfying $|\nabla y_1| = 1$. This completes Step 1 of Lemma \ref{lem: Putting the metric into the cone}. For Step 2, we define $y_2$ on a hypersurface perpendicular to $F_1$ along $N_\epsilon$, which extend $y_2|_{V_{\epsilon,1}\cup V_{\epsilon,2}}$ and satisfies $|\nabla y_2| = 1$  and $g(\nabla y_1,\nabla y_2) = 0$ along $N_\epsilon$. Then we use the same foliation argument (In the present case it is a foliation extension argument) as in Lemma \ref{lem: Putting the metric into the cone} to define $y_2$ in the whole $U_\epsilon$. For Step 3, Since $\psi_i$ has already mapped $\partial V_{\epsilon,i}$ to straight edges $\partial C^2$ in $\mathbf{R}^2$, the diffeomorphism $\zeta$ in Lemma \ref{lem: Putting the metric into the cone} can be arranged to be the identity on $\rho^{-1}[\epsilon,2\epsilon]\cup\rho^{-1}[1-2\epsilon,1-\epsilon]$. $\phi_i^*g$ satisfies the normal compatibility condition in $U_i\cap U_\epsilon (i=1,2)$ with respect to the coordinate map $\tilde{\psi}$ constructed in this way.

    Finally, by an argument similar to that in Lemma \ref{lem: straightening the normal vector locally}, we can extend $\phi_i$ to a map $\phi$ on $U_\epsilon$, after possibly shrinking $U_\epsilon$. At this time we have to apply the relative version Lemma \ref{lem: C11 Whitney extension smooth away from corner 2 relative version} to leave $\phi_i$ invariant on $U_i\quad (i=1,2)$. By performing this on all components of $\mathcal{S}^2$, we find a non-degenerate $C^{1,1}$ map $\Phi$, defined in a neighborhood of $\partial_{\cor} M$, fixing $\partial_{\cor}M$, such that for all $x\in \partial_{\cor}M$, there is a smooth coordinate chart such that $\Phi^* g$ satisfies the normal compatibility condition in that chart. Then we use Lemma \ref{lem: extension} to extend $\Phi$ into a global map, which completes the proof when $k_{\max}(M)=3$.

    For general $k_{\max}(M)$, the proof is the same as that of the case $k_{\max}(M)=3$. Denote $k_0 = k_{\max}(M)$, we begin with $\mathcal{S}^{k_0}$, which is decomposed into the union of closed manifolds
    \begin{align*}
         \mathcal{S}^{k_0} = P_1^{n-k_0}\cup P_2^{n-k_0}\cup\dots \cup P_{l_1}^{n-k_0}.
    \end{align*}
    Then we construct the coordinate maps modeled on submanifolds of $\mathcal{S}^{k_0-1}, \mathcal{S}^{k_0-2},\dots ,\mathcal{S}^2$ inductively. This completes the proof for general $k_{\max}(M)$.
    
\end{proof}

\subsection{Distributional scalar curvature lower bound}

$\quad$

In this subsection, we prove a proposition concerning the distributional scalar curvature lower bound on manifolds decomposed into a union of polyhedral domains. This type of result was first proved by Lee-Lefloch  \cite[Theorem 5.1]{LL15} for manifolds with hypersurface singularities. The proposition below extends \cite[Theorem 5.1]{LL15} to manifolds with polyhedral singularities.

\begin{proposition}\label{prop: distributional scalar curvature lower bound}
    Let $X$ be a smooth closed Riemannian manifold. Assume that $X$ admits a decomposition into polyhedral domains
\begin{align*}
X=\bigcup_{i=1}^{l}\Omega_i,
\end{align*}
such that the interiors of the $\Omega_i$ are pairwise disjoint and every nonempty intersection $\partial_{\fac}\Omega_i\cap\partial_{\fac}\Omega_j$ $(i\neq j)$ is a common boundary facet of $\Omega_i$ and $\Omega_j$. Let
\begin{align*}
\mathcal{S}
:=
\bigcup_{i=1}^{l}\partial\Omega_i .
\end{align*}
Denote by $\mathcal{S}_{\mathrm{fac}}$ the union of all open facets of the domains $\Omega_i$, and define
\begin{align*}
\mathcal{S}_{\mathrm{cor}}
:=
\mathcal{S}\setminus\mathcal{S}_{\mathrm{fac}}.
\end{align*}

Let ${g}$ be a Lipschitz metric on $X$ which is smooth away from $\mathcal{S}$, satisfying ${g}|_{\Omega_i}$ is smooth up to $\partial_{\fac}\Omega_i$. Assume that
\begin{enumerate}
    \item For each point $x\in \mathcal{S}_{\cor}$, there exists an open neighborhood $\mathcal{U}$ of $x$ in $X$ and a smooth coordinate map $\Psi: \mathcal{U}\longrightarrow \mathbf{R}^n$, such that each component of $\Omega_i\cap \mathcal{U}$ is mapped to a corresponding polyhedral cone $\mathcal{C}\subset \mathbf{R}^n$ (Note that the polyhedral cones associated all these components tessellate $\mathbf{R}^n$). Moreover, the restriction of $(\Psi^{-1})^*g$ on $\mathcal{C}$ satisfies the normal compatibility condition ;
    \item\label{item: 1} $H_i+H_j\ge 0$ on $\partial_{\fac}\Omega_i\cap \partial_{\fac}\Omega_j$, where $H_i$ denotes the mean curvature of $\partial_{\fac}\Omega_i$ with respect to the outward pointing normal of $\Omega_i\quad (1\le i\ne j\le l)$.
    \item $R_{{g}}\ge R_0$ away from $\mathcal{S}$ for some $R_0\in \mathbf{R}$.
\end{enumerate}
Then the scalar curvature of ${g}$ is no less than $R_0$ in the distributional sense in the sense of \cite{LL15}. 
\end{proposition}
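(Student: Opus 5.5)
We will verify directly from the Lee--LeFloch definition. For a Lipschitz metric $g$ with $g^{-1}\in L^\infty$, the distributional scalar curvature acts on a test function $u\in C_c^\infty(X)$, $u\ge 0$, by
\[
\langle R_g,u\rangle=\int_X\Big(-V\cdot\bar\nabla\big(u\,\tfrac{d\mu_g}{d\mu_h}\big)+F\,u\,\tfrac{d\mu_g}{d\mu_h}\Big)\,d\mu_h ,
\]
where $h$ is a smooth background metric, $V$ is a vector field built from $\Gamma(g)-\Gamma(h)$, and $F$ is the quadratic expression in first derivatives. We must show $\langle R_g,u\rangle\ge R_0\int_X u\,d\mu_g$. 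Because the statement is local, a partition of unity reduces everything to three kinds of supports: the regular region $X\setminus\mathcal S$; a neighborhood of an open glued facet in $\mathcal S_{\fac}$; and a neighborhood of a point $x\in\mathcal S_{\cor}$. On the regular region, $g$ is smooth and the pairing is $\int R_g u\,d\mu_g\ge R_0\int u\,d\mu_g$. Near $\mathcal S_{\fac}$, this is exactly \cite[Theorem 5.1]{LL15}: integrate by parts on each side. The boundary terms combine into $\int_{F}(H_i+H_j)u\,d\sigma$, which is nonnegative by assumption (\ref{item: 1}).

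The substantive case is a corner point. Fix $x\in\mathcal S_{\cor}$ and the chart $\Psi$ from assumption (1), in which the chambers are polyhedral cones $\mathcal C$ tessellating $\mathbf R^n$. Restricted to each chamber, $g$ is smooth up to the open facets and Lipschitz up to the corner strata. We integrate by parts chamber by chamber. Interior contributions give $\int R_g u$ on each chamber. Each open facet contributes a boundary term $\int_{\partial_{\fac}\mathcal C}u\,\langle V_i,\hat n\rangle\,d\sigma$ from each adjacent chamber. Two things need checking:
\begin{enumerate}
\item The corner strata carry no boundary contribution. They have codimension $\ge 2$ and $V$ is bounded since $g$ is Lipschitz, so a cutoff argument works: excise an $\varepsilon$-tubular neighborhood $T_\varepsilon$ of $\mathcal S_{\cor}\cap\mathcal U$. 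Then $|\partial T_\varepsilon|=O(\varepsilon)$ and $|T_\varepsilon|=O(\varepsilon^2)$, so all terms on $\partial T_\varepsilon$ vanish as $\varepsilon\to0$.
\item On each facet, the sum of the two one-sided terms equals $\int (H_i+H_j)u\,d\sigma$, as in Lee--LeFloch. Here the normal compatibility condition is used. It ensures that both chambers see the same unit normal $n_i=\hat n_i$ along the shared facet, for both $g$ and the Euclidean model $h$. Hence the facet computation of \cite{LL15} applies verbatim up to the corner, with no extra terms from normals mismatching near the edge, and the integrands stay bounded up to $\mathcal S_{\cor}$.
\end{enumerate}

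The main obstacle is making the cutoff step rigorous. One must check that the Lee--LeFloch boundary integrand $\langle V,\hat n\rangle$ on a facet is integrable up to its edge and is pointwise the mean-curvature jump. One must also check that the pieces of $\partial T_\varepsilon$ inside each chamber are controlled uniformly. I would take $T_\varepsilon=\{d_h(\cdot,\mathcal S_{\cor})<\varepsilon\}$ in the chart, with Euclidean distance. The Lipschitz bound gives $|V|+|F|\le C$ on $\mathcal U$, which yields the $O(\varepsilon)$ estimate. On $\partial_{\fac}\mathcal C\setminus T_\varepsilon$, the metric is smooth, so the facet computation is classical. Letting $\varepsilon\to0$ with dominated convergence, which is valid since $H_i$ is bounded, gives
\[
\langle R_g,u\rangle=\sum_i\int_{\Omega_i}R_gu\,d\mu_g+\sum_{F}\int_F(H_i+H_j)u\,d\sigma\ \ge\ R_0\int_X u\,d\mu_g .
\]
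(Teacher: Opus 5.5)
Your argument is correct, but it takes a different route from the paper.

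\textbf{The paper's route.} The paper does not isolate the corners. It excises an $\varepsilon$-neighborhood of the entire singular set $\mathcal S$ and integrates by parts chamber by chamber. It then evaluates the flux of $V$ on the parallel hypersurfaces $\Sigma_{\varepsilon,j}$ in the chart $\Psi$. Normal compatibility is what makes this work there: it gives $g_{00}=1$ and $g_{0a}=0$ on each flat facet. With that, each one-sided flux is identified directly with $2H$ of its own chamber, uniformly up to the edges. Summing over the two chambers adjacent to a facet gives $2(H_i+H_j)$.

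\textbf{Your route.} You split the test function with a cutoff $\eta_\varepsilon$ near the codimension-$\ge 2$ set $\mathcal S_{\mathrm{cor}}$. The Lipschitz bound makes $V$ and $F$ bounded. Combined with $|T_\varepsilon|=O(\varepsilon^2)$ and $|\bar\nabla\eta_\varepsilon|=O(\varepsilon^{-1})$, this kills the corner piece at rate $O(\varepsilon)$. The remaining piece $\eta_\varepsilon u$ is supported away from the corners, so the hypersurface theorem of \cite{LL15} applies on the open facets.

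\textbf{Comparison.} Your approach is shorter. It also isolates the real reason corners carry no curvature: a continuous metric with bounded first derivatives cannot concentrate scalar curvature on a set of codimension $\ge 2$. The paper's computation, by contrast, gives an explicit chamber-by-chamber identification of the boundary terms, at the cost of needing the special gauge.

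\textbf{Normal compatibility is not needed here.} Contrary to your item (2), your proof never uses it. Continuity of $g$ across a facet already forces both sides to share the same $g$-unit normal. Moreover, in any smooth chart the jump of $V\cdot\nu$ across the facet is a fixed multiple of the mean-curvature jump, because tangential derivatives of $g$ do not jump. Normal compatibility matters elsewhere in the paper, namely to produce a Lipschitz glued metric in the first place. For this proposition, your route needs only the Lipschitz hypothesis and the polyhedral structure of the corner set.

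\textbf{Two small corrections to your write-up.}
\begin{enumerate}
\item Near the corners $g$ is only Lipschitz, so $R_g$ need not a priori be integrable there. Your final displayed equality should not be asserted directly. Instead, use $R_g\ge R_0$ on $\operatorname{supp}(\eta_\varepsilon u)$ before letting $\varepsilon\to 0$; this yields the inequality immediately. Integrability of $(R_g-R_0)u$ then follows a posteriori from Fatou's lemma.
\item The facet term carries a factor $2$.
\end{enumerate}
Neither point affects the conclusion.
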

\begin{proof}
    We follow the argument of \cite[Proposition 5.1]{LL15}. Clearly, the distributional scalar curvature is greater than or equal to $R_0$ away from $\mathcal{S}$ by definition. Within $\mathcal{S}_{\fac}$ the distributional scalar curvature is also greater than or equal to $R_0$ as a consequence of \cite[Theorem 5.1]{LL15}. Thus, it suffices to consider what happens near $x\in \mathcal{S}_{\cor}$. 

    We work in the coordinate chart $\Psi: \mathcal{U}\longrightarrow \Psi(\mathcal{U})\subset \mathbf{R}^n$  assumed in the statement of the proposition. Choose the background metric $h = dx_0^2+dx_1^2+dx_2^2+\dots+ dx_{n-1}^2$ and choose $u$ to be a smooth function supported on $\Psi(\mathcal{U})\subset \mathbf{R}^n$. Denote by $X_{\varepsilon}$ the complement of the $\varepsilon$-neighborhood of $\mathcal{S}$ in $X$. Let $\nu$ be the unit normal of $\partial X_{\varepsilon}$ pointing outward $X_{\varepsilon}$.

    Now we work with a fixed connected component of $\Omega_i\cap \mathcal{U}$ for a fixed $i$, and denote the corresponding Euclidean polyhedral cone by $\mathcal{C}$. Assume that $x$ lies in $k$ facets $F_1, F_2,\dots, F_k$ of this component, and let $\hat{F}_j = \Psi(F_j)\quad (j=1,2,\dots,k)$ denote the corresponding $k$ flat facets of $\mathcal{C}$. In the following computation, the indices $p,q,r,s$ are used for coordinate indices and are summed over $0,1,\dots,n-1$. From the calculation in \cite[Proposition 5.1]{LL15}, we have

\begin{equation}\label{eq: 72}
    \begin{split}
        \langle\!\langle R_g,u\rangle\!\rangle
&=
\lim_{\epsilon\to 0}
\int_{X_\epsilon} R_g u\,d\mu_g
-
\int_{\partial X_\epsilon}
(V\cdot \nu)
\left(
u\frac{d\mu_g}{d\mu_h}
\right)
\,d\sigma_h
\\
&\ge
\lim_{\epsilon\to 0}
\int_{X_\epsilon} R_0 u\,d\mu_g
+
\lim_{\epsilon\to 0}\int_{\partial X_\epsilon}
\left(
-\Gamma^r_{pq}g^{pq}\nu_r
+
\Gamma^r_{rp}g^{pq}\nu_q
\right)
\left(
u\frac{d\mu_g}{d\mu_h}
\right)
\,dx_1\cdots dx_{n-1},
    \end{split}
\end{equation}
where $V$ is an intrinsic vector field defined by \cite[equation (2.3)]{LL15}. When $\varepsilon$ is sufficiently small, $\partial X_{\varepsilon}\cap \mathcal{C}$ is the union of $k$ hypersurfaces $\Sigma_{\varepsilon,1},\Sigma_{\varepsilon,2},\dots,\Sigma_{\varepsilon,k}$.
    By composing $\Psi$ with an orthogonal transformation on $\mathbf{R}^n$ if necessary, we can assume $\hat{F}_k\subset \{x_0 = 0\}$ and $\mathcal{C}\subset \{x_0\ge 0\}$. Let $n_0$ be the unit normal vector of $\hat{F}_k$ pointing inward $\mathcal{C}$ with respect to $g$, then the normal compatibility condition implies $n_0 = \partial_0$ on $\hat{F}_k$. Hence we obtain
    \begin{align}\label{eq: 73}
        \nu_q = g(\nu,\partial_q) \to -g(n_0,\partial_q) = -\delta_{0q}
    \end{align}
    as $\varepsilon\to 0$. 

    Now we consider the boundary integral over $\Sigma_{\varepsilon,k}$. Since $g$ is Lipschitz, $\Gamma_{pq}^r$ is uniformly bounded. It follows from \eqref{eq: 73} that
\begin{equation}\label{eq: 111}
\begin{split}
        &\big|\int_{\Sigma_{\varepsilon,k}}
\left(
-\Gamma^r_{pq}g^{pq}\nu_r
+
\Gamma^r_{rp}g^{pq}\nu_q
\right)
\left(
u\frac{d\mu_g}{d\mu_h}
\right)
dx_1\cdots dx_{n-1} \\
&\quad -
\int_{\Sigma_{\varepsilon,k}}
\left(
\Gamma^0_{pq}g^{pq}
-
\Gamma^r_{rp}g^{p0}
\right)
\left(
u\frac{d\mu_g}{d\mu_h}
\right)
dx_1\cdots dx_{n-1}\big|\to 0\qquad (\varepsilon\to 0)
\end{split}
\end{equation}
Observe that
\begin{equation}\label{eq: 112}
    \begin{split}
        &\quad 
\int_{\Sigma_{\varepsilon,k}}
\left(
\Gamma^0_{pq}g^{pq}
-
\Gamma^r_{rp}g^{p0}
\right)
\left(
u\frac{d\mu_g}{d\mu_h}
\right)
dx_1\cdots dx_{n-1} \\
&\quad =
\int_{\Sigma_{\varepsilon,k}}
\left(
\frac{1}{2}
\left(
\bar{\nabla}_p g_{q0}
+
\bar{\nabla}_q g_{p0}
-
\bar{\nabla}_0 g_{pq}
\right)\right.
g^{pq}
\\
&\qquad \qquad \left.-
\frac{1}{2}
g^{rs}
\left(
\bar{\nabla}_r g_{0s}
+
\bar{\nabla}_0 g_{rs}
-
\bar{\nabla}_s g_{r0}
\right)
\right)
\left(
u\frac{d\mu_g}{d\mu_h}
\right)
dx_1\cdots dx_{n-1} \\
&\quad =
\int_{\Sigma_{\varepsilon,k}}
-g^{pq}\bar{\nabla}_0 g_{pq}
\left(
u\frac{d\mu_g}{d\mu_h}
\right)
dx_1\cdots dx_{n-1}.
    \end{split}
\end{equation}
Since $g$ is Lipschitz and is smooth up to open faces of $\mathcal{C}$, we obtain
\begin{equation}\label{eq: 113}
    \begin{split}
        &
\int_{\Sigma_{\varepsilon,k}}
-g^{pq}\bar{\nabla}_0 g_{pq}
\left(
u\frac{d\mu_g}{d\mu_h}
\right)
dx_1\cdots dx_{n-1} \to \int_{\hat{F}_k}
-g^{pq}\bar{\nabla}_0 g_{pq}
\left(
u\frac{d\mu_g}{d\mu_h}
\right)
dx_1\cdots dx_{n-1}\\
=&
\int_{\hat{F}_k}
2 H
\left(
u\frac{d\mu_g}{d\mu_h}
\right)
dx_1\cdots dx_{n-1}\ge 0, \qquad(\varepsilon\to 0)
    \end{split}
\end{equation}
where $H$ denotes the mean curvature scalar of $\hat{F}_k$ with respect to the normal vector pointing outward $\mathcal C_i$ (Note that $n_0 = \partial_0$ is the normal vector pointing inward $\mathcal C_i$, so $H$ here is with respect to $-n_0$). Using the same reasoning to $\Sigma_{\varepsilon,1},\dots,\Sigma_{\varepsilon,k-1}$ as well as all other chambers of $\Omega_i\cap \mathcal{U}$ for all $i$, and plugging \eqref{eq: 111}\eqref{eq: 112}\eqref{eq: 113} into \eqref{eq: 72}, we obtain
\begin{equation*}
    \begin{split}
        &\langle\!\langle R_g,u\rangle\!\rangle\\
=&\int_{M\backslash \mathcal{S}} R_g u d\mu_g+ \sum_{i\ne j}\int_{\partial_{\fac} \Omega_i\cap \partial_{\fac}\Omega_j}2(H_i+H_j)
\left(
u\frac{d\mu_g}{d\mu_h}
\right)
\,dx_1\cdots dx_{n-1}\\
\ge& \int_M R_0 u d\mu_g,
    \end{split}
\end{equation*}
which implies that $R_g\ge R_0$ in the distributional sense.
\end{proof}

\subsection{Determining the smooth structure}

$\quad$

    Let $X, \mathcal{U}, N^{n-k}$ and $G$ be as in Theorem \ref{thm: coxeter gluing smoothing}. Let $U_0$ be a chamber, and let
    \begin{align*}
        \Psi_0: U_0\longrightarrow N^{n-k}\times C\subset N^{n-k}\times \mathbf{R}^k
    \end{align*}
    be a smooth coordinate map, where $C\subset\mathbf{R}^k$ is a Coxeter chamber. We denote
    \begin{align*}
        \Psi_0(x) = (z(x),\mathbf{y}(x))\in N^{n-k}\times \mathbf{R}^k.
    \end{align*}
    Using the $G$ action on the Coxeter tessellation $\mathcal{T}_C$, we are able to extend $\Psi_0$ to a map $\Psi_G: \mathcal{U}\longrightarrow N^{n-k}\times \mathbf{R}^k$ by
    \begin{align}\label{eq: 118}
        \Psi_G(w\cdot x) = (z(x), w\cdot \mathbf{y}(x)), \quad x\in U_0,\quad w\in G.
    \end{align}

Let $T\in O(k)$ be an orthogonal transformation on $\mathbf{R}^k$. We define
\begin{equation}\label{eq: 121}
    \begin{split}
        &\Psi_0^T: U_0\longrightarrow N^{n-k}\times T(C)\subset N^{n-k}\times \mathbf{R}^k,\\
    &\Psi_0^T(x) = (z(x), T(\mathbf{y}(x))).
    \end{split}
\end{equation}
Similarly, $\Psi_0^T$ is naturally extended to a map $\Psi^T: \mathcal{U}\longrightarrow N^{n-k}\times \mathbf{R}^k$ using the $G$ action by
    \begin{align}\label{eq: 119}
        \Psi^T_G(w\cdot x) = (z(x), w^T\cdot T(\mathbf{y}(x))), \quad x\in U_0,\quad w\in G.
    \end{align}
for $x\in U_0$ and $w\in G$. Here $w^T\cdot$ denotes the action of $w$ on the Coxeter tessellation
\begin{align*}
    \mathcal{T}^T_C = \{T(w(C)): w\in G\},
\end{align*}
and is therefore given by
\begin{align}\label{eq: 120}
    w^T\cdot \mathbf{y} = T(w\cdot T^{-1}(\mathbf{y})).
\end{align}

We thus obtain the following lemma, which shows that the linear map $T$ that may appear in Definition \ref{defn: induced coordinate map} of the induced coordinate map does not affect the smooth structure.
\begin{lemma}\label{lem: linear transition map}
    $\Psi^T_G\circ \Psi^{-1}_G: N^{n-k}\times \mathbf{R}^k\longrightarrow N^{n-k}\times \mathbf{R}^k$ is a smooth map.
\end{lemma}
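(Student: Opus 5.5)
We will verify directly that on each chamber $w\cdot U_0$ the transition map is the restriction of one fixed linear map, namely $\Id_N\times T$. The point is that the $G$-equivariant extensions \eqref{eq: 118} and \eqref{eq: 119} intertwine the two tessellations $\mathcal T_C$ and $\mathcal T_C^T$ through conjugation by $T$.

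First, take a point of $N^{n-k}\times\mathbf R^k$ in the image of $\Psi_G$. It lies in some chamber $N\times w(C)$, so it can be written as $\Psi_G(w\cdot x)=(z(x),w\cdot\mathbf y(x))$ with $x\in U_0$ and $w\in G$. By definition,
\begin{align*}
\Psi^T_G\circ\Psi_G^{-1}\bigl(z(x),w\cdot\mathbf y(x)\bigr)=\Psi^T_G(w\cdot x)=\bigl(z(x),w^T\cdot T(\mathbf y(x))\bigr).
\end{align*}
Using \eqref{eq: 120}, we get $w^T\cdot T(\mathbf y)=T(w\cdot T^{-1}T\mathbf y)=T(w\cdot\mathbf y)$. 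Hence on the chamber $N\times w(C)$ the transition map is $(z,\mathbf y)\mapsto(z,T\mathbf y)$.

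Second, we check that this formula is independent of the chamber. Points on common facets or lower strata can be represented by several pairs $(w,x)$. However, the formula $(z,\mathbf y)\mapsto(z,T\mathbf y)$ does not involve $w$, so the pieces agree on overlaps. This also confirms that $\Psi^T_G\circ\Psi_G^{-1}$ is well defined, given that $\Psi_G$ and $\Psi^T_G$ are themselves well defined by the Coxeter compatibility assumption, under which $G$ acts transitively on the chambers of $\mathcal U$. Consequently the transition map is globally the restriction of $\Id_N\times T$, which is smooth, indeed a linear isometry in the $\mathbf R^k$ factor. The same argument with the roles of $T$ and $T^{-1}$ exchanged shows that its inverse is smooth as well.

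The only subtle step is the well-definedness on the boundary strata, where several $w$ represent the same point. One must check that if $w\cdot x=w'\cdot x'$ in $\mathcal U$, then $w\cdot\mathbf y(x)=w'\cdot\mathbf y(x')$ and $w^T\cdot T\mathbf y(x)=w'^T\cdot T\mathbf y(x')$. The second identity follows from the first after applying $T$, by the computation above. The first is exactly the assumption that $\Psi_G$ is well defined, which comes from the stabilizer of a stratum point in $G$ fixing $\mathbf y(x)$.
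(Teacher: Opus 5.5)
Your proof is correct and follows the paper's argument. The paper likewise computes $\Psi^T_G\circ\Psi_G^{-1}(z(x),w\cdot\mathbf{y}(x))=(z(x),T(w\cdot\mathbf{y}(x)))$ chamber by chamber using the equivariant extensions and the conjugation formula $w^T\cdot\mathbf{y}=T(w\cdot T^{-1}\mathbf{y})$, and concludes that the transition map is $(z,\mathbf{y})\mapsto(z,T\mathbf{y})$; your additional check of well-definedness on the lower strata is something the paper takes for granted, and it does no harm.
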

\begin{proof}
    By \eqref{eq: 118}\eqref{eq: 119} and \eqref{eq: 120}, we obtain
    \begin{align*}
        \Psi^T_G\circ \Psi^{-1}_G(z(x), w\cdot \mathbf{y}(x)) = (z(x), w^T\cdot T(\mathbf{y}(x))) = (z(x),T(w\cdot\mathbf{y}(x)))
    \end{align*}
    for $x\in U_0$ and $w\in G$, which implies
    \begin{align*}
        \Psi^T_G\circ \Psi^{-1}_G(z,\mathbf{y}) = (z,T\mathbf{y})
    \end{align*}
    for all $(z,y)\in N^{n-k}\times \mathbf{R}^k$. This proves the smoothness of $\Psi^T_G\circ \Psi^{-1}_G$.
\end{proof}

%The following remark shows that $\Phi_\lambda$ can be chosen such that after this adjustment, the tesselation of $X$ into the pieces $(M_\lambda,g_\lambda')$ still satisfies the assumption of Theorem \ref{thm: coxeter gluing smoothing}.

\begin{construction}[Determining the correction maps $\Phi_\lambda (\lambda\in \Lambda)$]\label{construction: the choice of Phi lambda}
    %Recall that in the preceding paragraphs we work with $(M,g')=(M,\Phi^*g)$, where $\Phi$ is chosen as in Theorem \ref{thm: fermi type coordinate for manifolds with corner}. We now briefly explain how to determine $\Phi_\lambda$ more precisely for each building block $(M_\lambda,g_\lambda)$ in $X$.
    Recall that from Theorem \ref{thm: fermi type coordinate for manifolds with corner} that for any building block $(M_\lambda,g_\lambda)$ in $X$, there exists a $C^{1,1}$ self diffeomorphism $\Phi_\lambda$ of $M$, which is smooth away from $\partial_{\cor}M$, such that the normal compatible smooth coordinate map exists around each corner point of $(M_\lambda,g_\lambda') = (M_\lambda,\Phi^*_\lambda g)$. 
    
Because the tessellation of $X$ by $(M_\lambda,g_\lambda)$ satisfies the Coxeter compatibility condition around each corner point, and because the induced metrics agree isometrically on each open facet, the correction map $\Phi_\lambda$ can be chosen to be invariant under the local Coxeter group action in a neighborhood of each corner point, and to agree across each facet. This can be seen by tracing the construction of $\Phi$ in the proof of Theorem \ref{thm: fermi type coordinate for manifolds with corner}: the construction starts from the corner neighborhood of a building block, then proceeds to the open facets, and finally extends to the interior. Then $(M_\lambda,g'_\lambda)$ still satisfies all the assumptions of Theorem \ref{thm: coxeter gluing smoothing}.

From now on, we work with the adjusted building blocks $(M_\lambda,g'_\lambda) \quad(\lambda\in \Lambda)$ instead of the original ones $(M_\lambda,g_\lambda)\quad (\lambda\in \Lambda)$.
\end{construction}

$\quad$

Next, we study the smoothness of the transition map induced from the induced coordinate map defined by Definition \ref{defn: induced coordinate map}. Let $P^{n-k}\subset\mathcal{S}^k\subset (M,g')$. Assume that
\begin{itemize}
    \item $U$ is a neighborhood of $P^{n-k}$ in $M$, and $\Psi: U\longrightarrow P^{n-k}\times C $ is a smooth coordinate map;
    \item $\tilde{\Psi}: V\longrightarrow N^{n-k+1}\times C'$ is the coordinate map induced from $\Psi$ as defined in Definition \ref{defn: induced coordinate map}.
\end{itemize}
From \eqref{eq: 124}, $\tilde{\Psi}\circ\Psi^{-1}$ maps between open subsets of $P^{n-k}\times C$ and $N^{n-k+1}\times C'$, and is given by
\begin{align*}
    \tilde{\Psi}\circ\Psi^{-1}(z,y_1,y_2,\dots,y_k) = (\tau(z,y_1),y_2,\dots,y_k)\subset N^{n-k+1}\times \mathbf{R}^{k-1},
\end{align*}
where $\tau$ is a smooth map. 

Viewing $P^{n-k}$ and $N^{n-k+1}$ as submanifolds of $X$, we denote by $G_P$ and $G_N$ the Coxeter groups associated with $P^{n-k}$ and $N^{n-k+1}$, respectively. Note that $G_N<G_P$ is the stabilizer subgroup of $N^{n-k+1}$.
\begin{lemma}\label{lem: smoothness of the induced transition map}
    $\tilde{\Psi}_{G_N}\circ\Psi_{G_P}^{-1}$ is smooth.
\end{lemma}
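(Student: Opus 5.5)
The plan is to compute the transition map chamber by chamber, using the equivariant definitions \eqref{eq: 118}, and to see that on every chamber it is given by one and the same smooth formula. Recall that $\Psi_{G_P}(w\cdot x)=(z(x),w\cdot\mathbf y(x))$ for $x\in U_0$ and $w\in G_P$. Likewise $\tilde\Psi_{G_N}(w'\cdot x)=(\tilde z(x),w'\cdot\tilde{\mathbf y}(x))$ for $w'\in G_N$. On the base chamber we have the explicit formula
\[
\tilde\Psi\circ\Psi^{-1}(z,y_1,\mathbf y')=(\tau(z,y_1),\mathbf y'), \qquad \mathbf y'=(y_2,\dots,y_k).
\]
After the normalization by $T\in O(k)$ (which does not affect smoothness, by Lemma~\ref{lem: linear transition map}), we may take $\mathcal L$ to be mapped into $P\times e$, with $e$ the positive $y_1$-ray. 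Then $G_N$, the stabilizer of the edge, is generated by the reflections in the facets containing $e$. Hence every $w'\in G_N$ acts on $\mathbf R^k=\mathbf R_{y_1}\times\mathbf R^{k-1}$ as $\mathrm{id}\times w'$, fixing $y_1$. This is the key structural point, and I would verify it first from the Coxeter-compatibility assumption in Theorem~\ref{thm: coxeter gluing smoothing}.

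With this in hand, take a point of $\tilde\Psi_{G_N}$'s domain of the form $w'\cdot x$ with $x\in V\cap U_0$ and $w'\in G_N\subset G_P$. Then $\Psi_{G_P}(w'\cdot x)=(z,y_1,w'\mathbf y')$ and $\tilde\Psi_{G_N}(w'\cdot x)=(\tau(z,y_1),w'\mathbf y')$. Therefore
\[
\tilde\Psi_{G_N}\circ\Psi_{G_P}^{-1}(z,y_1,\mathbf y')=(\tau(z,y_1),\mathbf y')
\]
on every chamber $w'(C)$ meeting the region $(\epsilon,2\epsilon)\times B^{k-1}_\delta$. The region near the edge away from the vertex lies in the union of the $G_N$-translates of $C$, because chambers of $G_P$ not containing $e$ stay a positive distance away for small $\delta$. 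So the formula holds on the whole relevant open set. It is manifestly smooth, since $\tau$ is smooth: it is $\Psi^{-1}$ restricted to the smooth edge chart. Note also that the two $G_N$-extensions agree on overlaps of facets, because both are defined via the same group action, so the map is well defined.

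The main obstacle is justifying that the local Coxeter group $G_N$ at $N$ really sits inside $G_P$ as the subgroup generated by reflections through facets containing $e$, acting trivially on the $y_1$ factor. A related issue is that the chamber labelling used in $\Psi_{G_P}$ and in $\tilde\Psi_{G_N}$ is the same, in that the same $w'$ maps the same chamber of $\mathcal U$ in both. I would handle this using transitivity and isometric action of $G$ from Construction~\ref{construction: the choice of Phi lambda}, which makes $\Phi_\lambda$ equivariant, so the identification $w'\cdot x$ is intrinsic in $X$. The Coxeter fact is standard: the stabilizer of a face of a Coxeter chamber is the parabolic subgroup generated by the reflections in the walls containing that face.
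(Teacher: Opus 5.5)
Your proposal is correct and follows essentially the same route as the paper. Both arguments use the equivariant definition of the extended charts, the fact that $G_N$ fixes the $y_1$-axis and so acts as $\mathrm{id}\times w'$, and the base-chamber formula $(\tau(z,y_1),y_2,\dots,y_k)$ to conclude that the transition map has this same smooth form on every chamber. Your extra remarks (only $G_N$-translates of $C$ meet the region near the edge, $G_N$ is the parabolic stabilizer, and the chamber labelling is intrinsic) spell out points the paper simply asserts.
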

\begin{proof}
    Let $w\in G_N$, by \eqref{eq: 118}, we have
\begin{align*}
    &\tilde{\Psi}_{G_N}\circ\Psi_{G_P}^{-1}(z,y_1,w\cdot(y_2,\dots,y_k))\\
     =&\tilde{\Psi}_{G_N}\circ\Psi_{G_P}^{-1}(z,w\cdot(y_1,y_2,\dots,y_k)) = \tilde{\Psi}_{G_N}(w\cdot \Psi_{G_P}^{-1}(z,y_1,y_2,\dots,y_k)) \\= &w\cdot  \tilde{\Psi}\circ\Psi^{-1}(z,y_1,y_2,\dots,y_k) = w\cdot(\tau(z,y_1),y_2,\dots,y_k) = (\tau(z,y_1), w\cdot(y_2,\dots,y_k)).
\end{align*}
In the first equality we used that $G_N$ fixes the $y_1$ axis in $\mathbf{R}^k$. Therefore, the transition map $ \tilde{\Psi}_{G_N}\circ\Psi_{G_P}^{-1}$ has the form
\begin{align*}
    \tilde{\Psi}_{G_N}\circ\Psi_{G_P}^{-1}(z,y_1,y_2,\dots,y_k) = (\tau(z,y_1),y_2,\dots,y_k),
\end{align*}
which implies its smoothness.
\end{proof}

    We are now ready to construct a smooth structure on $X$. Recall that $k_{\max}(M) = \max\{ k, \mathcal{S}^k\ne \emptyset\}$. We first give a detailed construction of the $k_{\max}(M) = 2$ case of Theorem \ref{thm: coxeter gluing smoothing}. Let $(M,g') = (M,\Phi^*g)$ be a building block, and $\mathcal{S}^2$ is the disjoint union of closed manifolds
    \begin{align*}
        \mathcal{S}^{2} = N_1^{n-2}\cup N_2^{n-2}\cup\dots \cup N_{l_1}^{n-2}.
    \end{align*}
By Theorem \ref{thm: fermi type coordinate for manifolds with corner}, there is a neighborhood $U_i$ of $N_i$ in $M$ and a smooth coordinate map
\begin{align*}
    \psi_i: U_i\longrightarrow N_i^{n-2}\times C^2_i\subset N_i^{n-2}\times \mathbf{R}^2,\quad 1\le i\le l_1
\end{align*}
such that $(\psi^{-1}_i)^* g'$ satisfies the normal compatibility condition. Here, $C_i^2$ is a Coxeter chamber in $\mathbf{R}^2$. 

Let $F_0$ be a facet of $M$ and denote $F_0^-$ to be the part of $F_0$ obtained from itself with a tubular neighborhood of its boundary removed. We need to construct a coordinate map $\tilde{\psi}_0: W_0^-\longrightarrow F_0^-\times [0,\delta)$ which is compatible with $\psi_i\quad (i=1,2,\dots,l_1)$ above after we perform the Coxeter reflection. Here $W_0^-$ is a neighborhood of $F_0^-$ in $M$. Without of loss of generality, assume that $\partial F_0 = N_1^{n-2}\cup N_2^{n-2}\cup\dots \cup N_{l_2}^{n-2}\quad (l_2\le l_1)$. A tubular neighborhood of a component of $\partial F_0^-$ is diffeomorphic to $N_i^{n-2}\times (\epsilon,2\epsilon)$. Each coordinate map $\psi_i$ gives rise to a induced coordinate map $\tilde{\psi}_i: V_i\longrightarrow N_i^{n-2}\times (\epsilon,2\epsilon)\times [0,\delta)$, satisfying the normal compatibility condition. Here $V_i$ is a neighborhood of $N_i^{n-2}\times (\epsilon,2\epsilon)$ in $M$. Then we extend $\tilde{\psi}_i$ to obtain a smooth coordinate map
\begin{align*}
    \tilde{\psi}_0: W_0^-\longrightarrow F_0^-\times [0,\delta)
\end{align*}
that satisfies the normal compatibility condition.

%We denote $\Int M$ to be the part of $M$ with a tubular neighborhood of its boundary removed. 

By repeating the construction above for each facet of the building block $(M,g')$, we obtain the following collection of maps
\begin{itemize}
    \item A neighborhood $U_i$ of $N_i^{n-2}$ in $M$ and a smooth map
    \begin{align}\label{eq: 122}
        \psi_i: U_i\longrightarrow N_i^{n-2}\times C^2_i\subset N_i^{n-2}\times \mathbf{R}^2
    \end{align}
    satisfying the normal compatibility condition $(i=1,2,\dots,l_1)$;
    \item A neighborhood $W_b^-$ of $F_b^-$ in $M$ for each facet $F_b$ and a smooth map
    \begin{align}\label{eq: 123}
        \tilde{\psi}_b: W_b^-\longrightarrow F_b^-\times [0,\delta)
    \end{align}
    satisfying the normal compatibility condition $(b\in \mathscr{B})$;
    \item The identity map $\Id: \Int M\longrightarrow \Int M$.
\end{itemize}

%With the preparation above, we now begin to assign a smooth structure to the topological manifold $X$ in Theorem \ref{thm: coxeter gluing smoothing}.We first introduce some notations. Let
Let $X$ be the topological manifold in Theorem \ref{thm: coxeter gluing smoothing}. We introduce some notations. Let
\begin{itemize}
   \item $N_1^{n-2}\cup N_2^{n-2}\cup\dots \cup N_{L_1}^{n-2}$ denote the collection of all codimension $2$ singularities in the topological manifold $X$, $i.e.$
    \begin{align*}
        \mathcal{S}^2 = N_1^{n-2}\cup N_2^{n-2}\cup\dots \cup N_{L_1}^{n-2}.
    \end{align*}
    Each $N_i^{n-2}$ is a closed manifold;
    \item $F_{b}\quad\{b\in \mathscr{B}^+\}$ denote the collection of all open facets in the topological manifold $X$;
    \item $\Int M_\lambda\quad (\lambda\in \Lambda)$ denote the collection of all interior of the building blocks in the topological manifold $X$.
\end{itemize}

We first consider the simpler case that the chamber decomposition near $N_i^{n-2}$ is isomorphic to $N_i^{n-2}\times \mathcal{T}_{C_i^2}$. By the Coxeter compatibility condition, we can use \eqref{eq: 118} to find
\begin{itemize}
    \item a neighborhood $\mathcal{U}_i$ of $N_i^{n-2}$ in $X$ and a map
    \begin{align}\label{eq: 126}
        (\psi_i)_{G_i}: \mathcal{U}_i\longrightarrow N_i^{n-2}\times \mathbf{R}^2
    \end{align}
    that extends $\psi_i$ in \eqref{eq: 122} in the manner of \eqref{eq: 118}, where $G_i$ is the Coxeter group associated to the Coxeter chamber $C_i^2\subset \mathbf{R}^2$;
    \item a neighborhood of $\mathcal{W}_{b}$ of $F_b^-$ in $X$ and a map
    \begin{align}\label{eq: 127}
        (\tilde{\psi}_b)_{\mathbf{Z}_2}: \mathcal{W}_b\longrightarrow F_b^-\times (-\delta,\delta)
    \end{align}
    that extends $\tilde{\psi}_b$ in \eqref{eq: 123} by reflection;
    \item the identity map $\Id: \Int M_\lambda\longrightarrow \Int M_\lambda\quad (\lambda \in \Lambda)$.
\end{itemize}

\begin{figure}
\centering
\includegraphics[width=0.9\textwidth]{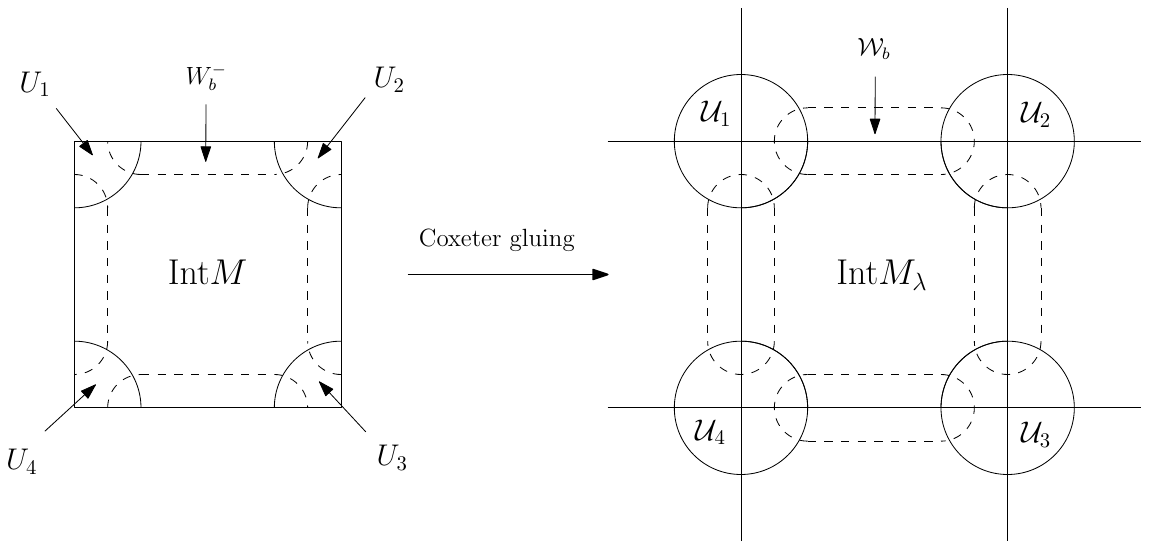}
\caption{The smooth structure on the topological manifold $X$}
\label{figure: smooth structure}
\end{figure}

We verify the smooth compatibility of these coordinate maps. The smooth compatibility between $\mathcal{U}_i$ ($\mathcal{W}_b$) and $\Int M_\lambda$ follows from the smoothness of $\psi_i$ ($\tilde{\psi}_b$). By shrinking $\mathcal{U}_i$ ($\mathcal{W}_b$) if necessary, we can always ensure that $\mathcal{U}_i\cap \mathcal{U}_{i'} = \emptyset$ ($\mathcal{W}_b\cap \mathcal{W}_{b'}=\emptyset$) whenever $i\ne i'$ ($b\ne b'$). Thus, it suffices to verify the smooth compatibility between $(\psi_i)_{G_i}$ and $(\tilde{\psi}_b)_{\mathbf{Z}_2}$. Up to a linear correction, the transition map $(\tilde{\psi}_b)_{\mathbf{Z}_2}\circ (\psi_i)_{G_i}$ is completely described by Lemma \ref{lem: smoothness of the induced transition map}, in the special case that $G_N = \mathbf{Z}_2$ and $G_P = G_i$. This shows that $(\psi_i)_{G_i}$ and $(\tilde{\psi}_b)_{\mathbf{Z}_2}$ is smooth. By Lemma \ref{lem: linear transition map}, the linear correction also gives rise to a smooth transition map on $\mathcal{U}_i$. Hence the coordinate system listed above defines a smooth structure on $X$.

In the general case, the chamber decomposition near $N_i^{n-2}$ is not necessarily isomorphic to $N_i^{n-2}\times \mathcal{T}_{C_i}$. We then cover $N_i^{n-2}$ by union of balls $B_{i,1}, B_{i,2},\dots,B_{i,s}$, such that the chamber decomposition near each $B_{i,j}$ is isometric to $B_{i,j}\times \mathcal{T}_{C_i^2}$. For each $j$ we have a neighborhood $\mathcal{U}_{i,j}$ of $B_{i,j}$ in $X$ and a coordinate map
\begin{align*}
    (\psi_{i,j})_{G_i}: \mathcal{U}_{i,j}\longrightarrow B_{i,j}\times \mathbf{R}^2.
\end{align*}
For $j_1\ne j_2$, the transition map has the form
\begin{align*}
    (\psi_{i,j_1})_{G_i}\circ (\psi_{i,j_2})_{G_i}^{-1}: &(B_{i,j_1}\cap B_{i,j_2})\times \mathbf{R}^2&\longrightarrow &(B_{i,j_1}\cap B_{i,j_2})\times \mathbf{R}^2\\
    &(z,\mathbf{y})&\mapsto &(z,T\mathbf{y}),
\end{align*}
where $T\in O(2)$ preserves the tessellation $\mathcal{T}_{C_i^2}$, although it may interchange chambers. It's clear that the transition map is smooth. This completes the construction of the smooth structure in the case $k_{\max}(M) = 2$.

For general $k_{\max}(M)$, we begin with the lowest-dimensional singular strata and, for each building block, construct the induced coordinate maps inductively as in the proof of Theorem \ref{thm: fermi type coordinate for manifolds with corner}. We then use \eqref{eq: 118} to extend these maps to coordinate maps on open subsets of $X$. The rest of the proof is the same as in the case $k_{\max}(M)=2$, except that we use Lemma \ref{lem: smoothness of the induced transition map} to verify the smoothness of the transition maps between coordinate maps modeled on higher-codimensional singular strata.

\subsection{Proof of the Coxeter smoothing theorem}

$\quad$

We are now in a position to prove Theorem \ref{thm: coxeter gluing smoothing}. We first prove the following lemma, which enables us to glue metrics on building blocks into a Lipschitz metric.

\begin{lemma}\label{lem: gluing into a Lipschitz metric}
    Let $C\subset\mathbf{R}^k$ be a Coxeter chamber corresponding to a Coxeter group $G$, and let $N^{n-k}$ be a Riemannian manifold. Let $g$ be a metric on $N\times C$, which is smooth on $N\times( C\backslash\partial_{\mathrm{cor}}C)$ and Lipschitz up to $N\times\partial_{\mathrm{cor}}C$. The Coxeter action on $N\times \mathcal{T}_C$ induces a metric $g_w$ on $N\times w(C)$ for each $w\in G$. Assume that $(N\times C,g)$ satisfies the normal compatibility condition. Then $\{g_w\}_{w\in G}$ patch together into a Lipschitz metric $\hat{g}$ on $N\times \mathbf{R}^k$.
\end{lemma}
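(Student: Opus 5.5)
We define $\hat g$ on $N\times\mathbf{R}^k$ chamberwise by $\hat g|_{N\times w(C)}=g_w:=(w^{-1})^*g$, where $w$ acts on the $\mathbf{R}^k$ factor and trivially on $N$. We then need two things: (i) $\hat g$ is well defined, i.e. on every wall shared by two adjacent chambers the two definitions give the same symmetric $2$-tensor at each point (full ambient tensor, not just the tangential part); (ii) $\hat g$ is Lipschitz. Once (i) holds, (ii) is almost automatic. Each $g_w$ is Lipschitz on the closed chamber $N\times w(C)$, since $w$ is a Euclidean isometry. A continuous function that is uniformly Lipschitz on each of finitely many closed convex pieces covering $N\times\mathbf{R}^k$ is globally Lipschitz. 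To see this, join two points by the straight segment in the $\mathbf{R}^k$ factor (locally in $N$), split it at the finitely many wall crossings, and apply the triangle inequality. So the work is in (i).

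\emph{Well-definedness across a wall.} By $G$-equivariance it suffices to treat one wall, say $N\times \hat G_i$ between $C$ and $r_i(C)$, where $r_i$ is the reflection across the hyperplane containing $\hat G_i$. We must show $g=(r_i)^*g$ at points of $N\times\hat G_i$. Decompose the tangent space there as $T(N\times\hat G_i)\oplus\mathrm{Span}\{\hat n_i\}$, with $\hat n_i$ the Euclidean unit normal. The map $dr_i$ fixes $T(N\times\hat G_i)$ and sends $\hat n_i\mapsto-\hat n_i$. So $(r_i^*g)(u,v)=g(u,v)$ for tangential $u,v$, and $(r_i^*g)(\hat n_i,\hat n_i)=g(\hat n_i,\hat n_i)$. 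The only possible discrepancy is the mixed term: $(r_i^*g)(u,\hat n_i)=-g(u,\hat n_i)$. This is where the normal compatibility condition enters. It says the $g$-unit outward normal $n_i$ equals $\hat n_i$. Hence $g(u,\hat n_i)=g(u,n_i)=0$ for all $u$ tangent to the wall, and also $g(\hat n_i,\hat n_i)=1$. Therefore $g$ and $r_i^*g$ agree as full tensors on the wall.

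\emph{Consistency at corners.} At points of $N\times\partial_{\cor}C$, several chambers meet, and we must check that all the $g_w$ agree. This is handled by chaining the wall argument. The chambers around a point of a lower-dimensional face form a gallery connected through walls containing that face, because the stabilizer subgroup is generated by the reflections in those walls. Every $g_w$ is continuous up to the corner, so the common value propagates along galleries. Also, $g_w=g$ on $N\times\{\mathbf{0}\}$ is forced by Lemma \ref{lem: nice form of the metric on the edge assuming angle matching condition}, since the dihedral angles match those of $C$. Consequently $\hat g$ is single-valued and continuous.

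\emph{Main obstacle.} The delicate point is that normal compatibility is only assumed on the open facets and their closures, where $g$ is merely Lipschitz. I would verify the identity $n_i=\hat n_i$ on the relatively open facet, where $g$ is smooth up to the boundary, and extend it by continuity to the corner strata. Non-degeneracy of $\hat g$ (uniform positive definiteness) follows from compactness of $N$, after shrinking to a bounded neighborhood of the origin in $\mathbf{R}^k$.
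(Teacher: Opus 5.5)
Your proposal is correct and follows essentially the paper's argument: normal compatibility makes the reflected metrics agree along each shared facet (you spell out why the mixed terms $g(u,\hat n_i)$ vanish, which the paper compresses into ``the unit normals coincide''), agreement at the corner strata follows (you use continuity along galleries, where the paper appeals to the product form along $N\times\{\mathbf{0}\}$ and Lemma~\ref{lem: nice form of the metric on the edge assuming angle matching condition}), and Lipschitz regularity is inherited chamberwise. Your ``main obstacle'' paragraph is unnecessary: normal compatibility is a hypothesis holding on the closed facets, so there is nothing to verify or extend there.
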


\begin{proof}
    For $w\in G$, let $T_w\in O(k)$ denote the action of $w$ on $\mathbf{R}^k$, and let
    \begin{align*}
        \hat{T}_w = \Id\times T_w: N\times \mathbf{R}^k\longrightarrow N\times \mathbf{R}^k
    \end{align*}
    be the induced action of $w$ on $N\times \mathbf{R}^k$. The normal compatibility condition satisfied by $(N\times C,g)$ implies that
    \begin{align*}
        g = dy_1^2+\dots+dy_k^2+\omega_N
    \end{align*}
    along $N=N\times\{\mathbf{0}\}$. Since $T_w\in O(k)$, we have
    \begin{align*}
        g_w = \hat{T}_w^*g = dy_1^2+\dots+dy_k^2+\omega_N
    \end{align*}
    along $N=N\times\{\mathbf{0}\}$. Hence, we simply define $\hat{g}$ to be $dy_1^2+\dots+dy_k^2+\omega_N$ along $N=N\times\{\mathbf{0}\}$. This shows that $\hat{g}$ is well defined along $N=N\times\{\mathbf{0}\}$.

    Similarly, using Lemma \ref{lem: nice form of the metric on the edge assuming angle matching condition}, we can prove that $\hat{g}$ is well defined on other parts of $\mathcal{S}_{\mathrm{cor}}$. For a facet $F_0$ that is the common part of two chambers $N\times w_1(C)$ and $N\times w_2(C)$, the unit normal vectors with respect to $g_{w_1}$ and $g_{w_2}$ coincide along $F_0$, as a consequence of the normal compatibility condition of $(N\times C,g)$. This implies that $\hat{g}$ is also well defined on $\mathcal{S}_{\mathrm{fac}}$. To summarize, $\{g_w\}_{w\in G}$ patch together into a well defined continuous metric $\hat{g}$ on $N\times \mathbf{R}^k$. Because $g_w$ is Lipschitz in the chamber $N\times w(C)$, we know that $\hat{g}$ is Lipschitz in $N\times \mathbf{R}^k$.
\end{proof}
\begin{proof}[Proof of Theorem \ref{thm: coxeter gluing smoothing}]
Let the topological manifold $X$ be endowed with the smooth structure constructed in Section 4.3. We first show that the metrics $(M_\lambda,g'_\lambda)$, $\lambda\in\Lambda$, patch together to define a Lipschitz metric $\hat{g}$ on $X$ with respect to this smooth structure. For each $x\in \mathcal{S}_{\cor}$, let $N^{n-k}$, $\mathcal{U}$, $C$, and $G$ be as in the statement of Theorem \ref{thm: coxeter gluing smoothing}. The coordinate map
\begin{align*}
\Psi_G:\mathcal{U}\longrightarrow N^{n-k}\times\mathbf{R}^k
\end{align*}
defined as in \eqref{eq: 126} identifies each chamber of $\mathcal{U}$ with a chamber of the form $N^{n-k}\times \mathcal{T}_C$. By the Coxeter compatibility condition for $(M_\lambda,g'_\lambda)$, see Construction \ref{construction: the choice of Phi lambda}, the metrics expressed in these coordinates satisfy precisely the assumptions imposed on the family $\{g_w\}_{w\in G}$ in Lemma \ref{lem: gluing into a Lipschitz metric}. Therefore, Lemma \ref{lem: gluing into a Lipschitz metric} allows us to define $\hat{g}$ in the coordinate chart $\Psi_G:\mathcal{U}\to N^{n-k}\times\mathbf{R}^k$. We thus obtain a Lipschitz metric $\hat{g}$ on $X$.

Consequently, $(X,\hat{g})$ satisfies all the hypotheses of Proposition \ref{prop: distributional scalar curvature lower bound}. It follows that the scalar curvature of $(X,\hat{g})$ is bounded below by $R_0$ in the distributional sense. This proves assertions (1) and (2) of Theorem \ref{thm: coxeter gluing smoothing}. Assertion (3) follows directly from the Ricci flow mollification result of Lee--Liu \cite[Theorem 1.1]{LL26}. This completes the mollification process and hence the proof of Theorem \ref{thm: coxeter gluing smoothing}.

\end{proof}

\section*{Part III. Examples and rigidity results}
\section{Construction of PSC $3$-manifolds using building blocks}\label{section: construction}

In this section, we construct some examples of 3-dimensional PSC capillary prisms with quantitatively large relative $\pi_2$-systoles. In Section 5.1, for each angle $\alpha\in (0,\frac\pi2)$, we construct the standard cylindrical building blocks $(M_\alpha,g)$; See Example \ref{example: standard cylinder block}. They seem to be the canonical geometric object among all PSC capillary prisms. 

The standard cylindrical block $(M_\alpha,g)$ naturally serves as a rigid model for the relative $\pi_2$-systole estimate under certain curvature pinching conditions. We prove this rigidity result in Section 7; see Theorem \ref{thm: 2 systole estimate, rigidity}. However, we also find that $(M_\alpha,g)$ is generally not rigid if one imposes only the scalar curvature condition. In Section 5.2, we construct such counterexamples and describe the phenomenon that arises. This shows that determining the optimal constant in Theorem \ref{thm: 2 systole estimate} is a rather delicate problem. 

\subsection{Construction of standard cylindrical blocks}

$\quad$

Fix $R_0 >0$ and $\alpha\in (0,\frac{\pi}{2})$. 

We start with a slightly more general set up. Consider a warp product metric  on $S$
\begin{equation}\label{eq: 35}
    \begin{split}
        &g_0 = dr^2+u^2(r)d\theta^2, \theta\in (0,2\pi), r\in (0,b];\\
    &u(0) = 0, u'(0) = 1.
    \end{split}
\end{equation}
Eventually we will take $S= S^2_+$ and $u(r)=\sin r$. Consider the Riemannian product with $\mathbf{R}$:
$$g = ds^2+g_0 = ds^2+dr^2+u^2(r)d\theta^2.$$
This can also be written as
\begin{align}\label{eq: 26}
    g = g_E +u(r)^2d\theta^2
\end{align}
where $E = \mathbf{R}\times (0,b)$ is the $(s,r)$-plane equipped with the flat metric $g_E = ds^2+dr^2$.

Let $ [a,b]\subset(0,b]$ and let $\rho: [t_-,t_+] \longrightarrow [a,b]$ be a positive increasing function to be determined later, where $[t_-,t_+]\subset \mathbf{R}$. Define
\begin{align}\label{eq: 41}
    &M_{\alpha}=\left\{(s,r,\theta)\in \mathbf{R}\times S \;\middle|\; t_-\le s\le t_+,\ 0\le r\le \rho(s)\right\},\\
    &E_{\alpha} = \left\{(s,r)\in E \;\middle|\; t_-\le s\le t_+,\ 0\le r\le \rho(s)\right\}.
\end{align}
Then
\begin{align*}
    \partial_{\pm}M_{\alpha} &= (\{t_{\pm}\}\times S)\cap \partial M_{\alpha},\\
    \partial_0M_{\alpha} &= ((t_-,t_+)\times \partial S)\cap \partial M_{\alpha}.
\end{align*}

\begin{figure}
\centering
\includegraphics[width=0.6\textwidth]{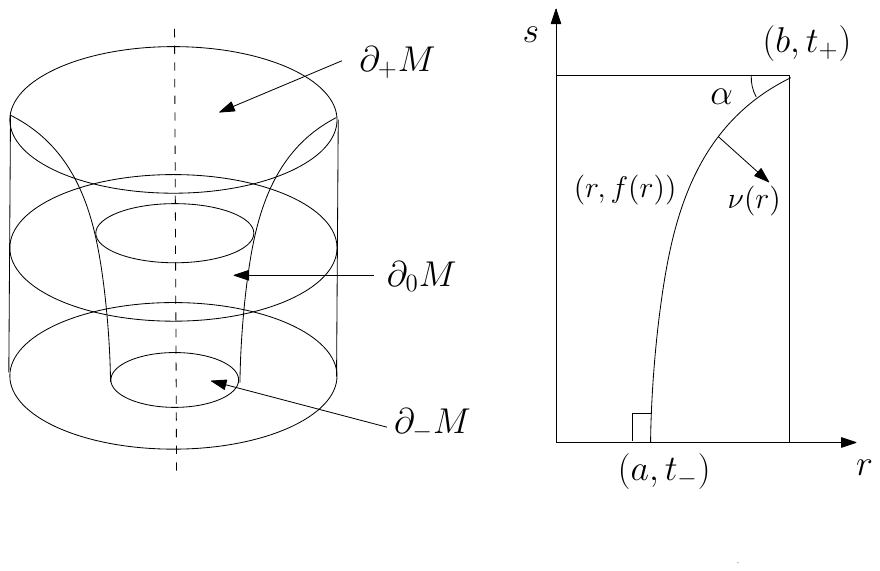}
\caption{Standard cylindrical building blocks. The left-hand figure is obtained by rotating the right-hand figure about the $s$-axis.}
\label{figure: example 1}
\end{figure}

Let $f:[a,b]\longrightarrow [t_-,t_+]$ be the inverse function of $\rho$. In $E$, the set
\begin{align*}
    (r,f(r)),\quad a\le r\le b,
\end{align*}
defines a smooth curve, which we denote by $\gamma(r)$. We impose the following angle condition:
\begin{align}\label{eq: 28}
    f'_+(a)=+\infty,\qquad f'(b)=\tan\alpha.
\end{align}
The downward-pointing unit normal of $\gamma(r)$ in $E$ is given by
\begin{align*}
    \nu(r)=\frac{1}{\sqrt{1+(f'(r))^2}}(f'(r),-1).
\end{align*}
See Figure~\ref{figure: example 1} for an illustration. The geodesic curvature of $\gamma(r)$ with respect to $g_E$ is given by
\begin{align*}
    \kappa(r) = \frac{f''(r)}{(1+f'(r)^2)^{\frac32}}.
\end{align*}

From the warped product expression \eqref{eq: 26} for $g$, the mean curvature $\bar{H}$ of $\partial_0 M_{\alpha}$ with respect to the outward unit normal is computed as follows:
\begin{align*}
    \bar{H} &= \kappa+u^{-1}\partial_\nu u = \kappa+u^{-1}\langle \nabla_E u,\nu\rangle_{g_E}\\
    & = \frac{f''(r)}{(1+f'(r)^2)^{\frac32}}+\frac{1}{u(r)}\frac{1}{\sqrt{1+f'(r)^2}}\langle (u'(r),0),(f'(r),-1)\rangle\\
    &=  \frac{f''(r)}{(1+f'(r)^2)^{\frac32}}+\frac{u'(r)}{u(r)}\frac{f'(r)}{\sqrt{1+f'(r)^2}}.
\end{align*}
Thus, the condition $\bar{H}=0$ is equivalent to
\begin{align*}
    f''(r)+\frac{u'(r)}{u(r)}f'(r)(1+f'(r)^2)=0.
\end{align*}
Multiplying both sides of the above equality by $f'(r)$ and integrating, we obtain
\begin{align}\label{eq: 27}
    \frac{f'(b)^2(f'(r)^2+1)}{f'(r)^2(f'(b)^2+1)}=\frac{u^2(r)}{u^2(b)}.
\end{align}
Let $\theta(r)=\arctan f'(r)$. By \eqref{eq: 28}, we have $\theta(a)=\frac{\pi}{2}$ and $\theta(b)=\alpha$. Combining this with \eqref{eq: 27}, we obtain
\begin{align}\label{eq: 29}
    &\sin\theta(r)\cdot u(r)=\sin\theta(b)\cdot u(b)=\sin\alpha\cdot u(b).
\end{align}

$\quad$

From now on, we specify $S=S^2_+$ and
\begin{align}\label{eq: 39}
    u(r) = \sin r, \quad b = \frac{\pi}{2}.
\end{align}
The scalar curvature of $(M_\alpha,g)$ is computed as
\begin{align}\label{eq: 128}
    R_g = -\frac{2u''(r)}{u(r)} = 2.
\end{align}
From \eqref{eq: 29} we know that
\begin{align*}
    \sin a = \sin a\cdot\sin\theta(a) = \sin b\cdot\sin\theta(b) = \sin\theta(b) = \sin\alpha,
\end{align*}
So $a=\alpha$. Thus $\partial_-M$ is a geodesic ball of radius $a$ on $S_+^2$, hence
\begin{align}\label{eq: 129}
    |\partial_-M| = 2\pi\int_0^a u(r')dr' = 2\pi(1-\cos a) = 2\pi(1-\cos\alpha)
\end{align}

Combining \eqref{eq: 128} and \eqref{eq: 129}, and rescaling the metric if necessary, we summarize the above construction as follows.

\begin{example}\label{example: standard cylinder block}
    For any $R_0>0$ and $\alpha\in (0,\frac{\pi}{2})$, the capillary prism $(M_\alpha,g)$ constructed above has the following properties:
    \begin{enumerate}
        \item $(M_\alpha,g)$ has constant scalar curvature $R_0$;
        \item $(M_\alpha,g)$ is rotationally symmetric with respect to an axis;
        \item $\angle_{0+}(M_\alpha,g) = \alpha$, $\angle_{0-}(M_\alpha,g) = \frac\pi2$; $\partial_{\pm}M_\alpha$ are totally geodesic, and $\partial_0M_\alpha$ is minimal;
        \item $M_\alpha$ is foliated by a one-parameter family of totally geodesic capillary surfaces, with the capillary angle varying monotonically in $(\alpha,\frac{\pi}{2})$.
        \item \begin{align}\label{eq: 30}
    (\inf_{M_\alpha} R_g)\cdot|\partial_-M_\alpha| = R_0\cdot|\partial_-M_\alpha| =  4\pi(1-\cos\alpha).
\end{align}
    \end{enumerate}
\end{example}
We call $(M_\alpha,g)$ constructed above the \textit{standard cylindrical block} with upper angle $\alpha$.

\subsection{Non-rigidity: deformations that preserve scalar lower bound while increasing the systole}

$\quad$

In this subsection, we modify Example \ref{example: standard cylinder block} to construct a capillary prism with upper angle $\alpha$ and scalar curvature bounded below by $R_0$, whose relative $\pi_2$-systole is slightly larger than that of the original one. We continue to use the notation from Section 4.1. Let $\alpha\in (0,\frac{\pi}{2})$ be a fixed angle, and let $S_+^2$ be the upper hemisphere defined by \eqref{eq: 35}. Consider the product space $\mathbf{R}\times S_+^2$ equipped with the doubly warped product metric
\begin{align}\label{eq: 36}
    g = v^2(r)ds^2+g_{S^2} = v^2(r)ds^2+dr^2+u^2(r)d\theta^2,
\end{align}
where $v(r)\in C^2(0,b]$ is a nonnegative function satisfying $v'(0)=0$. This time we denote the $(s,r)$-plane by $\hat{E}$, so that the metric $g$ takes the form
$$g = g_{\hat{E}} +u(r)^2d\theta^2,$$
where
$$g_{\hat{E}} = v^2(r)ds^2+dr^2.$$

Let $\rho: [t_-,t_+] \longrightarrow [a,b]$ and $f: [a,b]\longrightarrow [t_-,t_+]$ be inverse functions defining the curve $\gamma(r) = (r,f(r))\quad(a\le r\le b)$ in $\hat{E}$ as in Section 5.1, which generate the side boundary $\partial_0 M$ by rotation. Recall
\begin{align*}
    \partial_-M = \{(t_-,r,\theta) \,\big| 0\le r\le a\},\quad \partial_+ M = \{(t_+,r,\theta) \,\big| 0\le r\le b\}.
\end{align*}
As before, we impose on $f$ the boundary condition \eqref{eq: 28}. Note from the expression of $g$ \eqref{eq: 36} that each $s$-level set is totally geodesic. Therefore, both $\partial_-M$ and $\partial_+ M$ are totally geodesic.

Next we find the condition that ensures the minimality of the lateral boundary $\partial_0M$. The area of $\partial_0M$ can be written as follows, which is in a form of the area functional of the surface of revolution determined by $f$:
\begin{align*}
    \mathcal{A}(f) = \int_a^b u(r)\sqrt{1+v^2(r)f'(r)^2}\,dr.
\end{align*}
Its Euler--Lagrange equation is
\begin{align*}
    \frac{uv^2f'}{\sqrt{1+v^2(f')^2}} = \mathrm{const}.
\end{align*}
In conjunction with the boundary condition \eqref{eq: 28}, we obtain the relation
\begin{align}\label{eq: 37}
    u(b)v(b)\sin\alpha = u(a)v(a).
\end{align}

By a direct calculation, the scalar curvature of $(M^3,g)$ is given by
\begin{align}\label{eq: 38}
    R_g = -2\left(\frac{u''}{u}+\frac{v''}{v}+\frac{u'v'}{uv}\right).
\end{align}

With some elementary calculations carried out in Appendix A, we have the following example:

\begin{example}\label{example: perturbation counterexample}
    Let $\alpha\in (0,\alpha_0)$ be an angle, where $\alpha_0\approx 0.35\pi$ is defined at the beginning of Appendix A. Let $(M^3,g)$ be a capillary prism constructed as above, with $u(r)$ and $v(r)$ selected as in Lemma \ref{lem: A2}. Then
    \begin{align*}
        (\inf_M R_g)\cdot \sys_2(M,\partial_0 M,g)>4\pi(1-\cos\alpha).
    \end{align*}
    
    To see this, first note that \eqref{eq: a6} coupled with \eqref{eq: 38} implies $R_g \ge 2$. Since $M$ is foliated by totally geodesic capillary surfaces, the same calibration argument as in the proof of Proposition \ref{prop: energy-essential} implies that $|\partial_-M| = \sys_2(M,\partial_0 M,g)$. From Lemma \ref{lem: A2}, by taking $\mu$ sufficiently small, it holds
    \begin{align*}
        |\partial_- M| = 2\pi \int_0^a u(r)dr >2\pi (1-\cos\alpha).
    \end{align*}
    Summing these up we obtain the desired result.
\end{example}

\subsection{Connected sums of $S^2\times S^1$, and lens spaces}\label{subsection: construction of S2xS1 and lens spaces}

$\quad$

In this subsection, we construct PSC metrics on a large class of $3$-manifolds by tessellating them into building blocks. 

\subsubsection{The $\mathcal{P}_3$}

$\quad$

One fundamental component in our construction of PSC $3$-manifolds with large $\pi_2$-systole is $\mathcal{P}_3$, which is diffeomorphic to a three-dimensional sphere with three disjoint $3$-balls removed. We first construct PSC metrics on $\mathcal{P}_3$ with minimal boundary. More precisely, we describe a geometric realization of $\mathcal{P}_3$ with scalar curvature bounded below by $R_0$ and area-minimizing boundary achieving its $\pi_2$-systole.

To construct such a metric, we use six copies of the capillary prism $(M,g)$ with upper angle $\frac{\pi}{3}$ and lower angle $\frac{\pi}{2}$ constructed in Example \ref{example: perturbation counterexample} as fundamental building blocks (Note that $\frac\pi3<\alpha_0\approx 0.35\pi$ in Example \ref{example: perturbation counterexample}). The block $(M,g)$ satisfies
\begin{align}\label{eq: 102}
R_g\ge R_0>0,\qquad \sys_2(M,\partial_0 M,g) = |\partial_- M| >\frac{2\pi}{R_0}.
\end{align}
Furthermore, each facet of this block is weakly mean convex. For simplicity, we denote the six copies by $M_i$, $1\le i\le 6$. We now specify how to identify their boundary facets. Let
\begin{align}\label{eq: 106}
\partial_+M_{2k}\sim \partial_+ M_{2k+1},\qquad \partial_0 M_{2k-1}\sim \partial_0M_{2k}.
\end{align}
Here the subscripts are understood modulo $6$. All of these identifications are given by reflections, and it is straightforward to see that the resulting quotient space is homeomorphic to $\mathcal{P}_3$.

\begin{figure}
\centering
\includegraphics[width=0.9\textwidth]{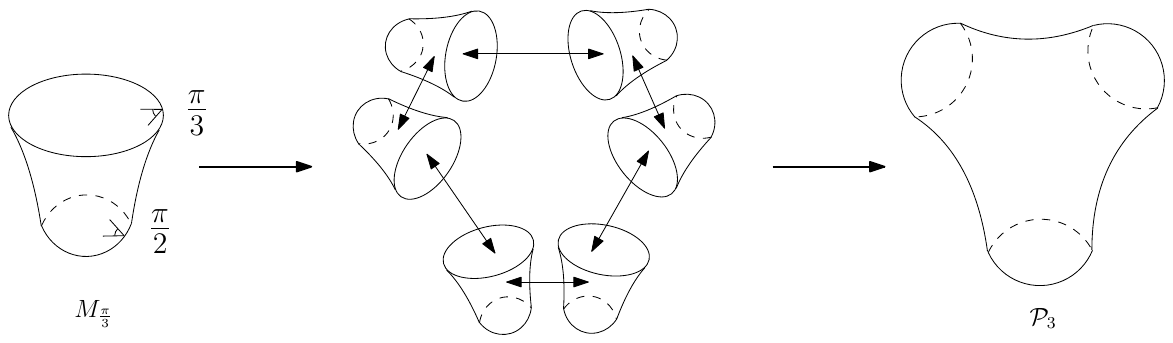}
\caption{The construction of $\mathcal{P}_3$ using the building block $M_{\frac\pi3}$.}
\label{figure: P3}
\end{figure}

In the identification above, the facets $\partial_- M_i$ are left unglued. However, since $\partial_0 M_{2k-1}$ is identified with $\partial_0M_{2k}$ and the dihedral angle between $\partial_0 M_i$ and $\partial_- M_i$ is $\frac{\pi}{2}$, the two facets $\partial_- M_{2k-1}$ and $\partial_- M_{2k}$ are glued together to form a closed sphere. By \eqref{eq: 102}, the area of each such sphere is greater than $\frac{4\pi}{R_0}$.

We now analyze the singular structure of $\mathcal{P}_3$ obtained from the identification above. The singular set $\mathcal{S}$ decomposes as $\mathcal{S}_{\fac}\cup\mathcal{S}_{\cor} = \mathcal{S}_{\fac}\cup\mathcal{S}^2$. Since $(M,g)$ has weakly mean convex facets, the mean curvature jump across $\mathcal{S}_{\fac}$ is nonnegative. It remains to understand the codimension-two singular strata. Note that the curves $\partial(\partial_+ M_i)$, $1\le i\le 6$, are identified into a single $S^1$. This $S^1$ is exactly the codimension-two singular strata $\mathcal{S}^2$. By the identification of the facets among $M_i$ $(1\le i\le 6)$ and the fact that $(M,g)$ has capillary angle $\pi/3$, there exists a neighborhood $\mathcal{U}$ of $\mathcal{S}^2\cong S^1$ in $\mathcal{P}_3$ such that the chamber decomposition of $\mathcal{U}$ is modeled on $S^1\times \mathcal{T}_{C(3)}$, where $C(3)$ is the Coxeter chamber introduced in Example \ref{example: Coxeter chambers on the plane}. This shows that the tessellation of $\mathcal{P}_3$ by the pieces $M_i$ $(1\le i\le 6)$ is Coxeter compatible.

Finally, we perform a doubling trick in order to apply Theorem \ref{thm: coxeter gluing smoothing}. The tessellation of $\mathcal{P}_3$ above gives rise to a tessellation of
$D\mathcal{P}_3\cong S^2\times S^1\# S^2\times S^1$
into $12$ copies of $(M,g)$. The additional codimension $2$ singularities arise along the boundary and are modeled on $S^1\times\mathcal{T}_{C(2)}$, due to the fact that $\partial_0 M$ is perpendicular to $\partial_-M$. Hence the induced tessellation on
$D\mathcal{P}_3\cong S^2\times S^1\# S^2\times S^1$
still satisfies the Coxeter compatibility condition.

Applying Theorem \ref{thm: coxeter gluing smoothing}, and then restricting the resulting metric to one copy of $\mathcal{P}_3$, we obtain a smooth metric $\tilde{g}$ on $\mathcal{P}_3$ with area-minimizing boundary spheres, satisfying
\begin{align*}
    R_{\tilde{g}}\ge R_0>0,
    \qquad
    \sys_2(\mathcal{P}_3,\tilde{g})>\frac{4\pi}{R_0}.
\end{align*}
This finishes the construction.

\subsubsection{The lens space $L(p,q)$ with a ball $D^3$ removed}

$\quad$

Let $L(p,q)$ be the lens space. If $p = 2$, then $L(2,1)\backslash D^3 = RP^3\backslash D^3$ is double covered by $S^2\times [-1,1]$. The product metric on $S^2\times [-1,1]$ then descends to a metric $\tilde{g}$ on $L(2,1)\backslash D^3$ with area-minimizing boundary sphere, satisfying
\begin{align*}
\inf R_{\tilde{g}}\cdot\sys_2(L(2,1)\setminus D^3,\tilde{g})=8\pi.
\end{align*}

For $p\ge 3$, we describe a geometric realization of $L(p,q)\backslash D^3 $ with scalar curvature bounded below by $R_0$, area-minimizing boundary achieving its $\pi_2$-systole.

We first recall a description of the lens space. Let $\mathbf{D}^3\subset \mathbf{R}^3$ be the unit ball, and write points in $\mathbf{R}^3=\mathbf{C}\times \mathbf{R}$ as $(z,t)$. Then $L(p,q)$ is obtained from $\mathbf{D}^3$ by identifying boundary points via
\begin{align}\label{eq: 101}
    (z,t)\sim \left(e^{\frac{2\pi qi}{p}}z,-t\right),
\qquad (z,t)\in \partial\mathbf{D}^3, \quad t>0.
\end{align}

Denote $(M_\alpha,g)$ to be the capillary prism given in Example \ref{example: perturbation counterexample} with capillary angle $\alpha = \frac{\pi}{p}$ (Note that $\frac{\pi}{p}< \alpha_0\approx 0.35\pi$ in Example \ref{example: perturbation counterexample} when $p\ge 3$), satisfying
\begin{align}\label{eq: 103}
    R_g\ge R_0 >0, \qquad\sys_2(M_\alpha,\partial_0 M_\alpha,g) = |\partial_- M_\alpha| >\frac{4\pi (1-\cos\alpha)}{R_0}.
\end{align}
Set $t_- = -t_0$ and $t_+ = 0$ in the construction of Example \ref{example: perturbation counterexample}, and let
\begin{align*}
    M_\alpha^0 = \left\{(s,r,\theta)\in \mathbf{R}\times S_+^2 \;\middle|\; -t_0\le s\le 0,\ 0\le r\le \rho(s)\right\}.
\end{align*}
be a copy of $M_\alpha$, where $\rho: [-t_0,0]\longrightarrow [a,b]$ gives rise to the defining curve of $\partial_0 M_\alpha^0$. We define
\begin{align*}
    M_\alpha^1 = \left\{(s,r,\theta)\in \mathbf{R}\times S_+^2 \;\middle|\; 0\le s\le t_0,\ 0\le r\le \rho(-s)\right\}
\end{align*}
to be another copy of $M_\alpha$. Note that $M_\alpha^0$ and $M_\alpha^1$ are glued together along their top faces:
\begin{align*}
    \partial_+ M_\alpha^0 = \partial_+ M_\alpha^1 \subset S_+^2\times \{0\}
\end{align*}

Next, we glue $\partial_0 M_\alpha^0$ and $\partial_0 M_\alpha^1$ together via the following identification:
\begin{align}\label{eq: 105}
    ((-s,\rho(s),\theta)\in \partial_0 M_\alpha^1)\sim ((s,\rho(s),\theta+\frac{2q\pi}{p})\in \partial_0 M_\alpha^0),\qquad s\in [-t_0,0].
\end{align}
Comparing this with \eqref{eq: 101}, we know that the resulting space is $L(p,q)\backslash D^3$. Since $\partial_- M_\alpha^{\varepsilon}$ is perpendicular to $\partial_0 M_\alpha^{\varepsilon}\quad (\varepsilon\in \{0,1\})$, $\partial_- M_\alpha^0$ and $\partial_-M_\alpha^1$ are glued into a closed boundary sphere. From \eqref{eq: 103}, the area of this sphere is greater than $\frac{8\pi (1-\cos\alpha)}{R_0}$.

Now we analyze the singular structure of the space $L(p,q)\setminus D^3$ obtained above. We focus on the codimension $2$ singular strata $\mathcal{S}^2$, which is the quotient of the circle $\partial(\partial_+ M_\alpha^0)=\partial(\partial_+ M_\alpha^1)$ under the $p$-sheeted covering map $z\mapsto z^p$. A neighborhood $\mathcal{U}$ of $\mathcal{S}^2$ in $L(p,q)\setminus D^3$ is a standard fibered solid torus of coefficient $(p,q)$.

Let $I$ be a segment in the singular strata $\mathcal{S}^2\cong S^1$. Then there exists a neighborhood $\mathcal{V}$ of $I$ in $L(p,q)\setminus D^3$ such that $\mathcal{V}$ is modeled on the Coxeter tessellation $I\times \mathcal{T}_{C(p)}$, where $C(p)$ is defined as in Example \ref{example: Coxeter chambers on the plane}. More precisely, $\mathcal{V}$ is divided into $2p$ chambers arranged cyclically and alternately around $I$, each with dihedral angle $\pi/p$; half of these chambers belong to $M_\alpha^0$, and the other half belong to $M_\alpha^1$.

Hence the tessellation of $L(p,q)\setminus D^3$ by the pieces $M_\alpha^\epsilon$ $(\epsilon\in\{0,1\})$ is Coxeter compatible. Applying Theorem \ref{thm: coxeter gluing smoothing}, together with the doubling trick used at the end of Section 5.3.1, we obtain a smooth metric $\tilde{g}$ on $L(p,q)\setminus D^3$ with an area-minimizing boundary sphere, satisfying
\begin{align*}
R_{\tilde{g}}\ge R_0>0,
\qquad
\sys_2(L(p,q)\setminus D^3,\tilde{g})>\frac{8\pi (1-\cos\alpha)}{R_0}.
\end{align*}

\subsubsection{Connected sums of copies of $S^2\times S^1$}

$\quad$

We describe the way to construct PSC metric on connected sums of $S^2\times S^1$. Fix $\beta_0 = \frac\pi3$ and let $M_{\beta_0}$ be the capillary prism with upper angle $\frac\pi 3$ and lower angle $\frac\pi 2$ constructed in Example \ref{example: perturbation counterexample}. Let $\partial_i \mathcal{P}_3\quad (i=1,2,3)$ be three boundary components of $\mathcal{P}_3$. Recall that $\mathcal{P}_3$ can be divided into six copies of $M_{\beta_0}$. By identifying $\partial_1 \mathcal{P}_3$ and $\partial_2\mathcal{P}_3$, we obtain a tessellation of $S^2\times S^1\backslash D^3$ into six copies of $M_{\beta_0}$. Let $\mathcal{P}_3'$ be another copy of $\mathcal{P}_3$. By identifying $\partial_i \mathcal{P}_3$ with $\partial_i\mathcal{P}_3'\quad (i=1,2)$, we obtain a tessellation of  $S^2\times S^1\backslash (D_1^3\bigsqcup D_2^3)$ into 12 copies $M_{\beta_0}$. Both tessellations satisfy the  Coxeter compatibility condition. Applying Theorem \ref{thm: coxeter gluing smoothing}, together with the doubling trick used at the end of Section 5.3.1, we obtain smooth metrics on $S^2\times S^1\backslash D^3$ and $S^2\times S^1\backslash (D_1^3\bigsqcup D_2^3)$ with area-minimizing boundary spheres satisfying
\begin{align}\label{eq: 104}
    (\inf R)\cdot \sys_2>4\pi.
\end{align}
By gluing these manifolds together, we obtain smooth metrics on connected sums of $k$ copies of $S^2\times S^1$ which also satisfies \eqref{eq: 104}.

\subsubsection{Connected sums of copies of $S^2\times S^1$ and lens spaces}

$\quad$

We need to construct two blocks, the modified block $M_\alpha^{\mathrm{mod}}$ and the connecting block $M_{\alpha,\beta}^{\mathrm{con}}$. Without loss of generality we assume $R_0 = 2$. Throughout this part, we use $S_\alpha^2$ to denote the spherical cap of geodesic radius $\alpha$ in the unit sphere.

Set $[t_-,t_+]= [0, t_0]\quad (t_0 >0)$ in the construction of Example \ref{example: perturbation counterexample}, and recall that the cylindrical block in Example \ref{example: perturbation counterexample} of upper angle $\alpha\in (0,\frac\pi2)$ is given by
\begin{align*}
    M_{\alpha}=\left\{(s,r,\theta)\in \mathbf{R}\times S_+^2 \;\middle|\; 0\le s\le t_0,\ 0\le r\le \rho(s)\right\}.
\end{align*}
Let $\delta>0$ be a small number. We define
\begin{align*}
    M_\alpha^\delta =  M_{\alpha}=\left\{(s,r,\theta)\in \mathbf{R}\times S_+^2 \;\middle|\; -1\le s\le t_0,\ 0\le r\le \rho^\delta(s)\right\},
\end{align*}
where
\begin{align*}
    \rho^\delta(s)
    =
    \begin{cases}
        \rho(-\delta), & -1\le s\le -\delta,\\
        \rho(-s), & -\delta\le s\le 0,\\
        \rho(s), & 0\le s\le t_0.
    \end{cases}
\end{align*}
The left panel of Figure \ref{figure: modified blocks} (a) illustrates $M_\alpha^\delta$.

The lateral boundary $\partial_0 M_\alpha^\delta$ consists of two smooth pieces
\begin{align*}
\Sigma_1 &= \left\{(s,r,\theta)\in \mathbf{R}\times S_+^2 \;\middle|\; -\delta\le s\le t_0,\ r= \rho^\delta(s)\right\},\\ \Sigma_2 &= \left\{(s,r,\theta)\in \mathbf{R}\times S_+^2 \;\middle|\; -1\le s\le -\delta,\ r= \rho^\delta(s)\right\},
\end{align*}
where $\Sigma_1$ is minimal and $\Sigma_2$ is weakly mean-convex. Moreover, the dihedral angle between $\Sigma_1$ and $\Sigma_2$ along their intersection is less than $\pi$. Hence, by the round-corner smoothing construction for rotationally symmetric surfaces, as in \cite[Theorem 4.1]{HMW2022}, we can smooth $\Sigma_1\cup\Sigma_2$ in an arbitrarily small neighborhood of their intersection, leaving it unchanged away from this neighborhood, so that the resulting boundary is weakly mean-convex. We denote the resulting modified block by $M_{\alpha}^{\mathrm{mod}}$, see the right panel of Figure \ref{figure: modified blocks} (a). A neighborhood of $\partial_-M_{\alpha}^{\mathrm{mod}}$ in  $M_{\alpha}^{\mathrm{mod}}$ is isometric to $S_{\alpha+\epsilon}^2\times [0,\frac{1}{2})$ for some $\epsilon$ sufficiently small. Moreover, we have
\begin{align*}
    \sys_2(M_{\alpha}^{\mathrm{mod}},\partial_0 M_{\alpha}^{\mathrm{mod}})\ge \sys_2(M_\alpha,\partial_0 M_\alpha)= 2\pi(1-\cos\alpha).
\end{align*}

Next we construct connecting block $M_{\alpha,\beta}^{\mathrm{con}}$ for given $0<\beta\le\alpha<\frac\pi2$. If $\beta = \alpha$, $M_{\alpha,\beta}^{\mathrm{con}}$ is simply constructed as the Riemannian product $S_{\alpha+\epsilon}^2\times [0,1]$ for some $\epsilon>0$ sufficiently small. If $\beta<\alpha$. let us start with $M_\beta$, which has bottom face $S_\beta^2$. There is a parallel slice in $M_\beta$ which is isometric to $S_\alpha^2$. Denote $M_{\alpha,\beta}$ to be the domain bounded by $S_{\alpha+\epsilon}^2$ and $S_\beta^2$ in $M_\beta$. By gluing $M_{\alpha,\beta}$ along $\partial_+ M_{\alpha,\beta} = S_{\alpha+\epsilon}^2$ with $S_{\alpha+\epsilon}^2\times [0,1]$, mollifying along the intersection, and at the same time perform the same construction near $\partial_- M_{\alpha,\beta} = S_\beta^2$ in the same way of constructing $M_{\beta,\mathrm{mod}}$ presented above, we obtain a new block $M_{\alpha,\beta}^{\mathrm{con}}$ which satisfies
\begin{itemize}
    \item $\partial_+M_{\alpha,\beta}^{\mathrm{con}}\,(\partial_-M_{\alpha,\beta}^{\mathrm{con}})$ is isometric to $S_{\alpha+\epsilon}^2\,(S_{\beta+\epsilon}^2)$, and a tubular neighborhood of $\partial_+M_{\alpha,\beta}^{\mathrm{con}}\,(\partial_-M_{\alpha,\beta}^{\mathrm{con}})$ is isometric to $S_{\alpha+\epsilon}^2\times [0,\frac12)(S_{\beta+\epsilon}^2\times [0,\frac12))$;
    \item $\partial_0 M_{\alpha,\beta}^{\mathrm{con}}$ is weakly mean convex;
    \item \begin{align*}
        \sys_2(M_{\alpha,\beta}^{\mathrm{con}},\partial_0 M_{\alpha,\beta}^{\mathrm{con}})\ge \sys_2(M_\beta,\partial_0 M_\beta)\ge 2\pi(1-\cos\beta).
    \end{align*}
\end{itemize}
See Figure \ref{figure: modified blocks} (b) for an illustration.

\begin{figure}
\centering
\includegraphics[width=0.9\textwidth]{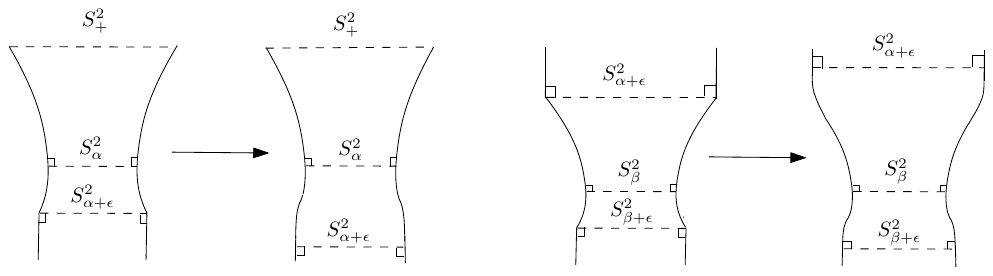}
\caption{An illustration of the construction of the modified block $M_\alpha^{\mathrm{mod}}$ (a) and the connecting block $M_{\alpha,\beta}^{\mathrm{con}}$ (b).}
\label{figure: modified blocks}
\end{figure}

We now describe our construction of PSC metrics on connected sums of copies of $S^2\times S^1$ and lens spaces using building blocks. For simplicity, we focus on the special case of $S^2\times S^1\# L(p,q)$; the case of general connected sums with multiple prime factors is similar. The construction $S^2\times S^1\# L(p,q)$ uses $10$ blocks, which we describe in the following: Let $\alpha = \frac{\pi}{3}$ and $\beta = \frac{\pi}{p}$. The blocks we need are
\begin{itemize}
    \item six copies of $M_{\alpha}^{\mathrm{mod}}$: $M_{\alpha,i}^{\mathrm{mod}}\quad (1\le i\le 6)$;
    \item two copies of $M_{\beta}^{\mathrm{mod}}$: $M_{\beta,j}^{\mathrm{mod}}\quad (j=1,2)$;
    \item two copies of $M_{\alpha,\beta}^{\mathrm{con}}$: $M_{\alpha,\beta,k}^{\mathrm{con}}\quad (k=1,2)$.
    
\end{itemize}

As in Sections 5.3.1 and 5.3.3, we construct $(S^2\times S^1)\backslash D^3$ from the blocks $M_{\alpha,i}^{\mathrm{mod}}$ $(1\le i\le 6)$ via the identifications in \eqref{eq: 106}, with
\begin{align*}
\partial((S^2\times S^1)\backslash D^3)
= \partial_- M_{\alpha,1}^{\mathrm{mod}}\cup \partial_- M_{\alpha,2}^{\mathrm{mod}}.
\end{align*}
Hence, a neighborhood of the boundary sphere is isometric to $DS_{\alpha+\epsilon}^2\times [0,\frac{1}{2})$, where $DS_{\alpha+\epsilon}^2$ denotes the double of the spherical cap $S_{\alpha+\epsilon}^2$.

Similarly, as in Section 5.3.2, we construct $L(p,q)\backslash D^3$ from the blocks $M_{\beta,j}^{\mathrm{mod}}$ $(j=1,2)$ by identifying the lateral faces via \eqref{eq: 105}. A neighborhood of its boundary sphere is isometric to $DS_{\beta+\epsilon}^2\times [0,\frac{1}{2})$.

Finally, by identifying $\partial_0M_{\alpha,\beta,1}^{\mathrm{con}}$ with $\partial_0 M_{\alpha,\beta,2}^{\mathrm{con}}$, we obtain a topological $S^2\times I$. Two neighborhoods of its two boundary components are isometric to $DS_{\alpha+\epsilon}^2\times [0,\frac{1}{2})$ and $DS_{\beta+\epsilon}^2\times [0,\frac{1}{2})$ respectively. By gluing $(S^2\times S^1)\backslash D^3$, $S^2\times I$, and $L(p,q)\backslash D^3$ along their boundaries, we obtain a tessellation of $S^2\times S^1\# L(p,q)$ into 10 building blocks listed above. The codimension $2$ singularities newly produced by $M_{\alpha,\beta,k}^{\mathrm{con}}$ $(k=1,2)$ have neighborhoods modeled on $S^1\times \mathcal{T}_{C(2)}$, which implies that this tessellation still satisfies the Coxeter compatibility condition. We then apply Theorem \ref{thm: coxeter gluing smoothing} to obtain
\begin{align*}
    \fs(M)>\min\{4\pi, 8\pi(1-\cos\beta)\}.
\end{align*}

\subsection{Prism $3$-manifolds}\label{subsection: prism manifolds}

In this Subsection and the next, we briefly discuss a potential extension of our approach to construct PSC metrics with large $\pi_2$-systole on a general $(S^3/G)\setminus D^3$. Let us first briefly recall basic topological facts of $3$-dimensional space forms. We refer the readers to the book by Martelli \cite[Section 12.2]{Martelli2025GeometricTopology} for complete details.

Identify $S^3$ with the set of unit quaternions with the standard group structure given by multiplications. Consider the homomorphism 
\[\Psi: S^3\times S^3\to \Isom ^+(S^3) =SO(4),\]
\[(q_1,q_2)\mapsto \{x\mapsto q_1 x q_2^{-1}\}. \]
$\Psi$ is a degree-$2$ covering with kernel $\{\pm(1,1)\}$, and hence 
\[SO(4) = S^3\times S^3 / \{\pm(1,1)\}. \]
By \cite[Proposition 6.2.15]{Martelli2025GeometricTopology}, the finite subgroups of $SO(3)$, up to conjugation, are:
\[C_n, \quad D_{2m}, \quad T_{12} \cong A_4,\quad  O_{24}\cong S_4, \quad I_{60}\cong A_5.\]
Here $C_n$ is the cyclic group of order $n$, $D_{2m}, T, O, I$ are the orientation-preserving isometry groups of the regular $m$-prism (the dihedral group), tetrahedron, octahedron (or cube), and icosahedron (or dodecahedron). Under the standard degree-$2$ covering map $\Phi: S^3\to SO(3)$, these groups lift to
\[D_{4m}^*, \quad T^*_{24}, \quad O^*_{48}, \quad I^*_{120},\]
called the binary dihedral, tetrahedral, octahedral, and icosahedral group. It follows that together with the cyclic groups, they form all finite subgroups of $S^3$.

\begin{proposition}[{\cite[Proposition 12.2.9]{Martelli2025GeometricTopology}}]\label{prop: finite subgroups of S3}
    Every finite subgroup of $S^3$ is one of the following:
    \[C_n, \quad D_{4m}^*, \quad T^*_{24}, \quad O^*_{48}, \quad I^*_{120}.\]
\end{proposition}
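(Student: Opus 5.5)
The classification of finite subgroups of $S^3$ follows from the covering $\Phi: S^3\to SO(3)$, $\ker\Phi=\{\pm1\}$, together with the classification of finite subgroups of $SO(3)$ recalled above. Let $H<S^3$ be finite. I would split into two cases, according to whether $|H|$ is odd or even.

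\emph{Case 1: $-1\in H$.} Then $H=\Phi^{-1}(\Phi(H))$, and $\Phi(H)$ is a finite subgroup of $SO(3)$. By the list above it is conjugate to one of $C_n, D_{2m}, T_{12}, O_{24}, I_{60}$. Since conjugation in $SO(3)$ lifts to conjugation in $S^3$, $H$ is conjugate to the corresponding full preimage. These preimages are $C_{2n}$ (the preimage of a rotation group about an axis is cyclic, generated by $\cos(\pi/n)+\sin(\pi/n)u$ for a unit imaginary $u$), $D^*_{4m}$, $T^*_{24}$, $O^*_{48}$ and $I^*_{120}$ respectively.

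\emph{Case 2: $-1\notin H$.} Then $\Phi|_H$ is injective, so $H$ is isomorphic to a finite subgroup of $SO(3)$. The key step is to show $H$ is cyclic. I would use that $-1$ is the \emph{unique} element of order $2$ in $S^3$: if $q^2=1$ with $q=a+v$, where $a$ is the real part and $v$ is purely imaginary, then $q^2=a^2-|v|^2+2av=1$ forces $v=0$ and $q=\pm1$. Hence $H$ contains no element of order $2$, so $|H|$ is odd by Cauchy's theorem. In the list $D_{2m}$ ($m\ge2$), $T_{12}$, $O_{24}$ and $I_{60}$ all have even order, so $\Phi(H)$ is cyclic of odd order, and therefore $H\cong C_n$ is cyclic.

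Combining the two cases gives exactly the stated list. The only nontrivial points are identifying the preimages in Case 1, which holds by definition of the binary groups, and the unique-involution argument in Case 2; I expect neither to be a real obstacle. Since the paper cites \cite{Martelli2025GeometricTopology}, in the text I would simply sketch this argument and cite the reference.
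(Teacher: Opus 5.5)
Your proof is correct and takes the same route the paper indicates: lift the classification of finite subgroups of $SO(3)$ through the double cover $\Phi$, which is also the standard argument in the cited reference. Your unique-involution argument supplies the case $-1\notin H$, which the paper's one-line sketch leaves implicit. One small inconsistency: you announce a split by the parity of $|H|$ but then split on whether $-1\in H$. These two splits agree, by your unique-involution argument together with Cauchy's theorem.
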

We proceed to describe all finite subgroups acting freely on $S^3$. It turns out that they include the cyclic group, the groups in the form
\[\Psi(G\times C_n), \]
where  $G$ is one of the binary groups in Proposition \ref{prop: finite subgroups of S3} and $C_n$ the cyclic group such that $|G|$ and $n$ are coprime, and certain finite index subgroups of them. See \cite[Table 12.4]{Martelli2025GeometricTopology}. When the second factor $C_n$ is the trivial group, the groups $\Psi(G\times \{1\})$ give rises to a special class of spherical space forms, called \textit{single-action spherical space forms}, or in a more geometric manner, \textit{homogeneous spherical space forms}. Observing \cite[Table 12.4]{Martelli2025GeometricTopology}, every spherical space form is a finite cyclic quotient of a single-action spherical space form. Equivalently, every spherical space form is regularly cyclically covered by one of the homogeneous spherical space forms $S^3/G$, where $G<S^3$ is cyclic, binary dihedral, binary tetrahedral, binary octahedral, or binary icosahedral.

Following Theorem \ref{thm: coxeter gluing smoothing}, we would like to construct PSC metrics with large $\pi_2$-systole on other spherical space forms. A challenging first attempt is to focus on single-action spherical space forms other than the lens spaces. When $G = D^*_{4m}$ is the binary dihedral group, $S^3/G$ is called a prism manifold. Recall that 
\begin{equation}\label{eq: binary dihedral}
    D_{4m}^* = \langle a,b | a^{2m} = 1, b^2=a^m, bab^{-1} =a^{-1}\rangle.
\end{equation}
A realization of $S^3/G$ is the following: take a prism $P = D^{2m}\times I$ where $D^{2m}$ is a regular planar $2m$-gon centered at $0$. Define $a$ the rotation of $P$ by $\tfrac{\pi}{m}$ around $\{0\}\times I$, and $b$ the action that identifies each of the rectangular side with the opposite one, with a rotation by $\frac\pi2$. \eqref{eq: binary dihedral} is thus satisfied, and hence $P/\langle a,b \rangle$ is diffeomorphic to $S^3/D_{4m}^*$.

We propose the following approach to construct PSC metrics on single-action spherical space forms with a $3$-ball removed achieving quantitatively large $\pi_2$-systole. Here we provide the necessary combinatorial and geometric conditions. We did some preliminary numerical experiments in search of explicit constructions, and found evidence of their existence. However, we do not have any elegant explicit solutions at hand.

We first handle prism manifolds. Fix a $3$-ball $B^3$ at the center of the prism $P$. The goal is to tessellate $P\setminus B^3$ into capillary building blocks which satisfy the assumptions of Theorem \ref{thm: coxeter gluing smoothing} so they glue smoothly. We thus need the following capillary blocks.

\begin{construction}[Building blocks for prism manifolds]\label{construction: block for prism manifolds}
    Start with a triangular prism $\Delta \times [-1,1]$, where $\Delta\subset \R^2$ is a planar triangle with vertices $o = (0,0), v_1, v_2$. Remove a ball $B_{1/2}$ centered at $(0,0,0)$ and obtain $M = (\Delta \times [-1,1])\setminus B_{1/2}$. The capillary building block for the prism manifold is a Riemannian polyhedron $(M,g)$ satisfying:
    \begin{enumerate}
        \item All facets of $M$ are mean convex with respect to the outward unit normal.
        \item The top and bottom faces $\partial_\pm M = \Delta \times \{\pm 1\}$ are isometric; the two radial faces $R_j = (\overline{ov_j} \times [-1,1])\setminus B_{1/2}$ are isometric.
        \item The lateral face $S = \overline{v_1 v_2}\times [-1,1]$ admits an isometry $(s, t)\mapsto (1-t, 1-s)$.
        \item We prescribe the dihedral angles between facets:
        \[\angle(R_1,R_2) = \frac{\pi}{m}, \quad \angle(\partial_\pm M, R_j ) = \frac{\pi}{2}, \quad \angle(S, R_j) = \frac{\pi}{3},\quad \angle(\partial_\pm M, S) = \frac{2\pi}{3}. \]
        \item Finally, letting $\Sigma = (\Delta \times [-1,1])\cap \partial B_{1/2}$, we require that $\Sigma$ is area-minimizing and prescribe the dihedral angle
        \[\angle(\Sigma, R_j) = \frac{\pi}{2}.\]
    \end{enumerate}
\end{construction}

Denote a Riemannian polyhedron in Construction \ref{construction: block for prism manifolds} by $M_m$. One may take $2m$ isometric copies of $M_m$ and glue along their radial faces to obtain $(D_{2m}\times [-1.1])\setminus B^3$, a prism with a ball removed at the center, where the isometric actions $a, b$ can then be used to glue the outer faces to obtain $(S^3/D^*_{4m})\setminus B^3$. The mean convexity assumption and the dihedral angles of $M_m$ guarantee assumptions of Theorem \ref{thm: coxeter gluing smoothing}, and hence it can be used to construct a smooth metric on the prism manifold with a $3$-ball removed, where the relative $\pi_2$-systole is $2m|\Sigma|$.

\subsection{Quaternionic, octahedral and dodecahedral spaces}\label{subsection: 3 special spaces}

We subsequentially describe the construction of capillary building blocks achieving large relative $\pi_2$-systole, for the quaternionic space, the binary octahedral space and the binary dodecahedral space, with a $3$-ball removed. These spaces inherit elegant platonic building blocks from which we take advantage. We note that the binary octahedral space is actually a prism manifold (with the binary dihedral group $D^*_4)$. However, the construction described below is different from the one in Construction \ref{construction: block for prism manifolds} when $m=2$.

\begin{construction}[Building blocks for the quaternionic space, the binary octahedral space and the binary dodecahedral space]\label{construction: 3 special spaces}
    For $k=3,4,5$, let $P_k$ be a planar regular $k$-gon. The building block for the quaternionic space, the binary octahedral space and the binary dodecahedral space is a capillary prism $M = P_k\times [-1,1]$, $k=3,4,5$, respectively, with a Riemannian metric $g$, satisfying:
    \begin{enumerate}
        \item Each face of $M$ is mean convex with respect to the outward unit normal.
        \item $(M,g)$ admits the isometry of rotation by $\tfrac{2\pi}{k}$.
        \item The bottom face $\partial_{-}M$ is area-minimizing in the relative homotopy class.
        \item We prescribe the dihedral angles as follows: for any lateral face $S$, 
        \[\angle (S,\partial_+ M) =\frac{\pi}{3},\quad \angle(S,\partial_{-}M) = \frac{\pi}{2},\]
        and for any two adjacent lateral faces $S_1,S_2$, we require that:
        \[\angle(S_1,S_2) = \begin{cases}
            &\frac{\pi}{2},\quad \text{ if }k=3;\\
            &\frac{2\pi}{3}, \quad \text{ if }k\in \{4,5\}.
        \end{cases} \]
    \end{enumerate}
\end{construction}

We may then use appropriately chosen isometric copies of these building blocks to construct the desired spherical space forms with a $3$-ball removed. The construction is summarized in Figure \ref{figure: Platonic building blocks}.

\begin{figure}
\centering
\includegraphics[width=0.8\textwidth]{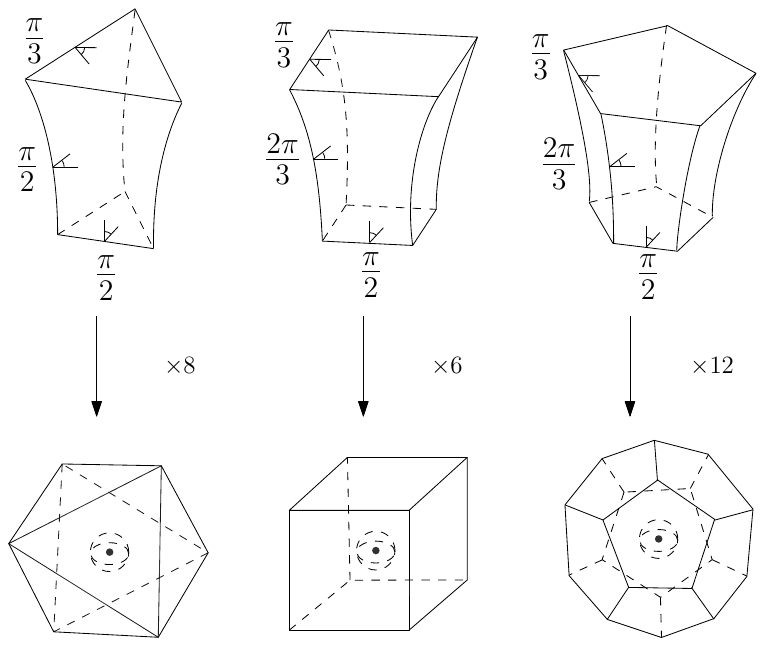}
\caption{The construction of Platonic solids using building blocks}
\label{figure: Platonic building blocks}
\end{figure}

\section{Comparison with 3-manifolds constructed by Gromov-Lawson-Schoen-Yau surgery}\label{section:comparison with surgery}

In this section, we compare the examples constructed in Section 5 with manifolds constructed by performing Gromov-Lawson/Schoen-Yau surgeries \cite{GL80}\cite{SY79b} on the round sphere $S^3$. Let $(S^n,g_{S^n})$ be the round $n$-sphere with constant curvature 1. Fix $r_0\in (0,\frac{\pi}{2})$ and $p\in S^3$. Let $B_{r_0}(p)\subset S^3$ be the geodesic ball of radius $r_0$ with center $p$ in $(S^3,g_{S^3})$, then $\partial B_{r_0}(p)$ is isometric to $(S^2,(\sin^2r_0)g_{S^2})$, and the mean curvature of $\partial B_{r_0}(p)$ in $(S^3,g_{S^3})$ equals $2\cot r_0$ with respect to the outward pointing normal.

Next, we construct a neck metric $g_{\mathcal N}$ on $\mathcal N=S^2\times[0,L]$ with the following properties:
\begin{itemize}
\item $\partial_+\mathcal N=S^2\times\{L\}$ is isometric to $(S^2,(\sin^2 r_0)g_{S^2})$, and its mean curvature with respect to the outward pointing normal is $2\cot r_0$;
\item $\partial_-\mathcal N=S^2\times\{0\}$ is minimal;
\item the scalar curvature satisfies $R_{g_{\mathcal N}}\ge R_0$, where $R_0>0$ is a constant to be determined.
\end{itemize}
We restrict our attention to the case when $g_{\mathcal N}$ is a warped product metric of the form
\begin{align*}
g_{\mathcal N}=d\rho^2+f(\rho)^2g_{S^2},\qquad \rho\in[0,L],
\end{align*}
where $f'(\rho)>0$ for $\rho\in(0,L)$ (this is the form of metric used by Gromov-Lawson). The coordinate spheres $S^2\times{\rho}$ then form a smooth foliation of $\mathcal N$. After reparametrizing this foliation, we obtain a solution $\{\Sigma_t\}_{0\le t\le T}$ of the inverse mean curvature flow such that
\begin{align*}
\Sigma_0=\partial_-\mathcal N,\qquad \Sigma_T=\partial_+\mathcal N .
\end{align*}

From the Geroch monotonicity, we have
\begin{align}\label{eq: 107}
        m_H(\partial_-\mathcal{N})\le m_H(\partial_+\mathcal{N}),
    \end{align}
    where 
    \begin{align*}
        m_H(\Sigma) = \left(\frac{|\Sigma|}{16\pi}\right)^{\frac{1}{2}}\left( 1-\frac{1}{16\pi}\int_\Sigma H^2-\frac{R_0}{24\pi}|\Sigma|\right)
    \end{align*}
    is the modified Hawking mass. Denote $x = \left(\frac{|\partial_-\mathcal{N}|}{16\pi}\right)^{\frac{1}{2}}$. We compute
    \begin{align}\label{eq: 108}
        m_H(\partial_-\mathcal{N})= x(1-\frac{2}{3}R_0x^2),\qquad m_H(\partial_+\mathcal{N})= \frac{\sin^3r_0}{2}(1-\frac{R_0}{6}).
    \end{align}
    Because $f(\rho)$ is increasing, we have $x\le \sqrt{\frac{|\partial_+ \mathcal{N}|}{16\pi}} = \frac{\sin r_0}{2}$. By \eqref{eq: 107} and \eqref{eq: 108}, we obtain
    \begin{align*}
        R_0\le \frac{\frac{\sin^3 r_0}{2}-x}{\frac{\sin^3 r_0}{12}-\frac{2}{3}x^3}.
    \end{align*}
    Therefore,
    \begin{align*}
        |\partial_-\mathcal{N}|\cdot R_0\le 16\pi \frac{x^2(\frac{\sin^3 r_0}{2}-x)}{\frac{\sin^3 r_0}{12}-\frac{2}{3}x^3} := F(x).
    \end{align*}
    
    Let $y_*(r_0)\in (0,1)$ denote the smallest positive solution of the equation
    \begin{align}\label{eq: 109}
        y^3-3y+2\sin^3r_0 = 0.
    \end{align}
    By direct calculation, $F'(x) = 0$ if and only if $(2x)^3-3\cdot (2x)+2\sin^3 r_0 = 0$. Hence, $F(x)$ attains its maximum in $(0,\frac{\sin r_0}{2})$ when $x = \frac{1}{2}y_*(r_0)$. In the general case, we have the upper bound estimate
    \begin{align*}
        |\partial_-\mathcal{N}|\cdot R_0 = F(x)\le F(\frac{y_*(r_0)}{2})\le 8\pi y_*(r_0)^2.
    \end{align*}
    
    In the equality case, we have $x = \frac{y_*(r_0)}{2}$ and
    \begin{align*}
        |\partial_-\mathcal{N}| = 4\pi y_*(r_0)^2,\qquad R_0 = 2,\qquad F(\frac{y_*(r_0)}{2}) = 8\pi y_*(r_0)^2.
    \end{align*}
    Furthermore, the equality implies that $g_{\mathcal{N}}$ has constant scalar curvature. Since $g_{\mathcal{N}}$ is a warped product metric, it is a Schwarzschild–de Sitter metric with constant scalar curvature $2$. From \eqref{eq: 109}, we have $y_*(r_0)\sim \frac23 r_0^3$ as $r_0\to 0$. Hence, the equality model $(\mathcal{N},g_{\mathcal{N}})$ satisfies
    \begin{align}\label{eq: 110}
        |\partial_- \mathcal{N}|\cdot R_0 \sim \frac{32\pi}{9}r_0^6.
    \end{align}
    We denote the neck constructed above as $(\mathcal{N}_{r_0}, g_{\mathcal{N}_{r_0}})$
    
The following table lists the values of $y_*(r_0)$ for several special choices of $r_0$:
\[
\begin{array}{c|c|c}
r_0 & y_*(r_0) & 8\pi y_*^2(r_0) \\ \hline
\frac{\pi}{2} & 1 & 8\pi \\
\frac{\pi}{3} & 0.467 & 1.744\pi \\
\frac{\pi}{4} & 0.240 & 0.462\pi \\
\frac{\pi}{10} & 0.019 & 0.003\pi
\end{array}
\]

\begin{example}
    Let $P_1,P_2,P_3$ be 3 points on a great circle in $(S^3,g_{S^3})$, the distance between each other equals $\frac{2\pi}{3}$. Let $\alpha = \frac{\pi}{3}-\epsilon$ for $\epsilon$ arbitrarily small. By gluing 3 copies of $(\mathcal{N}_\alpha,g_{\mathcal{N}_\alpha})$ to $S^3\backslash (B_\alpha(P_1)\cup B_\alpha(P_2)\cup B_\alpha(P_3))$ along $\partial_+ \mathcal{N}_\alpha$, we obtain a geometric realization of $\mathcal{P}_3$ with minimal boundary. Since $\partial_+\mathcal{N}_\alpha$ is isometric to $\partial B_\alpha(P_i)\quad (i=1,2,3)$ and the mean curvature matches along the boundary, we can apply  the mollification theorem by \cite{Miao2002} to obtain a metric $g$ on $\mathcal{P}_3$, satisfying
    \begin{align*}
        R_g\ge R_0 = 2,\qquad R_0\cdot \sys_2(\mathcal{P}_3,g)\approx 8\pi y_*(\frac\pi3)^2 \approx 1.744\pi.
    \end{align*}
    This can be compared with the value $4\pi$ obtained by the construction in Section 5.3.1.
\end{example}

\begin{example}
%    Let $P_1,P_2,\dots, P_p$ be $p$ points on a great circle in $(S^3,g_{S^3})$, the distance between each other equals $\frac{2\pi}{p}$. Let $\beta = \frac\pi p$. By gluing $p$ copies of $(\mathcal{N}_\beta,g_{\mathcal{N}_\beta})$ to $S^3\backslash (B_\beta(P_1)\cup B_\beta(P_2)\cup\dots \cup  B_\beta(P_p))$, we obtain a geometric realization of the universal covering of $L(p,q)\backslash D^3$ with minimal boundary. Quotienting this by $Z_p$, we obtain a geometric realization of $L(p,q)\backslash D^3$ with minimal boundary. Once again, we can apply \cite{Miao2002} to obtain a metric $g$ on $L(p,q)\backslash D^3$, satisfying
Recall that the injective radius of  $L(p,q)$ obtained from the quotient of $(S^3,g_{S^3})$ is $\frac{\pi}{p}$. Let $\beta = \frac\pi p-\epsilon$ for $\epsilon$ arbitrarily small. Fix a point $P\in L(p,q)$. By gluing $L(p,q)\backslash B_{\beta}(P)$ with $(\mathcal{N}_\beta,g_{\mathcal{N}_\beta})$ along $\partial_+ \mathcal{N}_\beta$, we obtain a geometric realization of $L(p,q)\backslash D^3$ with minimal boundary. Once again, we apply \cite{Miao2002} to obtain a metric $g$ on $L(p,q)\backslash D^3$, satisfying
    \begin{align*}
        R_g\ge R_0 = 2,\qquad R_0\cdot \sys_2(L(p,q)\backslash D^3, g)\approx 8\pi y_*(\frac\pi p)^2 = O(p^{-6}),
    \end{align*}
    where we have used \eqref{eq: 110}. This can be compared with the value $8\pi(1-\cos\frac \pi p)\sim 4\pi^3p^{-2}$ obtained in Section 5.3.2.
\end{example}

\begin{example}
    Let $\Sigma(2,3,5)$ be the Poincare sphere. For $\Sigma(2,3,5)\backslash D^3$, the injective radius of $S^3/ I_{120}^*$ obtained from the quotient of $(S^3,g_{S^3})$ is $\frac{\pi}{10}$. The method above gives the value $8\pi x_*^2(\frac\pi {10})\approx 0.003\pi$.
\end{example}

\section{Rigidity of the standard cylindrical block under pinched curvature conditions} 

$\quad$

In this section, we give the proof of Theorem \ref{thm: 2 systole estimate, rigidity}.

We need the following isoperimetric inequality for surfaces with Gauss curvature
bounded from above. The inequality follows from \cite[Theorem 4.3]{Os78}
and \cite[Theorem 2.2.1]{BuragoZalgaller1988}, and the rigidity statement is
a special case of \cite[Theorem 4.3.1]{BuragoZalgaller1988}.

\begin{lemma}\label{lem: isoperimetric inequality assuming sectional curvature upper bound}
    Let $(D,g_D)$ be a Riemannian disk with Gauss curvature $K\le K_0$, where
    $K_0>0$ is a constant. Then
    \begin{align*}
        |\partial D|^2\ge 4\pi |D|-K_0|D|^2.
    \end{align*}
    Equality holds if and only if $(D,g_D)$ is isometric to a geodesic disk in
    the round sphere of constant Gauss curvature $K_0$.
\end{lemma}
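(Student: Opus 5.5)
We prove the lemma by a comparison argument for the length of level sets of the distance function from an interior point, in the spirit of the Bol--Fiala approach. This is the classical route behind \cite[Theorem 4.3]{Os78} and \cite[Theorem 2.2.1]{BuragoZalgaller1988}. After scaling we may take $K_0=1$; the inequality $|\partial D|^2\ge 4\pi|D|-K_0|D|^2$ is scale invariant once $K_0$ is rescaled with the metric. We may also assume $|D|<4\pi$, since otherwise the right-hand side is nonpositive and there is nothing to prove.

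\textbf{Step 1: inner parallel sets.} Rather than working from an interior point, we work with the inner parallel sets $D_t=\{x\in D: d(x,\partial D)\ge t\}$, exactly as in the notation $\gamma(s)$, $L(s)$, $\Gamma(s)$ set up before Lemma \ref{lem: weak differential inequality}. Write $A(t)=|D_t|$ and $L(t)=|\partial D_t|$, so that $A'(t)=-L(t)$ almost everywhere. By the partial regularity lemma (\cite{Hartman1964}, \cite[Lemma 2.1]{Xu22}), $L$ is differentiable almost everywhere and satisfies the one-sided bound $L'(t)\le -\int_{\partial D_t}k$, with $k$ the geodesic curvature taken with respect to the outward normal of $D_t$. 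The inequality has this sign because cut-locus jumps only decrease $L$.

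\textbf{Step 2: a differential inequality for $F(t)=L^2-4\pi A+A^2$.} Gauss--Bonnet on $D_t$ gives
\begin{align*}
\int_{\partial D_t}k=2\pi\chi(D_t)-\int_{D_t}K\ge 2\pi\chi(D_t)-A(t).
\end{align*}
If we ignore topology, so that $\chi(D_t)\le 1$ on each component, we get $L'\le -(2\pi-A)$. Combined with $A'=-L$, this yields
\begin{align*}
F'=2LL'+4\pi L-2AL\le 2L\bigl(-(2\pi-A)\bigr)+4\pi L-2AL=0 .
\end{align*}
Hence $F$ is nonincreasing in $t$. As $t$ reaches the inradius, $A$ and $L$ tend to $0$ and $F\to 0$, so $F(0)\ge 0$, which is the claimed inequality.

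\textbf{Main obstacle: topology of $D_t$.} The difficulty is the topology of $D_t$, which can be disconnected, and the sign of $\chi(D_t)$. Since $D$ is a disk, each component of $D_t$ is a subset of a disk, so its Euler characteristic is at most $1$; this follows because components with holes have $\chi\le 0$. Summing over the components $D_t^{(j)}$ requires the inequality to be applied componentwise, with $\sum_j A_j(2\pi-A_j)$ compared against $A(2\pi-A)$. This comparison is favorable when $A<4\pi$, because the function $A\mapsto 4\pi A-A^2$ is subadditive on that range. To avoid the issue I would instead follow Burago--Zalgaller and first reduce to the case that $D$ is convex: replace $D$ by the region enclosed by a minimizer of $\int_{\partial\Omega}1-\mu|\Omega|$, which only improves the isoperimetric deficit. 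Alternatively, one can cite \cite[Theorem 2.2.1]{BuragoZalgaller1988} directly for the Bol--Fiala inequality. The non-smooth points of $L$ must be handled in the weak sense, as in Lemma \ref{lem: weak differential inequality}.

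\textbf{Rigidity.} Suppose equality holds. Then $F'\equiv 0$, which forces $K\equiv 1$ on $D$, equality in the Gauss--Bonnet estimate (so $D_t$ is a disk and has no cut-locus jumps), and constant geodesic curvature of the level curves. A simply connected surface with $K\equiv 1$ and a boundary curve of constant geodesic curvature whose parallel curves remain smooth up to a single point is a geodesic disk in the unit sphere. This is the statement of \cite[Theorem 4.3.1]{BuragoZalgaller1988}.
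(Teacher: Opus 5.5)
Your strategy is the classical Bol--Fiala inner-parallel-set argument. That is the standard route to the results the paper only cites: \cite[Theorem 4.3]{Os78} and \cite[Theorem 2.2.1]{BuragoZalgaller1988} for the inequality, and \cite[Theorem 4.3.1]{BuragoZalgaller1988} for rigidity. The paper gives no argument of its own. However, you resolve the step you call the main obstacle in the wrong direction, so Step 2 does not go through as written.

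\textbf{The gap: the sign of $\chi(D_t)$.} From $L'\le -\int_{\partial D_t}k$ and Gauss--Bonnet (corner angles included) you have $\int_{\partial D_t}k=2\pi\chi(D_t)-\int_{D_t}K\ge 2\pi\chi(D_t)-K_0A$. This gives $L'\le K_0A-2\pi\chi(D_t)$. To reach $L'\le-(2\pi-K_0A)$ you need the \emph{lower} bound $\chi(D_t)\ge 1$. Your sentence ``so that $\chi(D_t)\le 1$ on each component, we get $L'\le-(2\pi-A)$'' uses the upper bound, which yields nothing. A subset of a disk can be an annulus, and an annular component would destroy the estimate.

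The missing observation is that $D_t$ has no holes. If $d(x,\partial D)<t$, a minimizing segment from $x$ to $\partial D$ stays in $\{d<t\}$. Hence the collar $\Omega(t)=\{d<t\}$ is connected and contains $\partial D$. This is exactly the fact $\chi(\Omega(s))\le 0$ used in the paper's own parallel-curve lemma. Since $\Omega(t)$ and $D_t$ meet along circles, $\chi(D)=\chi(\Omega(t))+\chi(D_t)$, so $\chi(D_t)=1-\chi(\Omega(t))\ge 1$. Equivalently, every component of $D_t$ is a disk and $\chi(D_t)$ equals the number $m$ of components, so $\int_{\partial D_t}k\ge 2\pi m-K_0A\ge 2\pi-K_0A$.

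Disconnectedness therefore only helps. None of the following is needed:
\begin{enumerate}
\item a componentwise comparison;
\item subadditivity of $4\pi A-A^2$;
\item the restriction $A<4\pi$;
\item a reduction to a convex region via minimizers of $|\partial\Omega|-\mu|\Omega|$, which you do not justify and which is not evidently deficit-decreasing.
\end{enumerate}

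\textbf{Two smaller corrections.} First, as $t$ increases to the inradius, $A\to 0$ but $L$ need not tend to $0$; for a planar stadium the top level set is a segment. What you actually use is $\liminf F\ge\lim(-4\pi A+K_0A^2)=0$, together with $F$ being nonincreasing because the singular part of $DL$ is nonpositive.

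Second, for rigidity, $F\equiv 0$ does not directly give constant geodesic curvature of the level curves. It gives $K\equiv K_0$, $D_t$ connected, no length loss at the cut locus, and $L\to 0$. From this one can show that $d(\cdot,\partial D)=t_{\max}-d(p,\cdot)$ for the point $p$ of maximal distance. Then $D$ is a cut-point-free geodesic ball of constant curvature $K_0$, hence a spherical cap. Alternatively, cite \cite[Theorem 4.3.1]{BuragoZalgaller1988}, as the paper does.
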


The following proposition specifies Proposition \ref{prop: differential inequality for I} to the curvature conditions assumed by Theorem \ref{thm: 2 systole estimate, rigidity}.

\begin{proposition}\label{prop: free energy profile monotonicity 2}
     Let $(M^3,g)$ be an energy-essential Riemannian cylinder with scalar curvature $R_g\ge R_0>0$, sectional curvature $\sec\le \frac{R_0}{2}$, and let $\alpha\in (0,\frac{\pi}{2}]$. Assume $0<\angle_{+0}(M,g)\le \alpha$, $\angle_{-0}(M,g) = \frac{\pi}{2}$, $\partial_{\pm}M$ are both minimal surfaces, and $\bar{H}\ge 0$ on $\partial_0 M$. We also assume $(M^3,g)$ satisfies Assumption A.

     Let $I:(0,|\partial_0 M|)\longrightarrow (0,\infty)$ be the free energy profile. Given $\delta>0$ sufficiently small. Then for every $s_0\in (\delta, |\partial_0M|-\delta)$, there exists $\tilde{\delta}\in (0,\delta)$ and $\psi\in C^\infty(s_0-\tilde{\delta},s_0+\tilde{\delta})$ such that
\begin{enumerate}
    \item\label{sec4 item1} $\psi(s_0) = I(s_0)$;
    \item\label{sec4 item2} $\psi(s)\ge I(s)$ whenever $s\in (s_0-\tilde{\delta},s_0+\tilde{\delta})$;
    \item\label{sec4 item3} $\psi'(s_0) = \Phi(\sigma(s_0))$;
    \item\label{sec4 item4} 
    \begin{align}\label{eq: 74}
\psi''(s_0)
\le
\frac{
\bigl(1-\psi'(s_0)^2\bigr)
\left(2\pi-\frac{R_0}{2}\psi(s_0)\right)
}{
\psi(s_0)\left(4\pi-\frac{R_0}{2}\psi(s_0)\right)
}.
\end{align}
\end{enumerate}
Moreover, $\psi\in C^\infty(s_0-\tilde{\delta},s_0+\tilde{\delta})$ can be chosen such that the strict inequality in \eqref{eq: 74} holds, unless
\begin{itemize}
    \item $s_0 = s_{\sigma(s_0)}^-$ or $s_0 = s_{\sigma(s_0)}^+$;
    \item every $\Sigma_{\sigma(s_0)}\in\mathcal{E}_{\sigma(s_0)}$ is isometric to a geodesic disk in a round sphere of Gauss curvature $\frac{R_0}{2}$; 
    \item $\Sigma_{\sigma(s_0)}\in\mathcal{E}_{\sigma(s_0)}$ is totally geodesic;
    \item $\bar{H} = 0$ on $\partial\Sigma_{\sigma(s_0)}$ for every $\Sigma_{\sigma(s_0)}\in \mathcal{E}_{\sigma(s_0)}$.
\end{itemize}

\end{proposition}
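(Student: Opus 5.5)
We reuse the comparison functions $\psi$ built in Proposition~\ref{prop: differential inequality for I}. Items (1)--(3) carry over verbatim, so only the bound \eqref{eq: 74} and its equality cases need work. There are two cases.

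\emph{Case $s_0\in(s^-_{\sigma(s_0)},s^+_{\sigma(s_0)})$.} Here $\psi$ is linear and $\psi''(s_0)=0$. By \eqref{eq: 33} we have $\psi(s_0)=I(s_0)\le |\Sigma|\le 4\pi/R_0$, so $2\pi-\tfrac{R_0}{2}\psi>0$ and $4\pi-\tfrac{R_0}{2}\psi>0$. Together with $\psi'(s_0)\in(0,1)$, the right-hand side of \eqref{eq: 74} is strictly positive, so \eqref{eq: 74} holds strictly. One should check that $I(s_0)<4\pi/R_0$ on the open interval; this follows from the strict monotonicity of $I$ (Lemma~\ref{lem: I is increasing}) together with $I\le 4\pi/R_0$.

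\emph{Case $s_0=s^\pm_{\sigma(s_0)}$.} Let $\Sigma=\Sigma_{\sigma(s_0)}$. Energy-essentiality and Lemma~\ref{lem: spectral positivity on surfaces} give that $\Sigma$ is a disk with $\chi(\Sigma)=1$. Starting from \eqref{eq: 5}, we use $R_g\ge R_0$, $|A|^2\ge0$ and $\bar H\ge0$ to get
\[
\psi''(s_0)\le |\partial\Sigma|^{-2}\bigl(1-\psi'(s_0)^2\bigr)\Bigl(2\pi-\tfrac{R_0}{2}|\Sigma|\Bigr).
\]
The new ingredient is a lower bound on $|\partial\Sigma|^2$ from Lemma~\ref{lem: isoperimetric inequality assuming sectional curvature upper bound}. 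For this we need $K\le R_0/2$ on $\Sigma$. By the Gauss equation, $K=\sec(T\Sigma)+\det A$, and minimality gives $\det A=-\tfrac12|A|^2\le 0$. Hence $K\le \sec\le R_0/2$. The lemma then yields
\[
|\partial\Sigma|^2\ge 4\pi|\Sigma|-\tfrac{R_0}{2}|\Sigma|^2=|\Sigma|\Bigl(4\pi-\tfrac{R_0}{2}|\Sigma|\Bigr)>0 .
\]
Since $|\Sigma|=I(s_0)=\psi(s_0)$ and $2\pi-\tfrac{R_0}{2}\psi\ge0$, substituting this bound gives \eqref{eq: 74}.

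The main obstacle is the sign bookkeeping. The factor $2\pi-\tfrac{R_0}{2}|\Sigma|$ must be nonnegative so that enlarging $|\partial\Sigma|^{-2}$ preserves the inequality. This comes from $|\Sigma|\le 4\pi/R_0$, which holds only up to a factor 2, so we need the sharper fact that this factor is $\ge0$. We get it from stability: testing \eqref{eq: variational inequality for capillary surfaces} with the constant function and applying Gauss--Bonnet gives $2\pi\ge \tfrac{R_0}{2}|\Sigma|+\tfrac12\int_\Sigma|A|^2$, which is the required nonnegativity.

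For the rigidity clause, assume equality holds in \eqref{eq: 74} at $s_0$. The first case cannot occur, since there the inequality is strict. So $s_0=s^\pm_{\sigma(s_0)}$, and equality must hold in every estimate above:
\begin{itemize}
\item $\int_\Sigma|A|^2=0$, so $\Sigma$ is totally geodesic;
\item $\int_{\partial\Sigma}\bar H=0$ with $\bar H\ge0$, so $\bar H=0$ on $\partial\Sigma$;
\item $R_g=R_0$ on $\Sigma$;
\item equality holds in Lemma~\ref{lem: isoperimetric inequality assuming sectional curvature upper bound}, so $\Sigma$ is a geodesic disk in the round sphere of curvature $R_0/2$.
\end{itemize}
If equality fails for even one minimizer $\Sigma\in\mathcal E_{\sigma(s_0)}$ with $|S(\Sigma)|=s_0$, we construct $\psi$ from that $\Sigma$ and obtain strict inequality. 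This explains why the stated conditions are required of every element of $\mathcal E_{\sigma(s_0)}$.
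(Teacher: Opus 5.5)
Your proposal is correct and follows the paper's proof essentially line by line. In the linear case you use positivity of the right-hand side of \eqref{eq: 74}; at $s_0=s^\pm_{\sigma(s_0)}$ you use the Gauss equation $K=K_{12}-\tfrac12|A|^2\le R_0/2$ to feed Lemma~\ref{lem: isoperimetric inequality assuming sectional curvature upper bound} into \eqref{eq: 5}, and you read off the equality cases term by term.

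One tightening is needed. To conclude equality in the isoperimetric inequality in the rigidity case you need $2\pi-\tfrac{R_0}{2}|\Sigma|>0$ strictly, not just $\ge 0$. Your observation from the first case already supplies this for every $s_0$: strict monotonicity of $I$ together with $I\le 4\pi/R_0$ gives $I(s_0)<4\pi/R_0$.

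Separately, the nonnegativity you re-derive from stability is exactly \eqref{eq: 33}. So the remark about losing a factor of $2$ is mistaken and unnecessary.
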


\begin{proof}
    When $s_0\in (0,\partial_0 M)$, we have $0<I(s_0)<\frac{4\pi}{R_0}$ and $\Phi(\sigma(s_0))\in (0,\cos\alpha)$ (To be made clear later). If $s_0\in (s_{\sigma(s_0)}^-,s_{\sigma(s_0)}^+)$, then recall from Proposition \ref{prop: differential inequality for I} that $\psi$ can be chosen such that $\psi''(s_0) = 0$. Since $I$ is increasing by Lemma \ref{lem: I is increasing}, it follows from \eqref{eq: 33} that
    \begin{align*}
        \psi(s_0) = I(s_0) = I(s_{\sigma(s_0)}^+) = |\Sigma_{\sigma(s_0)}^+|\le \frac{4\pi}{R_0}.
    \end{align*}
    Therefore, the right hand side of \eqref{eq: 74} is strictly positive, which implies that \eqref{eq: 74} holds strictly. In the following, we focus on the case that $s_0 = s_{\sigma(s_0)}^{\pm}$.

    Since $\Sigma_{\sigma(s_0)}$ is a capillary minimal surface in $M$, the Gauss equation implies
    \begin{align*}
        K_{\Sigma_{\sigma(s_0)}} = K_{12}-\frac{1}{2}|A|^2\le K_{12}\le \frac{R_0}{2}.
    \end{align*}
    By applying Lemma \ref{lem: isoperimetric inequality assuming sectional curvature upper bound} we obtain
    \begin{align}\label{eq: 81}
        |\partial \Sigma_{\sigma(s_0)}|^2\ge 4\pi |\Sigma_{\sigma(s_0)}|-\frac{R_0}{2}|\Sigma_{\sigma(s_0)}|^2.
    \end{align}
    Plugging into \eqref{eq: 5} and using $R_g\ge R_0$ and $\bar{H}\ge 0$, we obtain \eqref{eq: 74}.

    If equality holds, the argument above shows that the case $s_0\in (s_{\sigma(s_0)}^-,s_{\sigma(s_0)}^+)$ is impossible. Hence either $s_0=s_{\sigma(s_0)}^-$ or $s_0=s_{\sigma(s_0)}^+$. Moreover, it follows from \eqref{eq: 5} that $\Sigma_{\sigma(s_0)}$ is totally geodesic, that $\bar{H}=0$ on $\partial\Sigma_{\sigma(s_0)}$, and that equality holds in \eqref{eq: 81}. In view of Lemma \ref{lem: isoperimetric inequality assuming sectional curvature upper bound}, $\Sigma_{\sigma(s_0)}$ is isometric to a geodesic disk in a round sphere of Gauss curvature $\frac{R_0}{2}$.
\end{proof}

We define the mass functions
\begin{align*}
&m_0(s) = I(s)(\frac{8\pi}{R_0}-I(s)),\\
&m(s) = m_0(s)(1-(\bar{I}'(s))^2).
\end{align*}
In the case that $I$ is smooth, direct calculation yields
    \begin{align*}
       (\log m(s))' &= (\log I(s)(\frac{8\pi}{R_0}-I(s)))'+(\log(1-I'(s)^2))'\\
       & = \frac{2I'(\frac{4\pi}{R_0}-I)}{I(\frac{8\pi}{R_0}-I)} -\frac{2I'I''}{1-I'^2}\\
       &\ge \frac{2I'(\frac{4\pi}{R_0}-I)}{I(\frac{8\pi}{R_0}-I)}-\frac{2I'(2\pi-\frac{R_0}{2}I)}{4\pi I-\frac{R_0}{2}I^2} = 0.
    \end{align*}
In the general case, we can prove that $m(s)$ is nondecreasing by arguing as in \cite[Lemma 3]{Bray97}.

Similar to Lemma \ref{lem: comparison of mass function for different angles}, we have the following lemma:
\begin{lemma}\label{lem: comparison of mass function for different angles 2}
    For $\alpha<\theta_2<\theta_1<\frac\pi2$, let $t_i = \Phi^{-1}(\cos\theta_i)\quad(i=1,2)$, and let $\Sigma_{t_i}\in\mathcal{E}_{t_i}$ be an area-minimizing surface with capillary angle $\theta_i$ in $(M,g)$. Then
    \begin{align}\label{eq: 82}
    |\Sigma_{t_1}|(\frac{8\pi}{R_0}-|\Sigma_{t_1}|)\sin^2\theta_1\le |\Sigma_{t_2}|(\frac{8\pi}{R_0}-|\Sigma_{t_2}|)\sin^2\theta_2.
\end{align}
Moreover, if the equality holds, then
\begin{enumerate}
    \item for any $t\in \Phi^{-1}((\cos\theta_1,\cos\theta_2))$, we have $s_t^- = s_t^+$, and for any $\Sigma_t\in \mathcal{E}_t$, it holds
\begin{itemize}
\item $\Sigma_t$ is isometric to a geodesic disk in a sphere of constant Gauss curvature $\frac{R_0}{2}$;
    \item $\Sigma_t$ is totally geodesic;
    \item $\bar{H} = 0$ on $\partial\Sigma_t$.
\end{itemize}
\item $I(s)$ is differentiable on $(s_{t_1},s_{t_2})$. Moreover,
\begin{align}\label{eq: 86}
    I(s)(\frac{8\pi}{R_0}-I(s))(1-I'(s)^2) = |\Sigma_{t_1}|(\frac{8\pi}{R_0}-|\Sigma_{t_1}|)\sin^2\theta_1
\end{align}
for all $s\in (s_{t_1},s_{t_2})$.

%Let $(M_\alpha,g_0)$ be the standard cylinder block with upper angle $\alpha$ and scalar curvature $R_0$, and let $I_0(s)$ be the free energy profile for $(M_\alpha,g_0)$, then $|\partial_0 M_\alpha| = |\partial_0 M|$, and $I(s) = I_0(s)$ for all $s\in (0,|\partial_0 M|)$.
\end{enumerate}

\end{lemma}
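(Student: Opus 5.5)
The inequality \eqref{eq: 82} will follow the template of Lemma \ref{lem: comparison of mass function for different angles}, using the new mass $m(s)=I(\tfrac{8\pi}{R_0}-I)(1-\bar I'^2)$. Its monotonicity was established just above the statement, via Proposition \ref{prop: free energy profile monotonicity 2} and the argument of \cite[Lemma 3]{Bray97}.

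\textbf{Step 1 (the inequality).} Choose points of differentiability $s_{1,k}\searrow s_{t_1}^+$ and $s_{2,k}\nearrow s_{t_2}^-$. At these points Lemma \ref{lem: I is increasing} gives $I'=\Phi(\sigma)$, and monotonicity of $m$ gives
\[
m_0(s_{1,k})\bigl(1-\Phi(\sigma(s_{1,k}))^2\bigr)\le m_0(s_{2,k})\bigl(1-\Phi(\sigma(s_{2,k}))^2\bigr).
\]
Pass to the limit using continuity of $I$ and $\sigma$. This yields \eqref{eq: 82} with $\Sigma_{t_1}^+$ and $\Sigma_{t_2}^-$ in place of $\Sigma_{t_1}$ and $\Sigma_{t_2}$.

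To replace them by arbitrary minimizers, use that $x\mapsto x(\tfrac{8\pi}{R_0}-x)$ is increasing on $(0,\tfrac{4\pi}{R_0}]$. By Lemma \ref{lem: properties of Sigma t}(1), among elements of $\mathcal{E}_t$ the area is monotone in the wetting area. Together with \eqref{eq: 33}, this gives $|\Sigma_{t_1}|\le|\Sigma_{t_1}^+|\le \tfrac{4\pi}{R_0}$ and $|\Sigma_{t_2}^-|\le|\Sigma_{t_2}|$, which completes \eqref{eq: 82}.

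\textbf{Step 2 (equality).} If equality holds, every inequality in the chain is an equality. In particular $m$ is constant on $(s_{t_1}^+,s_{t_2}^-)$; the endpoint gaps collapse too, because equality forces $|\Sigma_{t_1}|=|\Sigma_{t_1}^+|$ and similarly at $t_2$.

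I then argue that at every $s_0$ in this interval, \eqref{eq: 74} must hold with equality in the viscosity sense. Suppose instead that at some $s_0$ the barrier $\psi$ satisfies \eqref{eq: 74} strictly. Comparing $\log m$ with $\log\bigl(\psi(\tfrac{8\pi}{R_0}-\psi)(1-\psi'^2)\bigr)$ then shows $m$ strictly increases across a neighborhood of $s_0$. This is the same computation as the smooth case, with strict sign, fed into the Bray-type distributional argument. It contradicts constancy of $m$.

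By the "moreover" part of Proposition \ref{prop: free energy profile monotonicity 2}, equality then forces, for every $t$ in the range:
\begin{itemize}
\item no interval case, i.e.\ $s_t^-=s_t^+$;
\item every $\Sigma_t\in\mathcal{E}_t$ is isometric to a round geodesic disk of curvature $\tfrac{R_0}{2}$;
\item every such $\Sigma_t$ is totally geodesic;
\item $\bar H=0$ on $\partial\Sigma_t$.
\end{itemize}
This gives item (1).

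For item (2), $s_t^-=s_t^+$ for all $t$ means $I$ has no linear pieces, and $\sigma$ is a continuous strictly increasing bijection. The one-sided derivatives from Lemma \ref{lem: I is increasing} squeeze: $\bar I'(s)\le\Phi(\sigma(s))\le\underline I'(s)$. Semiconcavity (Lemma \ref{lem: semiconcavity of I}) gives $\underline I'\ge \bar I'$, and equality of $m$ computed with either one-sided derivative (constancy of $m$ from the left and the right) forces $\underline I'=\bar I'$. So $I$ is differentiable, and \eqref{eq: 86} is just the constancy of $m$.

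\textbf{Main obstacle.} The delicate point is the strictness step in the non-smooth setting. One must show that a single point with strict inequality in the viscosity sense produces a strict increase of $m$. I would handle this by working with $J=I-\tfrac C2 s^2$ concave and running Bray's argument locally with a strict barrier: strictness persists on a small interval by continuity of the right-hand side of \eqref{eq: 74} in $(\psi,\psi')$.
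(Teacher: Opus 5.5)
Your outline follows the paper's proof. Step 1 is the argument of Lemma~\ref{lem: comparison of mass function for different angles} with $m_0=I(\tfrac{8\pi}{R_0}-I)$. Item (1) is read off from the ``moreover'' clause of Proposition~\ref{prop: free energy profile monotonicity 2}, which the paper does in one line. Item (2) compares the left limit $m_0(s)(1-\underline{I}'(s)^2)$ with $m(s)=m_0(s)(1-\bar{I}'(s)^2)$, exactly as the paper does. Your remark that equality forces $|S(\Sigma_{t_1})|=s_{t_1}^+$ and $|S(\Sigma_{t_2})|=s_{t_2}^-$ is a detail the paper leaves implicit: it shows the interval in item (2) is the one on which $m$ is constant.

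The step you flag as the main obstacle does not work as you propose. A barrier $\psi$ yields \eqref{eq: 74} only at its touching point $s_0$. At nearby $s$ it is just a smooth function lying above $I$, and it carries no information about $\bar{I}'(s)$. So continuity of the right-hand side of \eqref{eq: 74} along $\psi$ does not produce strict barriers at nearby points. The Bray-type distributional argument also uses the inequality only at almost every point, so strictness at a single point cannot give strict increase of $m$.

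The repair uses ingredients you already have, in the opposite order. First run your item (2) argument. Constancy $m\equiv c$ on the interval gives $\bar{I}'=\sqrt{1-c/m_0}$, which is continuous. By semiconcavity this forces $\underline{I}'=\bar{I}'$, so $I\in C^1$ with $I'=\Phi(\sigma)\in(0,1)$. Then $I$ solves the autonomous ODE $I'=\sqrt{1-c/\bigl(I(\tfrac{8\pi}{R_0}-I)\bigr)}$, whose right-hand side is smooth wherever it is positive. Hence $I$ is smooth, and differentiating $m\equiv c$ shows that $I''$ equals the right-hand side of \eqref{eq: 74} with $\psi$ replaced by $I$.

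Now let $\psi$ be any barrier at $s_0$. Then $\psi-I\ge 0$ has a local minimum at $s_0$, so $\psi''(s_0)\ge I''(s_0)$. The latter is the right-hand side of \eqref{eq: 74} evaluated at $(\psi(s_0),\psi'(s_0))$. Hence no barrier with strict inequality exists at any $s_0$ in the interval, and the ``moreover'' clause of Proposition~\ref{prop: free energy profile monotonicity 2} gives item (1). In particular, $s_t^-<s_t^+$ is excluded directly: $I$ would be linear on $[s_t^-,s_t^+]$, while the right-hand side of \eqref{eq: 74} is positive.
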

\begin{proof}
    The inequality part follows from the same proof as that of Lemma \ref{lem: comparison of mass function for different angles}. For the equality part, (1) follows from Proposition \ref{prop: free energy profile monotonicity 2}. For (2), we first note that by tracing the proof of Lemma \ref{lem: comparison of mass function for different angles}, one obtains that \eqref{eq: 86} holds when $I$ is differentiable at $s$. For general $s$, let $s_k\nearrow s$ be a sequence of points such that $I$ is differentiable at $s_k$. By Lemma \ref{lem: semiconcavity of I} (2) and the continuity of $I$, we have
    \begin{align*}
        m_0(s_k)(1-I'(s_k)^2)\to m_0(s)(1-\underline{I}'(s)^2) = |\Sigma_{t_1}|(\frac{8\pi}{R_0}-|\Sigma_{t_1}|)\sin^2\theta_1.
    \end{align*}
    Similarly, we have
    \begin{align*}
         m_0(s)(1-\bar{I}'(s)^2) = |\Sigma_{t_1}|(\frac{8\pi}{R_0}-|\Sigma_{t_1}|)\sin^2\theta_1.
    \end{align*}
    Since $I$ is increasing, we have $\underline{I}'(s) = \bar{I}'(s)$, which implies $I$ is differentiable at $s$.
\end{proof}

$\quad$

\begin{proof}[Proof of Theorem \ref{thm: 2 systole estimate, rigidity} (inequality part)]
    By letting $\theta_1\to\frac\pi2$ in Lemma \ref{lem: comparison of mass function for different angles 2}, we have $t_1\to 0$ and $\Sigma_{t_1}$ converges smoothly to a free boundary area-minimizing surface $\Sigma_0$. From the outermost minimizing property of $\partial_-M$, we conclude that $\Sigma_0 = \partial_- M$. It follows from \eqref{eq: 82} that
\begin{align}\label{eq: 87}
    |\partial_- M|(\frac{8\pi}{R_0}-|\partial_- M|)\le|\Sigma_{t}|(\frac{8\pi}{R_0}-|\Sigma_{t}|)\sin^2\theta\le  \frac{16\pi^2}{R_0^2}\sin^2 \alpha
\end{align}
for $\Sigma_t\in\mathcal{E}_t,\theta = \arccos\Phi(t)$ and $t\in (0,\Phi^{-1}(\cos\alpha))$. Therefore we obtain
\begin{align*}
    R_0\cdot|\partial_- M|\le 4\pi(1-\cos\alpha)\qquad\text{or}\qquad R_0\cdot|\partial_- M|\ge 4\pi(1+\cos\alpha)
\end{align*}
The latter is impossible from Lemma \ref{lem: spectral positivity on surfaces}, so
\begin{align}\label{eq: 75}
    R_0\cdot|\partial_- M|\le 4\pi(1-\cos\alpha).
\end{align}
\end{proof}

Now we consider the equality case of \eqref{eq: 75}. By letting $\theta_2\to\alpha$, we have $t_2 \to \Phi^{-1}(\cos\alpha)$ and $\Sigma_{t_2}$ converges smoothly to an area-minimizing capillary surface $\Sigma_{\Phi^{-1}(\cos\alpha)}$ with capillary angle $\alpha$. From the innermost minimizing property of $\partial_+M$, we conclude $\Sigma_{\Phi^{-1}(\cos\alpha)} = \partial_+ M$. From the equality in \eqref{eq: 87} we see that
\begin{align*}
    |\Sigma_{t}|(\frac{8\pi}{R_0}-|\Sigma_{t}|)\sin^2\theta = \frac{16\pi^2}{R_0^2}\sin^2 \alpha
\end{align*}
for all $\Sigma_t\in\mathcal{E}_t$ with $t\in (0,\Phi^{-1}(\cos\alpha))$, with $\theta = \arccos(\Phi(t))$. We also have $|\partial_+ M| = \frac{4\pi}{R_0}$. By Lemma \ref{lem: comparison of mass function for different angles 2}, we have $s_t^-=s_t^+$ for each $t\in (0,\Phi^{-1}(\cos\alpha))$. Moreover, for each $t\in (0,\Phi^{-1}(\cos\alpha))$ and any $\Sigma_t\in\mathcal{E}_t$, it holds
\begin{itemize}
    \item $\Sigma_t$ is isometric to a geodesic ball in a sphere of constant Gauss curvature $\frac{R_0}{2}$;
    \item $\Sigma_t$ is totally geodesic;
    \item $\bar{H} = 0$ on $\partial\Sigma_t$.
\end{itemize}
Furthermore, $I(s)$ is differentiable on $(0,|\partial_0 M|)$, with
\begin{align}\label{eq: 90}
    I(s)(\frac{8\pi}{R_0}-I(s))(1-I'(s)^2) = \frac{16\pi^2}{R_0^2}\sin^2 \alpha
\end{align}
holds for all $s\in (0,|\partial_0 M|)$, and
\begin{align}\label{eq: 88}
    I(0) = |\partial_- M| = \frac{4\pi(1-\cos\alpha)}{R_0},\qquad I(|\partial_0 M|) = |\partial_+ M| = \frac{4\pi}{R_0}.
\end{align}

Let $(M_\alpha,g_0)$ be the standard cylindrical block with upper angle $\alpha$ and scalar curvature $R_0$, and let $I_0(s)$ be the free energy profile for $(M_\alpha,g_0)$. From \eqref{eq: 90}\eqref{eq: 88}, we see that $I(s)$ and $I_0(s)$ satisfies the same ODE, which yields $|\partial_0 M| = |\partial_0M_\alpha|$, and
\begin{align}\label{eq: 89}
    I(s) = I_0(s) \text{ for all } s\in (0,|\partial_0 M|).
\end{align}

$\quad$

In the following, we derive some structural description of the family of surfaces
$\Sigma_t\in\mathcal{E}_t$ for $t\in (0,\Phi^{-1}(\cos\alpha))$. We first show that $\Sigma_{t_1}\cap \Sigma_{t_2} = \emptyset$ when $0<t_1<t_2<\Phi^{-1}(\cos\alpha)$. This follows from an argument in the proof of \cite[Theorem 1.3]{EK24}. Assume $\Sigma_{t_1}\cap \Sigma_{t_2} \ne \emptyset$, then the minimizing property of $\Sigma_i (i=1,2)$ implies that $\partial\Sigma_{t_1}\cap \partial\Sigma_{t_2} \ne\emptyset$, and $\partial\Sigma_{t_1}$ and $\partial\Sigma_{t_2}$ intersect transversally. Let $T_i (i=1,2)$ be the tangent vector of $\partial\Sigma_{t_i}$, and denote $\cos\theta_i = \Phi(t_i)$, we have
\begin{align*}
    \nabla_{T_i}\bar{N} = -\cos\theta_i\cdot\nabla_{T_i} N+\sin\theta\cdot\nabla_{T_i}\nu = (k_{\Sigma_{t_i}}\sin\theta_i)T_i,
\end{align*}
where we have used the fact that $\Sigma_{t_i}$ is totally geodesic. Since $k_{\Sigma_{t_i}}>0$ (to be explained later), $T_i$ is a principle direction for $\partial_0 M$ along $\partial\Sigma_{t_i}$ with positive principle curvature. It follows that the second fundamental form $\mathrm{I\!I}$ of $\partial_0 M$ is positive definite on $\partial\Sigma_{t_1}\cap \partial\Sigma_{t_2}$, contradicting the minimality of $\partial_0 M$ along $\Sigma_{t_1}$.

Next, we show that for $t\in (0,\Phi^{-1}(\cos\alpha))$, there is exactly one element in $\mathcal{E}_t$. Assume that $\Sigma_t,\Sigma_t'\in \mathcal{E}_t$. Since $s_t^- = s_t^+$, we have $|S(\Sigma_t)| = |S(\Sigma_t')|$. If $\partial\Sigma_t\ne \partial\Sigma_t'$, then $\partial\Sigma_t$ must intersect $\partial\Sigma_t'$ transversally somewhere, so we can use the argument above to obtain a contradiction. Hence we have $\partial\Sigma_t = \partial\Sigma_t'$. However, both $\Sigma_t$ and $\Sigma_t'$ are totally geodesic and have the same capillary angle $\Phi(t)$. It follows that $\Sigma_t$ and $\Sigma_t'$ are tangent along $\partial\Sigma_t = \partial\Sigma_t'$. The totally geodesic property forces them to coincide in a neighborhood of $\partial\Sigma_t = \partial\Sigma_t'$, a contradiction.

If there exists $p\in \Int \partial_0 M$ which does not belong to any of $\partial\Sigma_t$, then it's impossible that $s_t^- = s_t^+$ for all $t\in(0,\Phi^{-1}(\cos\alpha)) $. Hence we have
\begin{align}\label{eq: 83}
    \Int\partial_0 M = \bigcup_{t\in (0,\Phi^{-1}(\cos\alpha))}\partial\Sigma_t,
\end{align}
and $\partial_0 M$ is $C^0$ foliated by $\partial\Sigma_t$.  As a consequence, $\Int M$ is $C^0$ foliated by $\Sigma_t (t\in (0,\Phi^{-1}(\cos\alpha))$. To see this, it suffices to show that for each $p\in \Int M$, there exists a $\Sigma_t$ passing through $p$. If this is false, then there exist $\partial_-M\subset\Omega_1\subset\Omega_2\subset M\backslash\partial_+ M$, such that $p\notin \Omega_1$ and $p\in \Omega_2$, and $\Sigma_1 = \Omega_1\cap \mathring{M}$ and $\Sigma_2 = \Omega_2\cap \mathring{M}$ both belong to $\mathcal{F}(M)$. Furthermore, there's no element in $\mathcal{F}(M)$ lying between $\Sigma_1$ and $\Sigma_2$. From \eqref{eq: 83} we know that $\partial\Sigma_1 = \partial\Sigma_2$. It follows that $\Sigma_1$ and $\Sigma_2$ have the same capillary angle, which contradicts the uniqueness of the capillary surface $\Sigma_t\in\mathcal{E}_t$.

\begin{lemma}
    $\{\Sigma_t\}_{t\in (0,\Phi^{-1}(\cos\alpha))}$ is a smooth foliation.
\end{lemma}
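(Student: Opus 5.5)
We need to show that the family $\{\Sigma_t\}_{t\in(0,\Phi^{-1}(\cos\alpha))}$ is a smooth foliation. So far we know three things: each $\mathcal{E}_t=\{\Sigma_t\}$ is a single surface, the leaves are pairwise disjoint and fill $\Int M$, and each $\Sigma_t$ is a totally geodesic capillary disk of angle $\theta(t)=\arccos\Phi(t)$ with $\bar H=0$ along $\partial\Sigma_t$. The plan is to show that each leaf is nondegenerate for the capillary Jacobi operator. Once that holds, the implicit function theorem gives a smooth local family of capillary surfaces through $\Sigma_{t_0}$ parametrized by the angle. Uniqueness of minimizers then forces this local family to coincide with $\{\Sigma_t\}$, and the resulting lapse function will be shown to be positive.

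\textbf{Step 1 (Jacobi operator on a leaf).} Fix $t_0$ and write $\Sigma=\Sigma_{t_0}$, $\theta_0=\theta(t_0)$. Since $\Sigma$ is totally geodesic and equality holds in \eqref{eq: 2nd variation for capillary surface 1}, the stability form is
\[
Q(f)=\int_\Sigma |\nabla f|^2-\Ric(N,N)f^2-\int_{\partial\Sigma}\frac{1}{\sin\theta_0}\mathrm{I\!I}(\bar\nu,\bar\nu)f^2 .
\]
I would show that $Q$ is in fact coercive, so that the Jacobi operator has trivial kernel. By the equality case of Proposition \ref{prop: free energy profile monotonicity 2}, all the inequalities used there become equalities on $\Sigma$: $R_g=R_0$, $|A|=0$, $\bar H=0$, $K\equiv R_0/2$, and $\sec(T\Sigma)=R_0/2$. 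Combining these, $\Ric(N,N)=\tfrac{R_0}{2}-K=0$. Along $\partial\Sigma$, identity \eqref{eq: 14} gives $\mathrm{I\!I}(\bar\nu,\bar\nu)=-\sin\theta_0\, k$, where $k>0$ is the geodesic curvature of a boundary circle of a spherical cap of radius less than $\pi/2$. Hence
\[
Q(f)=\int_\Sigma|\nabla f|^2+\int_{\partial\Sigma}k f^2,\qquad k>0 .
\]
This form is strictly positive for every $f\not\equiv0$, which gives nondegeneracy.

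\textbf{Step 2 (implicit function theorem).} Apply the capillary implicit function theorem to the map $(\theta,u)\mapsto(H(\Sigma_u),\ \angle(\Sigma_u,\partial_0M)-\theta)$, where $\Sigma_u$ is the normal graph of $u$ over $\Sigma$, suitably extended to be tangent to $\partial_0M$. Nondegeneracy from Step 1 yields a smooth family $u_\theta$ with $u_{\theta_0}=0$. The derivative $\phi=\partial_\theta u_\theta|_{\theta_0}$ solves
\[
\Delta\phi=0 \text{ in } \Sigma,\qquad \partial_\nu\phi+k\phi=c \text{ on } \partial\Sigma,
\]
with constant $c\neq0$, up to the factor coming from linearizing the angle condition. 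The maximum principle (Hopf lemma) then gives $\phi>0$ or $\phi<0$ on $\Sigma$, so the nearby surfaces $\Sigma_{u_\theta}$ are disjoint and locally foliate a neighborhood of $\Sigma$.

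\textbf{Step 3 (identification and main obstacle).} We must show that $\Sigma_{u_\theta}=\Sigma_{t(\theta)}$. By Lemma \ref{lem: convergence of capillary surfaces}, together with uniqueness of the minimizer, $\Sigma_t\to\Sigma_{t_0}$ smoothly as $t\to t_0$. For $t$ close to $t_0$, $\Sigma_t$ is therefore a small graph over $\Sigma$ that is capillary with angle $\theta(t)$. The uniqueness part of the implicit function theorem then forces $\Sigma_t=\Sigma_{u_{\theta(t)}}$, so the foliation is smooth in the interior. I expect the main obstacle to be justifying the equality-case identities in Step 1 pointwise along each leaf, especially $\Ric(N,N)=0$ and $k>0$. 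For $k>0$ I would rely on the leaf being isometric to a geodesic disk of radius less than $\pi/\sqrt{2R_0}$ in the sphere of curvature $R_0/2$, which follows from $|\Sigma_t|<4\pi/R_0$. A secondary technical point is smoothness up to the corners $\partial_\pm M$. There I would use the convergence of $\Sigma_t$ to $\partial_\mp M$ as $t\to0$ and as $t\to\Phi^{-1}(\cos\alpha)$, and apply the same implicit function argument at the free-boundary and $\alpha$-capillary ends.
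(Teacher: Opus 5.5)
Your proposal is correct and follows essentially the same route as the paper: the equality-case identities ($A=0$, $R_g=R_0$, $K\equiv R_0/2$, $\bar H=0$ on $\partial\Sigma_t$) together with \eqref{eq: 14} reduce the Jacobi problem to $\Delta f$ with the Robin condition $\partial_\nu f+kf$, $k>0$, which gives nondegeneracy; the implicit function theorem and uniqueness of the minimizer with a given angle then identify nearby $\Sigma_{t'}$ with the implicit-function leaves. Your Hopf-lemma argument for a one-signed lapse and your justification of $k>0$ via $|\Sigma_t|<4\pi/R_0$ usefully fill in points the paper only asserts.

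The closing discussion of smoothness up to $\partial_\pm M$ is not needed, since the lemma concerns only the open parameter interval. It also would not carry over verbatim to $\partial_+M$: there $|\partial_+M|=4\pi/R_0$, so $\partial_+M$ is a hemisphere with $k=0$, the Robin condition degenerates to Neumann, and constants lie in the kernel.
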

\begin{proof}
    For $t\in (0,\Phi^{-1}(\cos\alpha))$, let $Y$ be a smooth vector field along $Y$, such that $Y$ is tangential to $\partial_0 M$ along $\partial\Sigma_t$, and $\langle Y,N\rangle = 1$ along $\Sigma_t$. Let $\phi(x,t)$ be a flow of $Y$. For $f\in C^{2,\beta}(\Sigma_t)$, let $\Sigma(f) = \{\phi(x,f(x))\big|\, x\in\Sigma_t\}\subset M$ be a graph over $\Sigma_t$. Consider $\tilde{\mathcal{F}}: C^{2,\beta}(\Sigma_t)\longrightarrow C^{0,\beta}(\Sigma_t)\times C^{1,\beta}(\partial\Sigma_t)$ given by
    \begin{align*}
        \tilde{\mathcal{F}}(f) = (H(\Sigma(f)), -\langle N(\Sigma(f)),\bar{N}\rangle)
    \end{align*}
    By \cite[Lemma A.2, Lemma A.3]{Li20}, we obtain the linearization operator of $\tilde{\mathcal{F}}$ at $0$ is given by
    \begin{align}\label{eq: 84}
        D_0\tilde{\mathcal{F}}(f) = \left(-\Delta_{\Sigma_t}f-(\Ric (N,N)+|A|^2)f,\,  \sin\theta\frac{\partial f}{\partial\nu}-(\cos\theta) A(N,N)f-\mathrm{I\!I}(\bar{N},\bar{N})f\right )
    \end{align}
    Since $K_{\Sigma_t} = \frac{R_0}{2}$, $R_g = R_0$ on $\Sigma_t$ and $|A|^2 = 0$, we have $\Ric(N,N) = 0$. Note that \eqref{eq: 83}, along with the fact that $\bar{H} = 0$ on $\partial\Sigma_t$ imply $\partial_0 M$ is minimal. Combined with \eqref{eq: 14}, \eqref{eq: 84} reduces to
    \begin{align}\label{eq: 85}
        D_0\tilde{\mathcal{F}}(f) = \left(-\Delta_{\Sigma_t}f,\,  \sin\theta(\frac{\partial f}{\partial\nu}+k_{t}f)\right ),
    \end{align}
    where $k_t$ denotes the geodesic curvature of $\partial\Sigma_t\subset\Sigma_t$ with respect to $\nu$.

    Since $k_t>0,\sin\theta>0$, we obtain $D_0\tilde{\mathcal{F}}$ is an isomorphism. In other word, $\Sigma_t$ is non-degenerate. Hence, the implicit function theorem implies that there is a smooth foliation of capillary minimal surfaces $\Sigma(f_{\tau}), \tau\in (-\epsilon,\epsilon)$. Here $f_{\tau}\in C^{2,\beta}(\Sigma_t)$ depends smoothly on $\tau$, satisfying $f_0 = 0$. Furthermore, the capillary angle $\gamma_\tau$ of $\Sigma(f_\tau)$ satisfies $\frac{d}{d\tau}\big|_{\tau = 0}\gamma_\tau \ne 0$.

    For $t'\in (0,\Phi^{-1}(\cos\alpha))$, when $\Sigma_{t'}$ is sufficiently close to $\Sigma_t$, the implicit function theorem further implies that $\Sigma_{t'}$ coincides with a leaf of $\Sigma(f_\tau)$. Since $\Sigma_{t'}$ has different capillary angle for different $t'$ and $\frac{d}{d\tau}\big|_{\tau = 0}\gamma_\tau \ne 0$, we know that $\{\Sigma_{t'}\}_{t'\in (0,\Phi^{-1}(\cos\alpha))}$ coincides with $\{\Sigma(f_\tau)\}_{\tau\in (-\epsilon,\epsilon)}$ locally. This concludes the proof that $\{\Sigma_{t'}\}_{t'\in (0,\Phi^{-1}(\cos\alpha))}$ is a smooth foliation.
\end{proof}

\begin{proof}[Proof of Theorem \ref{thm: 2 systole estimate, rigidity} (rigidity part)]
We reparametrize the foliation $\{\Sigma_t\}_{t\in (0,\Phi^{-1}(\cos\alpha))}$ as $\{\Sigma(s)\}_{s\in (0,|\partial_0 M|)}$ by $s = |S(\Sigma_t)|$. By definition, we have $|\Sigma(s)| = I(s)$. Let $\theta(s)$ and $k(s)$ denote the capillary angle of $\Sigma(s)$ and the geodesic curvature of $\partial\Sigma(s)\subset\Sigma(s)$ with respect to $\nu$. 

For fixed $\Sigma = \Sigma(s_0)$, let $v$ be the lapse function defined on $\Sigma$ induced by the foliation $\{\Sigma(s)\}_{s\in (0,|\partial_0 M|)}$. By \cite[Lemma A.2, Lemma A.3]{Li20} we obtain
\begin{align*}
    &\Delta_{\Sigma} v = 0,\quad\text{in } \Sigma,\\
    &\frac{d}{ds}\big|_{s = s_0}\cos\theta(s) = \sin\theta(s_0)(\frac{\partial v}{\partial\nu}+kv)\quad\text{on }\partial\Sigma.
\end{align*}
Let $\bar v$ denote the average of $v$ on $\partial\Sigma$. By integration by parts we have $k\bar{v} = -\theta'(s_0)$, so we have $\frac{\partial v}{\partial\nu}+k(v-\bar v) = 0$. Applying integration by parts once more, we obtain
\begin{align*}
    \int_{\Sigma}|\nabla_\Sigma (v-\bar v)|^2 = \int_{\partial\Sigma}(v-\bar{v})\frac{\partial v}{\partial \nu} = -k\int_{\partial\Sigma}(v-\bar v)^2.
\end{align*}
Since $k>0$, we conclude that $v$ is a constant. Therefore the distance between nearby leaves in the family $\{\Sigma(s)\}_{s\in (0,|\partial_0 M|)}$ is constant along each leaf.

By the same argument as \cite[Corollary 3.7]{BBN10}, the unit normal $N$ of the foliation  $\{\Sigma(s)\}_{s\in (0,|\partial_0 M|)}$ is a parallel vector field. In particular, each point in $(M,g)$ has a neighborhood which is isometric to a Riemannian product. Since $\Sigma(s)$ is isometric to a geodesic disk in the round sphere of constant curvature $\frac{R_0}{2}$, $(M,g)$ can be isometrically embedded in $S^2\times \mathbf{R}$. 

Now we can show that $(M,g)$ is rotationally symmetric. Let $z$ be the parameter of the $\mathbf{R}$ factor of $S^2\times \mathbf{R}$, and reparametrize $\{\Sigma(s)\}_{s\in (0,|\partial_0 M|)}$ as $\{\Sigma_z\}_{z_-<z<z_+}$. Without loss of generality, we assume $R_0 = 2$ and $S^2\subset\mathbf{R}^3$ is the unit sphere. Recall $\Sigma_z\subset S^2$ is a geodesic disk. Let $a(z)\in S^2$, which is also viewed as a unit vector in $\mathbf{R}^3$, be the center of $\Sigma_z$. Then $\partial_0 M$ can be parametrize as
\begin{align*}
    X(z,\varphi) = &\left( (a(z)\cos r(z)+q(\varphi,z)\sin r(z), \,z\right)\in S^2\times \mathbf{R}\subset \mathbf{R}^3\times \mathbf{R},\\
    &z\in (z_-,z_+)\qquad  \varphi\in [0,2\pi),
\end{align*}
where $q(\varphi,z)\in \mathbf{R}^3$ is a unit vector perpendicular to $a(z)$, and $r(z)$ is the radius of the geodesic disk $\Sigma_z$. In fact, $q(z,\varphi)$ can be expressed as
\begin{align}\label{eq: 91}
    q(z,\varphi) = b(z)\cos\varphi+c(z)\sin\varphi,
\end{align}
where $b(z), c(z)$ and $a(z)$ form an orthonormal basis of $\mathbf{R}^3$. The outer normal of $\partial\Sigma_z\subset\Sigma_z$ is given by
\begin{align*}
    \nu = -\sin r(z) a(z)+\cos r(z)q(z,\varphi).
\end{align*}
Since the dihedral angle between $\Sigma_z$ and $\partial_0 M$ is a constant $\theta\in (0,\frac{\pi}{2})$ along $\partial\Sigma_z$, the normal vector $\bar N$ of $\partial_0 M$ must be of the form
\begin{align*}
    \bar{N} = (\cos\theta)\nu +(\sin\theta)\partial_z.
\end{align*}
By direct computation we have
\begin{align*}
    X_z = a'(z)\cos r(z)+q_z(\varphi,z)\sin r(z)+r'(z)\nu+\partial_z.
\end{align*}
Note that
\begin{align*}
    &\langle a'(z)\cos r(z)+q_z(\varphi,z)\sin r(z), \nu\rangle\\
    = &\langle a'(z)\cos r(z)+q_z(\varphi,z)\sin r(z), -\sin r(z) a(z)+\cos r(z)q(z,\varphi)\rangle\\
    =& a'(z)q_z(z,\varphi),
\end{align*}
where we have used $|a(z)| = |q(\varphi,z)| = 1$. Using that $\langle X_z,\bar N\rangle = 0$, we conclude that
\begin{align*}
    0 = \cos{\theta}\,(r'(z)+a'(z)q_z(z,\varphi))+\sin\theta = 0.
\end{align*}
Letting $\varphi$ vary and using \eqref{eq: 91}, we obtain $a'(z)\equiv 0$. Hence the leaves in the family $\{\Sigma_z\}_{z\in (z_-,z_+)}$ have a common center in $S^2$. This proves the rotational symmetry of $(M,g)$. Once the rotational symmetry is established, the minimality of $\partial_0 M$ allows us to solve the resulting ODE as in Section 4.1. It follows that $(M,g)$ is a standard cylindrical block.
\end{proof}

\appendix
\section{Elementary inequalities}

As an elementary fact, the function $(1+\cos \alpha)(\frac{\pi^2}{4}-\alpha^2)-4\cos \alpha$ has a unique zero in $(0,\frac{\pi}{2})$, located at $\alpha_0\approx 0.35\pi$. Consequently, for $\alpha\in (0,\frac{\pi}{2})$, the inequality
\begin{align}\label{eq: a1}
        (1+\cos \alpha)(\frac{\pi^2}{4}-\alpha^2)>4\cos \alpha
    \end{align} 
holds if and only if $0<\alpha<\alpha_0$. We have the following elementary lemma.

\begin{lemma}\label{lem: A1}
       Fix $\alpha\in (0,\alpha_0)$. Then there exist $\delta>0$ and $\mu_0>0$ such that for every $x\in (\alpha-\delta,\alpha+\delta)$ and every $\mu\in (0,\mu_0)$, it holds
    \begin{align*}
        (1-2\mu^2)(1-\cos x)>1-\sqrt{1-\sin^2 x\left(\frac{\cosh(\mu x)}{\cosh(\frac{\pi}{2}\mu)}\right)^2}.
    \end{align*}
\end{lemma}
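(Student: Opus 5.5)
The plan is a perturbation argument around the boundary case $\mu=0$, where the inequality reduces to \eqref{eq: a1}. Set
\[
F(x,\mu)=(1-2\mu^2)(1-\cos x)-1+\sqrt{1-\sin^2 x\,\bigl(\cosh(\mu x)/\cosh(\tfrac{\pi}{2}\mu)\bigr)^2}.
\]
At $\mu=0$ we have $\sqrt{1-\sin^2x}=\cos x$, so $F(x,0)\equiv 0$ on $(0,\tfrac{\pi}{2})$. Therefore the sign of $F$ for small $\mu$ is governed by its $\mu^2$ coefficient. Since $F$ is even in $\mu$ and real-analytic in $(x,\mu)$ near $(\alpha,0)$ (because $\cos x>0$ there), I will write $F(x,\mu)=\mu^2 G(x)+\mu^4 R(x,\mu)$, with $R$ bounded on a compact neighborhood of $\{\alpha\}\times\{0\}$.

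To compute $G$, expand the ratio:
\[
\frac{\cosh(\mu x)}{\cosh(\frac{\pi}{2}\mu)}=1+\frac{\mu^2}{2}\Bigl(x^2-\frac{\pi^2}{4}\Bigr)+O(\mu^4),
\]
so its square is $1-\mu^2\bigl(\tfrac{\pi^2}{4}-x^2\bigr)+O(\mu^4)$. Substituting, the quantity under the root becomes $\cos^2x+\mu^2\sin^2x\bigl(\tfrac{\pi^2}{4}-x^2\bigr)+O(\mu^4)$. Hence
\[
\sqrt{\cdot}=\cos x+\frac{\mu^2\sin^2x\,(\frac{\pi^2}{4}-x^2)}{2\cos x}+O(\mu^4).
\]
This gives
\[
G(x)=-2(1-\cos x)+\frac{(1-\cos^2x)(\frac{\pi^2}{4}-x^2)}{2\cos x}=\frac{1-\cos x}{2\cos x}\Bigl[(1+\cos x)\bigl(\tfrac{\pi^2}{4}-x^2\bigr)-4\cos x\Bigr].
\]
The prefactor is positive for $x\in(0,\tfrac{\pi}{2})$, and the bracket is exactly the function in \eqref{eq: a1}. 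So $G(\alpha)>0$ whenever $0<\alpha<\alpha_0$.

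Now choose $\delta>0$ so that $[\alpha-\delta,\alpha+\delta]\subset(0,\alpha_0)$. By continuity, $G\ge c>0$ on this interval. Let $C$ be a bound for $|R|$ on $[\alpha-\delta,\alpha+\delta]\times[-1,1]$; shrink that $\mu$-range if needed so the radicand stays bounded away from $0$ and the Taylor remainder is uniform. Taking $\mu_0<\min\{1,\sqrt{c/C}\}$ gives $F\ge\mu^2(c-C\mu^2)>0$ for $0<\mu<\mu_0$.

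The only delicate point is making the $O(\mu^4)$ remainder uniform in $x$. I would handle it with Taylor's theorem with remainder, since $F$ is smooth and even in $\mu$ on a compact set. The algebraic factorization of $G$ is routine.
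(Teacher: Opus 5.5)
Your proof is correct and follows essentially the same route as the paper: Taylor-expand in $\mu$ about $\mu=0$, observe that the $\mu^2$-coefficient is a positive multiple of the bracket in \eqref{eq: a1}, and control a locally uniform $O(\mu^4)$ remainder. The only difference is cosmetic: the paper first squares both sides and so avoids expanding the square root, which gives the coefficient $(1-\cos x)[\cdots]$ instead of your $\frac{1-\cos x}{2\cos x}[\cdots]$; your explicit fourth-order Taylor remainder makes the uniformity step more precise than the paper's.
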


\begin{proof}[Proof of Lemma \ref{lem: A1}]
    It suffices to show
    \begin{align}\label{eq: a2}
        1-\sin^2 x\left(\frac{\cosh(\mu x)}{\cosh(\frac{\pi}{2}\mu)}\right)^2> [1-(1-2\mu^2)(1-\cos x)]^2.
    \end{align}
    Subtracting the two sides of \eqref{eq: a2} and using the Taylor expansion on $\mu$, we obtain
    \begin{align*}
        &1-\sin^2 x\left(\frac{\cosh(\mu x)}{\cosh(\frac{\pi}{2}\mu)}\right)^2
        - [1-(1-2\mu^2)(1-\cos x)]^2\\
        =&\ \mu^2(1-\cos x)\left[(1+\cos x)\left(\frac{\pi^2}{4}-x^2\right)-4\cos x\right]+O(\mu^4).
    \end{align*}
    Since $\alpha\in (0,\alpha_0)$, by continuity there exists $\delta>0,\epsilon_0>0$, such that
    \begin{align*}
        (1+\cos x)\left(\frac{\pi^2}{4}-x^2\right)-4\cos x>\epsilon_0
    \end{align*}
    for all $x\in (\alpha-\delta,\alpha+\delta)$. Moreover, the $O(\mu^4)$ terms above are locally uniform for $x\in (\alpha-\delta,\alpha+\delta)$. Hence, for $\mu_0>0$ sufficiently small, the right-hand side is positive for all $x\in (\alpha-\delta,\alpha+\delta)$ and all $\mu\in (0,\mu_0)$. This proves the lemma.
\end{proof}

\begin{lemma}\label{lem: A2}
Assume $0<\alpha<\alpha_0$ and let
\begin{align*}
    &\mu > 0, \qquad k = \frac{1}{\sqrt{1-2\mu^2}}, \qquad
    \lambda = \mu k, \qquad b = \frac{\pi}{2k},\\
    &u(r) = \frac{\sin kr}{k}, \qquad v(r) = \cosh(\lambda r), \qquad r\in (0,b).
\end{align*}
Then for $\mu$ sufficiently small, there exists $a\in (0,b)$ satisfying
\begin{align}\label{eq: a3}
    u(b)v(b)\sin\alpha = u(a)v(a),
\end{align}
\begin{align}\label{eq: a4}
    \int_0^a u(r)dr > 1-\cos\alpha,
\end{align}
and
\begin{align}\label{eq: a6}
    \frac{u''}{u}+\frac{v''}{v}+\frac{u'v'}{uv}\le -1 \text{ for } r\in (0,b).
\end{align}
\end{lemma}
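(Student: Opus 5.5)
The plan is to verify the three conditions one at a time, using the explicit forms of $u$ and $v$ and the relations $k^2=1/(1-2\mu^2)$ and $\lambda=\mu k$. The curvature inequality \eqref{eq: a6} is a direct computation. The choice of $a$ comes from the intermediate value theorem. The area inequality \eqref{eq: a4} is reduced to Lemma \ref{lem: A1} via the substitution $x=ka$.

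\textbf{Step 1: \eqref{eq: a6}.} We have $u''/u=-k^2$ and $v''/v=\lambda^2$. For the cross term, $u'/u=k\cot(kr)$ and $v'/v=\lambda\tanh(\lambda r)$. The key algebraic identity is
\[
k^2-\lambda^2=k^2(1-\mu^2)=\frac{1-\mu^2}{1-2\mu^2}=1+\mu^2k^2=1+\lambda^2 .
\]
Hence \eqref{eq: a6} is equivalent to $k\lambda\cot(kr)\tanh(\lambda r)\le\lambda^2$, that is, $k\cot(kr)\tanh(\lambda r)\le\lambda$. For $r\in(0,b)$ we have $kr\in(0,\pi/2)$, so $\cot(kr)>0$, and we may use $\tanh(\lambda r)\le\lambda r$. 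It then suffices to show $(kr)\cot(kr)\le 1$, which holds on $(0,\pi/2)$ because $\tan y\ge y$.

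\textbf{Step 2: existence of $a$.} Set $F(r)=u(r)v(r)$. Both factors are positive and increasing on $(0,b]$, since $kr\le\pi/2$. Therefore $F$ is continuous and strictly increasing, with $F(0)=0$ and $F(b)>F(b)\sin\alpha>0$. The intermediate value theorem gives a unique $a\in(0,b)$ satisfying \eqref{eq: a3}.

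Now set $x=ka\in(0,\pi/2)$. Using $u(b)=1/k$, $\lambda b=\mu\pi/2$ and $\lambda a=\mu x$, condition \eqref{eq: a3} becomes
\[
\sin x\,\cosh(\mu x)=\sin\alpha\,\cosh(\tfrac{\pi}{2}\mu).
\]
As $\mu\to0$, the left side converges uniformly on $[0,\pi/2]$ to the strictly increasing function $\sin x$, and the right side converges to $\sin\alpha$. Hence the solution satisfies $x=x(\mu)\to\alpha$. In particular, for $\mu$ small we get $x\in(\alpha-\delta,\alpha+\delta)$ and $\mu<\mu_0$, where $\delta,\mu_0$ are the constants from Lemma \ref{lem: A1}.

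\textbf{Step 3: \eqref{eq: a4}.} We compute $\int_0^a u\,dr=(1-\cos(ka))/k^2=(1-2\mu^2)(1-\cos x)$. From the relation in Step 2 we have $\sin\alpha=\sin x\cosh(\mu x)/\cosh(\frac{\pi}{2}\mu)$. Since $\alpha\in(0,\pi/2)$, $\cos\alpha=\sqrt{1-\sin^2\alpha}$, so the right-hand side of Lemma \ref{lem: A1} is exactly $1-\cos\alpha$. Lemma \ref{lem: A1} then gives $(1-2\mu^2)(1-\cos x)>1-\cos\alpha$, which is \eqref{eq: a4}.

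The only point needing care is the continuity argument $x(\mu)\to\alpha$ in Step 2, which places $x$ inside the window where Lemma \ref{lem: A1} applies. All remaining steps are elementary.
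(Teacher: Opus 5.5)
Your proposal is correct and follows the paper's proof essentially step by step. You use the same intermediate value argument for $a$ after rewriting \eqref{eq: a3} as $\sin x\cosh(\mu x)=\sin\alpha\cosh(\tfrac{\pi}{2}\mu)$ with $x=ka$, the same reduction of \eqref{eq: a4} to Lemma \ref{lem: A1}, and the same bounds $y\cot y\le 1$, $\tanh y\le y$ together with $k^2(1-2\mu^2)=1$ for \eqref{eq: a6}. Your uniform-convergence justification that $x(\mu)\to\alpha$ is slightly more careful than the paper's one-line version.
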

\begin{proof}
    Since $kb=\frac{\pi}{2}$, \eqref{eq: a3} is equivalent to
    \begin{align}\label{eq: a5}
        \sin ka\cosh \lambda a = \sin\alpha\cosh \lambda b.
    \end{align}
    Note the left hand side is increasing in $a$ for $a\in [0,b]$, and is greater than the right hand side when $a = b$. This implies there exists a unique $a\in (0,b)$ such that \eqref{eq: a3} holds.
    
    Let $x = ka$. When $\mu\to 0$, we have $k\to 1$ and $\lambda\to 0$. Combined with \eqref{eq: a3} we have $x = ka\to\alpha$. Let $\delta>0$ be the number given by Lemma \ref{lem: A1}, then there exists $\mu_1>0$, such that $x\in (\alpha-\delta,\alpha+\delta)$ whenever $0<\mu<\mu_1$. We calculate
    \begin{align*}
        \int_0^au(r)dr = \frac{1}{k^2}(1-\cos ka) = (1-2\mu^2)(1-\cos x).
    \end{align*}
    Using \eqref{eq: a5} we obtain
    \begin{align*}
        &1-\cos\alpha  = 1- \sqrt{1-\sin^2\alpha}\\
       =& 1-\sqrt{1-\sin^2ka \left(\frac{\cosh \lambda a}{\cosh \lambda b}\right)^2} = 1-\sqrt{1-\sin^2x \left(\frac{\cosh \mu x}{\cosh \frac{\pi}{2}\mu}\right)^2}
    \end{align*}
    Taking $\mu<\min\{\mu_0,\mu_1\}$, we see from Lemma \ref{lem: A1} that \eqref{eq: a4} holds. 

    It remains to verify \eqref{eq: a6}. We compute
\[
\frac{u''}{u}=-k^2,
\qquad
\frac{v''}{v}=\lambda^2,
\qquad
\frac{u'v'}{uv}
=
k\lambda\cot(k r)\,\tanh(\lambda r).
\]
Write
\[
x=kr,\qquad \mu=\frac{\lambda}{k}.
\]
Since $0<kr<kb = \frac{\pi}{2}$, we have $kr\cot kr \le 1$ and $\tanh\lambda r\le \lambda r$. We obtain
\begin{align*}
    k\lambda\cot(k r)\,\tanh(\lambda r) \le \lambda ^2
\end{align*}
Therefore
\[
\frac{u''}{u}+\frac{v''}{v}+\frac{u'v'}{uv}
\le
-k^2+\lambda^2+\lambda^2
=
-k^2(1-2\mu^2)
=
-1.
\]
This proves the lemma.
\end{proof}

\section{A version of the Whitney extension theorem on polyhedral domains}

We first recall the following $C^{1,1}$ version of the Whitney extension theorem:

\begin{lemma}\label{lem: vector valued C11 Whitney extension}(\cite{Whitney1934},\cite[p.1]{LeGruyer2009})
Let $E\subset \mathbf{R}^n$ be closed. Suppose that
$f:E\to \mathbf{R}^m$ and
$A:E\to \operatorname{Hom}(\mathbf{R}^n,\mathbf{R}^m)$ satisfy, for some $M>0$,
\begin{align}\label{eq: 68'}
    |A_a-A_b|\le M|a-b|,
\end{align}
and
\begin{align}\label{eq: 69'}
     |f(a)-f(b)-A_b(a-b)|\le M|a-b|^2
\end{align}
for all $a,b\in E$. Then there exists
$F\in C^{1,1}(\mathbf{R}^n,\mathbf{R}^m)$ such that
\[
    F|_E=f,\qquad DF|_E=A.
\]
Moreover, $F$ is real analytic in $\mathbf{R}^n\backslash E$.
\end{lemma}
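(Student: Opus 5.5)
The statement is the classical $C^{1,1}$ Whitney extension theorem. I would prove it by the standard Whitney decomposition construction, applied componentwise in the target $\mathbf{R}^m$. Since \eqref{eq: 68'} and \eqref{eq: 69'} hold with the Euclidean norm, they hold for each component $f^j$ and each row $A^j$ with the same constant $M$. So it suffices to treat $m=1$.

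\textbf{Step 1 (decomposition and partition of unity).} Decompose the open set $\Omega=\mathbf{R}^n\setminus E$ into Whitney cubes $\{Q_\nu\}$. These have disjoint interiors and satisfy $\operatorname{diam} Q_\nu\le \operatorname{dist}(Q_\nu,E)\le 4\operatorname{diam}Q_\nu$. Slightly enlarged copies $Q_\nu^*$ have bounded overlap. I then build a smooth partition of unity $\{\varphi_\nu\}$ subordinate to $\{Q_\nu^*\}$, with $\sum_\nu\varphi_\nu=1$ on $\Omega$ and $|\nabla^k\varphi_\nu|\le C_k\operatorname{diam}(Q_\nu)^{-k}$. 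For each $\nu$ I pick $p_\nu\in E$ with $|x-p_\nu|\le C\operatorname{dist}(x,E)$ for all $x\in Q_\nu^*$. Define
\begin{align*}
F(x)=\begin{cases} f(x), & x\in E,\\ \sum_\nu \varphi_\nu(x)\bigl(f(p_\nu)+A_{p_\nu}(x-p_\nu)\bigr), & x\in\Omega.\end{cases}
\end{align*}
This $F$ is smooth on $\Omega$. If real analyticity off $E$ is wanted as stated, I would replace the partition of unity by a regularization with an analytic kernel of variable width comparable to $\operatorname{dist}(x,E)$, as in Le Gruyer's construction. This is a technical modification of the same estimates, and I would cite \cite{LeGruyer2009} for it.

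\textbf{Step 2 (key estimates).} For $x\in\Omega$, let $a\in E$ be a nearest point and set $d=|x-a|$. Write $P_b(y)=f(b)+A_b(y-b)$. Then \eqref{eq: 68'} and \eqref{eq: 69'} give, for $b,c\in E$ within distance $Cd$ of $x$,
\[
|P_b(y)-P_c(y)|\le C M d^2 \quad\text{and}\quad |\nabla P_b-\nabla P_c|\le CMd
\]
for $|y-x|\le Cd$. Using $\sum\nabla^k\varphi_\nu=0$ for $k\ge1$, I write $F-P_a=\sum_\nu\varphi_\nu(P_{p_\nu}-P_a)$. This yields
\[
|F(x)-P_a(x)|\le CMd^2,\qquad |\nabla F(x)-A_a|\le CMd,\qquad |\nabla^2F(x)|\le CM.
\]

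\textbf{Step 3 (global $C^{1,1}$).} From these bounds, $F$ is differentiable at every point of $E$ with $DF=A$. To see this, for $b\in E$ and $x$ near $b$, compare with $P_b$: use \eqref{eq: 69'} when $x\in E$, and the Step 2 estimates when $x\in\Omega$. It remains to show that $\nabla F$ is globally Lipschitz. I expect this to be the main obstacle, because of pairs of points at different scales. I would split into cases:
\begin{itemize}
\item both points in $E$: this is \eqref{eq: 68'};
\item one point in $E$: this follows from $|\nabla F(x)-A_a|\le CMd$ together with \eqref{eq: 68'};
\item both points in $\Omega$ with $|x-y|$ small compared to $\operatorname{dist}(x,E)$: the segment between them stays in $\Omega$, so the Hessian bound applies;
\item both points in $\Omega$ otherwise: route through nearest points $a,b\in E$, whose distances are controlled by $C|x-y|$, and use the previous cases.
\end{itemize}
This gives $F\in C^{1,1}$ with $F|_E=f$ and $DF|_E=A$.
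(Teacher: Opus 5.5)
Your proof is correct: it is the classical Whitney-cube argument behind this result, which the paper does not prove but only cites from \cite{Whitney1934} and \cite{LeGruyer2009}. The one caveat is attribution: real-analyticity off $E$ should be credited to Whitney's 1934 analytic-extension construction rather than to Le Gruyer, whose paper concerns optimal Lipschitz constants, and in any case the paper only uses that $F$ is $C^\infty$ on $\mathbf{R}^n\setminus E$, which your partition-of-unity construction already gives.
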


In this appendix, we prove the following improved version of the Whitney extension theorem for polyhedral domains and its variants.

\begin{lemma}\label{lem: C11 Whitney extension smooth away from corner}
Let $\mathcal{C}\subset \mathbf{R}^k$ be a simplicial polyhedral cone with $k$ facets $G_1,G_2,\dots, G_k$ and vertex $\mathbf{0}$. Suppose that $A: \partial \mathcal{C}\longrightarrow \mathrm{Hom}(\mathbf{R}^k,\mathbf{R}^k)$ satisfies, for some $M>0$,
\begin{align}\label{eq: 68}
    |A_a-A_b|\le M|a-b|,
\end{align}
and
\begin{align}\label{eq: 69}
     |a-b-A_b(a-b)|\le M|a-b|^2
\end{align}
for all $a,b\in \partial \mathcal{C}$. Moreover, assume $A$ is smooth on each open facet. Then there exists $F\in C^\infty (\mathcal{C}\setminus \partial_{\cor}\mathcal{C})\cap C^{1,1}(\mathcal{C})$ such that
\begin{align*}
    F|_{\partial \mathcal{C}} = \Id,\qquad DF|_{\partial \mathcal{C}} = A.
\end{align*}
\end{lemma}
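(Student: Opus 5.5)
The plan is to apply the classical $C^{1,1}$ Whitney extension (Lemma \ref{lem: vector valued C11 Whitney extension}) to get a global $C^{1,1}$ map. We then correct it away from the corner locus by a smooth interpolation, using a Whitney-type partition of unity adapted to the distance to $\partial_{\cor}\mathcal{C}$, so that the corrected map is smooth up to each open facet.

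\textbf{Step 1 (global $C^{1,1}$ extension).} With $E=\partial\mathcal{C}$, $f=\Id|_{\partial\mathcal{C}}$ and the given $A$, hypotheses \eqref{eq: 68} and \eqref{eq: 69} are exactly \eqref{eq: 68'} and \eqref{eq: 69'}. Lemma \ref{lem: vector valued C11 Whitney extension} then gives $F_0\in C^{1,1}(\mathbf{R}^k,\mathbf{R}^k)$ with $F_0|_{\partial\mathcal{C}}=\Id$ and $DF_0|_{\partial\mathcal{C}}=A$. This $F_0$ is analytic in $\Int\mathcal{C}$, but it need not be smooth up to the open facets.

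\textbf{Step 2 (smooth local models near open facets).} Fix a facet $G_i$ with inward unit normal $n_i$. On $\Int G_i$ define the first-order Taylor model
\[ T_i(x+tn_i)=x+tA_x(n_i),\qquad x\in \Int G_i,\ t\ge 0. \]
This is smooth, because $A$ is smooth on the open facet, and it satisfies $T_i|_{G_i}=\Id$ and $DT_i|_{G_i}=A$. Using \eqref{eq: 68} we get $|D^2T_i|\lesssim M$ uniformly, even though higher derivatives of $A$ may blow up near $\partial G_i$. Consequently, on the region where $t\lesssim d(x):=\operatorname{dist}(x,\partial_{\cor}\mathcal{C})$, the estimates $|T_i-F_0|\le CMt^2$ and $|D(T_i-F_0)|\le CMt$ hold, since both maps share the same $1$-jet on $G_i$ and are $C^{1,1}$.

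\textbf{Step 3 (patching).} Cover $\mathcal{C}\setminus\partial_{\cor}\mathcal{C}$ by a Whitney-type collection of cubes $Q$ with $\operatorname{diam}Q\sim$ the distance of $Q$ to $\partial_{\cor}\mathcal{C}$. Take a smooth partition of unity $\{\chi_Q\}$ with $|D^j\chi_Q|\lesssim(\operatorname{diam}Q)^{-j}$. For cubes touching a facet $G_i$ (by scaling, each meets at most one facet) use $T_i$; for interior cubes use $F_0$. Set
\[ F=\sum_Q\chi_Q F_Q, \]
which is smooth on $\mathcal{C}\setminus\partial_{\cor}\mathcal{C}$. Near $\Int G_i$, $F$ coincides with $T_i$, so $F|_{\partial\mathcal{C}}=\Id$ and $DF|_{\partial\mathcal{C}}=A$. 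Near $\partial_{\cor}\mathcal{C}$ we write $F-F_0=\sum\chi_Q(F_Q-F_0)$, and the Step 2 bounds control this difference to second order. The main obstacle is here: proving $D^2(F-F_0)$ stays bounded as $d\to 0$.

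We expect the Step 2 bounds to give, on a cube $Q$,
\[ |F_Q-F_0|\lesssim M\operatorname{diam}(Q)^2,\qquad |D(F_Q-F_0)|\lesssim M\operatorname{diam}(Q), \]
and $|D^2 F_Q|\lesssim M$. Since $|D^j\chi_Q|\lesssim(\operatorname{diam}Q)^{-j}$, the terms in $D^2(F-F_0)$ then combine to $O(M)$, so $F-F_0\in C^{1,1}$ with vanishing $1$-jet on $\partial_{\cor}\mathcal{C}$, hence $F\in C^{1,1}(\mathcal{C})$. The delicate point is the uniform second-derivative bound on $T_i$ near $\partial G_i$. If $\partial_x A$ is not bounded pointwise there, we will first mollify $A$ along $G_i$ at scale $\sim d(x)$ before defining $T_i$, and rely on \eqref{eq: 68} to control the error.
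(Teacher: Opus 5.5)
Your construction follows the paper's route closely. Your $F_0$ is the paper's Whitney extension $W$, your $T_i(x+tn_i)=x+tA_x(n_i)$ is its $V_i$, and both of you cut off at a scale comparable to the distance to the corner (the paper uses dyadic collars of thickness $t_j\sim 2^{-j}$ and sets $F=W+\sum\chi_{i,j}(V_i-W)$). Your genuine partition of unity $\sum_Q\chi_Q=1$ is actually cleaner, because the paper's cutoffs have overlapping plateaus.

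However, the step you flag is a genuine gap. The claim in Step 2 that \eqref{eq: 68} gives $|D^2T_i|\lesssim M$ is false. With $x$ the tangential variable,
\[
\partial_t^2T_i=0,\qquad \partial_t\partial_xT_i=\partial_xA_x(n_i),\qquad \partial_x^2T_i=t\,\partial_x^2A_x(n_i).
\]
The first two are bounded by $M$. In particular \eqref{eq: 68} \emph{always} bounds $\partial_xA$, so your case distinction targets the wrong quantity. These two bounds alone already give $|T_i-F_0|\le CMt^2$ and $|D(T_i-F_0)|\le CMt$.

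What can blow up is $\partial_x^2A$, and no hypothesis controls it. Take $k=2$, $G_1=\{(s,0):s\ge 0\}$, $A\equiv\Id$ on $G_2$, and on $G_1$ let $A_{(s,0)}$ fix $\partial_s$ with $A_{(s,0)}(\partial_t)=\partial_t+\phi(s)\,\partial_s$, where $\phi(s)=s^2\sin(1/s)$. This $A$ satisfies \eqref{eq: 68} and \eqref{eq: 69}. Yet along a sequence $s\to 0$ on $\{t=cs\}$, the term $t\,\phi''(s)$ has size $c/s$. Your $F$ equals $T_1$ on a collar of thickness comparable to $d$, so $D^2F$ is unbounded there and $F\notin C^{1,1}$. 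The paper's proof has the same defect: it bounds $\Lip(D(V_i-W))$ by asserting that $V_i$ is $C^{1,1}$, which fails for this $A$.

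Your fallback of mollifying $A$ at scale $d(x)$ does not repair this. It replaces the normal derivative on $G_i$ by $\tilde A_x(n_i)\neq A_x(n_i)$, so $DF|_{\partial\mathcal{C}}=A$ fails. \eqref{eq: 68} only makes the discrepancy $O(Md)$, while the conclusion requires exact equality.

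What works is mollifying at a scale proportional to $t$. Let $\rho$ be a mollifier on the hyperplane of $G_i$, fix a small $\varepsilon>0$, and set $T_i(x+tn_i)=x+t\,(A*\rho_{\varepsilon t})_x(n_i)$. This map has three properties:
\begin{enumerate}
    \item It is smooth up to $\Int G_i$, since $A$ is smooth there.
    \item It has exactly the $1$-jet $(\Id,A)$ on $G_i$, because the extra term $t\,\partial_t(A*\rho_{\varepsilon t})$ is $O(M\varepsilon t)$.
    \item $|D^2T_i|\le C(\varepsilon)M$ on $\{t\le c\,d(x)\}$. Indeed, derivatives of $\rho_{\varepsilon t}$ in $x$ or $t$ integrate to zero, so each derivative landing on $\rho_{\varepsilon t}$ costs $(\varepsilon t)^{-1}$ but can be paired with $A_z-A_x=O(M\varepsilon t)$.
\end{enumerate}
With this $T_i$, your Step 3 goes through unchanged.

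Alternatively, keep the naive $T_i$ but shrink the collar where it is used. Take a slowly varying thickness $\tau(x)\le\min\{c\,d(x),\,M/\sup_{B(x,d(x)/2)\cap G_i}|\partial_x^2A|\}$, with cutoffs satisfying $|D^j\chi|\lesssim\tau^{-j}$. The first-order bounds then control the cutoff terms, and $t\,|\partial_x^2A|\le 2M$ on the collar.
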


\begin{proof}
    We first point out some basic properties satisfied by $A$. When $a$ and $b$ lie in the same facet $G_i$, \eqref{eq: 69} implies that the restriction of $A_b$ on $TG_i$ is the identity. Furthermore, by letting $b = 0$ in \eqref{eq: 69} and letting $|a|\to 0$, we obtain
    \begin{align*}
        A_{\mathbf{0}} = \Id.
    \end{align*}
    
    Note that
    \begin{align*}
        &f: \partial \mathcal{C}\longrightarrow \mathbf{R}^k,\qquad f(x) = x,\\
        &A: \partial \mathcal{C}\longrightarrow \mathrm{Hom}(\mathbf{R}^k,\mathbf{R}^k)
    \end{align*}
    satisfy all conditions in Lemma \ref{lem: vector valued C11 Whitney extension}. Hence, there exists $W\in C^\infty(\mathcal{C}\backslash\partial_{\cor}C)\cap  C^{1,1}(\mathcal{C})$, satisfying
    \begin{align*}
        W|_{\partial\mathcal{C}} = \Id,\qquad DW|_{\partial\mathcal{C}} = A.
    \end{align*}

The remaining task is to modify $W$ near each open facet $\Int G_i$, $i=1,2,\dots,k$, so that the resulting map $F$ is smooth up to each $\Int G_i$ while remaining $C^{1,1}$ up to the vertex $\mathbf{R}^{n-k}\times \{\mathbf{0}\}$.

    For simplicity of exposition, we primarily focus on the case when $k=2$. In this case, $\mathcal{C}\subset\mathbf{R}^2$ has two edges, $G_1$ and $G_2$. By possibly rotating $\mathcal{C}$, we can specify a Euclidean coordinate $(s,t)$ on $\mathbf{R}^2$, such that $G_i = \{(s,0),s\ge 0\}$. We define
    \begin{align}\label{eq: 116}
        V_i(s,t) = (s,0)+tA_{(s,0)}(\partial_t).
    \end{align}
    From the smoothness of $A$ on $\Int G_i$, $V_i$ is also smooth in a neighborhood of $\Int G_i$. It's easy to see
    \begin{align}\label{eq: 94}
        V_i = W = \Id\text{ on } G_i,\qquad DV_i = DW = A\text{ on } G_i.
    \end{align}
    In other word, $V_i$ and $W$ have the same $1$-jet along $G_i$.

    Let $t_j>0\, (j=1,2,\dots)$ be a sequence of numbers to be specified. We define
    \begin{align*}
        &\Omega_{i,j} = \{(s,t),\quad 2^{-j-1}<s<2^{-j+1}, 0\le t<t_j\}\\
        &\tilde{\Omega}_{i,j} = \{(s,t),\quad 2^{-j-1}-t_j<s<2^{-j+1}+t_j, 0\le t<2t_j\}.
    \end{align*}
    Clearly, $\Omega_{i,j} \, (j=1,2,\dots)$ cover $\Int G_i$. We also define a smooth cut off fuction $\chi_{i,j}$, which equals $1$ in $\Omega_{i,j}$ and equals $0$ outside $\tilde{\Omega}_j$, such that
    \begin{align}\label{eq: 92}
        0<\chi_{i,j}<1,\qquad |D\chi_{i,j}|<t_j^{-1},\qquad |D^2\chi_{i,j}|<t_j^{-2}.
    \end{align}
    Let $\alpha_0$ denote the angle of $C$ at its vertex $\mathbf{0}$. By choosing
    \begin{align}\label{eq: 93}
        t_j\in (2^{-j-4}\tan\frac{\alpha_0}{2},2^{-j-3}\tan\frac{\alpha_0}{2})
    \end{align}
    one ensures that $\tilde{\Omega}_{i,j}$ is contained in $\mathcal{C}$, and $\tilde\Omega_{1,j_1}\cap \tilde\Omega_{2,j_2}=\emptyset$ for all $j_1,j_2$. Moreover, every point $x\in \mathcal{C}$ is contained in at most three of the sets $\tilde\Omega_{i,j}$.

    We define
    \begin{align}\label{eq: 95}
        F = W+\sum_{i=1}^2\sum_{j=1}^\infty \chi_{i,j}(V_i-W).
    \end{align}
    For each point $x\in \mathcal{C}$, at most two terms in the above sum is non-zero. We have $F = V_i$ along $G_i$, which shows that $F$ is smooth in $\mathcal{C}\backslash\{\mathbf{0}\}$.

    It remains to check that $F$ is $C^{1,1}$ at $\mathbf{0}$. Since both $V_i$ and $W$ are $C^{1,1}$, $\Lip (D(V_i-W))$ is uniformly bounded. Using \eqref{eq: 94}\eqref{eq: 93} and the Taylor expansion for $V_i-W$ at points in $\tilde{\Omega}_{i,j}\cap G_i$, we obtain
    \begin{align*}
        |D(V_i-W)|<Ct_j,\qquad |V_i-W|<Ct_j^2
    \end{align*}
    in $\tilde{\Omega}_{i,j}$. Combined with \eqref{eq: 92}, we obtain
    \begin{align*}
    &|D(\chi_{i,j}(V_i-W))|<Ct_j,\\
    &\Lip (D(\chi_{i,j}(V_i-W)))<C.
    \end{align*}
    Combined with the fact that for each point $x\in \mathcal{C}$, at most two terms in the sum in \eqref{eq: 95} is non-zero, it is direct to check $DF_{\mathbf{0}} = \Id$, and $DF$ is Lipschitz in $\mathcal{C}$. This concludes the proof when $k=2$.

    For general $k$, the proof is the same except for a slight modification on the choice of $\Omega_{i,j}$ and $\tilde{\Omega}_{i,j}$. In this case our facet $G_i$ is modeled on
    \begin{align*}
        G_i = \{(s_1,s_2,\dots,s_{k-1},t),\, s_l\ge 0, l=1,2,\dots,k-1;\, t=0\}.
    \end{align*}
    We define
    \begin{align*}
        &\Omega_{i,j_1j_2,\dots j_{k-1}} = \{(s_1,s_2,\dots,s_{k-1},t),\,2^{-j_l-1}<s_l<2^{-j_l+1}, 0\le t<t_j\},\\
        &\tilde{\Omega}_{i,j_1j_2,\dots j_{k-1}} = \{(s_1,s_2,\dots,s_{k-1},t),\,2^{-j_l-1}-t_j<s_l<2^{-j_l+1}+t_j, 0\le t<2t_j\}.
    \end{align*}
By choosing $t_j\in (c_1 2^{-j},c_2 2^{-j})$, where $c_1$ and $c_2$ depend only on the dihedral angles between the facets of $\mathcal{C}$, we may ensure that $\tilde{\Omega}_{i,j_1j_2,\dots j_{k-1}}\subset\mathcal{C}$ and $\tilde{\Omega}_{i,j_1j_2,\dots j_{k-1}}\cap \tilde{\Omega}_{i',j_1'j_2',\dots j_{k-1}'} = \emptyset$ whenever $i\ne i'$. Moreover, if $c_2$ is sufficiently small, then each point
$x\in \mathcal{C}$ belongs to at most $3^{k-1}$ of the sets $\tilde{\Omega}_{i,j_1j_2,\dots,j_{k-1}}$. We can then argue exactly as in the case $k=2$.
    
\end{proof}

By the same argument, we also have the following generalized version of Lemma \ref{lem: C11 Whitney extension smooth away from corner}.
\begin{lemma}\label{lem: C11 Whitney extension smooth away from corner 2}
Let $\mathcal{C}\subset \mathbf{R}^k$ be a simplicial polyhedral cone with $k$ facets $G_1,G_2,\dots, G_k$ and vertex $\mathbf{0}$. Let $B_1^{n-k}(\mathbf{0})\subset \mathbf{R}^{n-k}$ be the unit ball. Suppose that
\begin{align*}
    A:  B_1^{n-k}(\mathbf{0})\times\partial\mathcal{C}\longrightarrow \mathrm{Hom}(T(B_1^{n-k}(\mathbf{0})\times \mathbf{R}^k), T(B_1^{n-k}(\mathbf{0})\times \mathbf{R}^k)) = \mathrm{Hom}(\mathbf{R}^n,\mathbf{R}^n)
\end{align*}
satisfies, for some $M>0$,
\begin{align}\label{eq: 114}
    |A_a-A_b|\le M|a-b|,
\end{align}
and
\begin{align}\label{eq: 115}
     |a-b-A_b(a-b)|\le M|a-b|^2
\end{align}
for all $a,b\in  B_1^{n-k}(\mathbf{0})\times\partial\mathcal{C}$. Moreover, assume $A$ is smooth on each open facet. Then there exists $F\in C^\infty (B_1^{n-k}(\mathbf{0})\times(\mathcal{C}\setminus \partial_{\cor}\mathcal{C}))\cap C^{1,1}(B_1^{n-k}(\mathbf{0})\times\mathcal{C})$ such that
\begin{align*}
    F|_{B_1^{n-k}(\mathbf{0})\times\partial \mathcal{C}} = \Id,\qquad DF|_{B_1^{n-k}(\mathbf{0})\times\partial \mathcal{C}} = A.
\end{align*}
\end{lemma}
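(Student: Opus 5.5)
The plan is to follow the proof of Lemma \ref{lem: C11 Whitney extension smooth away from corner} line by line, carrying along the extra tangential factor $B_1^{n-k}(\mathbf{0})$. The first step is to apply the vector-valued Whitney extension, Lemma \ref{lem: vector valued C11 Whitney extension}. We take the closed set $E=\overline{B_1^{n-k}(\mathbf{0})}\times\partial\mathcal{C}$, or a slightly smaller closed ball to avoid issues at the boundary of the ball, with $f=\Id$ and the given $A$. Hypotheses \eqref{eq: 114} and \eqref{eq: 115} are exactly \eqref{eq: 68'} and \eqref{eq: 69'} with $m=n$. This produces $W\in C^{1,1}(\mathbf{R}^n)$ that is analytic off $E$ and satisfies $W|_E=\Id$ and $DW|_E=A$. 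As in the earlier proof, \eqref{eq: 115} applied to $a,b$ in the same facet forces $A_b|_{T(B\times G_i)}=\Id$. Letting $a\to b$ within $B\times\{\mathbf{0}\}$ (and, more generally, within lower strata) gives $A=\Id$ on vectors tangent to every stratum.

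The second step replaces $W$ near each open facet $B\times \Int G_i$ by a map with the same $1$-jet that is smooth up to the facet. Choose coordinates $(z,s,t)$ with $z\in B$, $s\in G_i$ (a $(k-1)$-dimensional simplicial cone), and $t\ge 0$ the inward normal coordinate. Define
\[
V_i(z,s,t)=(z,s,0)+t\,A_{(z,s,0)}(\partial_t).
\]
Then $V_i$ is smooth near $B\times\Int G_i$ because $A$ is smooth there. Moreover $V_i=\Id$ on the facet and $DV_i=A$ there, since $A$ is the identity on tangential directions. Hence $V_i$ and $W$ share $1$-jets along $B\times G_i$.

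The third step is the dyadic patching. Use the boxes $\Omega_{i,j_1\dots j_{k-1}}$ and $\tilde\Omega_{i,j_1\dots j_{k-1}}$ from the earlier proof, taken as products with $B$ in the $z$-variable. Note that their scale depends only on the distance to the corner in the $s$-variables and not on $z$. Set
\[
F=W+\sum_{i}\sum_{\mathbf{j}}\chi_{i,\mathbf{j}}(V_i-W),
\]
where the cutoffs $\chi_{i,\mathbf{j}}$ are independent of $z$ and satisfy $|D\chi|\lesssim t_{\mathbf{j}}^{-1}$ and $|D^2\chi|\lesssim t_{\mathbf{j}}^{-2}$. The choice $t_{\mathbf{j}}\in(c_12^{-j},c_22^{-j})$ keeps the supports inside the cone, disjoint for different $i$, and of bounded overlap. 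Then $F=V_i$ near $B\times\Int G_i$, so $F$ is smooth there, and $F$ is analytic in the interior. Because $F$ agrees with $W$ to first order on the whole facet, $F=\Id$ and $DF=A$ on $B\times\partial\mathcal{C}$.

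The main obstacle is the uniform $C^{1,1}$ estimate near the corner locus $B\times\partial_{\cor}\mathcal{C}$. On each $\tilde\Omega$ we need
\[
|V_i-W|\le Ct_{\mathbf{j}}^2,\qquad |D(V_i-W)|\le Ct_{\mathbf{j}}.
\]
These follow by Taylor expansion from the matching $1$-jets on the facet, together with $\Lip D(V_i-W)\le C$. Here $\Lip DW$ is controlled by the Whitney construction, and $\Lip DV_i$ is controlled because $DV_i$ involves $A$ and $t\,DA$. The latter point requires care: $A$ is only Lipschitz up to the corner, so $t\,|DA|$ must be bounded. This holds in the a.e. sense, and it suffices since we only need Lipschitz bounds. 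Combining these with the cutoff bounds gives $\Lip D(\chi(V_i-W))\le C$ uniformly in $\mathbf{j}$. Since only finitely many terms ($\le 3^{k-1}$) are nonzero at any point and $D(\chi(V_i-W))\to0$ toward the corner, $DF$ is Lipschitz on $B\times\mathcal{C}$. Throughout, the $z$-direction plays no role, because all scales are $z$-independent.
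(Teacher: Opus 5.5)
Your route is the paper's: the paper proves this lemma ``by the same argument'' as Lemma~\ref{lem: C11 Whitney extension smooth away from corner}, using the Whitney extension $W$, the model maps $V_i$, and dyadic cutoffs. However, the step you single out as the main obstacle is not justified, and as set up it fails.

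To get $\Lip D\bigl(\chi_{i,\mathbf j}(V_i-W)\bigr)\le C$ uniformly, you need $|D^2V_i|\le C$ on the patches. Since $DV_i$ contains the term $t\,D_{z,s}\bigl(A_{(z,s,0)}(\partial_t)\bigr)$, this requires a bound on $t\,D^2_{z,s}A$, not on $t\,D_{z,s}A$. Boundedness of $t\,DA$ only shows that $DV_i$ is bounded, and the hypotheses (Lipschitz, smooth on open facets) give no control of $D^2A$ near the corner.

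Here is a concrete failure, with trivial $z$-factor. Let $\mathcal C=\{s\ge0,\ t\ge0\}\subset\mathbf R^2$ and $A=\Id$ on $\{s=0\}$. On $\{t=0\}$ let $A\partial_s=\partial_s$ and $A\partial_t=\partial_t+h(s)\partial_s$, with $h(s)=s^3\sin(s^{-2})$. Then $|h|\le s$ and $|h'|\le 5$, and \eqref{eq: 114}--\eqref{eq: 115} hold; the only nonzero defect in \eqref{eq: 115} is $t|h(s)|\le ts\le|a-b|^2$ for $a=(0,t)$, $b=(s,0)$. But $V_1(s,t)=(s+t\,h(s),\,t)$ has $\partial_s^2V_1=(t\,h''(s),0)$. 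Moreover $|h''(s)|\ge 4s^{-3}-6s$ wherever $\sin(s^{-2})=\pm1$, which happens in every dyadic interval. On a patch with $s\approx 2^{-j}$ and $t\approx t_{\mathbf j}\sim 2^{-j}$, where $F=V_1$, this gives $|D^2F|\gtrsim 2^{2j}$, so $F\notin C^{1,1}$. The paper's proof of Lemma~\ref{lem: C11 Whitney extension smooth away from corner} passes over the same point by simply asserting that $V_i$ is $C^{1,1}$.

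The rest of your estimates are fine. The bounds $|V_i-W|\le Ct^2$ and $|D(V_i-W)|\le Ct$ use only $\Lip A$ and $\Lip DW$, because $\partial_t^2V_i=0$. Only the tangential Hessian of $V_i$ needs repair, and either of two devices does it.
\begin{enumerate}
\item[(i)] Let the normal thickness depend on $A$. Besides $t_{\mathbf j}\le c\,r_{\mathbf j}$, where $r_{\mathbf j}$ is the tangential size of the $\mathbf j$-th piece of $\Int G_i$, choose $t_{\mathbf j}$ so small that $t_{\mathbf j}\sup|D^2_{z,s}A|\le 1$ on that piece. The piece is compact once $z$ is restricted to a slightly smaller closed ball. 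Then cut off anisotropically, $\chi_{i,\mathbf j}=\eta_{\mathbf j}\,\theta(t/t_{\mathbf j})$, where $\{\eta_{\mathbf j}\}$ is a Whitney-type partition of unity of $\Int G_i$ at scales $r_{\mathbf j}$ and $\theta\equiv1$ near $0$.
\item[(ii)] Replace $A_{(z,s,0)}(\partial_t)$ in $V_i$ by its mollification in the variables $(z,s)$ at scale $\epsilon t$. This keeps $V_i$ smooth up to the open facet with the same $1$-jet, and one integration by parts gives $|D^2V_i|\le C\epsilon^{-1}\Lip A$.
\end{enumerate}

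Separately, your claim that $F=V_i$ near $B\times\Int G_i$ needs $\sum_{\mathbf j}\chi_{i,\mathbf j}\equiv1$ there, i.e.\ a genuine partition of unity. Cutoffs that are each $1$ on overlapping boxes leave a multiple of $W$ in $F$, and $W$ need not be smooth up to the facet.
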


Using the construction \eqref{eq: 116} and a simple cut-off function argument, we have the following result.
\begin{lemma}\label{lem: an elementary construction for the smooth model near the boundary}
    Consider the upper half Euclidean space $\mathbf{R}^n_+$ with boundary $\partial \mathbf{R}^n_+ = \mathbf{R}^{n-1}\times \{0\}$. Let
    \begin{align*}
        A:\mathbf{R}^{n-1}\times \{0\}\longrightarrow \mathrm{Hom}(\mathbf{R}^n,\mathbf{R}^n)
    \end{align*}
    be a smooth map. Assume that $A$ satisfies $A_x = \Id$ when restricted on the linear subspace $\mathbf{R}^{n-1}\subset\mathbf{R}^n$.
    
    Suppose $U$ is an open subset of $\overline{\mathbf{R}^n_+}$, and $F_0:U\longrightarrow \overline{\mathbf{R}^n_+}$ is a smooth map, such that
    \begin{itemize}
        \item $F_0(x) = x$ for $x\in (\mathbf{R}^{n-1}\times \{0\})\cap U$;
        \item $DF = A$ on $\mathbf{R}^{n-1}\times \{0\}$.
    \end{itemize}
    Then for any $U_0\subset U$ satisfying $\bar U_0\subset U$, there exists a smooth map $V$ defined on $\mathbf{R}^{n-1}\times [0,\epsilon)$ for some $\epsilon>0$, such that
    \begin{align*}
        V|_{\mathbf{R}^{n-1}\times \{0\}} = \Id,\qquad DV|_{\mathbf{R}^{n-1}\times \{0\}} = A,
    \end{align*}
    and $V = F_0$ in $(\mathbf{R}^{n-1}\times [0,\epsilon))\cap U_0$.
\end{lemma}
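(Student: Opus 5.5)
The plan is a cutoff-and-interpolation argument in the normal direction. First I build a local smooth extension with the prescribed $1$-jet along the boundary, using the linear construction from \eqref{eq: 116}. Then I glue it to $F_0$ on $U_0$ with a cutoff that depends only on the tangential variable. Write points as $(x',t)\in\mathbf{R}^{n-1}\times[0,\infty)$ and set
\[
V_1(x',t)=(x',0)+t\,A_{(x',0)}(\partial_t).
\]
This map is smooth because $A$ is smooth. On $t=0$ it is the identity. Its differential at $t=0$ agrees with $A$: in tangential directions both are the identity, by the assumption $A|_{\mathbf{R}^{n-1}}=\Id$, and in the $\partial_t$ direction both equal $A(\partial_t)$. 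So $V_1$ and $F_0$ have the same $1$-jet along $(\mathbf{R}^{n-1}\times\{0\})\cap U$.

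Next, pick a smooth cutoff $\eta(x')$ on $\mathbf{R}^{n-1}$ with $\eta\equiv 1$ on the projection of a neighborhood of $\bar U_0\cap\partial\mathbf{R}^n_+$ and $\operatorname{supp}\eta$ contained in the projection of $U\cap\partial\mathbf{R}^n_+$. Choose $\epsilon_1>0$ so that $\{(x',t): x'\in\operatorname{supp}\eta,\ 0\le t<\epsilon_1\}\subset U$. This is possible if we first shrink so that $\operatorname{supp}\eta$ is compact, using that $\bar U_0\subset U$. If $\bar U_0$ is not compact, I would instead choose $\epsilon_1(x')>0$ varying smoothly and positive. Then define
\[
V=V_1+\eta(x')\,(F_0-V_1).
\]
Since $\eta$ does not depend on $t$, we get $V=\Id$ on $t=0$. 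Moreover $DV=A+(F_0-V_1)\otimes D\eta+\eta\,D(F_0-V_1)=A$ on $t=0$, because $F_0-V_1$ and its derivative vanish there. Smoothness is clear where $\eta\ne 0$ and trivial elsewhere.

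Finally we need $V=F_0$ on $(\mathbf{R}^{n-1}\times[0,\epsilon))\cap U_0$. This holds provided $\eta\equiv1$ at every point of $U_0$ within height $\epsilon$. The step requiring care is this compatibility: a point of $U_0$ with small $t$ must project into the region where $\eta\equiv 1$. Points of $U_0$ near the boundary accumulate only at $\bar U_0\cap\partial\mathbf{R}^n_+$, so shrinking $\epsilon$ handles it when $\bar U_0$ is compact. In the noncompact case I would again let $\epsilon$ depend smoothly on $x'$, shrinking the domain to $\{t<\epsilon(x')\}$. I would also verify that $V$ maps into $\overline{\mathbf{R}^n_+}$ for small $t$ when the last component of $A(\partial_t)$ is positive, which is all the statement needs.
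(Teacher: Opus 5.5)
Your model map $V_1$ is exactly the paper's $V_0$ from \eqref{eq: 116}, and your $V=\eta F_0+(1-\eta)V_1$ has the same form as the paper's interpolation. The problem is your choice of a cutoff $\eta=\eta(x')$ that depends only on the tangential variable. That choice creates a real gap outside the compact case.

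With a tangential cutoff, $V$ is only defined where $F_0$ is defined on the product region $\operatorname{supp}\eta\times[0,\epsilon)$, but the lemma allows an arbitrary open $U$. Take $U=\{0\le t<e^{-|x'|^2}\}$ and $U_0=\{0\le t<\tfrac12 e^{-|x'|^2}\}$, with a general $F_0$ on $U$. Then $\bar U_0\cap\partial\mathbf{R}^n_+$ is the whole boundary, so your recipe forces $\eta\equiv 1$. Your step ``choose $\epsilon_1>0$ with $\operatorname{supp}\eta\times[0,\epsilon_1)\subset U$'' is then impossible, and $V$ is defined only on $U$, which contains no strip $\mathbf{R}^{n-1}\times[0,\epsilon)$. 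Your fallback of a variable height $\epsilon(x')$ proves a weaker statement than the one asked. The same issue affects your claim that points of $U_0$ at small height project into $\{\eta=1\}$: it relies on compactness of $\bar U_0$ and fails, for instance, when $U_0$ approaches $\partial\mathbf{R}^n_+$ only at infinity. Your argument is fine when $\bar U_0$ is compact, but not in the stated generality.

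The $t$-independence you impose is not needed. Since $F_0-V_1$ and $D(F_0-V_1)$ both vanish on $\{t=0\}\cap U$, the identities $V=\Id$ and $DV=A+(F_0-V_1)\otimes D\chi+\chi\,D(F_0-V_1)=A$ on the boundary hold for a cutoff $\chi(x',t)$ depending on all variables. The paper takes $\chi$ on $\overline{\mathbf{R}^n_+}$ with $\chi\equiv 1$ on $\bar U_0$ and $\operatorname{supp}\chi\subset U$, which exists because $\bar U_0\subset U$. It then sets $V=\chi F_0+(1-\chi)V_0$, extending $\chi F_0$ by zero outside $U$. This $V$ is smooth on the entire closed half-space, equals $F_0$ on $U_0$, and has the prescribed $1$-jet on the boundary. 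So any $\epsilon$ works, and no compactness, shrinking, or variable height is needed. With this change your proof is complete. Your closing remark about $V$ mapping into $\overline{\mathbf{R}^n_+}$ is not something the statement requires.
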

\begin{proof}
    Let $\chi\in C^\infty(\mathbf{R}_n^+)$ be a cut off function satisfying $\chi = 1$ in $U_0$ and $\chi = 0$ outside $U$. We use $x=(x',t)\in \mathbf{R}^{n-1}\times [0,\infty)$ to represent points in $\overline{\mathbf{R}^n_+}$. Define
    \begin{align*}
        V_0(x',t) = (x',0)+tA_{(x',0)}(\partial_t),
    \end{align*}
    then $V_0(x) = x$ and $DV_0 = A$ on $\mathbf{R}^{n-1}\times \{0\}$. Because $A$ is smooth, $V_0$ is also smooth. It is then direct to check that the map $V = \chi F_0+(1-\chi)V_0$ satisfies all desired properties.
\end{proof}

As a consequence, we have the following relative version of Lemma \ref{lem: C11 Whitney extension smooth away from corner 2}
\begin{lemma}\label{lem: C11 Whitney extension smooth away from corner 2 relative version}
Let $\mathcal{C}$ and $A$ be as in Lemma \ref{lem: C11 Whitney extension smooth away from corner 2}. Let $U$ be an open subset of $B_1^{n-k}(\mathbf{0})$, and let $F_0: U\times \mathcal{C}\longrightarrow B_1^{n-k}(\mathbf{0})\times \mathcal{C}$ be a $C^{1,1}$ map which is smooth away from $B_1^{n-k}(\mathbf{0})\times \partial_{\mathrm{cor}}\mathcal{C}$, satisfying
\begin{itemize}
\item $F_0(x) = x$ for $x\in U\times \partial\mathcal{C}$;
    \item $DF_0 = A$ on $U\times \partial\mathcal{C}$.
\end{itemize}
 Then for any $U_0\subset U$ satisfying $\bar U_0\subset U$, there exists $F\in C^\infty (B_1^{n-k}(\mathbf{0})\times(\mathcal{C}\setminus \partial_{\cor}\mathcal{C})\cap C^{1,1}(B_1^{n-k}(\mathbf{0})\times\mathcal{C})$ such that
\begin{align*}
    F|_{B_1^{n-k}(\mathbf{0})\times\partial \mathcal{C}} = \Id,\qquad DF|_{B_1^{n-k}(\mathbf{0})\times\partial \mathcal{C}} = A,
\end{align*}
and $F = F_0$ on $U\times \mathcal{C}$.
\end{lemma}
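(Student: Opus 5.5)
The plan is to patch together local models, imitating the proof of Lemma \ref{lem: C11 Whitney extension smooth away from corner} but relative to a prescribed map $F_0$ on $U\times\mathcal{C}$.

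\textbf{Step 1: a global extension.} Lemma \ref{lem: C11 Whitney extension smooth away from corner 2} applied to the data $(\Id, A)$ on $B_1^{n-k}(\mathbf{0})\times\partial\mathcal{C}$ gives a map $W\in C^\infty(B_1^{n-k}(\mathbf{0})\times(\mathcal{C}\setminus\partial_{\cor}\mathcal{C}))\cap C^{1,1}(B_1^{n-k}(\mathbf{0})\times\mathcal{C})$ with the required $1$-jet along $B_1^{n-k}(\mathbf{0})\times\partial\mathcal{C}$.

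\textbf{Step 2: gluing in $F_0$.} Choose $U_1$ with $\bar U_0\subset U_1$ and $\bar U_1\subset U$. Let $\chi=\chi(z)$ be a cutoff depending only on the $B_1^{n-k}(\mathbf{0})$-variable $z$, with $\chi\equiv 1$ on $U_1$ and $\operatorname{supp}\chi\subset U$. Set
\[
F=\chi F_0+(1-\chi)W .
\]
Since $\chi$ is smooth and independent of the cone variables, $F$ inherits smoothness away from $B_1^{n-k}(\mathbf{0})\times\partial_{\cor}\mathcal{C}$ and $C^{1,1}$ regularity from $F_0$ and $W$. On $B_1^{n-k}(\mathbf{0})\times\partial\mathcal{C}$ both $F_0$ and $W$ equal $\Id$, so $F=\Id$ there.

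\textbf{Step 3: the derivative and the main obstacle.} Differentiating gives
\[
DF=\chi DF_0+(1-\chi)DW+(F_0-W)\otimes D\chi .
\]
Since $F_0-W$ vanishes on $B_1^{n-k}(\mathbf{0})\times\partial\mathcal{C}$, the last term drops there, and $DF=A$ on the boundary. The remaining problem is that $F=F_0$ holds only on $U_1\times\mathcal{C}$, not on all of $U\times\mathcal{C}$. Taken literally, requiring $F=F_0$ on $U\times\mathcal{C}$ is incompatible with an arbitrary cutoff region unless $F_0$ already extends, so the hypothesis $\bar U_0\subset U$ suggests the intended conclusion is $F=F_0$ on $U_0\times\mathcal{C}$. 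I would prove that version; it is the form used in the applications, for example when leaving $\phi_i$ invariant on $U_i$. If the full statement on $U\times\mathcal{C}$ is needed, I would instead start from a relative Whitney extension. Concretely, I would apply Lemma \ref{lem: vector valued C11 Whitney extension} on the closed set $E=(B_1^{n-k}(\mathbf{0})\times\partial\mathcal{C})\cup(\bar U_1\times\mathcal{C})$, with data given by $F_0$ and $DF_0$ on the second piece. The compatibility conditions \eqref{eq: 68'} and \eqref{eq: 69'} across the two pieces would follow from the $C^{1,1}$ Taylor estimates of $F_0$ together with \eqref{eq: 115}.

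\textbf{Step 4: smoothness up to open facets.} Finally, $F$ must be smooth up to the open facets away from the corner. In the region where $\chi\in(0,1)$ this is automatic, because $F_0$ and $W$ are each smooth there. If Step 3 is done through the relative Whitney extension instead, I would rerun the facet-wise correction \eqref{eq: 95} using the local models $V_i$ from \eqref{eq: 116}, with cutoffs supported away from $\bar U_0\times\mathcal{C}$. I expect this step, keeping the dyadic cutoffs compatible with the region where $F=F_0$ is prescribed, to be the main technical point.
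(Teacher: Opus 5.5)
Your Steps 1--3 already give a complete proof of the version with $F=F_0$ on $U_0\times\mathcal{C}$. That is the intended statement: the paper's own construction also achieves agreement only over $U_0$. Your route is genuinely different from the paper's.

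The paper applies the Whitney extension Lemma~\ref{lem: vector valued C11 Whitney extension} directly to the union of $B_1^{n-k}(\mathbf{0})\times\partial\mathcal{C}$ and the region over $U$. It uses data $(\Id,A)$ on the first piece and $(F_0,DF_0)$ on the second. It then reruns the dyadic facet-wise correction \eqref{eq: 95}, with smooth models $V_i$ produced by Lemma~\ref{lem: an elementary construction for the smooth model near the boundary} so that $V_i=F_0$ over $U_0$.

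You instead take the extension $W$ from the non-relative Lemma~\ref{lem: C11 Whitney extension smooth away from corner 2}, which is already smooth up to the open facets. You then blend it with $F_0$ through a cutoff $\chi(z)\in C^\infty_c(U)$ equal to $1$ near $\bar U_0$. The key point is that two maps with the same $1$-jet along $B_1^{n-k}(\mathbf{0})\times\partial\mathcal{C}$ can be glued by any smooth partition of unity without disturbing that $1$-jet. The only extra term, $(F_0-W)\otimes D\chi$, vanishes there. Smoothness away from the corner and $C^{1,1}$ regularity are inherited from the two pieces.

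Your argument is shorter: you never verify the Whitney compatibility estimates between the two pieces of the closed set, and you never repeat the dyadic construction. The paper's route simply reuses the template of the non-relative proof.

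Two parts of your proposal should be dropped:
\begin{itemize}
\item Step 4 is unnecessary. Where $\chi=1$ the map is $F_0$, where $\chi=0$ it is $W$, and in between both pieces are smooth away from the corner.
\item The alternative at the end of Step 3 does not do what you claim. A Whitney extension on $(B_1^{n-k}(\mathbf{0})\times\partial\mathcal{C})\cup(\bar U_1\times\mathcal{C})$ only yields $F=F_0$ on $\bar U_1\times\mathcal{C}$, so it does not give the literal conclusion on $U\times\mathcal{C}$ either. That literal conclusion can in fact fail in general, since $F_0$ need not extend as a $C^{1,1}$ map across $\partial U\times\mathcal{C}$.
\end{itemize}
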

\begin{proof}
    The proof is the same as that of Lemma \ref{lem: C11 Whitney extension smooth away from corner}. We first apply the Whitney extension theorem \ref{lem: vector valued C11 Whitney extension} to $E = B^{n-k}_1(\mathbf{0})\times \partial\mathcal{C}\cup U$ to obtain a $C^{1,1}$ map $W$ on $E$, which equals the identity on $B_1^{n-k}(\mathbf{0})\times\partial\mathcal{C}$ and equals $F_0$ on $U\times \mathcal{C}$, such that $DW = A$ on $B_1^{n-k}(\mathbf{0})\times \partial\mathcal{C}$ . Then, we replace $W$ by a smooth model map $V_i$ in a neighborhood of each facet of the cone. The smooth model map $V_i$ is now constructed as in Lemma \ref{lem: an elementary construction for the smooth model near the boundary}. The rest of the proof runs in the same way as that of Lemma \ref{lem: vector valued C11 Whitney extension}. 
\end{proof}

Finally, we record some lemmas concerning extension of maps defined on the boundary neighborhood to the interior. The following lemma is elementary.
\begin{lemma}\label{lem: extension 2}
    Let $(M^n,g)$ be a compact Riemannian manifold with corners. Let $U$ be a neighborhood of $\partial M$, and let
\begin{align*}
    \Phi:U\longrightarrow M
\end{align*}
be a $C^{1,1}$ embedding which is smooth away from $\partial M$. Assume that
\begin{align*}
    \Phi=\mathrm{Id}\quad\text{on }\partial M
\end{align*}
and that $\Phi$ is sufficiently $C^1$-close to the inclusion map on $U$. Then, after possibly shrinking $U$, $\Phi$ extends to a $C^{1,1}$ self-diffeomorphism
\begin{align*}
    \widetilde{\Phi}:M\longrightarrow M
\end{align*}
which is smooth away from $\partial M$ and agrees with $\Phi$ near $\partial M$. Moreover, $\widetilde\Phi$ can also be chosen to be sufficiently close to the identity map.
\end{lemma}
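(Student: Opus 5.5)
The approach is to build the extension by pushing $\Phi$ forward along an inward collar flow and damping it with a cutoff. The result is then a map that equals $\Phi$ near $\partial M$ and the identity deep inside.

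\textbf{Step 1: a collar.} Fix a smooth inward-pointing vector field $Y$ on $M$, transverse to every facet and to each corner stratum (inward pointing in the corner sense). Its flow gives a collar $c:\partial M\times[0,\epsilon_0)\to M$. Shrink $U$ so that $U=c(\partial M\times[0,\epsilon_1))$ with $\epsilon_1<\epsilon_0$.

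\textbf{Step 2: a vector field generating $\Phi$.} Since $\Phi$ is $C^1$-close to the inclusion, for each $x\in U$ the point $\Phi(x)$ is joined to $x$ by a unique short geodesic of $g$, extended slightly past $\partial M$ to an ambient closed manifold $\hat M$ if needed. Define
\[
\Phi(x)=\exp_x(V(x)).
\]
Then $V$ is a $C^{1,1}$ vector field on $U$, smooth away from $\partial M$, with $V=0$ on $\partial M$ and $\|V\|_{C^1}$ small.

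\textbf{Step 3: cutoff.} Take a smooth cutoff $\chi$ that equals $1$ on $c(\partial M\times[0,\epsilon_1/3))$ and is supported in $c(\partial M\times[0,2\epsilon_1/3))$. Set
\[
\widetilde\Phi(x)=\exp_x(\chi(x)V(x))\quad\text{on } U,\qquad \widetilde\Phi=\mathrm{Id}\ \text{elsewhere}.
\]
Then $\widetilde\Phi$ agrees with $\Phi$ near $\partial M$, is $C^{1,1}$, is smooth away from $\partial M$ (since $\chi$ and $\exp$ are smooth), and is $C^1$-close to the identity. The $C^1$ bound is
\[
|D(\chi V)|\le |D\chi|\,|V|+|DV|.
\]
Because $V=0$ on $\partial M$, we have $|V|\le C\|DV\|_{C^0}\,\epsilon_1$ in $U$. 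Combined with $|D\chi|\lesssim\epsilon_1^{-1}$, the first term is controlled by $\|DV\|_{C^0}$, which is small; no smallness beyond that of $\|V\|_{C^1}$ is needed.

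\textbf{Step 4: diffeomorphism and invariance of $M$.} A map that is $C^1$-close to the identity on a compact manifold is a local diffeomorphism and injective; the argument is the standard one on a compact set, using that the displacement is uniformly small relative to the injectivity radius. Near $\partial M$ we have $\widetilde\Phi=\Phi$, which maps into $M$ by hypothesis. Away from $\partial M$ the displacement is too small to leave $M$, since points there have a definite distance to $\partial M$ once we have fixed $\epsilon_1$ and then require $\|V\|_{C^0}\ll\epsilon_1$. Finally, $\widetilde\Phi$ is the identity on $\partial M$ and a local homeomorphism, so degree theory shows it is surjective onto $M$.

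The main obstacle I anticipate is the quantitative order of choices in Steps 3 and 4. We must fix $\epsilon_1$ first and then demand that $\Phi$ be $C^1$-close to the inclusion depending on $\epsilon_1$. This is permitted by the hypothesis ``sufficiently $C^1$-close''. Near the corners, the geodesic-exponential description should be applied in $\hat M$, and we should check that $M$ is preserved there using the local models $M_1\cap\dots\cap M_k$.
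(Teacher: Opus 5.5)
The paper gives no proof of this lemma; it only asserts that it is elementary. Your proposal therefore supplies an argument the paper omits, and it is correct. The core mechanism is the right one:
\begin{itemize}
\item write $\Phi(x)=\exp_x(V(x))$;
\item replace $V$ by $\chi V$;
\item use $V|_{\partial M}=0$ to absorb the $\epsilon_1^{-1}$ size of $D\chi$.
\end{itemize}
The standard facts behind Steps 1--4 then give a $C^{1,1}$ self-diffeomorphism, smooth away from $\partial M$, equal to $\Phi$ near $\partial M$.

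Some points should be stated explicitly.

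\textbf{The ambient manifold.} The paper's definition of a manifold with corners is local. You therefore need to produce $\hat M$ and a smooth extension of $g$ separately; this is standard. You need $\hat M$ to make sense of $\exp^{-1}$ and of the flow of $Y$ near the corners.

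\textbf{Smoothness of the cutoff.} Near the corners the collar depth $t$ is only Lipschitz, since it is the minimum of the hitting times of the individual facets. So $\chi$ should be obtained by mollifying $\eta(t)$ at scale $\sim\epsilon_1$. The bound $|D\chi|\lesssim\epsilon_1^{-1}$ survives because the Lipschitz constant of $t$ does not depend on $\epsilon_1$.

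\textbf{The order of choices.} The issue you flag as the main obstacle is not one. Let $\rho_j$ be local defining functions of the facets. Then $Y\rho_j\ge c_0>0$ gives $d(x,\partial M)\ge c\,t(x)$. Integrating along the flow line of $Y$ gives $|V(x)|\le C\|\nabla V\|_{C^0}\,t(x)$. Hence once $\|\nabla V\|_{C^0}$ is below a threshold depending only on $(M,g,Y)$, the whole geodesic $s\mapsto \exp_x(s\chi(x)V(x))$, $s\in[0,1]$, stays in $M$ for every $x$. You never need $\|V\|_{C^0}\ll\epsilon_1$ as a separate requirement, so the closeness threshold is independent of $\epsilon_1$ and of $U$. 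This matters when closeness to the inclusion is obtained only by shrinking the neighborhood, as in the corner setting of Lemma~\ref{lem: extension}.

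\textbf{Surjectivity and injectivity.} The same estimate gives a homotopy $H_s(x)=\exp_x(s\chi(x)V(x))$ through maps $M\to M$ that fix $\partial M$. This makes your degree argument for surjectivity rigorous. Injectivity near the corners does not follow from an injectivity-radius argument alone. Use the mean-value inequality in local charts in which $M$ is a convex quadrant $[0,\infty)^k\times\mathbf{R}^{n-k}$, so that segments between nearby points stay in the domain of $\widetilde\Phi$.
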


As a consequence, we have the following lemma.
\begin{lemma}\label{lem: extension}
Let $(M^n,g)$ be a compact Riemannian manifold with corners. Let $U$ be a neighborhood of $\partial_{\cor}M$, and let
\begin{align*}
    \Phi:U\longrightarrow M
\end{align*}
be a $C^{1,1}$ embedding which is smooth away from $\partial_{\cor}M$. Assume that
\begin{align*}
    \Phi=\mathrm{Id}\quad\text{on }\partial_{\cor}M
\end{align*}
and that $\Phi$ is sufficiently $C^1$-close to the inclusion map on $U$. Then, after possibly shrinking $U$, $\Phi$ extends to a $C^{1,1}$ self-diffeomorphism
\begin{align*}
    \widetilde{\Phi}:M\longrightarrow M
\end{align*}
which is smooth away from $\partial_{\cor}M$ and agrees with $\Phi$ near $\partial_{\cor}M$. Moreover, $\widetilde{\Phi}$ can also be chosen to be sufficiently close to the identity map.
\end{lemma}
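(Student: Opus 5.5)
The plan is to reduce Lemma \ref{lem: extension} to Lemma \ref{lem: extension 2}. We first extend $\Phi$ from a neighborhood of $\partial_{\cor}M$ to a neighborhood of all of $\partial M$, keeping the regularity class and the $C^1$-closeness to the inclusion. Lemma \ref{lem: extension 2} then produces the global self-diffeomorphism.

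\textbf{Step 1: extend along the facets.} Fix a smaller neighborhood $U_0\Subset U$ of $\partial_{\cor}M$. On each open facet $\Int F_b$ the map $\Phi$ is smooth on $U\cap\Int F_b$. Since $\Phi$ is $C^1$-close to the inclusion and the corner structure is preserved, I will first arrange that $\Phi$ maps $F_b\cap U$ into $F_b$. If it does not do so already, this is done by composing with the nearest-point retraction in Fermi coordinates of $F_b$, which is smooth away from corners and $C^{1,1}$ near them. Then I take $\phi_b:=\chi\,\Phi|_{F_b}+(1-\chi)\,\Id$, computed in local coordinates of $F_b$. Here $\chi$ is a cutoff equal to $1$ on $U_0$ and supported in $U$. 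Because $\Phi|_{F_b}$ is $C^1$-close to $\Id$, the map $\phi_b$ is a diffeomorphism of $F_b$. It is $C^{1,1}$, smooth away from the corners, equal to $\Phi$ near $\partial_{\cor}M$, and equal to the identity away from $U$.

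\textbf{Step 2: thicken off $\partial M$.} Next I extend the facet maps into a collar. In Fermi-type coordinates $(x,t)$ near $\Int F_b$, set $\hat\Phi(x,t)=\chi\,\Phi+(1-\chi)(\phi_b(x),t)$, and define it to be $\Phi$ on $U_0$. On overlaps of collars of adjacent facets, the cutoff $\chi$ equals $1$ there, since those overlaps lie near $\partial_{\cor}M$ and hence inside $U_0$ after shrinking. So the definitions agree and $\hat\Phi$ is well defined on a neighborhood $U'$ of $\partial M$. The map $\hat\Phi$ is $C^1$-close to the inclusion, because it is a convex combination of maps $C^1$-close to the inclusion and $\chi$ has fixed derivative bounds. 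It is therefore an embedding after shrinking. It is also smooth away from $\partial_{\cor}M$ and $C^{1,1}$.

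\textbf{Step 3: apply Lemma \ref{lem: extension 2}.} The hypothesis of Lemma \ref{lem: extension 2} requires $\Phi=\Id$ on $\partial M$, but $\hat\Phi|_{F_b}=\phi_b$ is not the identity. Two fixes are available. I could verify that the proof of Lemma \ref{lem: extension 2}, an isotopy and collar argument, only uses that $\hat\Phi$ preserves $\partial M$ and is close to the inclusion. Alternatively, I could compose with a collar extension of $\phi_b^{-1}$ to normalize to the identity on $\partial M$, and then compose back. The extension near $\partial_{\cor}M$ is unaffected since it agrees with $\Phi$ there.

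\textbf{Main obstacle.} I expect the real difficulty to be the patching near the corners with controlled $C^{1,1}$ norms. The cutoff $\chi$ must be chosen so that the smoothness-away-from-corners is not destroyed, and so that the convex combination remains an embedding preserving each facet. I would handle this exactly as in Lemma \ref{lem: C11 Whitney extension smooth away from corner}: use the fixed cutoff $\chi$ on a region bounded away from $\partial_{\cor}M$, so that all mixing happens where everything is smooth.
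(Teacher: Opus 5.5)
Your proposal is correct and follows the paper's route: extend along each facet, thicken to a neighborhood of $\partial M$, and then extend into the interior using the argument of Lemma \ref{lem: extension 2} rather than its literal statement. The paper likewise says ``in the same spirit'', for exactly the reason you flag in Step~3. The one real difference is that the paper obtains the facet extensions by applying Lemma \ref{lem: extension 2} to each facet $F_\lambda$, which is legitimate because $\partial F_\lambda\subset\partial_{\cor}M$, where $\Phi=\Id$. You use an explicit cutoff interpolation instead.

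One caveat: drop the retraction fallback in Step~1. Composing with a retraction changes $\Phi$ near $\partial_{\cor}M$, so the result no longer agrees with $\Phi$ there. Moreover, facet preservation is forced by the conclusion anyway, since a self-diffeomorphism of $M$ close to the identity preserves $\partial M$ and each facet. It should therefore be read as an implicit hypothesis, and it does hold in the paper's application.
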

\begin{proof}
    We first apply Lemma \ref{lem: extension 2} to each facets of $M$ to obtain a extension $\Phi_1$ of $\Phi$ to $U\cup\partial M$. Then, we extend $\Phi_1$ to a $C^{1,1}$ embedding $\Phi_2$ defined on a neighborhood $\mathcal{U} $ of $\partial M$. For any facet $F_\lambda$ of $M$, Lemma \ref{lem: extension 2} ensures $\Phi_1|_{F_\lambda}$ can be sufficiently close to $\Id|_{F_\lambda}$. Hence, we can extend $\Phi_2$ to the diffeomorphism of the whole $M$ in the same spirit of Lemma \ref{lem: extension 2}.
\end{proof}

\bibliographystyle{alpha}

\bibliography{Positive}

\end{document}